\newcounter{numberofremark}
\newcommand\nothing[1]{}
\newcommand{\dcl}{\DeclareMathOperator}
\dcl\cdet{cdet} \dcl\Sp{Specm} \dcl\depth{depth} \dcl\im{Im} \dcl\Span{Span} \dcl\id{id} \dcl\Ker{Ker} \dcl\Specm{Specm}
\dcl\Supp{Supp} \dcl\codim{codim} \dcl\Y{Y} \dcl\gl{\mathfrak{gl}}    \dcl\U{U} \dcl\T{T} \dcl\soc{soc} 
\dcl\qdet{qdet} \dcl\sgn{sgn} \dcl\gr{gr} \dcl\diag{diag}
\dcl\g{\mathfrak{g}} \dcl\C{\mathbb C} \dcl\dd{{\mathrm d}}
 \dcl\Tab{Tab}
\newcommand\sm{{\mathsf m}}
\newcommand\Ga{{\Gamma}}
\def\cD{\mathcal D}
\newlength\yStones
\newlength\xStones
\newlength\xxStones
\def\Stones{\pst@object{Stones}}
\def\Stones@i#1{%
  \pst@killglue%
  \begingroup%
  \use@par%
  \setlength\xxStones{\xStones}%
  \expandafter\Stones@ii#1,,\@nil
  \endgroup
  \global\addtolength\xStones{0.6cm}%
  \global\addtolength\yStones{-7.5mm}}%
\def\Stones@ii#1,#2,#3\@nil{%
  \rput(\xxStones,\yStones){%
    \psframebox[framesep=0]{%
      \parbox[c][6mm][c]{11mm}{\makebox[11mm]{$#1$}}}}%
  \addtolength\xxStones{1.2cm}%
  \ifx\relax#2\relax\else\Stones@ii#2,#3\@nil\fi}
\def\Stone#1{\fbox{\makebox[11mm]{\strut#1}}\kern2pt}
\newtheorem{theorem}{Theorem}[section]
\newtheorem{lemma}[theorem]{Lemma}
\newtheorem{corollary}[theorem]{Corollary}
\newtheorem{proposition}[theorem]{Proposition}
\newtheorem{example}[theorem]{Example}
\newtheorem{remark}[theorem]{Remark}
\newtheorem{definition}[theorem]{Definition}
\begin{document}

\title{Classification of simple Gelfand-Tsetlin modules of $\displaystyle \mathfrak{sl}(3)$}

\author{Vyacheslav Futorny}
\address{Instituto de Matem\'atica e Estat\'istica, Universidade de S\~ao
Paulo,  S\~ao Paulo, Brasil} \email{futorny@ime.usp.br}

\author{Dimitar Grantcharov}
\address{\noindent
University of Texas at Arlington,  Arlington, TX 76019, USA} \email{grandim@uta.edu}

\author{Luis Enrique Ramirez}
\address{\noindent
Universidade Federal do ABC,  Santo Andr\'e SP, Brasil} \email{luis.enrique@ufabc.edu.br}
\date
\makeatletter
\def\@roman#1{\romannumeral#1}
\makeatother

\begin{abstract}
We provide a classification and an explicit realization of all simple Gelfand-Tsetlin modules of the complex Lie algebra $\mathfrak{sl}(3)$. The realization of these modules, including those with infinite-dimensional weight spaces, is provided via  regular and derivative Gelfand-Tsetlin tableaux.  Also, we show that all simple Gelfand-Tsetlin $\mathfrak{sl}(3)$-modules can be obtained as subquotients of localized Gelfand-Tsetlin $E_{21}$-injective modules.
\end{abstract}
\maketitle
 
\noindent\textbf{MSC 2010 Classification:} 16G99, 17B10.\\
\noindent\textbf{Keywords:} Gelfand-Tsetlin modules, Gelfand-Tsetlin bases, weight modules,
localization.

\tableofcontents
\section{Introduction}

Let ${\mathfrak g}$ be a finite-dimensional simple Lie algebra over the
complex numbers, and let ${\mathfrak h}$ be a Cartan
subalgebra of ${\mathfrak g}$. A ${\mathfrak g}$-module $M$ is called a {\it
weight module} if   $ M=\bigoplus_{\lambda\in {\mathfrak h}^*} M_\lambda$, where $M_\lambda=\{v\in M\mid hv=\lambda(h)v,\
\forall h\in {\mathfrak h}\}$. The space $M_\lambda$ is called a \emph{weight space}, the set $\{\lambda\in {\mathfrak h}^*\ |\ 
M_{\lambda}\neq 0\}$ is called the {\it weight support} of $M$ and the
dimension of $M_{\lambda}$ is called the {\it weight mltiplicity} of
$\lambda$.  If $\mathfrak h$ does not necessarily act diagonally but acts locally finitely on $M$, 
 then we say that  $M$ is a {\it generalized
weight module}.  It is an easy exercise to show that a simple generalized weight module is a weight module.

A weight module $M$ is {\it torsion free} provided that all root vectors of
 ${\mathfrak g}$ act injectively on $M$. If $M$ is a torsion free module then all weight 
 multiplicities of $M$ (finite or infinite) are equal. This invariant of $M$ is
called the {\it weight degree} of $M$. Furthermore, the weight support of a torsion free module $M$ coincides with a full coset $\lambda + Q$ of ${\mathfrak h}^*/Q$, where $Q$ is the root lattice of $\mathfrak g$ and $\lambda$ is in the weight support of $M$. On the other hand, a simple weight module
may have ``full support'' without being torsion free, in which case the 
weight multiplicities are necessarily infinite. The first 
examples of such modules were given in \cite{Fut1}. Simple modules with
full support are called {\it dense}.

A breakthrough in the  theory of weight modules with finite weight multiplicities was made 
by Fernando, \cite{Fe}, in 1990 who reduced the classification of all such simple modules to determining the simple torsion free
modules. He also showed that  the only simple Lie algebras admitting
torsion free modules are those of type $A$ or $C$. The next major breakthrough was made in 2000 by Mathieu,
\cite{M}, who classified and provided a realization of all
simple torsion free  modules of finite degree.
Previously, the case of degree $1$ was worked out in \cite{Britten Lemire 3}. Important properties of the annihilators of the torsion free modules were established in \cite{Jos92}.

The study of weight modules with infinite multiplicities is still at its initial stage. A result similar to the one of Fernando 
reduces  the classification of all such
simple modules to the classification of all simple dense modules
of simple Lie algebras. For the classical simple
Lie algebras this reduction was obtained  in \cite{Fut3} and
for all exceptional simple Lie algebras except $E_8$ in
\cite{FutOvsTsy}. Finally, in \cite{DimMathPen}, the reduction
in all cases, including all important classes of finite-dimensional Lie superalgebras, was completed.

The classification of all simple dense modules, possibly with infinite weight multiplicities, remains out of reach. The latter classification is present only in the case of $\mathfrak{sl}(2)$, where in the fundamental paper of Block, \cite{Block},  all simple  $\mathfrak{sl}(2)$-modules are classified, and in particular, it is shown that the simple dense modules have always weight degree $1$.

One natural category of weight modules is the category of Gelfand-Tsetlin modules. More precisely, this is the full subcategory of the category of generalized weight modules consisting of modules that admit a generalized eigenbasis for the \emph{Gelfand-Tsetlin subalgebra}, a maximal commutative subalgebra of the universal enveloping algebra $U(\mathfrak g)$ of $\mathfrak g$. 
 Gelfand-Tsetlin modules  were introduced in
\cite{DFO2}, \cite{DFO1}, \cite{DFO3} as an attempt to generalize the celebrated tableaux construction of Gelfand-Tsetlin bases of irreducible finite-dimensional representations of
simple classical Lie algebras, \cite{GT}, \cite{m:gtsb}, \cite{zh:cg}.

Gelfand-Tsetlin subalgebras have applications that extend beyond the study of Gelfand-Tsetlin modules. For example, these subalgebras were related to the solutions of the
Euler equation in \cite{FM}, and to the subalgebras of $U(\mathfrak g)$ of
maximal Gelfand-Kirillov dimension in \cite{Vi}. Gelfand-Tsetlin subalgebras  were studied in \cite{KW-1}, \cite{KW-2} in
connection with classical mechanics, and also in \cite{Gr1}, \cite{Gr2} in
connection with general hypergeometric functions on the Lie group
$GL(n, \mathbb C)$.

A general theory of Gelfand-Tsetlin modules for a class of Galois algebras (for a definition see \cite{FO1}) was developed in \cite{FO2}. The results for these Galois algebras can be applied to the universal enveloping algebras of  $\mathfrak{sl}(n)$ and  $\mathfrak{gl}(n)$, and provide structural properties of the corresponding simple Gelfand-Tsetlin modules. In the \emph{generic} case the characters of the Gelfand-Tsetlin subalgebra parametrize such simple modules. However, in the nongeneric case, i.e. in the \emph{singular} case and $n>2$, we may have more than one isomorphism class of simple Gelfand-Tsetlin modules with a fixed character of the Gelfand-Tsetlin subalgebra. The theory of singular Gelfand-Tsetlin modules was initiated in  \cite{FGR3} where 
$1$-singular modules were constructed and studied in detail. Immediately after the construction of the 1-singular modules, there was an abundance of successful attempts to construct simple Gelfand-Tsetlin modules with a given singular character.  For more details, we refer the reader to the following papers
 \cite{EMV},  \cite{FGR3}, \cite{FGR16a}, 
\cite{FGR17},  \cite{FGRZ},  \cite{FGRZ1}, 
\cite{FK17},  \cite{FRZ16}, \cite{Har17}, \cite{RZ17}, \cite{V17a}, \cite{V17b}, \cite{Za17}.  In poarticular, a  classification of the simple Gelfand-Tsetlin modules was recently announced in 
\cite{KTWWY} and \cite{W}. 

A classification of the simple $1$-singular Gelfand-Tsetlin modules was obtained in \cite{FGR17} and leads to the classification of all simple Gelfand-Tsetlin modules of the Lie algebra $\mathfrak{sl}(3)$ (and of $\mathfrak{gl}(3)$). The latter classification is the main purpose of the present paper and it is provided via very explicit tableaux construction.

Our classification result relies on various old  results on Gelfand-Tsetlin $\mathfrak{sl}(3)$-modules obtained in  \cite{Britten Futorny Lemire}, \cite{Fut1}, \cite{Fut2}, \cite{Fut3}, \cite{Fut4}, combined with newer results from \cite{FGR1} and  \cite{Ramirez Thesis}. We remark that  some technical statements in the paper on the properties of Gelfand-Tsetlin modules can be simplified using the  theory recently developed, for example, in \cite{FGR17}, \cite{FGRZ1}, \cite{RZ17}. However,  for the sake of completeness and for reader's convenience, we opted to keep the original manuscript containing detailed and explicit proofs. 

The structure of the paper is as follows.
In Section $2$  we set up the notation and state  basic
definitions and  results needed in the rest of the paper. In Section $3$  we prove some general results about the Gelfand-Tsetlin modules of $\mathfrak{gl}(n)$. Section $4$  is devoted to the description of certain ``easier to study'' classes of Gelfand-Tsetlin modules of $\mathfrak{gl}(n)$, namely finite-dimensional modules, generic modules, and $1$-singular modules. In Section 5 we collect some important definitions and preliminary results that relate  Gelfand-Tsetlin modules to their Gelfand-Tsetlin character. In Section $6$  we prove the main results about existence and uniqueness of simple Gelfand-Tsetlin modules of $\mathfrak{sl}(3)$. The explicit description of all simple Gelfand-Tsetlin modules for $\mathfrak{sl}(3)$ is included in  Section $7$  . Finally, in Section $8$, we study localization functors on the category of Gelfand-Tsetlin $\mathfrak{sl}(3)$-modules and prove that any simple module in this category can be obtain from an $E_{21}$-injective module using a $E_{21}$-localization functors. 

\medskip

\noindent{\bf Acknowledgements.} D.G. gratefully acknowledges the
hospitality and excellent working conditions at the S\~ao Paulo
University where part of this work was completed. V.F. is
supported in part by the CNPq grant (304467/2017-0) and Fapesp grant (2018/23690-6). D.G. is supported in part by Simons Collaboration Grant 358245 and by Fapesp grant (2014/09310-5). L.E.R. was supported by Fapesp grant (2018/17955-7).

\section{Preliminaries} \label{Section: Preliminaries}

The ground field will be ${\mathbb C}$.  In the first part of the paper we fix an integer $n\geq 2$.  For $a \in {\mathbb Z}$, we write $\mathbb Z_{\geq a}$ for the set of all integers $m$ such that $m \geq a$. Similarly, we define $\mathbb Z_{< a}$, etc. For a Lie algebra ${\mathfrak a}$ by $U(\mathfrak a)$ we denote the universal enveloping algebra of ${\mathfrak a}$. For a commutative ring $R$, ${\rm Specm}\, R$ will stand for  the set of maximal ideals of $R$.  

By $\gl(n)$ we denote the general linear Lie algebra consisting of all $n\times n$ complex matrices, and by $\{E_{i,j}\ |\ 1\leq i,j \leq n\}$  - the standard basis of $\gl(n)$ of elementary matrices. We fix the standard Cartan subalgebra  of $\gl(n)$, the standard triangular decomposition and the corresponding basis of simple roots of $\gl(n)$.  The weights of $\gl(n)$ will be written as $n$-tuples $(\lambda_1,...,\lambda_n)$ through the identification ${\mathfrak h}^* \to {\mathbb C}^n$. 

The Lie subalgebra  ${\mathfrak g}=\mathfrak{sl}(n)$ of $\mathfrak{gl}(n)$ is generated by $\{E_{i,i+1},\ E_{i+1,i}\ |\ 1\leq i \leq
n-1\}$. The standard Cartan subalgebra of ${\mathfrak g}$  will be denoted by $\mathfrak h$, i.e. $${\mathfrak
h}=\hbox{$\Span$}\{h_i=E_{ii}-E_{i+1,i+1}\ |\  i=1,\dots, n-1\}.$$ Let
$\epsilon_i$ denote the projection of a $n\times n$
matrix onto its $(i,i)^{th}$ entry. Then a basis of simple roots
of the root system $\Delta$ of ${\mathfrak g}$ is given by
$\pi=\{\alpha_i=\epsilon_i-\epsilon_{i+1}\ |\  i=1, \dots, n-1\}$ and
the corresponding positive roots are
$\Delta^+=\{\epsilon_i-\epsilon_j=\alpha_i+\cdots+\alpha_{j-1}\mid
i<j\}$.

\subsection{Index of notations}\label{subsection: Index of notations}

\begin{itemize}
\item \S  \ref{Subsection: Gelfand-Tsetlin modules of gl(n)}. $c_{mk}$; $i(c_{mk})$; $\Gamma$; $\gamma_{mk}(l)$; $M(\chi)$; $Supp_{GT}(M)$; $M_{\lambda}$; $\Supp (M)$; $\Gamma(\pi)$; $\mathcal{GT}_{\pi}(n)$.
\item \S \ref{subsection: Gelfand-Tsetlin modules for sl(n)}.  $\mathcal{GT}_{\pi}(n)$.
\item \S \ref{subsection:Gelfand-Tsetlin tableaux}  $T(v)$; $T_{n}(R)$; $\chi_{v}$.
\item \S \ref{subsection: Gelfand-Tsetlin formulas for finite-dimensional modules} $\delta^{ij}$;  $Q_{n}$; $\mathcal{B}(T(v))$; $\mathcal{GT}_{T(v)}(n)$.
\item \S \ref{subsection: Generic modules}. $V(T(v))$; $\tilde{S}_{m}$; $\Phi_{\ell m}$; $\varepsilon_{rs}$; $e_{t}^{(+)}(w)$; $e_{t+1}^{(-)}(w)$; $e_{r,s}(w)$; $\Omega(T(w))$; $\Omega^{+}(T(w))$; $\mathcal{N}(T(w))$; $\mathcal{I}(T(w))$.
\item \S \ref{subsection: Singular Gelfand-Tsetlin modules}. $\mathcal{F}$; $T_{n}(\mathbb{C})_{\rm gen}$; $\mathcal{H}_{ij}^{k}$; $\Phi_{\ell m}(k,t)$; $\mathcal{H}$; $\overline{\mathcal{H}}$; $\mathcal{F}_{ij}$, $V(T(\bar{v}))$; $\mathcal{V}_{\rm gen}$; $\mathcal{S}$; $\mathcal{D}^{\bar{v}}$; $\mathcal{B}(T(\bar{v}))$; $\Tab(w)$. 
\item \S \ref{Section: Gelfand-Tsetlin modules of sl(3)}. $g_{\lambda}(x,y)$; $C(\mathfrak{h})$.
\item \S \ref{subsection: Structure of singular sl(3)-modules V(T(v))}. $\Lambda^{+}(Tab(w))$; $\mathcal{A}(\Tab(w))$; $\mathcal{N}(\Tab(w))$; $\mathcal{A}(\Tab(w))$; $\mathcal{C}(w)$; $\mathcal{R}(w)$; $\mathcal{A}(\Tab(w))$; $\mathcal{N}_{(r,s,p)}(\Tab(w))$; $\mathcal{N}^{(1)}(\Tab(w))$; $\mathcal{N}^{(2)}(\Tab(w))$; $\widehat{\mathcal{A}}(\Tab(w))$; $\widehat{\mathcal{N}}(\Tab(w))$; $\Tab(\mathcal{B})$.
\item \S \ref{subsection: Realizations of all simple singular Gelfand-Tsetlin modules for sl(3)}. $M({\mathcal B})$; $L({\mathcal B})$.
\item \S \ref{Section: Localization functors for Gelfand-Tsetlin modules} $\cD_{ij}$; $\cD^x_{ij}$; $\mathcal{QD}_{ij}$.
\item \S \ref{subsection: Localization functors on sl(3)-case}. $\Theta_{x}(u)$; $\Phi_\alpha^x$.
\item \S \ref{subsection: simple Gelfand-Tsetlin modules and localization functors}. $L_{i}^{(Gj)}$; $L_{i}^{(Cj)}$.
\end{itemize}

\section{Gelfand-Tsetlin modules of $\mathfrak{gl}(n)$ and $\mathfrak{sl}(n)$}\label{Section: Gelfand-Tsetlin modules of gl(n) and sl(n)}

\subsection{Gelfand-Tsetlin modules of $\mathfrak{gl}(n)$}\label{Subsection: Gelfand-Tsetlin modules of gl(n)} Let  for $m\leqslant n$, $\mathfrak{gl}_{m}$ be the Lie subalgebra
of $\gl (n)$ spanned by $\{ E_{ij}\,|\, i,j=1,\ldots,m \}$. We have the following chain
$$\gl_1\subset \gl_2\subset \ldots \subset \gl_n.$$
It induces  the chain $U_1\subset$ $U_2\subset$ $\ldots$ $\subset
U_n$ for the universal enveloping algebras  $U_{m}=U(\gl_{m})$, $1\leq m\leq n$. Let
$Z_{m}$ be the center of $U_{m}$. Then $Z_m$ is the polynomial
algebra in the $m$ variables $\{ c_{mk}\,|\,k=1,\ldots,m \}$,
\begin{equation}\label{Equation: c_mk}
c_{mk } \ = \ \displaystyle {\sum_{(i_1,\ldots,i_k)\in \{
1,\ldots,m \}^k}} E_{i_1 i_2}E_{i_2 i_3}\ldots E_{i_k i_1}.
\end{equation}  The
\emph{(standard) } \index{defs}{Gelfand-Tsetlin subalgebra}
${\Ga}$ in $U$ (\cite{DFO3, GT}) is generated by $\bigcup\limits_{i=1}^{n}Z_i$.

 The algebra ${\Ga}$ is a polynomial
algebra in $\displaystyle \frac{n(n+1)}{2}$ variables $\{
c_{ij}\,|\, 1\leqslant j\leqslant i\leqslant n \}$.
 For
$i=1,\ldots, n$ denote by $S_i$ the $i$-th symmetric group and set
$G=S_n\times \cdots \times S_1$. Let $\Lambda$ be the polynomial
algebra in variables $\{l_{ij}\,|$ $1\leqslant j\leqslant
i\leqslant n \}$.

Let $\imath:{\Ga}{\longrightarrow}$ $\Lambda$ be the embedding
given by $\imath(c_{mk})=\gamma_{mk}(l)$ where 
\begin{equation}\label{Equation: gamma mk}
\gamma_{mk}(l) \  = \ \sum_{i=1}^m
(l_{mi}+m-1)^k \prod_{j\ne i}^{m} \left( 1 -
\frac{1}{l_{mi}-l_{mj}} \right).
\end{equation} 
The image of $\imath$
coincides with the subalgebra of $G-$invariant polynomials  in
$\Lambda$which we identify with $\Ga$, see \cite{zh:cg} for more details.

\begin{remark}\label{Remark: Cartan inside Gamma} Note that $\Gamma$ contains the standard Cartan subalgebra of $\gl(n)$ spanned by $E_{ii}$, $i=1,\ldots, n$. Indeed, $c_{m1}=\sum\limits_{i=1}^{m}E_{ii}$ for each $1\leq m\leq n$. Therefore, $E_{ii}$ belong to $\Gamma$ for each $1\leq i\leq n$.
\end{remark}
\begin{remark} We should note that the polynomials $\gamma_{mk}(l)$ are symmetric of degree $k$ in  variables $l_{m1}, \ldots, l_{mm}$, and $\{\gamma_{m1}(l), \ldots, \gamma_{mm}(l)\}$ generate the algebra of $S_{m}$--invariant polynomials in the variables $l_{m1},\ldots l_{mm}$ (see \cite{zh:cg}).
\end{remark}

\begin{example}\label{Example: explicit gamma's n leq 4}
Tthe polynomials $\gamma_{mk}(l)$ for $m\leq 4$ are listed below.
\begin{align*}
\gamma_{11}(l)=&l_{11}.\\
\gamma_{21}(l)=&(l_{21}+l_{22})+1.\\
\gamma_{22}(l)=&(l_{21}^{2}+l_{22}^{2})+(l_{21}+l_{22}).\\
\gamma_{31}(l)=&(l_{31}+l_{32}+l_{33})+3.\\
\gamma_{32}(l)=&(l_{31}^{2}+l_{32}^{2}+l_{33}^{2})+2(l_{31}+l_{32}+l_{33})+1.\\
\gamma_{33}(l)=&(l_{31}^{3}+l_{32}^{3}+l_{33}^{3})+4(l_{31}^{2}+l_{32}^{2}+l_{33}^{2})-(l_{31}l_{32}+l_{31}l_{33}+l_{32}l_{33})-6\\
& +(l_{31}+l_{32}+l_{33}).\\
\gamma_{41}(l)=&(l_1 + l_2 + l_3 + l_4) + 6.\\
\gamma_{42}(l)=&(l_1^2 + l_2^2 + l_3^2 +l_4^2)+ 3(l_1+ l_2+ l_3+l_4) + 4.\\
\gamma_{43}(l)=& (l_1^3+ l_2^3 + l_3^3 + l_4^3)  -( l_1 l_2 +l_1 l_3 + l_1 l_4 + l_2 l_3 + l_2 l_4+l_3 l_4 ) +\\
&+6 (l_1^2+l_2^2 + l_3^2 + l_4^2) + 3(l_1 + l_2+l_3+l_4)   - 19.\\
\gamma_{44}(l)=& (l_{1}^4+ l_2^4+ l_3^4+ l_4^4) + 9(l_{1}^3+ l_2^3 + l_3^3+ l_4^3)+ 21 (l_1^2+ l_2^2+ l_3^2+ l_4^2)-\\
&( l_1^2 l_2 + l_1^2 l_3 + l_1^2 l_4  + l_1 l_2^2  + l_1 l_3^2 + l_1 l_4^2 + l_2^2 l_3 + l_2^2 l_4+ l_2 l_3^2  + l_2 l_4^2 + l_3^2 l_4+ l_3 l_4^2)\\
&- 10 (l_1 l_2+ l_1 l_4+ l_2 l_3+ l_3 l_4+ l_1 l_3+ l_2 l_4) - 19(l_1+l_2+ l_3 + l_4) - 120.
\end{align*}
\end{example}

\begin{definition}\label{Definition: GT-modules} A finitely generated $U$-module
$M$ is called a \emph{Gelfand-Tsetlin module (relative to
$\Ga$)} provided that the restriction $M|_{\Ga}$ is a direct sum
of $\Ga$-modules:
\begin{equation}\label{Equation: GT-module}
M|_{\Ga}=\bigoplus_{\sm\in\Sp\Ga}M(\sm),
\end{equation}
where $$M(\sm)=\{v\in M\ |\  \sm^{k}v=0 \text{ for some }k\geq 0\}.$$
\end{definition}

\begin{definition}
An algebra homomorphism $\chi:\Gamma\rightarrow \mathbb{C}$ will be called \emph{Gelfand-Tsetlin character}.
\end{definition}

\begin{remark}
For each $\mathbf m\in\Sp\Ga$ we have associated a character $\chi_{\mathbf m}:\Ga\rightarrow\Ga/\mathbf m\sim\mathbb{C}$. In the same way, for each non-zero character  $\chi:\Ga\rightarrow\mathbb{C}$,  $Ker(\chi)$ is a maximal ideal of $\Ga$.  So, we have a natural identification between  characters of $\Ga$ and elements of $\Sp\Ga$.
So, using Gelfand-Tsetlin characters, a Gelfand-Tsetlin module (with respect to $\Gamma$) $M$ can be decomposed as $M=\bigoplus_{\chi\in\Gamma^{*}}M(\chi)$, where
$$M(\chi)=\{v\in M\ |\ \text{ for each }\gamma\in\Gamma\text{ , }\exists k\in\mathbb{Z}_{\geq 0} \text{ such that } (\gamma-\chi(\gamma))^{k}v=0 \}.$$
\end{remark}
\begin{definition}
Given a Gelfand-Tsetlin module $M$, the {\emph Gelfand-Tsetlin support} of $M$ is the set
$$\Supp_{GT}(M):=\{\chi\in\Ga^{*}\ |\ M(\chi)\neq 0\}.$$
\end{definition}

\begin{definition}\label{Definition: weight modules} A finitely generated $U$-module
$M$ is called a \emph{weight module (relative to
$\mathfrak h$)} provided that the restriction $M|_{\mathfrak h}$ is a direct sum
of simple $\mathfrak h$-modules:
\begin{equation}\label{Equation: GT-module}
M =\bigoplus_{\lambda\in {\mathfrak h}^*}M_{\lambda},
\end{equation}
where $$M_{\lambda} :=\{v\in M\ |\  hv=\lambda(h)v \text{ for all } h \in \mathfrak{h}\}.$$
The \emph{weight support} (or simply, the \emph{support}) of $M$ is 
$$\Supp (M):=\{\lambda \in \mathfrak{h}^{*}\ |\ M_{\lambda}\neq 0\}.$$
\end{definition}

\begin{remark}
Any simple Gelfand-Tsetlin module $M$ over $\gl(n)$ is a weight module with respect to the standard Cartan subalgebra $\mathfrak h$ spanned by $E_{ii}$, $i=1,\ldots, n$, see Remark \ref{Remark: Cartan inside Gamma}.  Moreover, $\Ga$ is diagonalizable on any finite-dimensional simple module.  On the other hand, a simple weight module $M$ need not to be Gelfand-Tsetlin, unless the $\mathfrak h$-weight multiplicities of $M$ are finite. The latter is true since in this case $\Gamma$ has a common eigenvector in every non-zero weight space. In particular, every highest weight module or, more general, every module from the category $\mathcal O$ is Gelfand-Tsetlin. 
\end{remark}

The definition of a Gelfand-Tsetlin module depends on the choice of the Gelfand-Tsetlin subalgebra $\Gamma$. One can easily define a family  of
Gelfand-Tsetlin subalgebras of $\gl(n)$ as follows. Let $\pi=\{\beta_1, \ldots, \beta_{n-1}\}$ be a base of the root system
of $\gl(n)$, where, $\beta_k = \epsilon_{i_k} -  \epsilon_{i_{k+1}}$, $k=1,...,n-1$.
Let  $\gl_{k}$ be the subalgebra of  $\gl(n)$ spanned by $E_{ij}$, $i,j \in \{ i_1,...,i_k\}$. In particular,  $\gl_{k} \simeq \gl (k)$ and $\beta_1, \ldots, \beta_k$ are the simple roots of $\gl_{k}$. Then we have a
chain of embeddings
$$\gl_1\subset \ldots \subset \gl_n.$$ 
Let $Z_i$ be the center of
$U(\gl_i)$ and $\Gamma(\pi)$ is the subalgebra generated by $Z_i$,
$i=1, \ldots, n$. We will call $\Gamma(\pi)$ a \emph{Gelfand-Tsetlin
subalgebra associated with $\pi$}.

Each  subalgebra $\Gamma(\pi)$ gives rise to a category of
Gelfand-Tsetlin modules which we denote by $\mathcal{GT}_{\pi}(n)$.  Let  $\pi$
and $\pi'$ be different bases of the root system. Then $\pi$
and $\pi'$ are
conjugate under the action of the Weyl group of $\gl(n)$. Hence  $\Gamma(\pi)$ and $\Gamma(\pi')$ are also conjugate which leads to an equivalence of the categories $\mathcal{GT}_{\pi}(n)$ and $\mathcal{GT}_{\pi'}(n)$.

\begin{example} Two bases $\pi$ and $\pi'$  may define the same Gelfand-Tsetlin subalgebra. Indeed, take for example the bases $\pi = \{ \epsilon_1 - \epsilon_2, \epsilon_2 - \epsilon_3\}$ and  $\pi' = \{ \epsilon_2 - \epsilon_1, \epsilon_1 - \epsilon_3\}$ of root system of $\gl(3)$. Then  $\Gamma(\pi) =  \Gamma(\pi')$. One easily checks that  $\gl(3)$ has three distinct Gelfand-Tsetlin
subalgebras and they are parameterized of the $\gl_2$-part of the chain.
\end{example}

\subsection{Gelfand-Tsetlin modules of $\mathfrak{sl}(n)$}\label{subsection: Gelfand-Tsetlin modules for sl(n)}

Let $\Ga$ be a
Gelfand-Tsetlin subalgebra of $\gl(n)$. Consider the natural projection
$\tau: \gl(n)\rightarrow \mathfrak{sl}(n)$, $\tau(A) = A - \frac{{\rm tr}(A) }{n}I_n$, which extends to an epimorphism
$\bar{\tau}: U(\gl(n))\rightarrow U(\mathfrak{sl}(n))$. Then the image $\bar{\tau}(\Ga)$
of $\Ga$ is called the (standard) Gelfand-Tsetlin subalgebra of $\mathfrak{sl}(n)$. It is a maximal commutative subalgebra of $U(\mathfrak{sl}(n))$ isomorphic to a polynomial ring in
$\frac{n(n+1)}{2}-1$ generators. With a small abuse of notation, by $\mathcal{GT}(n)$ we denote the category of all Gelfand-Tsetlin $\mathfrak{sl}(n)$-modules relative to $\Ga$.

\section{Families of Gelfand-Tsetlin modules for $\mathfrak{gl}(n)$}\label{Section: Families of Gelfand-Tsetlin modules for gl(n)}

\subsection{Gelfand-Tsetlin tableaux}\label{subsection:Gelfand-Tsetlin tableaux}

The simple finite dimensional modules are the first natural examples of Gelfand-Tsetlin modules. In this case, an eigenbasis for  the action of the generators of $\Gamma$ (\ref{Equation: c_mk}) is given by the so-called Gelfand-Tsetlin tableaux, following the original work of Gelfand and Tsetlin. In particular for every simple finite dimensional module $M$,  $\dim(M(\chi)) = 1$ whenever $\chi \in \Supp_{GT}(M): $. In order to describe  the  Gelfand Tsetlin tableaux,  we first fix some notation.

\begin{definition}\label{Definition: GT tableau} Fix a vector  
$v=(v_{ij})_{i\leq j}^n\in \mathbb{C}^{\frac{n(n+1)}{2}}$.
\begin{itemize}
\item[(i)] By $T(v)$ we will denote the following array with complex entries $\{v_{ij}\}$ 
\begin{center}

\Stone{\mbox{ \scriptsize {$v_{n1}$}}}\Stone{\mbox{ \scriptsize {$v_{n2}$}}}\hspace{1cm} $\cdots$ \hspace{1cm} \Stone{\mbox{ \scriptsize {$v_{n,n-1}$}}}\Stone{\mbox{ \scriptsize {$v_{nn}$}}}\\[0.2pt]
\Stone{\mbox{ \scriptsize {$v_{n-1,1}$}}}\hspace{1.5cm} $\cdots$ \hspace{1.5cm} \Stone{\mbox{ \tiny {$v_{n-1,n-1}$}}}\\[0.3cm]
\hspace{0.2cm}$\cdots$ \hspace{0.8cm} $\cdots$ \hspace{0.8cm} $\cdots$\\[0.3cm]
\Stone{\mbox{ \scriptsize {$v_{21}$}}}\Stone{\mbox{ \scriptsize {$v_{22}$}}}\\[0.2pt]
\Stone{\mbox{ \scriptsize {$v_{11}$}}}\\
\medskip
\end{center}
Such an array will be called a \emph{Gelfand-Tsetlin tableau} of height $n$. 
\item[(ii)] Throughout the paper, for any ring $R$, $T_n(R)$ will stand for the space of  the   Gelfand-Tsetlin
tableaux of height $n$ with entries in $R$. We will identify  $ T_{n}(\mathbb{C})$ with the set  $\mathbb{C}^{\frac{n(n+1)}{2}}$ in the following way:
 to  $$
v=(v_{n1},...,v_{nn}|v_{n-1,1},...,v_{n-1,n-1}| \cdots|v_{21}, v_{22}|v_{11})\in  \mathbb{C}^{\frac{n(n+1)}{2}}
$$ 
 we associate a tableau   $T(v)\in T_{n}(\mathbb{C})$ as above.
\end{itemize}
\end{definition}

\begin{remark}\label{Remark: correspondence between characters and tableaux}
 There is a natural correspondence between the set $\Gamma^*$ of characters $\chi : \Gamma \to \mathbb{C}$ and the set of Gelfand-Tsetlin tableaux of  height  $n$. In fact, to obtain a Gelfand-Tsetlin tableau $T(l)$ from a character $\chi$ we find a solution $(l_{ij})$ of the system of equations
$$\{\gamma_{mk}(l) =\ \chi(c_{mk})\}_{1\leq k\leq m\leq n}$$
Conversely, for every Gelfand-Tsetlin tableau $T(v)$ with entries $\{ v_{ij}\;  | \; 1\leq j \leq i \leq n\}$, we associate $\chi_{v} \in \Gamma^*$ by defining  $\chi_{v}(c_{mk})=\gamma_{mk}(v)$. It is clear that each tableau defines such a character uniquely. On the other hand, a
tableau is defined by a character up to a permutation of the rows, i.e. an element of $S_n \times S_{n-1} \times \cdots \times S_1$.
\end{remark}

%%%%%%%%%%%%%%%%%%%%%%%%%%%%%%%%%%%%%%%%%%%

\subsection{Gelfand-Tsetlin formulas for finite-dimensional modules}\label{subsection: Gelfand-Tsetlin formulas for finite-dimensional modules}

In this subsection we recall the classical result of I. Gelfand and M. Tsetlin, \cite{GT}.

\begin{definition}
A Gelfand-Tsetlin tableau of height $n$ is called \emph{standard} if
$v_{ki}-v_{k-1,i}\in\mathbb{Z}_{\geq 0}$ and $v_{k-1,i}-v_{k,i+1}\in\mathbb{Z}_{> 0}$ for all $1\leq i\leq k\leq n-1$.
\end{definition}
Note that, for the sake of convenience, the second condition in the definition above is slightly different from the original condition in \cite{GT}.

\begin{theorem}[Gelfand-Tsetlin, \cite{GT}]\label{Theorem: GT theorem}
Let $L(\lambda)$ be the simple finite dimensional module over $\mathfrak{gl}(n)$ of highest weight $\lambda=(\lambda_{1},\ldots,\lambda_{n})$. Then there exist a basis of $L(\lambda)$ parameterized by the set of all standard tableaux $T(v) = T(v_{ij})$ with fixed top row $v_{nj}=\lambda_j-j+1$, $j=1,\ldots,n$. Moreover,  the action of the generators of $\mathfrak{gl}(n)$ on $L(\lambda)$ is given by the \emph{Gelfand-Tsetlin formulas}:

$$E_{k,k+1}(T(v))=-\sum_{i=1}^{k}\left(\frac{\prod_{j=1}^{k+1}(v_{ki}-v_{k+1,j})}{\prod_{j\neq i}^{k}(v_{ki}-v_{kj})}\right)T(v+\delta^{ki}),$$

$$E_{k+1,k}(T(v))=\sum_{i=1}^{k}\left(\frac{\prod_{j=1}^{k-1}(v_{ki}-v_{k-1,j})}{\prod_{j\neq i}^{k}(v_{ki}-v_{kj})}\right)T(v-\delta^{ki}),$$

$$E_{kk}(T(v))=\left(k-1+\sum_{i=1}^{k}v_{ki}-\sum_{i=1}^{k-1}v_{k-1,i}\right)T(v),$$
where $\delta^{ij} \in T_{n}({\mathbb Z})$ is defined by  $(\delta^{ij})_{ij}=1$ and all other $(\delta^{ij})_{k\ell}$ are zero. If the new  tableau $T(v\pm\delta^{ki})$ is not standard, then the corresponding summand of $E_{k,k+1}(T(v))$ or $E_{k+1,k}(T(v))$ is zero by definition. 
\end{theorem}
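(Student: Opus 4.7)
The plan is to proceed by induction on $n$, with the trivial base case $n=1$, where $L(\lambda_1)$ is one-dimensional and all three formulas reduce to $E_{11}=v_{11}$. For the inductive step I would first invoke the classical multiplicity-free branching rule for the restriction of a simple $\mathfrak{gl}(n)$-module to $\mathfrak{gl}(n-1)$: if $L(\lambda)$ has highest weight $\lambda=(\lambda_1,\ldots,\lambda_n)$, then $L(\lambda)\big|_{\mathfrak{gl}(n-1)} \cong \bigoplus_{\mu} L(\mu)$, where the sum ranges over highest weights $\mu=(\mu_1,\ldots,\mu_{n-1})$ satisfying the interlacing conditions $\lambda_i \geq \mu_i \geq \lambda_{i+1}$. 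Iterating this along the chain $\mathfrak{gl}(1)\subset\mathfrak{gl}(2)\subset\cdots\subset\mathfrak{gl}(n)$ yields a canonical basis, determined up to scalars, parametrized by sequences $(\mu^{(1)},\ldots,\mu^{(n)}=\lambda)$ of interlacing weights; under the shift $v_{kj}=\mu_j^{(k)}-j+1$ these sequences are precisely the standard Gelfand--Tsetlin tableaux with top row $v_{nj}=\lambda_j-j+1$. The required eigenbasis for $\Gamma$ is then obtained by decomposing each $\mathfrak{gl}_k$-isotypic component into its one-dimensional $\mathfrak{gl}_{k-1}$-isotypic components along the chain.

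The diagonal formula is an immediate consequence of the construction: the trace $E_{11}+\cdots+E_{kk}$ lies in the center of $U(\mathfrak{gl}_k)$ only up to constants, but the operator $E_{11}+\cdots+E_{kk}$ acts on the $\mathfrak{gl}_k$-isotypic component of highest weight $\mu^{(k)}$ by the scalar $\mu_1^{(k)}+\cdots+\mu_k^{(k)} = \sum_{i=1}^k v_{ki}+\binom{k}{2}$. Subtracting the analogous expression for $k-1$ and simplifying $\binom{k}{2}-\binom{k-1}{2}=k-1$ yields exactly the stated formula for $E_{kk}$.

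The heart of the proof is the computation of the matrix coefficients of $E_{k,k+1}$ and $E_{k+1,k}$. Since both operators commute with $\mathfrak{gl}_{k-1}$ and act within a fixed $\mathfrak{gl}_{k+1}$-isotypic component, they can only change a basis vector by shifting a single entry of row $k$ by $\pm 1$, which reduces the problem to determining the scalars $\alpha_{ki}(v),\beta_{ki}(v)\in\mathbb{C}$ with
\begin{equation*}
E_{k,k+1}T(v)=\sum_{i=1}^{k}\alpha_{ki}(v)\,T(v+\delta^{ki}),\qquad E_{k+1,k}T(v)=\sum_{i=1}^{k}\beta_{ki}(v)\,T(v-\delta^{ki}).
\end{equation*}
To determine $\alpha_{ki}$ and $\beta_{ki}$ explicitly, I would use Zhelobenko's lowering operators: inside $U(\mathfrak{gl}(n))$ one writes down operators $s_{ki}\in U(\mathfrak{gl}_k)$ that map the highest weight vector of an $\mathfrak{gl}_{k+1}$-component to the various $\mathfrak{gl}_k$-highest weight vectors, so their rational eigenvalues under a maximal commutative subalgebra dictate the shape of the denominators $\prod_{j\neq i}(v_{ki}-v_{kj})$ and numerators $\prod_{j}(v_{ki}-v_{k\pm 1,j})$ appearing above. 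Finally, one fixes the overall normalization (equivalently, the sign conventions of the basis vectors $T(v)$) by matching against the commutation relation $[E_{k,k+1},E_{k+1,k}]=E_{kk}-E_{k+1,k+1}$ and the relations with $\mathfrak{gl}_{k-1}$ and $\mathfrak{gl}_{k+2}$.

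The main obstacle is this last step: while the combinatorial shape of the formulas is dictated by branching and the centralizer structure of the chain $\mathfrak{gl}_1\subset\cdots\subset\mathfrak{gl}_n$, the precise rational coefficients require either a delicate computation with lowering operators or an inductive verification of all $\mathfrak{gl}(n)$-relations on each tableau, with nontrivial cancellations. Everything else, the existence of the basis, its parametrization by standard tableaux, and the diagonal action, is a clean formal consequence of the branching rule and the inductive hypothesis.
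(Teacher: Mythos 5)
The paper does not prove Theorem~\ref{Theorem: GT theorem}: it is stated as a cited classical result, with \cite{GT} given as the source and \cite{zh:cg}, \cite{m:gtsb} listed in the bibliography for the underlying theory, so there is no in-paper argument for me to compare your sketch against. Your outline is the standard modern proof (Zhelobenko--Molev style), and at the level of a sketch it is essentially sound: the base case, the construction of the basis from the multiplicity-free branching chain $\mathfrak{gl}_1\subset\cdots\subset\mathfrak{gl}_n$, the translation of interlacing into the ``standard'' inequalities via $v_{kj}=\mu^{(k)}_j-j+1$, and the derivation of the diagonal formula from the eigenvalue of $c_{k1}=E_{11}+\cdots+E_{kk}$ are all correct. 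Incidentally, the phrase that $E_{11}+\cdots+E_{kk}$ lies in the center of $U(\mathfrak{gl}_k)$ ``only up to constants'' is misleading --- it is exactly central --- but your computation that follows is the right one.

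There is one genuine gap in the middle step. You justify that $E_{k,k+1}T(v)$ is supported on tableaux $T(v+\delta^{ki})$ by saying that $E_{k,k+1}$ commutes with $\mathfrak{gl}_{k-1}$ and preserves the $\mathfrak{gl}_{k+1}$-isotypic component. That alone only constrains rows other than $k$ and the row sums; a priori $E_{k,k+1}T(v)$ could hit tableaux where row $k$ changes in a more complicated way (for instance by $2\delta^{k1}-\delta^{k2}$) while keeping rows $k-1$ and $k+1$ and the sum of row $k$ fixed. The missing observation is that $\{E_{1,k+1},\ldots,E_{k,k+1}\}$ spans a copy of the standard $\mathfrak{gl}_k$-module under the adjoint action of $\mathfrak{gl}_k$, so $E_{k,k+1}$ maps the $\mathfrak{gl}_k$-isotypic piece of highest weight $\mu^{(k)}$ into $\mathbb{C}^k\otimes L(\mu^{(k)})$; Pieri's rule then decomposes this as $\bigoplus_i L(\mu^{(k)}+\epsilon_i)$, which is precisely the statement that exactly one entry of row $k$ is raised by $1$. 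The dual observation, with $\{E_{k+1,1},\ldots,E_{k+1,k}\}$ spanning the dual standard module, handles $E_{k+1,k}$. You are correct that the remaining work --- pinning down the exact rational coefficients via lowering operators or by verifying the Serre and bracket relations inductively --- is the technical heart of the theorem and is where the real length of a full proof lies.
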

We call the formulas above  the  \emph{Gelfand-Tsetlin formulas} for $\mathfrak{gl}(n)$.

 Set $e_{kk}(v):=\left(k-1+\sum\limits_{i=1}^{k}v_{ki}-\sum\limits_{i=1}^{k-1}v_{k-1,i}\right)$. We note that  $E_{kk}(T(v))=e_{kk}(v)T(v)$ is well defined for any Gelfand-Tsetlin tableau $T(v)$. .
 
 \begin{definition}\label{Definition: weight of a tableau} Let $T(v)$ be a Gelfand-Tsetlin tableau. Then we call $(e_{11}(v),\ldots,e_{nn}(v))$  (respectively, $(e_{11}(v)-e_{22}(v),\ldots,e_{n-1,n-1}(v)-e_{nn}(v))$) the \emph{ $\mathfrak{gl}(n)$-weight} (respectively,  the \emph{ $\mathfrak{sl}(n)$-weight}) of the tableau $T(v)$.
\end{definition}

The formulas for the  action of the generators of $\gl (n)$ in the theorem above imply that the standard tableaux $T(v)$ form an eigenbasis for the action of the standard Cartan subalgebra $\mathfrak h$. The following result shows that such basis is an eigenbasis for the Gelfand-Tsetlin subalgebra. 

\begin{theorem}[Zhelobenko, \cite{zh:cg}]\label{Theorem: action of Gamma f.d.}Let $L(\lambda)$ be the simple finite dimensional module over $\mathfrak{gl}(n)$ of highest weight $\lambda=(\lambda_{1},\ldots,\lambda_{n})$, with basis as described in Theorem \ref{Theorem: GT theorem}. The action of the generators $c_{rs}$ of $\Gamma$ (see (\ref{Equation: c_mk})) is given by 
\begin{equation}\label{Equation: action of Gamma on FD modules}
c_{rs}(T(v))=\gamma_{rs}(v)T(v),
\end{equation}
where $\gamma_{rs}(v)$ are the symmetric polynomials defined in (\ref{Equation: gamma mk}).
\end{theorem}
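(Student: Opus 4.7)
The plan is to combine two classical ingredients: (i) each $c_{rs}$ lies in the center of $U(\mathfrak{gl}_r)$, and (ii) the Gelfand--Tsetlin basis of $L(\lambda)$ is built recursively by branching along the chain $\mathfrak{gl}_1\subset\mathfrak{gl}_2\subset\cdots\subset\mathfrak{gl}_n$. Together these reduce the statement to a single central-character computation for $U(\mathfrak{gl}_r)$ on an appropriate simple submodule, which can then be matched with $\gamma_{rs}(v)$ by evaluating on a highest weight vector.

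First I would isolate the relevant submodule. Inspecting the Gelfand--Tsetlin formulas in Theorem \ref{Theorem: GT theorem}, the operators $E_{k,k+1}, E_{k+1,k}$ with $k\leq r-1$ modify only the $k$-th row of a tableau. Hence the span $V(v)$ of all standard tableaux $T(v')$ satisfying $v'_{kj}=v_{kj}$ whenever $k\geq r$ is $\mathfrak{gl}_r$-stable. A direct check using the formulas shows that $V(v)$ is the simple $\mathfrak{gl}_r$-module $L(\mu^{(r)})$ with $\mu^{(r)}_j=v_{rj}+j-1$, where the shift inverts the top-row normalization of Theorem \ref{Theorem: GT theorem} applied to $\mathfrak{gl}_r$ itself. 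Since $c_{rs}\in Z_r\subseteq Z(U(\mathfrak{gl}_r))$, Schur's lemma forces $c_{rs}$ to act on $V(v)$ by a scalar $\chi_{\mu^{(r)}}(c_{rs})$ which depends only on $v_{r1},\ldots,v_{rr}$; in particular $c_{rs}(T(v))=\chi_{\mu^{(r)}}(c_{rs})\,T(v)$.

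It remains to verify $\chi_{\mu^{(r)}}(c_{rs})=\gamma_{rs}(v)$. I would evaluate $c_{rs}$ on the highest weight vector of $V(v)$, namely the ``degenerate'' subtableau with $v'_{ki}=v_{ri}$ for all $1\leq i\leq k\leq r$; one checks that it is standard, and that every summand of $E_{k,k+1}$ acting on it is zero for $k\leq r-1$, so it is indeed a highest weight vector. On this vector $E_{ij}$ with $i<j$ annihilates and $E_{ii}$ acts by $\mu^{(r)}_i=v_{ri}+i-1$. Expanding
\[
c_{rs}=\sum_{(i_1,\ldots,i_s)\in\{1,\ldots,r\}^s} E_{i_1 i_2}E_{i_2 i_3}\cdots E_{i_s i_1}
\]
and using the $\mathfrak{gl}_r$-commutation relations to move every strictly upper-triangular factor to the right produces a symmetric polynomial in $v_{r1},\ldots,v_{rr}$. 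The goal is to identify this polynomial with the right-hand side of (\ref{Equation: gamma mk}); note that the shift $l_{mi}+m-1$ appearing there equals precisely the Harish-Chandra-shifted coordinate $\mu^{(r)}_i+(r-i)=v_{ri}+r-1$, which is what one expects from the central character.

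The main obstacle is this last combinatorial identification, i.e.~the explicit evaluation of $\sum E_{i_1 i_2}\cdots E_{i_s i_1}$ on a highest weight vector and its repackaging into the closed form (\ref{Equation: gamma mk}). Conceptually, the content is that the embedding $\imath$ is precisely the Harish-Chandra homomorphism expressed in Gelfand--Tsetlin coordinates; in practice one can bypass a full combinatorial expansion by comparing the highest-degree symmetric term on both sides and invoking the $S_r$-invariance noted in the remark following (\ref{Equation: gamma mk}) to conclude equality of the two polynomials. This is precisely the calculation carried out by Zhelobenko in \cite{zh:cg}.
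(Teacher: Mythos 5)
The paper itself gives no proof of this theorem: it is stated with the attribution to Zhelobenko and cited from \cite{zh:cg}, so there is no in-paper argument to compare against. Your reduction, however, is correct and is the standard route. Fixing rows $r,\dots,n$ of the tableau does pick out a $\mathfrak{gl}_r$-stable subspace (since the Gelfand--Tsetlin operators for $E_{k,k\pm 1}$, $k\le r-1$, only change row $k$), this subspace is the simple $\mathfrak{gl}_r$-module arising from iterated multiplicity-free branching with highest weight $\mu^{(r)}_j=v_{rj}+j-1$, the degenerate subtableau $v'_{ki}=v_{ri}$ is standard and is annihilated by every $E_{k,k+1}$ with $k\le r-1$ because of the factor $v'_{ki}-v'_{k+1,i}=0$, and $c_{rs}\in Z_r$ then acts by the central character value; the identification $\mu^{(r)}_i+(r-i)=v_{ri}+r-1=l_{ri}+r-1$ correctly matches the shift in~(\ref{Equation: gamma mk}).

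One step as you wrote it is not rigorous: the proposed ``bypass'' of comparing highest-degree symmetric terms on both sides and invoking $S_r$-invariance does not by itself imply equality of the two polynomials, since two symmetric polynomials of the same degree can share a leading term and still differ. You do acknowledge this, and correctly defer the genuine combinatorial content --- evaluating $\sum E_{i_1i_2}\cdots E_{i_si_1}$ on the highest weight vector and recasting the result into the closed form of~(\ref{Equation: gamma mk}) --- to Zhelobenko's computation, which is exactly what the paper does by citing \cite{zh:cg}. So the outline is sound as a reduction, but you should either drop the suggested leading-term shortcut or replace it with an honest appeal to the Harish--Chandra/Perelomov--Popov evaluation of the Gelfand invariants.
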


As a direct consequence of Theorem \ref{Theorem: GT theorem} and Theorem \ref{Theorem: action of Gamma f.d.}, any simple finite dimensional $\mathfrak{gl}(n)$-module is a Gelfand-Tsetlin module with one-dimensional Gelfand-Tsetlin subspaces.
\noindent

\begin{remark}
Whenever we refer to finite dimensional $\mathfrak{sl}(n)$-modules we will use the same vector space and the Gelfand-Tselin formulas for generators $E_{r,r+1}$ or $E_{r+1,r}$, for the action of the generators of a Cartan subalgebra $\{h_{1},\ldots, h_{n-1}\}$ we define $h_{i}(T(v)):=(E_{ii}-E_{i+1,i+1})(T(v))$. We also fix the action of the central element $E_{11}+\ldots+E_{nn}$ as zero.
\end{remark}

\begin{example}\label{Example: FD module sl(3)}
Let us to denote by $M$ the simple highest weight $\mathfrak{gl}(3)$-module with highest weight $(1,0,-1)$. $M$ is a finite dimensional module of dimension $8$. The tableaux realization guaranteed by Theorem \ref{Theorem: GT theorem} consist of a vector space spanned by the set of all standard tableaux of height $3$ with top row $(1,-1,-3)$.\\

\begin{center}
 \hspace{1.6cm}\Stone{$1$}\Stone{$-1$}\Stone{$-3$}\hspace{1cm}\Stone{$1$}\Stone{$-1$}\Stone{$-3$}\\[0.2pt]
 $T_1$= \hspace{0.7cm}\Stone{$1$}\Stone{$-1$} \hspace{1.4cm} $T_2$= \Stone{$1$}\Stone{$-1$}\\[0.2pt]
 \hspace{1.3cm}\Stone{$1$}\hspace{3.8cm}\Stone{$0$}\\
\end{center}
\bigskip
\begin{center}
 \hspace{1.6cm}\Stone{$1$}\Stone{$-1$}\Stone{$-3$}\hspace{1cm}\Stone{$1$}\Stone{$-1$}\Stone{$-3$}\\[0.2pt]
 $T_3$= \hspace{0.7cm}\Stone{$1$}\Stone{$-2$} \hspace{1.4cm} $T_4$= \Stone{$1$}\Stone{$-2$}\\[0.2pt]
 \hspace{1.3cm}\Stone{$1$}\hspace{3.8cm}\Stone{$0$}\\
\end{center}
\bigskip
\begin{center}
 \hspace{1.6cm}\Stone{$1$}\Stone{$-1$}\Stone{$-3$}\hspace{1cm}\Stone{$1$}\Stone{$-1$}\Stone{$-3$}\\[0.2pt]
 $T_5$= \hspace{0.7cm}\Stone{$0$}\Stone{$-2$} \hspace{1.4cm} $T_6$= \Stone{$0$}\Stone{$-2$}\\[0.2pt]
 \hspace{1.3cm}\Stone{$-1$}\hspace{3.8cm}\Stone{$0$}\\
\end{center}
\bigskip
\begin{center}
 \hspace{1.6cm}\Stone{$1$}\Stone{$-1$}\Stone{$-3$}\hspace{1cm}\Stone{$1$}\Stone{$-1$}\Stone{$-3$}\\[0.2pt]
 $T_7$= \hspace{0.7cm}\Stone{$1$}\Stone{$-2$} \hspace{1.4cm} $T_8$= \Stone{$0$}\Stone{$-1$}\\[0.2pt]
 \hspace{1.3cm}\Stone{$-1$}\hspace{3.8cm}\Stone{$0$}\\
\end{center}
\bigskip

By Theorem \ref{Theorem: GT theorem}, the module $M$ is  isomorphic to $\Span_{\mathbb{C}}\{T_{i}\ |\ i=1,\ldots,8\}$ endowed with the action of $\mathfrak{gl}(3)$ given by the Gelfand-Tsetlin formulas.\\

Since the action of $E_{11}+E_{22}+E_{33}$ is fixed to  be trivial and $\mathfrak{h}=\Span_{\mathbb{C}}\{h_{1}=E_{11}-E_{22},\ h_{2}=E_{22}-E_{33}\}$, $M$ becomes an $\mathfrak{sl}(3)$-module with weight support
$$\Supp(M)=\{(1,1),(-1,2),(2,-1),(0,0),(-2,1),(1,-2),(-1,-1)\}.$$
Then as an $\mathfrak{sl}(3)$-module, $M$ is isomorphic to $L(1,1)$ (the simple finite dimensional $\mathfrak{sl}(3)$-module of highest weight $(1,1)$). The following picture shows the weights lattice of the $\mathfrak{sl}(3)$-module $M$. Note that $M_{(0,0)}$ is $2$-dimensional with basis $\{T_{4},\ T_{8}\}$.\\

\begin{center}
\psset{unit=0.23cm}
\begin{pspicture}(9,-6)(23,15)
%\psgrid[subgriddiv=0,griddots=1,gridlabels=0pt]

\psline(21,-2)(25,6)
\psline(21,-2)(13,-2)
\psline(13,-2)(9,6)
\psline(9,6)(13,14)
\psline(13,14)(21,14)
\psline(21,14)(25,6)

\psdots(17,6)
  \psdots(9,6)(25,6)
  \psdots(13,-2)(21,-2)
    \psdots(13,14)(21,14)
  \uput{3pt}[dl](9,6){{\small $(-2,1)$}}
    \uput{3pt}[d](17,6){{\footnotesize $(0,0)$}}
      \uput{3pt}[dr](25,6){{\footnotesize $(2,-1)$}}
        \uput{3pt}[dl](13,-2){{\footnotesize $(-1,-1)$}}
          \uput{3pt}[dr](21,-2){{\footnotesize $(1,-2)$}}
            \uput{3pt}[ul](13,14){{\footnotesize $(-1,2)$}}
              \uput{3pt}[ur](21,14){{\footnotesize $(1,1)$}}
             
  \end{pspicture}
\end{center}
In particular, the basis elements of $M_{(0,0)}$ can not be distinguish by the action of the Cartan subalgebra. However, using $\Gamma$ the module decomposes as a direct sum of $1$-dimensional $\Gamma$-submodules.
\end{example}

The following theorem will give us information about the dimension of Gelfand-Tsetlin subspaces for simple Gelfand-Tsetlin modules and the possible number of non-isomorphic Gelfand-Tsetlin modules with a given Gelfand-Tsetlin character in its support.

\begin{theorem}[\cite{FO2}, Theorem 6.1; \cite{O}]\label{Theorem: finiteness-for-gl-n}
Let $U=U(\gl(n))$,
$\Ga\subset U$ the Gelfand-Tsetlin subalgebra, $\sm\in\Sp\Ga$. Set $Q_n=1!2!\ldots (n-1)!$.  
\begin{itemize}
\item[(i)] For a Gelfand-Tsetlin module
$M$, such that $M(\sm)\ne 0$ and $M$ is generated by some $x\in
M(\sm)$ (in particular for an simple module), one has $$\dim_{\mathbb{C}}
M(\sm)\leq Q_n.$$
\item[(ii)] The number of  isomorphism classes of simple Gelfand-Tsetlin modules $N$ such that $N(\sm)\ne 0$ is always nonzero and does not exceed  $Q_n$.
\end{itemize}
\end{theorem}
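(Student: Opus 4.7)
The plan is to leverage the Galois-order structure of $U := U(\gl(n))$ over the Gelfand-Tsetlin subalgebra $\Ga$, which converts both parts into orbit combinatorics for the group $G = S_n \times S_{n-1} \times \cdots \times S_1$ acting on $T_n(\C)$. Under $\Ga \cong \La^G$, each character $\chi \in \Sp \Ga$ corresponds to a single $G$-orbit of tableaux (cf.\ Remark \ref{Remark: correspondence between characters and tableaux}). Two ingredients drive the argument: \emph{(a)} every $u \in U$ admits a Harish-Chandra type decomposition $u \equiv \sum_{g \in G} a_g \cdot g$ with $a_g \in \mathrm{Frac}(\La)$, controlling how $u$ moves tableaux along their $G$-orbit (the content of the Galois-order framework of \cite{FO2}); and \emph{(b)} the center $Z(U) \subseteq \Ga$ is generated by $c_{n1},\ldots,c_{nn}$, so the top row $(l_{n1},\ldots,l_{nn})$ is completely pinned down, as an $S_n$-orbit, by the central character.

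For part (i), let $M = U \cdot x$ with $x \in M(\sm)$. Because $Z(U) \subseteq \Ga$ is central in $U$, its generalized eigenvalue on $x$ propagates to all of $M$; hence every tableau arising in the decomposition of $u\cdot x$ shares its top row (as an $S_n$-orbit) with that of $\sm$. Expanding $u = \sum_g a_g\, g$ via \emph{(a)} and extracting the $\sm$-component, the surviving $g$'s must preserve the orbit modulo the frozen top row, so the effective acting group is $S_{n-1} \times \cdots \times S_1$ of order $Q_n$. This gives $\dim_\C M(\sm) \le Q_n$.

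For part (ii), existence comes from the cyclic module $M_0 := U/U\sm$, which is nonzero by Ovsienko's flatness result \cite{O}: the image of $1$ lies in $M_0(\sm)$, so any simple subquotient $N$ of $M_0$ with $N(\sm) \ne 0$ realizes $\chi_\sm \in \Supp_{GT}(N)$. For the upper bound, every simple $N$ with $\chi_\sm \in \Supp_{GT}(N)$ is cyclic on any nonzero $\sm$-annihilated vector of $N(\sm)$ and hence is a quotient of $M_0$. Part (i) applied to $M_0$ gives $\dim_\C M_0(\sm) \le Q_n$, and the Galois-order machinery of \cite{FO2} converts this into a bound of $Q_n$ on the number of such isomorphism classes, via the block decomposition of $M_0(\sm)$ under the appropriate finite-dimensional fiber algebra.

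The principal obstacle is establishing \emph{(a)} rigorously: one must embed $U \hookrightarrow (\mathrm{Frac}(\La) \ast \Sigma)^G$ for a shift semigroup $\Sigma$ encoding the integer translates visible in Theorem \ref{Theorem: GT theorem}, and verify that $U$ is a \emph{principal} Galois order over $\Ga$ in the sense of \cite{FO2}. Granted this, the remainder is combinatorial; the delicate point is the ``drop'' of the $S_n$-factor (giving $Q_n$ instead of $|G| = n!\,Q_n$), which is forced by the centrality of $c_{n1},\ldots,c_{nn}$ freezing the top row across the entire cyclic module.
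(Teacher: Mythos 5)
The paper cites \cite{FO2} (Theorem 6.1) and \cite{O} for this result and supplies no proof of its own, so there is no in-paper argument to compare with; what you have written is a sketch of the cited Galois-order argument. Your overall strategy is the right one, and the existence half of part (ii) is correctly reduced to Ovsienko's freeness of $U(\gl(n))$ over $\Ga$. However, there is a real gap in part (i). The decomposition you invoke, $u \equiv \sum_{g\in G} a_g\cdot g$ with $g$ running over $G=S_n\times\cdots\times S_1$, is not what the Galois-order embedding provides: $U$ embeds into $(\mathrm{Frac}(\La)\ast\mathcal{M})^{G}$ where $\mathcal{M}\cong\mathbb{Z}^{n(n-1)/2}$ is the lattice of integer translations of rows $1,\ldots,n-1$ (the shifts $\pm\delta^{ki}$ visible in Theorem~\ref{Theorem: GT theorem}), and elements decompose as $\sum_{m\in\mathcal{M}}a_m\cdot m$ subject to $G$-equivariance, not as a sum over $G$. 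Correspondingly, $u$ moves tableaux by integer translations, not along $G$-orbits; the $G$-orbit structure only governs which tableaux carry the same character. The bound $Q_n$ then comes from a count of the shifts $m\in\mathcal{M}$ that return a fixed $\ell$ to its own $G$-orbit, taken modulo the $G$-action, and identifying the worst (maximally degenerate) case with $|S_{n-1}\times\cdots\times S_1|$. Your observation that the centrality of $c_{n1},\ldots,c_{nn}$ freezes the top row is correct and does explain why the $S_n$-factor is absent from the bound, but it does not by itself yield $\dim_{\mathbb{C}}M(\sm)\le Q_n$; the orbit-versus-shift bookkeeping is the actual content of \cite{O} and \cite{FO2}, and that is precisely the step you elide.

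For part (ii), your argument is essentially sound, but two steps should be made explicit. First, since $N(\sm)$ is finite-dimensional by part (i), the ideal $\sm$ acts nilpotently on $N(\sm)$, so it has a nonzero kernel there; this is what supplies the $\sm$-annihilated cyclic vector you invoke and makes $N$ a quotient of $M_0:=U/U\sm$ (and one must also check $M_0$ itself is a Gelfand-Tsetlin module, which again rests on Ovsienko's freeness). Second, the count of non-isomorphic simple quotients of $M_0$ is bounded by $\dim_{\mathbb{C}} M_0(\sm)$ because each such $N$ with $N(\sm)\ne 0$ occurs as a Jordan--H\"older factor of $M_0$ and contributes at least one to $\dim_{\mathbb{C}} M_0(\sm)$.
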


The theorem above shows that elements of $\Sp\Ga$ classify the simple Gelfand-Tsetlin $\gl(n)$-modules (and, hence,  $\mathfrak{sl}(n)$-modules) up to some finiteness and up to an isomorphism of Gelfand-Tsetlin modules which contains two different Gelfand-Tsetlin characters.

In \cite{Mazorchuk}, Gelfand-Tsetlin modules with tableaux realization and action given by the Gelfand-Tsetlin formulas are studied, but such modules $V$ satisfy $\dim(V(\chi))\leq 1$ for all $\chi\in \Gamma^{*}$. In what follows we will consider a more general definition of tableaux realization, which will allow to consider certain classes of modules with $\dim(V(\chi))$ greater than $1$.

For any Gelfand-Tsetlin tableau $T(v)\in T_n(\mathbb C)$ we  consider the set
\begin{equation}\label{Equation: B(T(v))} \mathcal{B}(T(v)):=\{T(v+z)\; | \; z\in T_{n}(\mathbb Z), z_{nk}=0, \, 1\leq k\leq  n \}.
\end{equation}

 If an indecomposable Gelfand-Tsetlin module $V$ has a tableaux realization and $T(v)$ is one of the basis tableaux then it has a basis which is a subset of $\mathcal{B}(T(v))$. On the other hand, we might have a module with a basis  consisting of a subset of tableau from $\mathcal{B}(T(v))$ but without  a tableaux realization. This may happen, for example, when $V$ has a Gelfand-Tsetlin character of multiplicity more than $1$.  For this reason we extend the notion of modules with a tableaux realization.
\begin{definition}\label{Definition: Generalized tableaux realization}
We say that a Gelfand-Tsetlin module $M$ admits a \emph{generalized tableaux realization} with respect to a Gelfand-Tsetlin subalgebra $\Gamma$ if $M$ has a basis  $\mathcal{B}_M$  labelled by a subset  of $\mathcal{B}(T(v))$ for some tableau $T(v)$, such that every $m_{T(w)} \in \mathcal{B}_M$, $T(w) \in \mathcal{B}(T(v))$, is a generalized eigenvector of $c_{rs}$ of eigenvalue $\gamma_{rs} (w)$, for all $r,s$. We will denote by $\mathcal{GT}_{T(v)}(n)$ the full subcategory of the category of Gelfand-Tsetlin modules $\mathcal{GT}(n)$ which consists of  modules with
a generalized tableaux realization with respect to $\Gamma$ whose basis contains $T(v)$.
\end{definition}

  The subcategory  $\mathcal{GT}_{T(v)}(n)$ is closed under the operations of taking submodules and quotients. Moreover, as our main result will imply, simple modules of the categories  $\mathcal{GT}_{T(v)}(3)$ for all $T(v)$ exhaust all simple Gelfand-Tsetlin modules for $\mathfrak{sl}(3)$. 
 
 \medskip
\noindent  {\bf Conjecture}: Simple modules of the categories  $\mathcal{GT}_{T(v)}(n)$ for all $T(v)$ exhaust all simple Gelfand-Tsetlin modules for $\mathfrak{sl}(n)$.

\begin{remark}
There are modules in $\mathcal{GT}(n)$ that do not belong to $\mathcal{GT}_{T(v)}(n)$ for any $T(v)$. For example,  consider
$n=2$ and a simple weight module $V(\lambda, \gamma)$ with a weight $\lambda\in \mathbb C$ in its weight support and on which the Casimir element $c_{22}$ acts as a multiplication by $\gamma\in \mathbb C$, where  $\gamma\neq (\lambda + k)^2+2(\lambda + 2k)$ for any integer $k$.
 Then $V(\lambda, \gamma)$ has a non-split self-extension which remains a weight module but on which $c_{22}$ does not act semisimply. This self-extension is an indecomposable Gelfand-Tsetlin module that does not admit a generalized tableaux realization. 
\end{remark}

\begin{definition}\label{Definition: blocks}
We will call the subcategory $\mathcal{GT}_{T(v)}(n)$  \emph{ the block of $\mathcal{GT}(n)$ generated by $T(v)$}. 
\end{definition}

From now on, whenever $n$ is clear from the context, we will write $\mathcal{GT}_{T(v)}$ instead of $\mathcal{GT}_{T(v)}(n)$.

%%%%%%%%%%%%%%%%%%%%%%%%%%%%%%%%%%%%%%%%%%%%%%%%%%

\subsection{Generic modules}\label{subsection: Generic modules}

Observing that the coefficients in the Gelfand-Tsetlin formulas in Theorem \ref{Theorem: GT theorem} are rational functions on the entries of the tableaux, Y. Drozd, V. Futorny and S. Ovsienko \cite{DFO3} extended the Gelfand-Tsetlin construction to more general modules.  In the case when all denominators are nonzero for all possible integral shifts, one can use the same formulas and  define a new class of infinite dimensional $\gl(n)$-modules:  {\it generic}  Gelfand-Tsetlin modules (cf. \cite{DFO3}, section 2.3.).

\begin{definition}\label{Definition: generic tableau}
A Gelfand-Tsetlin tableau $T(v)$ (equivalently, $v \in  T_{n}(\mathbb{C})$) is called \emph{generic} if
$v_{ki}-v_{kj}\notin\mathbb{Z}$ for all $1\leq i\neq j \leq k \leq n-1$. A  Gelfand-Tsetlin character  $\chi_v$ associated to a generic tableau $T(v)$ (see Remark \ref{Remark: correspondence between characters and tableaux}) will be called a \emph{generic} Gelfand-Tsetlin character.
\end{definition}
Recall  that ${\mathcal B}(T(v))=\{T(v+z)\ |\ z\in T_{n-1}(\mathbb{Z})\}$ for any Gelfand-Tableau $T(v)$.

\begin{theorem}[\cite{DFO3}, Section 2.3]\label{Theorem: Generic GT modules}
Let $T(v)$ be a generic  Gelfand-Tsetlin tableau of height $n$. 
\begin{itemize}
\item[(i)] The vector space $V(T(v)) = \Span_{\mathbb{C}} {\mathcal B}(T(v))$ has  a structure of a $\mathfrak{gl}(n)$-module with action of the generators of $\mathfrak{gl}(n)$ given by the Gelfand-Tsetlin formulas.
\item[(ii)] The action of the generators of $\Gamma$ on the basis elements of ${\mathcal B}(T(v))$ is given by (\ref{Equation: action of Gamma on FD modules}). 
\item[(iii)] The $\mathfrak{gl}(n)$-module $V(T(v))$ is a Gelfand-Tsetlin module with Gelfand-Tsetlin multiplicities equal to $1$. 
\end{itemize}
\end{theorem}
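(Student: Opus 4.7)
The plan is as follows. The generic hypothesis $v_{ki}-v_{kj} \notin \mathbb{Z}$ for $1 \leq i \neq j \leq k \leq n-1$ guarantees that, for every $z \in T_n(\mathbb{Z})$ with $z_{nk}=0$ and every shift by $\pm \delta^{ki}$, the denominators $\prod_{j \neq i}\bigl((v_{ki}+z_{ki}) - (v_{kj}+z_{kj})\bigr)$ that appear in the Gelfand-Tsetlin formulas never vanish. Hence the formulas of Theorem \ref{Theorem: GT theorem} unambiguously define linear operators $E_{k,k+1}, E_{k+1,k}, E_{kk}$ on $V(T(v))$ which send each basis tableau to a finite linear combination of basis tableaux. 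So part (i) reduces to checking that these operators satisfy the defining relations $[E_{ij},E_{kl}]=\delta_{jk}E_{il}-\delta_{il}E_{kj}$ of $\mathfrak{gl}(n)$.

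For this, fix a basis tableau $T(v+z)$ and indices $i,j,k,l$. By inspection of the formulas, $\bigl([E_{ij},E_{kl}]-\delta_{jk}E_{il}+\delta_{il}E_{kj}\bigr)(T(v+z))$ is a finite linear combination $\sum_{z'} f_{z,z'}(v)\,T(v+z')$, where each coefficient $f_{z,z'}$ is a rational function in the entries of $v$ with denominators of the form $\prod (v_{ki}-v_{kj}+m)$, $m\in\mathbb{Z}$. I would show that every $f_{z,z'}$ is identically zero by specializing to points of the following form: choose $\lambda$ dominant integral, sufficiently large relative to the (finitely many) shifts $z'$ appearing, so that setting $v_{nj}=\lambda_j-j+1$ makes all tableaux in the computation standard in the sense of Theorem \ref{Theorem: GT theorem}, while leaving the entries $v_{ki}$ for $k<n$ as free parameters. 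At such a specialization the operators coincide with the action on $L(\lambda)$, where the commutation relation holds. Since the set of such specializations is Zariski-dense in the locus of definition of $f_{z,z'}$, the rational function vanishes identically, establishing (i).

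For (ii), write each generator $c_{rs}$ of $\Gamma$ as the noncommutative polynomial in the $E_{ij}$'s given by \eqref{Equation: c_mk}. By (i), its action on $T(v+z)$ is a combination $\sum_{z'} g_{z,z'}(v)\,T(v+z')$ with rational coefficients. Theorem \ref{Theorem: action of Gamma f.d.} says that on every finite-dimensional specialization one has $g_{z,z'}(v)=\delta_{z,z'}\gamma_{rs}(v+z)$. The same Zariski-density argument upgrades this to a rational identity, yielding $c_{rs}T(v+z)=\gamma_{rs}(v+z)T(v+z)$ for every generic tableau.

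Finally, (iii) follows from (ii) once we check that $z \mapsto \chi_{v+z}$ is injective on the indexing set. If $\chi_{v+z}=\chi_{v+z'}$, then for each row $k$ the multisets $\{v_{ki}+z_{ki}\}_{i=1}^k$ and $\{v_{ki}+z'_{ki}\}_{i=1}^k$ coincide, so some $\sigma\in S_k$ satisfies $v_{k\sigma(i)}-v_{ki}=z'_{ki}-z_{k\sigma(i)}\in\mathbb{Z}$ for all $i$. For $k\leq n-1$ the generic hypothesis forces $\sigma=\mathrm{id}$, hence $z_{ki}=z'_{ki}$; the top-row entries satisfy $z_{nk}=z'_{nk}=0$ by definition of $\mathcal{B}(T(v))$. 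Thus $z=z'$, each basis tableau spans its own one-dimensional Gelfand-Tsetlin subspace, and $V(T(v))$ is a Gelfand-Tsetlin module with multiplicities equal to $1$. The main obstacle is the polynomial-identity bookkeeping in step (i): producing enough standard-tableau specializations to cover the finitely many terms appearing in a given commutator is conceptually simple but requires care with how large $\lambda$ must be chosen in terms of the shifts.
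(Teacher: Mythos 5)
The paper does not prove this theorem; it is cited directly from [DFO3, Section 2.3], so there is no internal proof to compare against. Judged on its own terms, your overall strategy — reduce the Serre relations in (i) and the formula (\ref{Equation: action of Gamma on FD modules}) in (ii) to rational-function identities in the tableau entries, and verify those identities on a Zariski-dense set of specializations drawn from the finite-dimensional representations of Theorem~\ref{Theorem: GT theorem} — is the standard way this statement is established, and your argument for (iii) via injectivity of $z\mapsto\chi_{v+z}$ on the generic locus is correct.

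The specialization step in (i), as you have written it, has a gap. You propose to fix a large dominant integral $\lambda$ on the top row ``while leaving the entries $v_{ki}$ for $k<n$ as free parameters'' and claim the resulting specializations are Zariski-dense. For a fixed $\lambda$ the set of standard tableaux is finite, hence certainly not Zariski-dense in the space of lower-row variables; and if the lower entries are genuinely free complex parameters the configuration is not a standard tableau, so the finite-dimensional theorem gives no information at that point. What one needs instead is to vary $\lambda$ and \emph{all} lower rows simultaneously over the set of integer tableaux in which every betweenness inequality $v_{k+1,i}\geq v_{ki}$, $v_{ki}>v_{k+1,i+1}$ holds with slack at least $R$ for a fixed $R$ exceeding the number of raising/lowering steps occurring in the commutator; that set of integer points fills a full-dimensional polyhedral cone and is therefore Zariski-dense in $\mathbb{C}^{n(n+1)/2}$. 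The ``deep interior'' requirement is not cosmetic: a shifted tableau $T(v\pm\delta^{ki})$ can fail to be standard for two distinct reasons. One is that the constraint with the adjacent row which appears in the numerator of the Gelfand-Tsetlin coefficient is violated, in which case the coefficient itself vanishes and there is no discrepancy. The other is that the constraint with the \emph{other} adjacent row is violated — e.g.\ $v_{k-1,i-1}=v_{ki}+1$ when applying $E_{k,k+1}$, or $v_{ki}=v_{k+1,i+1}+1$ when applying $E_{k+1,k}$ — in which case the coefficient is generically nonzero and the finite-dimensional convention of dropping the summand is \emph{not} captured by a factor vanishing. So the correction to make is not simply ``take $\lambda$ large'' but ``keep the whole tableau, including every intermediate one arising in the computation, strictly inside the standardness region.'' Once you phrase the specialization set this way, the Zariski-density argument closes (i), and the same amendment closes (ii).
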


The module  $V(T(v))$  constructed in  Theorem \ref{Theorem: Generic GT modules} will be extensively used in future and will be referred as \emph{the generic Gelfand-Tsetlin  module associated to $T(v)$}.  In general $V(T(v))$ need not to be simple.  Because $\Gamma$ has simple spectrum on $V(T(v))$ for $T(w)$ in $\mathcal{B}(T(v))$ we may define the \emph{simple $U$-module in $V(T(v))$ containing $T(w)$} to be the simple subquotient of $V(T(v))$ containing $T(w)$.

\begin{remark}\label{Remark: Gamma separates generic tableaux}
By Theorem \ref{Theorem: Generic GT modules}(iii), given two different tableaux $T(w)$ and $T(w')$ in $\mathcal{B}(T(v))$, there exists an element of $\Gamma$ that has different eigenvalues for $T(w)$ and $T(w')$. Whenever we say that {\it $\Gamma$ ``separates" tableaux of $V(T(v))$} we will refer to this property. 
\end{remark}

\subsubsection{Gelfand-Tsetlin formulas in terms of permutations}\label{subsubsection: GT formulas in terms of permutations}
\medskip

In this subsection we will rewrite the Gelfand-Tsetlin formulas in terms of permutations. These formulas will be very useful when verifying certain identities for the action of $\mathfrak g$ on   $V(T(v))$.

Let $\widetilde{S}_m$ denotes the subset of $S_m$ consisting of the transpositions $(1,i)$, $i=1,...,m$. For $\ell < m $, set $\Phi_{\ell m} =\widetilde{S}_{m-1}\times\cdots\times\widetilde{S}_{\ell} $. For $\ell > m$ we set $\Phi_{\ell m} = \Phi_{m \ell}$. Finally we define, $\Phi_{\ell \ell} = \{ \mbox{Id}\}$.  Every $\sigma$ in $\Phi_{\ell m }$ will be written as an $|\ell- m|$-tuple  of transpositions and by $\sigma[t]$ we will denote the $t$-th component of the tuple. 

\begin{remark}
Recall that in order to have well defined action of $\sigma\in\Phi_{\ell m }$ on $ T_{n-1}(\mathbb{C})$, for $w\in T_{n-1}(\mathbb{C})$ and $\sigma\in\Phi_{\ell m }$ on $w$ we set  
$$
\sigma(w):=(w_{n-1,\sigma^{-1}[n-1](1)},\ldots,w_{n-1,\sigma^{-1}[n-1](1)}|\ldots|w_{1,\sigma^{-1}[1](1)}).$$
\end{remark}

\begin{definition} \label{Definition: varepsilon_rs}
Let $1 \leq r < s \leq n$. Define
$$
\varepsilon_{rs}:=\delta^{r,1}+\delta^{r+1,1}+\ldots+\delta^{s-1,1} \in T_{n}(\mathbb{Z}).
$$
Furthermore, define $\varepsilon_{rr}=0$ and  $\varepsilon_{sr}=- \varepsilon_{rs}$.
\end{definition}

\begin{definition}
For each generic vector $w$ and any $1\leq t\leq n-1$ define
\begin{align*}
e_{t}^{(+)}(w):=\frac{\prod_{j\neq 1}^{t+1}(w_{t1}-w_{t+1,j})}{\prod_{j\neq 1}^{t}(w_{t1}-w_{tj})} & \ \ \ \ ; \ \ \ \
e_{t+1}^{(-)}(w) :=\frac{\prod_{j\neq 1}^{t-1}(w_{t1}-w_{t-1,j})}{\prod_{j\neq 1}^{t}(w_{t1}-w_{tj})} \\
e_{k,k+1}(w):=- \frac{\prod_{j=1}^{k+1}(w_{k1}-w_{k+1,j})}{\prod_{j\neq 1}^{k}(w_{k1}-w_{kj})} & \ \ \ \ ; \ \ \ \ e_{k+1,k}(w):= \frac{\prod_{j=1}^{k-1}(w_{k1}-w_{k-1,j})}{\prod_{j\neq 1}^{k}(w_{k1}-w_{kj})}.
\end{align*}
\end{definition}

\begin{lemma}\label{Lemma: E_mk for m>k} For each $m>k$ the action of $E_{mk}$ is given by the expression:
{\footnotesize $$E_{mk}(T(v))=\sum_{\sigma\in \Phi_{mk }}e_{k+1,k}(\sigma(v))\left(\prod_{j=k+2}^{m}\left(e_{j}^{(-)}(\sigma(v))\right)\right)T\left(v+\sum_{i=k}^{m-1}\sigma(\varepsilon_{i+1,i})\right).$$}
\end{lemma}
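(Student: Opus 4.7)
The plan is to prove the formula by induction on $m-k$, exploiting the identity $E_{mk} = [E_{m,m-1}, E_{m-1,k}]$ (valid since $m > k$, so no $\delta_{ij}$-terms contribute), together with the action of $\mathfrak{gl}(n)$ on $V(T(v))$ already supplied by Theorem~\ref{Theorem: Generic GT modules}.

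For the base case $m=k+1$, the set $\Phi_{k+1,k}$ equals $\widetilde{S}_k$ and the product $\prod_{j=k+2}^m$ is empty. A transposition $\sigma = (1,i) \in \widetilde{S}_k$ gives $\sigma(\varepsilon_{k+1,k}) = -\delta^{k,i}$ and $\sigma(v)_{k,j} = v_{k,\sigma^{-1}[k](j)}$, so the denominator of $e_{k+1,k}(\sigma(v))$ is $\prod_{j\neq i}^k(v_{k,i}-v_{k,j})$ and its numerator is $\prod_{j=1}^{k-1}(v_{k,i}-v_{k-1,j})$. A direct substitution shows this is precisely the $i$-th coefficient in the original Gelfand-Tsetlin formula for $E_{k+1,k}(T(v))$ from Theorem~\ref{Theorem: GT theorem}.

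For the inductive step, assume the formula holds for $E_{m-1,k}$. Expand
\[
E_{mk}(T(v)) \; = \; E_{m,m-1}\,E_{m-1,k}(T(v)) - E_{m-1,k}\,E_{m,m-1}(T(v)).
\]
In the first summand, plug in the inductive formula for $E_{m-1,k}$ and apply the Gelfand-Tsetlin formula for $E_{m,m-1}$; the result is a double sum indexed by $\widetilde{S}_{m-1}\times\Phi_{m-1,k}$. Under the natural identification $\Phi_{mk}\simeq\widetilde{S}_{m-1}\times\Phi_{m-1,k}$ (i.e.\ $\sigma \leftrightarrow (\sigma[m-1],\sigma|_{k,\dots,m-2})$), this reindexes as a single sum over $\Phi_{mk}$ with shift $\sum_{i=k}^{m-1}\sigma(\varepsilon_{i+1,i})$. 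The second summand produces the same tableaux after the row-$m-1$ shift commutes past the cascade on rows $k,\dots,m-2$, and cancels the unwanted $j=1$ factor appearing in the numerator of $e_{m,m-1}$ via the identity $e_{j,j-1}(w) = (w_{j-1,1}-w_{j-2,1})\,e_j^{(-)}(w)$, thereby converting $e_{m,m-1}$ into $e_m^{(-)}$.

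The principal obstacle is the bookkeeping of rational-function arguments: when $E_{m,m-1}$ acts after $E_{m-1,k}$, the argument of $e_{m,m-1}$ depends on row $m-2$ of $v$ already shifted by $-\delta^{m-2,a_{m-2}}$, while in the reverse order the shift is absent. After collecting terms in the two double sums with identical output tableaux and applying the identity above, the difference telescopes, and all surviving contributions assemble into the single product $e_{k+1,k}(\sigma(v))\prod_{j=k+2}^m e_j^{(-)}(\sigma(v))$ claimed in the lemma.
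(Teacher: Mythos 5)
Your argument is correct, but it uses the opposite commutator decomposition from the paper's. You peel off the highest step: $E_{mk}=[E_{m,m-1},E_{m-1,k}]$, inducting on $m-k$ through $E_{m-1,k}$, so the inductive hypothesis already supplies the factor $e_{k+1,k}$ and the product up to $e_{m-1}^{(-)}$, and the commutator subtraction converts the freshly applied $e_{m,m-1}$ into $e_m^{(-)}$. The paper instead peels off the lowest step, using $E_{mk}=[E_{m,k+1},E_{k+1,k}]$: there the inductive formula for $E_{m,k+1}$ carries $e_{k+2,k+1}$ as its lead factor, the subtraction converts $e_{k+2,k+1}$ into $e_{k+2}^{(-)}$, and the newly applied $E_{k+1,k}$ contributes the unconverted $e_{k+1,k}$. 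Both decompositions are exact mirror images; the governing factorization identity $e_{j,j-1}(w)=(w_{j-1,1}-w_{j-2,1})\,e_j^{(-)}(w)$ appears in each, just applied at level $j=m$ in yours versus $j=k+2$ in theirs, and the splitting $\Phi_{mk}\simeq\widetilde S_{m-1}\times\Phi_{m-1,k}$ mirrors the paper's $\Phi_{mk}\simeq\Phi_{m,k+1}\times\widetilde S_{k}$. Neither direction is appreciably simpler: in each version one summand of the commutator has its ``new'' $e$-factor evaluated at a row already shifted by the preceding cascade, and the difference of the shifted and unshifted products collapses to the single factor that is missing from $e_j^{(-)}$. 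So your proposal is a genuine proof of the lemma by a valid alternative route; the level of detail you give on the telescoping is in fact slightly more than the paper provides. One small caveat worth keeping in mind when you flesh this out: the factor of $e_{m,m-1}$ that gets cancelled is not always literally the $j=1$ factor of the unpermuted numerator --- it is the $j=a$ factor, where $a=\sigma'[m-2]^{-1}(1)$ records the row-$(m-2)$ entry shifted by the $E_{m-1,k}$ cascade; it becomes the $j=1$ factor only after pulling the result back through $\sigma$. Your wording (``the unwanted $j=1$ factor'') is correct once one reads the index through $\sigma$, but in a written-out version you should make that reindexing explicit.
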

\begin{proof}
The case $k=m+1$ follows from the Gelfand-Tsetlin formulas. The general case follows by induction on $m-k$ using the relation $$E_{m,k+1}(E_{k+1,k}T(v))-E_{k+1,k}(E_{m,k+1}T(v))=E_{m,k}T(v)$$ for any generic vector $v$. \end{proof}

\begin{lemma}\label{Lemma: E_mk for m<k} For each $r<s$ the action of $E_{rs}$ is given by the expression:
{\footnotesize $$E_{rs}(T(v))=\sum_{\sigma\in \Phi_{rs }}\left(\displaystyle\prod_{j=r}^{s-2}e_{j}^{(+)}(\sigma(v))\right)e_{s-1,s}(\sigma(v))T\left(v+\sum_{i=r}^{s-1}\sigma(\varepsilon_{i,i+1})\right).$$}
\end{lemma}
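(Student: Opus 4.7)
The plan is to mirror the proof of Lemma \ref{Lemma: E_mk for m>k}, proceeding by induction on $s-r$. The base case $s=r+1$ reduces to the Gelfand-Tsetlin formula of Theorem \ref{Theorem: GT theorem}: the set $\Phi_{r,r+1}=\widetilde{S}_r$ consists of the transpositions $(1,i)$ for $i=1,\ldots,r$, the empty product $\prod_{j=r}^{s-2}e_{j}^{(+)}(\sigma(v))$ equals $1$, and the shift $\sigma(\varepsilon_{r,r+1})=\sigma(\delta^{r,1})$ becomes $\delta^{r,i}$ when $\sigma=(1,i)$. A direct comparison of the coefficient $e_{r,r+1}(\sigma(v))$ with the $i$-th summand of the Gelfand-Tsetlin formula for $E_{r,r+1}(T(v))$ settles the base case.

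For the inductive step (with $s-r\geq 2$) I would invoke the commutator identity $E_{rs}=[E_{r,r+1},E_{r+1,s}]$ in $U(\mathfrak{gl}(n))$ applied to $T(v)$:
$$E_{rs}(T(v))=E_{r,r+1}\bigl(E_{r+1,s}(T(v))\bigr)-E_{r+1,s}\bigl(E_{r,r+1}(T(v))\bigr).$$
Expanding $E_{r+1,s}(T(v))$ via the inductive hypothesis produces a sum indexed by $\tau\in\Phi_{r+1,s}$ with shifts on rows $r+1,\ldots,s-1$; a subsequent application of the Gelfand-Tsetlin formula for $E_{r,r+1}$ appends a sum over $\rho\in\widetilde{S}_r$ shifting row $r$. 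Since $\Phi_{rs}=\widetilde{S}_r\times\Phi_{r+1,s}$, the combined index set is precisely the one that appears in the asserted formula, and the collective shift reorganizes into $\sum_{i=r}^{s-1}\sigma(\varepsilon_{i,i+1})$ with $\sigma=(\rho,\tau)$.

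The main obstacle will be verifying that the coefficient attached to each $\sigma\in\Phi_{rs}$ collapses to the product $\bigl(\prod_{j=r}^{s-2}e_{j}^{(+)}(\sigma(v))\bigr)e_{s-1,s}(\sigma(v))$ claimed in the lemma. The asymmetry between the two compositions stems from the fact that $E_{r+1,s}$ shifts entries in row $r+1$, which appear in the coefficients of $E_{r,r+1}$, whereas $E_{r,r+1}$ shifts only row $r$, which is invisible to the coefficients of $E_{r+1,s}$. Consequently $E_{r,r+1}\circ E_{r+1,s}$ evaluates its row-$r$ coefficients on a shifted row $r+1$, and the subtraction of $E_{r+1,s}\circ E_{r,r+1}$ is designed to eliminate the resulting ``off-diagonal'' contributions through a partial fraction identity in the entries of $T(v)$. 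Genericity of $T(v)$ guarantees that none of the denominators in the $e_{j}^{(\pm)}$ coefficients vanish, so the cancellation can be checked at the level of rational functions; the underlying combinatorial mechanism is identical to the one driving Lemma \ref{Lemma: E_mk for m>k}, applied here to raising rather than lowering operators.
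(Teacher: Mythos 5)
Your proposal follows exactly the route the paper takes: establish the base case $s=r+1$ directly from the Gelfand-Tsetlin formulas (checking that the permutation indexing and $\varepsilon_{r,r+1}$ reproduce the $\delta^{ri}$ shifts), then induct on $s-r$ using the commutator relation $E_{rs}=E_{r,r+1}E_{r+1,s}-E_{r+1,s}E_{r,r+1}$ on $T(v)$ for generic $v$. Your discussion of why the asymmetry in the commutator forces the coefficient cancellation (via the relation $e_{r,r+1}(w)=-(w_{r1}-w_{r+1,1})\,e_{r}^{(+)}(w)$ and the shift in row $r+1$) is a reasonable gloss on the computation that the paper leaves implicit, but the underlying argument is the same.
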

\begin{proof}
The case $s=r+1$ follows from the Gelfand-Tsetlin formulas. The general case follows by induction in $s-r$ using the relation $$E_{r,r+1}(E_{r+1,s}T(v))-E_{r+1,s}(E_{r,r+1}T(v))=E_{rs}T(v)$$ for any generic vector $v$.
\end{proof}

\begin{definition}\label{Definition: e_rs(v)}
For each generic vector $w\in T_{n}(\mathbb{C})$ and any $1\leq r, s\leq n$ we define
$$
e_{rs}(w):=
\begin{cases}
\left(\displaystyle\prod_{j=r}^{s-2}e_{j}^{(+)}(w)\right)e_{s-1,s}(w), & \text { if }\ \ \ r<s\\
e_{s+1,s}(w)\left(\displaystyle\prod_{j=s+2}^{r}e_{j-2}^{(-)}\right), &  \text { if }\ \ \ r>s\\
r-1+
\sum_{i=1}^{r}w_{ri}-\sum_{i=1}^{r-1}w_{r-1,i}, &  \text { if }\ \ \ r=s.
\end{cases}
$$
\end{definition}

\begin{proposition}\label{Proposition: Action of E_ij in terms of e_ij(v)} Let $v\in T_{n}(\mathbb{C})$ be any generic vector and $z\in T_{n-1}(\mathbb{Z})$. The Gelfand-Tsetlin formulas for the $U$-module $V(T(v))$ can be written as follows:
$$
E_{\ell m} (T(v+z))= \sum_{\sigma \in \Phi_{\ell m}} e_{\ell m} (\sigma (v+z)) T (v + z+\sigma (\varepsilon_{\ell m})) .$$
\end{proposition}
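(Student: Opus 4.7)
The proof is essentially a repackaging of the two preceding lemmas together with the $\ell=m$ case of the Gelfand-Tsetlin formulas, so the plan is to split into three cases and match the summands termwise.

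First, note that genericity of $v$ is preserved under integer shifts: if $v_{ki}-v_{kj}\notin\mathbb{Z}$ for all $1\leq i\neq j\leq k\leq n-1$, then also $(v+z)_{ki}-(v+z)_{kj}\notin\mathbb{Z}$ for any $z\in T_{n-1}(\mathbb Z)$. Hence all denominators appearing in $e_t^{(\pm)}(\sigma(v+z))$ and $e_{s-1,s}(\sigma(v+z))$, $e_{k+1,k}(\sigma(v+z))$ are nonzero, so the right-hand side of the claimed formula makes sense for any basis tableau $T(v+z)$ of $V(T(v))$.

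For the case $\ell<m$, I would apply Lemma \ref{Lemma: E_mk for m<k} to the generic tableau $T(v+z)$. The coefficient of $T((v+z)+\sum_{i=\ell}^{m-1}\sigma(\varepsilon_{i,i+1}))$ there is
\[
\Bigl(\prod_{j=\ell}^{m-2}e_j^{(+)}(\sigma(v+z))\Bigr)\, e_{m-1,m}(\sigma(v+z)),
\]
which is precisely $e_{\ell m}(\sigma(v+z))$ by Definition \ref{Definition: e_rs(v)}. It then remains to identify the shift with $\sigma(\varepsilon_{\ell m})$. Since each $\sigma\in\Phi_{\ell m}$ acts componentwise on the levels $\ell,\ldots,m-1$, linearity gives $\sum_{i=\ell}^{m-1}\sigma(\varepsilon_{i,i+1})=\sigma\bigl(\sum_{i=\ell}^{m-1}\varepsilon_{i,i+1}\bigr)$, and by the very Definition \ref{Definition: varepsilon_rs} we have $\sum_{i=\ell}^{m-1}\varepsilon_{i,i+1}=\sum_{i=\ell}^{m-1}\delta^{i,1}=\varepsilon_{\ell m}$. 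This yields the desired formula.

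The case $\ell>m$ is symmetric: apply Lemma \ref{Lemma: E_mk for m>k} to $T(v+z)$, recognize $e_{k+1,k}(\sigma(v+z))\prod_{j=k+2}^{\ell}e_j^{(-)}(\sigma(v+z))$ as $e_{\ell m}(\sigma(v+z))$ via the second branch of Definition \ref{Definition: e_rs(v)}, and use $\sum_{i=m}^{\ell-1}\sigma(\varepsilon_{i+1,i})=\sigma(-\varepsilon_{m\ell})=\sigma(\varepsilon_{\ell m})$, again via Definition \ref{Definition: varepsilon_rs}. Finally, for $\ell=m$ we have $\Phi_{\ell\ell}=\{\mathrm{Id}\}$ and $\varepsilon_{\ell\ell}=0$, so the right-hand side collapses to $e_{\ell\ell}(v+z)\,T(v+z)$, which agrees with the diagonal Gelfand-Tsetlin formula in Theorem \ref{Theorem: GT theorem}. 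I do not foresee any real obstacle: the whole argument is a term-by-term bookkeeping matching the two lemmas and Definitions \ref{Definition: varepsilon_rs}, \ref{Definition: e_rs(v)}; the only point worth flagging carefully is the interchange of $\sigma$ with the summation in $\sum_{i}\sigma(\varepsilon_{i,i+1})$, which is legitimate because each $\sigma[i]$ acts only on row $i$.
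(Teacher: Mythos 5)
Your proof is correct and takes essentially the same route as the paper: invoke Lemma \ref{Lemma: E_mk for m>k} for $\ell>m$ and Lemma \ref{Lemma: E_mk for m<k} for $\ell<m$, then collapse the sum $\sum_{i}\sigma(\varepsilon_{i,i+1})$ (resp.\ $\sum_i\sigma(\varepsilon_{i+1,i})$) to $\sigma(\varepsilon_{\ell m})$ using Definition \ref{Definition: varepsilon_rs} and the linearity of the $\sigma$-action. You are slightly more complete than the paper in that you handle $\ell=m$ explicitly and note that genericity is preserved under integer shifts; the interchange of $\sigma$ with the sum is justified simply because $\sigma$ acts linearly on $T_{n-1}(\mathbb C)$.
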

\begin{proof}

Follows from Lemmas \ref{Lemma: E_mk for m>k} and \ref{Lemma: E_mk for m<k} and the fact that  $\sum_{i=\ell}^{m-1}\sigma(\varepsilon_{i+1,i})=\sigma(\varepsilon_{m,\ell})$ when $m>\ell$ and $\sum_{i=m}^{\ell-1}\sigma(\varepsilon_{i,i+1})=\sigma(\varepsilon_{m,\ell})$ when $m<\ell$.
\end{proof}

\begin{example}\label{Example: GT formulas in terms of permutations for gl(3)}
Let us write explicitly the functions $e_{rs}(w)$ from Definition \ref{Definition: e_rs(v)} and the Gelfand-Tsetlin formulas in Proposition \ref{Proposition: Action of E_ij in terms of e_ij(v)} in the case of $\mathfrak{gl}(3)$. Let $v\in T_{3}(\mathbb{C})$ be a generic vector, $z\in T_{2}(\mathbb{Z})$ and $w=v+z$. Set also $\tau$ to be the permutation that interchanges the entries in positions $(2,1)$ and $(2,2)$. Considering
\begin{center}
\begin{tabular}{|>{$}l<{$}||>{$}l<{$}|}\hline
\varepsilon_{11}=(0,0,0)&  e_{11}(w)=w_{11}\\\hline
\varepsilon_{22}=(0,0,0)&  e_{22}(w)=w_{21}+w_{22}-w_{11}+1\\\hline
\varepsilon_{33}=(0,0,0)&  e_{33}(w)=w_{31}+w_{32}+w_{33}-w_{21}-w_{22}+2\\\hline
\varepsilon_{12}=(0,0,1)&e_{12}(w)=-(w_{11}-w_{21})(w_{11}-w_{22})\\ \hline
\varepsilon_{21}=(0,0,-1)& e_{21}(w)=1\\ \hline
\varepsilon_{23}=(1,0,0)& e_{23}(w)=-\frac{(w_{21}-w_{31})(w_{21}-w_{32})(w_{21}-w_{33})}{w_{21}-w_{22}}\\ \hline
\varepsilon_{32}=(-1,0,0)& e_{32}(w)=\frac{w_{21}-w_{11}}{w_{21}-w_{22}}\\ \hline
\varepsilon_{31}=(-1,0,-1)& e_{31}(w)=\frac{1}{w_{21}-w_{22}}\\ \hline
\varepsilon_{13}=(1,0,1)& e_{13}(w)=-\frac{(w_{21}-w_{31})(w_{21}-w_{32})(w_{21}-w_{33})(w_{11}-w_{22})}{w_{21}-w_{22}}\\ \hline
\end{tabular}
\end{center}

\medskip
The action of $\mathfrak{gl}(3)$ on any tableau is given by:
\begin{align*}
E_{11}(T(w)) &= e_{11}(w)T(w)&\\
E_{22}(T(w)) &= e_{22}(w)T(w)&\\
E_{33}(T(w)) &= e_{33}(w)T(w)&\\
E_{12}(T(w)) &= e_{12}(w)T(w+\varepsilon_{12})&\\
E_{21}(T(w)) &= e_{21}(w)T(w+\varepsilon_{21})&\\
E_{32}(T(w)) &= e_{32}(w)T(w+\varepsilon_{32})+e_{32}(\tau(w))T(w+\tau(\varepsilon_{32}))&\\
E_{23}(T(w))&=e_{23}(w)T(w+\varepsilon_{23})+e_{23}(\tau(w))T(w+\tau(\varepsilon_{23}))&\\
E_{13}(T(w))&=e_{13}(w)T(w+\varepsilon_{13}) +e_{13}(\tau(w))T(w+\tau(\varepsilon_{13}))&\\
E_{31}(T(w))&=e_{31}(w)T(w+\varepsilon_{31})+e_{31}(\tau(w))T(w+\tau(\varepsilon_{31})).&
\end{align*}
\end{example}

The explicit description of all simple generic modules for $\mathfrak{gl}(3)$ was obtained first in \cite{Ramirez}. The classification of simple generic modules $\mathfrak{gl}(n)$ was completed in \cite{FGR2}. Let us discuss briefly the main results in the last classification. 

\begin{definition}\label{Definition: Omega +} Let $T(v)$ be a fixed Gelfand-Tsetlin tableau. For any $T(w)\in \mathcal{B}(T(v))$, and for any $1<r\leq n$, $1\leq s\leq r$ and $1\leq u \leq r-1$ we define:
$$\Omega(T(w)):=\{(r,s,u)\ |\ w_{rs}-w_{r-1,u}\in \mathbb{Z}\}$$
$$\Omega^{+}(T(w)):=\{(r,s,u)\ |\ w_{rs}-w_{r-1,u}\in \mathbb{Z}_{\geq 0}\}$$
\end{definition}

A basis for the simple subquotients of $V(T(v))$ is provided in the following theorem.

\begin{theorem}[\cite{FGR2}, Theorems 6.8 and 6.14]\label{Theorem: basis for Irr generic modules}
Let $T(v)$ be a fixed generic Gelfand-Tsetlin tableau and $T(w)$ in $\mathcal{B}(T(v))$.
\begin{itemize}
\item[(i)]  The module $U\cdot T(w)$ has a basis of tableaux
$$\mathcal{N}(T(w))=\{T(w')\in\mathcal{B}(T(w))\ |\ \Omega^{+}(T(w))\subseteq\Omega^{+}(T(w'))\}.$$ 
\item[(ii)]  The simple module  containing $T(w)$ has a basis of tableaux
$$\mathcal{I}(T(w))=\{T(w')\in\mathcal{B}(T(w))\ |\ \Omega^{+}(T(w))=\Omega^{+}(T(w'))\}.$$
\end{itemize}

The action of $\mathfrak{gl}(n)$ on $T(w')\in \mathcal{N}(T(w))$ is given by the Gelfand-Tsetlin formulas. The action of $\gl(n)$ on $T(w') \in\mathcal{I}(T(w))$ is given by the Gelfand-Tsetlin formulas with the convention that all tableau $T(w'\pm\delta^{ki})$ for which $\Omega^{+}(T(w'\pm \delta^{ki}))\neq \Omega^{+}(T(w))$ are omitted in the sums for $E_{k,k+1}(T(w'))$ and $E_{k+1,k}(T(w'))$.
\end{theorem}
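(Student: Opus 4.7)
The plan is to establish part (i) first---that $U\cdot T(w)=\operatorname{span}\mathcal{N}(T(w))$---and then deduce part (ii) as a short corollary. For (i), I would prove the two inclusions $U\cdot T(w)\subseteq\operatorname{span}\mathcal{N}(T(w))$ and $\operatorname{span}\mathcal{N}(T(w))\subseteq U\cdot T(w)$ separately.

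For $U\cdot T(w)\subseteq\operatorname{span}\mathcal{N}(T(w))$, I would apply Proposition \ref{Proposition: Action of E_ij in terms of e_ij(v)}: acting by $E_{\ell m}$ on a tableau $T(w')\in\mathcal{N}(T(w))$ yields the sum $\sum_{\sigma\in\Phi_{\ell m}}e_{\ell m}(\sigma(w'))\,T(w'+\sigma(\varepsilon_{\ell m}))$. The denominators in $e_{\ell m}(\sigma(w'))$ are products of factors $(\sigma(w')_{ki}-\sigma(w')_{kj})$ with $i\neq j$, which are all nonzero because $v$ is generic. The key observation is that for every triple $(r,s,u)\in\Omega^{+}(T(w))$, if the shift $\sigma(\varepsilon_{\ell m})$ would lower the difference $w'_{rs}-w'_{r-1,u}$ from $0$ to $-1$---the only way to fall out of $\Omega^{+}$ in a single step---then the factor $(\sigma(w')_{rs}-\sigma(w')_{r-1,u})$ is forced to be zero and appears in the numerator of $e_{\ell m}(\sigma(w'))$, killing the offending term. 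A case analysis over the admissible $\sigma(\varepsilon_{\ell m})$ verifies this pattern for each $E_{\ell m}$.

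For the reverse inclusion, I would induct on $\|w'-w\|:=\sum_{k,i}|z_{ki}|$ with $w'=w+z$. Given $T(w')\in\mathcal{N}(T(w))$ at positive distance from $T(w)$, the goal is to exhibit a neighbor $T(w'')\in\mathcal{N}(T(w))$ at smaller distance such that $T(w')=T(w''\pm\delta^{ki})$ arises with nonzero coefficient in $E_{k,k\pm 1}(T(w''))$. This coefficient is nonzero precisely because the vanishing criterion isolated in the first inclusion is sharp: a single-step move that stays within $\mathcal{N}(T(w))$ cannot hit a cancelling factor. Since $\Gamma$ separates tableaux of $V(T(v))$ (Remark \ref{Remark: Gamma separates generic tableaux}), one can then isolate $T(w')$ from the sum $E_{k,k\pm 1}(T(w''))$ by applying a suitable polynomial in generators of $\Gamma$, placing $T(w')$ in $U\cdot T(w)$. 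The main obstacle, and the only nontrivial point of the whole argument, is the combinatorial lemma that the admissible single-step moves make $\mathcal{N}(T(w))$ connected, so that the induction step always admits such a $T(w'')$; this should follow from the observation that the $\Omega^{+}$ constraints are compatible with moving entries one unit at a time back toward $w$.

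For (ii), set $M:=U\cdot T(w)$ and $N:=\operatorname{span}\{T(w')\in\mathcal{N}(T(w)):\Omega^{+}(T(w'))\supsetneq\Omega^{+}(T(w))\}$. Applying (i) to each such $T(w')$ shows $U\cdot T(w')\subseteq N$, so $N$ is a $U$-submodule of $M$. The quotient $M/N$ has basis $\mathcal{I}(T(w))$, with induced action obtained from the Gelfand-Tsetlin formulas after dropping every tableau $T(w'\pm\delta^{ki})$ whose $\Omega^{+}$ strictly exceeds $\Omega^{+}(T(w))$. Simplicity of $M/N$ follows by reapplying (i) to any $T(w')\in\mathcal{I}(T(w))$: since $\Omega^{+}(T(w'))=\Omega^{+}(T(w))$, we have $\mathcal{N}(T(w'))=\mathcal{N}(T(w))$ and therefore $U\cdot T(w')=M$, so the image of $T(w')$ in $M/N$ cyclically generates the whole quotient.
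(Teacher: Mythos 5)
Your overall architecture is sound: establishing part (i) via two inclusions, running an induction on $\|w'-w\|$ for the reverse inclusion using the fact that $\Gamma$ separates tableaux, and deducing part (ii) by quotienting by the span of tableaux with strictly larger $\Omega^{+}$. The first inclusion $U\cdot T(w)\subseteq\operatorname{span}\mathcal{N}(T(w))$ is correct as you describe it: the only way a single shift can remove a triple $(r,s,u)$ from $\Omega^{+}$ is to push a zero difference to $-1$, and in the Gelfand-Tsetlin formulas that difference appears as a factor in the numerator. Part (ii) is also a clean corollary of (i): $N$ is a submodule because $\mathcal{N}(T(w'))\subseteq N$ whenever $\Omega^{+}(T(w'))\supsetneq\Omega^{+}(T(w))$, and $M/N$ is cyclic on every element of $\mathcal{I}(T(w))$.

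However, the justification of the induction step for the reverse inclusion contains a real error. You assert that ``a single-step move that stays within $\mathcal{N}(T(w))$ cannot hit a cancelling factor.'' This is false, and the flaw is not cosmetic---it is exactly the subtlety that makes the combinatorial lemma you acknowledge as the ``main obstacle'' genuinely nontrivial. The numerator in, say, $E_{k,k+1}(T(w''))$ in front of $T(w''+\delta^{ki})$ vanishes whenever $w''_{ki}=w''_{k+1,j}$ for some $j$, i.e.\ whenever $(k+1,j,i)\in\Omega^{+}(T(w''))$ with difference exactly $0$. If this triple lies in $\Omega^{+}(T(w''))\setminus\Omega^{+}(T(w))$, the move still keeps the target inside $\mathcal{N}(T(w))$ (membership only requires $\Omega^{+}(T(w))\subseteq\Omega^{+}(\cdot)$), yet the coefficient is zero. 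For a concrete instance take $n=3$, $\Omega^{+}(T(w))=\emptyset$, and $w_{21}-w_{11}=-t\in\mathbb{Z}_{<0}$ with all other differences non-integral; then $w''=w+t\delta^{21}$ and $w'=w''+\delta^{11}$ both lie in $\mathcal{N}(T(w))=\mathcal{B}(T(w))$, $w''$ is strictly closer to $w$, but the coefficient of $T(w')$ in $E_{12}(T(w''))=-(w''_{21}-w''_{11})(w''_{22}-w''_{11})T(w''+\delta^{11})$ vanishes because $w''_{21}=w''_{11}$. The theorem is still true---$T(w')$ is reached by first applying $E_{12}$ at $T(w)$ and only then moving $w_{21}$ upward---so the inductive step must not choose an arbitrary closer neighbor but rather a neighbor along a carefully ordered path (roughly: resolve the coordinates in an order that never passes a difference through the value $0$ from above). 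That choice of ordering, together with a check that the chosen coefficients really are nonzero, is the heart of the argument in \cite{FGR2} and needs to be supplied; the asserted sharpness of the vanishing criterion does not hold and cannot substitute for it.
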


\begin{corollary}
Let $T(v)$ be a generic Gelfand-Tsetlin tableau. The module $V(T(v))$ is simple if and only if $\Omega(T(v))=\emptyset$.
\end{corollary}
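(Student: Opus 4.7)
The plan is to read the corollary off Theorem \ref{Theorem: basis for Irr generic modules}. The pivotal observation is that for any $T(w)=T(v+z)\in\mathcal{B}(T(v))$ and any admissible triple $(r,s,u)$, the difference $w_{rs}-w_{r-1,u}$ equals $v_{rs}-v_{r-1,u}$ modulo $\mathbb{Z}$, so $\Omega(T(w))=\Omega(T(v))$; by contrast, membership of $(r,s,u)$ in $\Omega^+(T(w))$ depends on the shift $z$.

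$(\Leftarrow)$ Assume $\Omega(T(v))=\emptyset$. Then $\Omega(T(w))=\emptyset$, and in particular $\Omega^+(T(w))=\emptyset$, for every $T(w)\in\mathcal{B}(T(v))$. Theorem \ref{Theorem: basis for Irr generic modules}(i) gives $U\cdot T(w)=\Span_{\mathbb{C}}\mathcal{N}(T(w))=\Span_{\mathbb{C}}\mathcal{B}(T(v))=V(T(v))$ for every basis tableau. Since $\Gamma$ has one-dimensional, pairwise distinct generalized eigenspaces on $V(T(v))$ by Theorem \ref{Theorem: Generic GT modules}(iii) and Remark \ref{Remark: Gamma separates generic tableaux}, every nonzero $U$-submodule is $\Gamma$-invariant and therefore contains some basis tableau $T(w)$, hence coincides with $V(T(v))=U\cdot T(w)$. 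Thus $V(T(v))$ is simple.

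$(\Rightarrow)$ I argue contrapositively. Suppose $\Omega(T(v))\neq\emptyset$ and pick $(r,s,u)\in\Omega(T(v))$ with $m:=v_{rs}-v_{r-1,u}\in\mathbb{Z}$. I construct $z\in T_n(\mathbb{Z})$ with $z_{nj}=0$ that flips the sign status of this triple: if $r<n$, set $z_{rs}=-m-1$ when $m\geq 0$ and $z_{rs}=-m$ when $m<0$ (and all other entries of $z$ equal to zero); if $r=n$, apply the analogous shift to $z_{n-1,u}$ instead, which is admissible because $n-1<n$. In every case exactly one of $\Omega^+(T(v))$ and $\Omega^+(T(v+z))$ contains $(r,s,u)$, so they differ, and therefore $T(v+z)\notin\mathcal{I}(T(v))$. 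By Theorem \ref{Theorem: basis for Irr generic modules}(ii) the simple subquotient of $V(T(v))$ containing $T(v)$ has basis $\mathcal{I}(T(v))\subsetneq\mathcal{B}(T(v))$; if $V(T(v))$ were simple it would coincide with this subquotient, a contradiction.

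The only subtle point is the edge case $r=n$: because the top row of the tableau is fixed throughout $\mathcal{B}(T(v))$, the shift must be placed in row $n-1$ rather than row $n$, which still alters the relevant difference $w_{ns}-w_{n-1,u}$. Everything else is a direct application of the classification of simple subquotients stated in Theorem \ref{Theorem: basis for Irr generic modules}.
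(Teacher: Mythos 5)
The paper leaves this corollary unproved, and your argument is exactly the derivation from Theorem \ref{Theorem: basis for Irr generic modules} that the paper intends: $\Omega(T(v))=\emptyset$ forces $\mathcal{N}(T(w))=\mathcal{I}(T(w))=\mathcal{B}(T(v))$ for every $T(w)$, and the $\Gamma$-separation from Theorem \ref{Theorem: Generic GT modules}(iii) then upgrades "every basis tableau is cyclic'' to simplicity; conversely, any $(r,s,u)\in\Omega(T(v))$ can be flipped in $\Omega^+$ by an admissible integer shift (placed in row $r$ when $r<n$, in row $n-1$ when $r=n$), so $\mathcal{I}(T(v))\subsetneq\mathcal{B}(T(v))$. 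Your handling of the edge case $r=n$, where the top row is frozen, is the one genuinely non-obvious point and you treat it correctly.
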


\begin{example}\label{Example: explicit basis for irr gen module}
Consider $a,b,c\in\mathbb{C}$ such that $\{a-b,a-c,b-c\}\bigcap\mathbb{Z}=\emptyset$ and $v=(a,b,c | a,b+2 |a)$, then
\begin{center}
 \hspace{1.5cm} \Stone{a}\Stone{b}\Stone{c}\\[0.2pt]
 $T(v)$=\hspace{0.5cm} \Stone{a}\Stone{b+2}\hspace{1.2cm}\\[0.2pt]
 \hspace{1.3cm} \Stone{a}\\
\end{center}
then $\Omega(T(v))=\{(3,1,1),(3,2,2),(2,1,1)\}$, $\Omega^{+}(T(v))=\{(3,1,1), (2,1,1)\}$. So, by Theorem \ref{Theorem: basis for Irr generic modules}, the simple subquotient of $V(T(v))$ containing $T(v)$ has a basis
$$\mathcal{I}(T(v))=\{T(v+(m,n,k)))\ |\ m\leq 0, k\leq m, \text{ and } n> -2\}.$$
\end{example}

\begin{example}\label{Example: generic Verma module} (See also \cite{Mazorchuk}, Section 4.3) Let $a_{1},\ldots,a_{n}$ be complex numbers such that $a_{i}-a_{j}\notin \mathbb{Z}$ for any $i\neq j$. Denote by $T(v)$ the Gelfand-Tsetlin tableau of height $n$ with entries $v_{ij}$, such that $v_{rs}=a_{s}$ for $1\leq s\leq r\leq n$. The tableau $T(v)$ is a generic Gelfand-Tsetlin tableau and by Theorem \ref{Theorem: basis for Irr generic modules} a basis for an simple $\mathfrak{gl}(n)$-module containing $T(v)$ has a basis
\begin{align*}
\mathcal{I}(T(v))=&\{T(v+z)\ |\ z_{rs}-z_{r-1,s}\in\mathbb{Z}_{\geq 0} \text{ for any } r,s\}.
\end{align*}
Moreover, we can easily check that $\Span_{\mathbb{C}}\mathcal{I}(T(v))$ is a submodule of $V(T(v))$ isomorphic to the simple Verma module $M(a_{1},a_{2}+1,\ldots,a_{n}+n-1)$.
\end{example}

Using the description of simple subquotients of $V(T(v))$, we will also be able to describe the Loewy series decomposition for $V(T(v))$. We will use the convention that the first module in the list is the socle  of $V(T(v))$.
\begin{theorem}\label{Theorem: Loewy decomposition generic case}
Let $T(v)$ be a generic tableau and set $t:=|\Omega(T(v))|$. The Loewy series decomposition of the Gelfand-Tsetlin module $V(T(v))$ is given by 
$$D_{t},\ D_{t-1},\ \cdots,D_{0}$$
where, $D_{i}={\rm Span}_{\mathbb{C}}\{T(w)\in\mathcal{B}(T(v))\ |\ |\Omega^{+}(T(w))|=i\}$ and $0\leq i\leq t$.
If $D_{i}=\emptyset$ for some $1<i<t$ we omit this term in the Loewy decomposition.
\end{theorem}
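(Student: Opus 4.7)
The plan is to realize the socle filtration of $V(T(v))$ explicitly and match it to the decomposition in the statement, using Theorem \ref{Theorem: basis for Irr generic modules} to translate each filtration step into the combinatorics of $\Omega$ and $\Omega^+$.

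First, note that $v_{rs}-v_{r-1,u}$ and $w_{rs}-w_{r-1,u}$ differ by an integer for every $T(w)\in\mathcal{B}(T(v))$, so $\Omega(T(w))=\Omega(T(v))$ and $t$ is well-defined on all of $\mathcal{B}(T(v))$. For $0 \leq j \leq t$ set
$$
V_j \;:=\; \bigoplus_{\substack{T(w)\in\mathcal{B}(T(v)) \\ |\Omega^+(T(w))|\,\geq\, t-j}} \mathbb{C}\, T(w),
$$
giving a chain $V_0\subseteq V_1\subseteq\cdots\subseteq V_t=V(T(v))$ with $V_j/V_{j-1}$ having basis in bijection with the tableaux spanning $D_{t-j}$. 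Theorem \ref{Theorem: basis for Irr generic modules}(i) guarantees that the $U$-action on any $T(w)$ produces only tableaux $T(w')$ with $\Omega^+(T(w))\subseteq\Omega^+(T(w'))$, so $|\Omega^+|$ is non-decreasing under the action and each $V_j$ is a $U$-submodule.

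Second, I would show that $V_j/V_{j-1}$ is semisimple. Partition its basis according to the value $\Omega^+(T(w))=S$, where $S$ runs over subsets of $\Omega(T(v))$ of cardinality $t-j$. In the quotient, any tableau $T(w')$ produced by an operator acting on $T(w)$ with $\Omega^+(T(w'))\supsetneq\Omega^+(T(w))$ satisfies $|\Omega^+(T(w'))|>t-j$ and therefore vanishes modulo $V_{j-1}$. Hence each subspace indexed by a fixed $S$ is $U$-stable, and by Theorem \ref{Theorem: basis for Irr generic modules}(ii) it is isomorphic to the simple module with basis $\mathcal{I}(T(w))$ for any representative $T(w)$. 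This exhibits $V_j/V_{j-1}$ as a direct sum of simple modules.

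It remains to identify $V_j/V_{j-1}$ with the socle of $V(T(v))/V_{j-1}$. Let $L$ be a simple submodule of the quotient and $x=\sum c_w T(w)\in V(T(v))$ a nonzero lift of some $y\in L$. Because $\Gamma$ acts diagonalizably on $V(T(v))$ with pairwise distinct characters on distinct basis tableaux (Remark \ref{Remark: Gamma separates generic tableaux}), a Lagrange interpolation argument in the generators of $\Gamma$ shows that for every $T(w_0)$ appearing in $x$ with $T(w_0)\notin V_{j-1}$ we have $\overline{T(w_0)}\in L$. If $L\not\subseteq V_j/V_{j-1}$, I may choose such $T(w_0)$ with $|\Omega^+(T(w_0))|<t-j$; to contradict the simplicity of $L$ it then suffices to exhibit $T(w')\in\mathcal{B}(T(v))$ with $\Omega^+(T(w_0))\subsetneq\Omega^+(T(w'))$ and $|\Omega^+(T(w'))|\leq t-j$, since such $T(w')$ lies in a different simple subquotient of $V(T(v))$ while $\overline{T(w')}$ is nonzero in the quotient.

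The main obstacle is this last realizability step, which reduces to the combinatorial lemma that every subset $S\subseteq \Omega(T(v))$ equals $\Omega^+(T(w))$ for some $T(w)\in\mathcal{B}(T(v))$. Genericity of $v$ forces $\Omega(T(v))$ to be a partial matching between consecutive rows: for each $r$, at most one triple of $\Omega(T(v))$ involves a given $(r,s)$ as top entry and at most one involves a given $(r-1,u)$ as bottom entry. Viewing the shiftable positions $(r,i)$ for $1\leq i\leq r\leq n-1$ as vertices and the triples of $\Omega(T(v))$ as edges yields a graph that is a disjoint union of paths, along each of which the signs of the relevant differences $w_{rs}-w_{r-1,u}$ can be prescribed independently by suitable integer shifts of the interior entries. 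Applied to a chosen $(r_0,s_0,u_0)\in\Omega(T(v))\setminus\Omega^+(T(w_0))$ and $S=\Omega^+(T(w_0))\cup\{(r_0,s_0,u_0)\}$, the lemma produces the required $T(w')$ and completes the argument; the same lemma shows every $D_i$ is nonempty for generic $T(v)$, so the ``omit'' clause in the statement is actually vacuous in this setting.
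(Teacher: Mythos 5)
The first two stages of your argument (the filtration $V_0\subseteq\cdots\subseteq V_t$ with non-decreasing $|\Omega^+|$, and the semisimplicity of $V_j/V_{j-1}$ via partitioning by the value of $\Omega^+$) are sound and are essentially the same submodule/quotient bookkeeping the paper carries out. The third stage, where you reduce the socle identification to a realizability statement, is also a good instinct, and the $\Gamma$-separation/Lagrange-interpolation step to extract $\overline{T(w_0)}\in L$ is correct.

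The gap is in the combinatorial lemma you invoke to close the argument: it is false that every subset $S\subseteq\Omega(T(v))$ is of the form $\Omega^+(T(w))$, and the structural reason you give for it does not hold. Genericity of $T(v)$ constrains the differences $v_{ki}-v_{kj}$ only for $1\leq i\neq j\leq k\leq n-1$; the top row is unconstrained. Consequently it is \emph{not} true that ``at most one triple of $\Omega(T(v))$ involves a given $(r-1,u)$ as bottom entry'' when $r=n$: several entries $v_{n,s_1},v_{n,s_2},\ldots$ may differ from the same $v_{n-1,u}$ by integers, so the graph you describe can have a vertex of arbitrarily high degree and need not be a disjoint union of paths. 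Worse, the sign conditions at such a vertex are \emph{not} independently prescribable: they all involve the single shift variable $z_{n-1,u}$, so they are linearly ordered. Concretely, for $T(v)$ with top row $(a,a-t,a-s)$ ($0<t<s$), middle row $(a,y)$, bottom $y$ (case (G15) of the paper), one has $\Omega(T(v))=\{(3,1,1),(3,2,1),(3,3,1),(2,2,1)\}$; the three triples $(3,*,1)$ share the bottom entry $(2,1)$, and the subset $S=\{(3,2,1)\}$ is unrealizable, since $(3,2,1)\in\Omega^+(T(w))$ forces $m\leq -t<0$, which in turn forces $(3,1,1)\in\Omega^+(T(w))$. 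Thus ``every subset is realized'' fails, and with it your justification of the realizability step and of the final claim that the omit clause is vacuous.

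What you actually need is weaker: for every realizable $S_0\subsetneq\Omega(T(v))$ there exists \emph{some} $x\in\Omega(T(v))\setminus S_0$ with $S_0\cup\{x\}$ realizable (so the poset of realizable sets is saturated from below). This statement is plausibly true — in the example above every realizable set does have a realizable upper cover — but it requires a careful argument that propagates a shift along a chain while controlling the ``star'' of constraints at row $n-1$, and it does not follow from the argument you gave. Until this weaker saturation lemma is stated and proved (or derived from Theorem \ref{Theorem: basis for Irr generic modules} / Remark \ref{Remark: Number of simple subquotients for generic blocks} in some other way), the identification of $V_j/V_{j-1}$ with the socle of $V(T(v))/V_{j-1}$ is not established.
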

\begin{proof}
Let us show first that $D_{t}$ is a simple submodule of $V(T(v))$. By Theorem \ref{Theorem: basis for Irr generic modules}(i), if $|\Omega^{+}(T(w))|=t$, the module generated by $T(w)$ is simple and hence, equal to $Span_{\mathbb{C}}\{T(w')\ |\ |\Omega^{+}(T(w'))|=t\}$. That is, $V(T(v))$ has a unique simple submodule $M$, namely $M=D_{t}$.\\
Set $M_{t+1}:=V(T(v))$ and define $M_{i}:=M_{i+1}/D_{i}$. Note that 
$$M_{i+1}={\rm Span}_{\mathbb{C}}\{T(w)\in\mathcal{B}(T(v))\ |\ |\Omega^{+}(T(w))|\leq i\}$$
So, by Theorem \ref{Theorem: basis for Irr generic modules}(ii) any element basis of $D_{i}$ is a basis element of a simple submodule of $M_{i+1}$ and, then $D_{i}$ is the sum of all simple submodules of $M_{i+1}$. 
\end{proof}

\begin{remark}\label{Remark: Loewy series for generic modules}
We will often apply Theorem \ref{Theorem: Loewy decomposition generic case} in the following way.
If $V_j$, $j \in J$, are all non-isomorphic simple subquotients  of $V(T(v))$, and $T(w_j) \in V_j$, then 
the modules Loewy series components of $V(T(v))$ are precisely:
$$D_{i}=D_{i1}\oplus\ldots\oplus D_{i,r_{i}},$$
where $\{D_{ij}\}_{j=1}^{r_{i}}$ is the set of all $V_j$ such that  $|\Omega^{+}(w_j)|=i$. 
\end{remark}

Although Theorem \ref{Theorem: basis for Irr generic modules} gives a nice relation between the category $\mathcal{GT}_{T(v)}$ (see Definition \ref{Definition: Generalized tableaux realization}) and $\Omega(T(v))$, it is not true that $\mathcal{GT}_{T(v)}$  is completely determined by $\Omega(T(v))$ - see the next example.
\begin{example}  Consider the  tableaux $T(v)$ and $T(v')$ such that $\Omega(T(v))=\Omega(T(v'))$ but $\mathcal{GT}_{T(v)}$ is not equivalent to $\mathcal{GT}_{T(v')}$. Take
\begin{center}
 \hspace{1.6cm}\Stone{$a$}\Stone{$a$}\Stone{$a$}\hspace{1cm}\Stone{$a$}\Stone{$a+1$}\Stone{$a+2$}\\[0.2pt]
 $T(v)$= \hspace{0.4cm}\Stone{$a$}\Stone{$y$} \hspace{0.7cm} $T(v')$= \hspace{0.2cm}  \Stone{$a$}\Stone{$y$}\\[0.2pt]
 \hspace{1.3cm}\Stone{$z$}\hspace{3.8cm}\Stone{$z$}\\
\end{center}
Then  $\Omega(T(v))=\Omega(T(v'))$. The Loewy series of $T(v)$ is $D_3,D_0$, however, the Loewy series of $T(v')$ is $D_3,D_2,D_1,D_0$.

\end{example}

\subsection{Singular Gelfand-Tsetlin modules}\label{subsection: Singular Gelfand-Tsetlin modules}
The construction of simple finite-dimensional modules and  of generic modules presented in Sections \ref{subsection: Gelfand-Tsetlin formulas for finite-dimensional modules} and \ref{subsection: Generic modules} have one common feature - an explicit basis parameterized by a set of Gelfand-Tsetlin tableaux. In the finite-dimensional case all the entries of the tableaux $T(v)$ in the basis satisfy  $v_{ki}-v_{kj}\in\mathbb{Z}$, while  in  the generic case they satisfy  $v_{ki}-v_{kj}\notin\mathbb{Z}$ for any $k\neq n$. We may consider the standard and generic tableaux as the two extreme cases of the singular Gelfand-Tsetlin tableaux where the latter are defined below. 

\begin{definition}\label{Definition: singular and 1-singular vector}
A vector $v\in T_{n}(\mathbb{C})$ will be called \emph{singular} if there exist $1\leq s<t\leq r\leq n-1$ such that $v_{rs}-v_{rt}\in\mathbb{Z}$. The vector $v$ will be called \emph{$1$-singular} if there exist $k,i,j$ with $1\leq i<j\leq k\leq n-1$ such that $v_{ki}-v_{kj}\in\mathbb{Z}$ and $v_{rs}-v_{rt}\notin\mathbb{Z}$ for all $(r,s,t) \neq (k,i,j)$, $r\neq n$. If $v$ is $1$-singular, the tableau $T(v)$ will be called \emph{$1$-singular tableau}. A Gelfand-Tsetlin character $\chi_v$  associated to a singular (respectively, $1$-singular) tableau $T(v)$ (see Remark \ref{Remark: correspondence between characters and tableaux}) will be called a \emph{singular} (respectively, \emph{$1$-singular}) character.
\end{definition}

\subsubsection{Construction of $1$-singular Gelfand-Tsetlin modules}\label{subsection: Construction of $1$-singular Gelfand-Tsetlin modules} 
In \cite{FGR3} an explicit construction of modules with a generalized tableaux realization (see Definition \ref{Definition: Generalized tableaux realization}) associated with any $1$-singular Gelfand-Tsetlin tableau was provided. In this section we provide the main details of this construction.

Set $v$ a vector of variables with $\frac{n(n+1)}{2}$ entries indexed by $(r,s)$ such that $1\leq s\leq r\leq n$. By ${\mathcal F}$ we will denote the space of rational functions on $v_{ij}$, $1\leq j \leq i \leq n$, with poles on the hyperplanes $v_{rs} - v_{rt} =0$. Note that $V(T(v))$ is defined for all generic $v$ and that $V(T(v)) = V(T(v'))$ whenever $v - \sigma(v') \in T_{n-1}({\mathbb Z})$ for some $\sigma\in G$.  Thus $V(T(v))$ is defined for elements $v$ in the (generic) complex torus $T= T_{n}({\mathbb C}) / T_{n-1}({\mathbb Z})$. Denote by $T_{n}({\mathbb C})_{\rm gen}$ the set of all generic vectors $v$ in $T_{n}({\mathbb C})$ such that $V(T(v))$ is simple, equivalently, $v_{rs}-v_{r-1,t}\notin\mathbb{Z}$ for any $r,s,t$.
Until the end of this section we fix $(i,j,k)$ such that  $1\leq i < j \leq k\leq n-1$. 

By ${\mathcal H}$ we denote the hyperplane $v_{ki} - v_{kj} = 0$ in $ T_{n}(\mathbb{C})$, also by $\tau \in S_{n-1} \times\cdots\times S_{1}$ we denote the transposition on the $k$th row interchanging the $i$th and  $j$th entries. $\overline{\mathcal H}$ stands for the subset of all $w$ in $ T_{n}(\mathbb{C})$ such that $w_{tr} \neq w_{ts}$ for all triples $(t,r,s)$ except for $(t,r,s) = (k,i,j)$. Finally, by ${\mathcal F}_{ij}$ denote the subspace of ${\mathcal F}$ consisting of all functions that are smooth on $\overline{\mathcal H}$.

Let us fix $\bar{v}$ in ${\mathcal H}$ such that $\bar{v}_{ki} = \bar{v}_{kj}$ and all other differences $v_{mr} - v_{ms}$ are noninteger.  In other words, $\bar{v} \in {\mathcal H}$ and $\bar{v} +  T_{n-1}(\mathbb{Z}) \subset \overline{\mathcal H}$. 

 \begin{remark}
For any generic vector $w$ we can choose a representative of the class $w+ T_{n-1}(\mathbb{Z})$ of $w$ in $T=  T_{n}(\mathbb{C})_{\rm gen} /  T_{n-1}(\mathbb{Z})$ as ``close'' as possible to $\bar{v}$ as follows. Let $m_{rs}:=\lfloor {\rm Re}(\bar{v}_{rs}-w_{rs})\rfloor$ (the integer part of the real part of $\bar{v}_{rs}-w_{rs}$),   $m$ be the vector in $ T_{n-1}(\mathbb{Z})$ with components $m_{rs}$, and  $\bar{v}[w] :=w+m$. Then $\mathcal{S} = \{\bar{v} [ w ] \; | \; w\in T_{n}(\mathbb{C})_{\rm gen}\}$ is a set of representatives of $T$.
\end{remark}

Our goal is construct a module $V(T(\bar{v}))$ with Gelfand-Tsetlin support $\{\chi_{\bar{v}+m}\ |\  m \in T_{n-1}(\mathbb{Z})\}$. We will refer to this module as the $1$-{\it singular universal tableaux Gelfand-Tsetlin module associated with $\bar{v}$}, or simply as the \emph{universal module}.

 We formally introduce the complex vector space $V(T(\bar{v}))$ as the one  spanned by vectors $\{T(\bar{v}+z), {\mathcal D}T({\bar{v}} + z)  \ |\ z \in  T_{n-1}(\mathbb{Z})\}$ subject to the relations $T(\bar{v} + z) - T(\bar{v} +\tau(z)) = 0$ and  ${\mathcal D}T({\bar{v}} + z) + {\mathcal D} T({\bar{v}} + \tau(z)) = 0$. We will refer to $T(u)$ as  {\it the regular Gelfand-Tsetlin tableau} associated with $u$ and to ${\mathcal D} T(u)$ as {\it the derivative Gelfand-Tsetlin tableau} associated with $u$.  
\begin{remark} \label{Remark: The set of all singular tableaux is not a basis}
Although $\{ T(\bar{v} + z), \,\mathcal{D} T(\bar{v} + z) \; | \; z \in  T_{n-1}(\mathbb{Z})\}$ is not a basis,  we  have the following natural basis of $V(T(\bar{v}))$:
$$
\mathcal{B}(T(\bar{v}))=\{ T(\bar{v} + z), \mathcal{D} T(\bar{v} + w) \; | \; z_{ki} \leq z_{kj}, w_{ki} > w_{kj}\}.
$$
\end{remark}

Set  ${\mathcal V}_{\rm gen} = \bigoplus_{v \in S} V(T(v))$ and  ${\mathcal V}' = V(T(\bar{v})) \oplus {\mathcal V}_{\rm gen}$. Then ${\mathcal F} \otimes {\mathcal V}_{\rm gen}$  is a  $\mathfrak{gl} (n)$-module  with the  trivial action on ${\mathcal F}$. We next define a $\mathfrak{gl}(n)$-module structure on $V(T(\bar{v}))$.\\
 
The {\it evaluation map} $\mbox{ev}(\bar{v}) : {\mathcal F}_{ij} \otimes {\mathcal V}' \to  {\mathcal V}'$ is the linear map defined by\\ $
\mbox{ev}(\bar{v})(f T(v+z))=f(\bar{v}) T(\bar{v}+z),\ 
\mbox{ev}(\bar{v})(f {\mathcal D} T(\bar{v}+z))= f(\bar{v}) {\mathcal D} T(\bar{v}+z)$. Furthermore, $\mathcal{D}^{\bar{v}}:  {\mathcal F}_{ij} \otimes V(T(v)) \to  V(T(\bar{v}))$ will denote the linear map defined by \\ $\mathcal{D}^{\bar{v}} (f T(v+z)) = \mathcal{D}^{\bar{v}} (f) T(\bar{v}+z) +   f(\bar{v}) \mathcal{D} T(\bar{v}+z)
$, where $\mathcal{D}^{\bar{v}}(f) = \frac{1}{2}\left(\frac{\partial f}{\partial v_{ki}}-\frac{\partial f}{\partial v_{kj}}\right)(\bar{v})$, $z \in  T_{n-1}(\mathbb{Z})$, $f\in {\mathcal F}_{ij}$ and $v \in {\mathcal S}$. We may think of $\mathcal{D}^{\bar{v}}$ as the map 
$$\mathcal{D}^{\bar{v}} \otimes \mbox{ev}(\bar{v})  + \mbox{ev}(\bar{v})  \otimes \mathcal{D}^{\bar{v}}.$$
This map extends to a linear map  ${\mathcal F}_{ij} \otimes {\mathcal V}_{\rm gen} \to V (T(\bar{v}))$ which we will also denote by $\mathcal{D}^{\bar{v}}$.
%%%%%%%%%%%%%%%%%%%%   %%%%%%%%%%%%%%%%%%%%%%%%%%%%%%%%%%%%

\begin{theorem}[\cite{FGR3} Theorems 4.9 and 5.6]\label{Theorem: 1-singular modules}
$V(T(\bar{v}))$ has structure of Gelfand-Tsetlin module over $\mathfrak{gl}(n)$ with action of the generators of $\gl(n)$ given by
\begin{align}
E_{rs}(T(\bar{v} + z))=& \mathcal{D}^{\bar{v}}((v_{ki} - v_{kj})E_{rs}(T(v + z))),\label{Equation: action of gl(n) on regular tableaux}\\
E_{rs}(\mathcal{D} ( T(\bar{v} + w)))=&\mathcal{D}^{\bar{v}} ( E_{rs}(T(v + w))),\label{Equation: action of gl(n) on derivative tableaux}
\end{align}

and action of the generators of $\Gamma$ given by 
\begin{align}
c_{rs}(T(\bar{v}+z))=&\mathcal{D}^{\bar{v}}((v_{ki} - v_{kj})c_{rs}(T(v+z))),\label{Equation: action of Gamma on regular tableaux}
\\
c_{rs}(\mathcal{D}^{\bar{v}}(T(v+w)))=& \mathcal{D}^{\bar{v}}(c_{rs}(T(v+w))),\label{Equation: action of Gamma on derivative tableaux}
\end{align}
where $v$ is a generic vector in the set of representatives $\mathcal{S}$, and $z, w \in  T_{n-1}(\mathbb{Z})$ with $w \neq \tau(w)$. 
\end{theorem}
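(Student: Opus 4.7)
The plan is to realize $V(T(\bar{v}))$ as a ``first-order Taylor expansion'' of the generic family $V(T(v))$ along the singular hyperplane $\mathcal{H}$, and then transfer the $\mathfrak{gl}(n)$- and $\Gamma$-module relations from the generic side by a limit argument.

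First, I would verify that the right-hand sides of \eqref{Equation: action of gl(n) on regular tableaux}--\eqref{Equation: action of Gamma on derivative tableaux} are well-defined elements of $V(T(\bar{v}))$. Using the explicit expressions in Proposition~\ref{Proposition: Action of E_ij in terms of e_ij(v)}, each coefficient $e_{\ell m}(\sigma(v+z))$ is a rational function on $v$ whose only possible pole along $\overline{\mathcal{H}}$ is the hyperplane $v_{ki}=v_{kj}$, and that pole is simple. Hence $(v_{ki}-v_{kj})E_{rs}(T(v+z))$ and $(v_{ki}-v_{kj})c_{rs}(T(v+z))$ lie in $\mathcal{F}_{ij}\otimes\mathcal{V}_{\rm gen}$, so $\mathcal{D}^{\bar{v}}$ may be applied. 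For derivative tableaux the assumption $w\neq\tau(w)$ guarantees that the summands $e_{\ell m}(\sigma(v+w))$ are individually smooth on $\overline{\mathcal{H}}$ after the cancellations induced by the $\sigma$'s that swap the $(k,i)$ and $(k,j)$ slots, so \eqref{Equation: action of gl(n) on derivative tableaux} and \eqref{Equation: action of Gamma on derivative tableaux} are immediately well-defined.

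Next, I would check compatibility with the quotient relations $T(\bar{v}+z)=T(\bar{v}+\tau(z))$ and $\mathcal{D}T(\bar{v}+z)=-\mathcal{D}T(\bar{v}+\tau(z))$. The operator $\mathcal{D}^{\bar{v}}$ is $\tau$-antisymmetric, because $\tau$ swaps $v_{ki}$ and $v_{kj}$; the factor $(v_{ki}-v_{kj})$ is also antisymmetric; and the Gelfand-Tsetlin action intertwines the $\tau$-action on $v$ with the natural permutation on tableaux. The signs conspire so that \eqref{Equation: action of gl(n) on regular tableaux} and \eqref{Equation: action of Gamma on regular tableaux} take the same value on $T(\bar{v}+z)$ and $T(\bar{v}+\tau(z))$, while \eqref{Equation: action of gl(n) on derivative tableaux} and \eqref{Equation: action of Gamma on derivative tableaux} take opposite values on $\mathcal{D}T(\bar{v}+w)$ and $\mathcal{D}T(\bar{v}+\tau(w))$, as required.

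The heart of the argument is verifying the Lie bracket relations $[E_{rs},E_{pq}]=\delta_{sp}E_{rq}-\delta_{qr}E_{ps}$ and the analogous polynomial identities expressing each $c_{rs}$ in terms of the $E_{ij}$. These hold in every $V(T(v))$ with $v$ generic; clearing denominators by multiplying by a suitable power of $\xi:=v_{ki}-v_{kj}$ produces a polynomial identity in $\xi$ whose coefficients lie in $\mathcal{F}_{ij}\otimes\mathcal{V}_{\rm gen}$. Expanding around $\xi=0$ and comparing the coefficients of $\xi^{0}$ and $\xi^{1}$ yields precisely the relations required when the two sides act on regular and on derivative tableaux respectively. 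The Leibniz-type decomposition $\mathcal{D}^{\bar{v}}=\mathcal{D}^{\bar{v}}\otimes\mathrm{ev}(\bar{v})+\mathrm{ev}(\bar{v})\otimes\mathcal{D}^{\bar{v}}$ is exactly what is needed to convert a composition of two singular operators into the linear coefficient of a product expanded on the generic side.

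The main obstacle will be the careful bookkeeping of powers of $\xi$ in these Taylor expansions: a composition like $E_{rs}E_{pq}(T(\bar{v}+z))$ corresponds on the generic side to a rational expression that is not a priori in the domain of $\mathcal{D}^{\bar{v}}$, and one must renormalize using the $\tau$-antisymmetry of $\mathcal{D}^{\bar{v}}$ together with the vanishing at $\xi=0$ of the summands indexed by permutations that swap the $i$th and $j$th entries of the $k$th row, in order to bring the expression into the canonical form $\mathcal{D}^{\bar{v}}(\xi\cdot\Phi)$ with $\Phi\in\mathcal{F}_{ij}\otimes\mathcal{V}_{\rm gen}$. Once the commutation relations are established, the Gelfand-Tsetlin property follows immediately: by \eqref{Equation: action of Gamma on regular tableaux}--\eqref{Equation: action of Gamma on derivative tableaux} each $c_{rs}$ preserves the two-dimensional subspace spanned by $T(\bar{v}+z)$ and $\mathcal{D}T(\bar{v}+z)$ with generalized eigenvalue $\gamma_{rs}(\bar{v}+z)$, so $\Gamma$ acts locally finitely, and the Gelfand-Tsetlin support consists precisely of the characters $\chi_{\bar{v}+z}$ with $z\in T_{n-1}(\mathbb{Z})$.
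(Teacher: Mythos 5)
Your proposal follows essentially the same route as the proof in \cite{FGR3} (which the paper cites rather than reproduces): treat $V(T(\bar v))$ as the first-order jet of the generic family $V(T(v))$ along $\mathcal H$, transport the $\mathfrak{gl}(n)$- and $\Gamma$-relations from the generic side, and use the Leibniz-type decomposition $\mathcal D^{\bar v}=\mathcal D^{\bar v}\otimes\mathrm{ev}(\bar v)+\mathrm{ev}(\bar v)\otimes\mathcal D^{\bar v}$ together with the $\tau$-antisymmetry of both $\mathcal D^{\bar v}$ and $v_{ki}-v_{kj}$ to make compositions cohere. All the structural ingredients of the actual argument are present in your sketch.

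The place where you are vaguer than the original is the step you yourself flag as the obstacle. Saying that one ``compares coefficients of $\xi^{0}$ and $\xi^{1}$'' after clearing denominators glosses over two nontrivial facts that \cite{FGR3} proves explicitly: (a) although individual coefficients $e_{\ell m}(\sigma(v+z))$ in a composite $E_{rs}E_{pq}T(v+z)$ can acquire \emph{second-order} poles on $\mathcal H$, the $\tau$-paired terms combine so that the full coefficient of each tableau has at most a simple pole, and lies in $\mathcal F_{ij}$ after multiplication by $\xi=v_{ki}-v_{kj}$; and (b) the action computed by iterating the singular formulas \eqref{Equation: action of gl(n) on regular tableaux}--\eqref{Equation: action of gl(n) on derivative tableaux} agrees, term by term, with $\mathcal D^{\bar v}(\xi\cdot E_{rs}E_{pq}T(v+z))$ -- this is where the Leibniz identity is actually used, and it requires tracking which summands of $E_{pq}T(v+z)$ become regular versus derivative tableaux under $\mathcal D^{\bar v}$. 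Your sketch correctly identifies that this is the crux, but the assertion that ``the summands $e_{\ell m}(\sigma(v+w))$ are individually smooth'' for derivative tableaux is not quite right as stated: what is true is that the coefficient of each basis tableau (which can aggregate several $\sigma$'s when two shifts coincide after passing to $\bar v$) is smooth; the cancellation mechanism is between $\tau$-conjugate summands, not within individual ones. These are the points at which a complete write-up, like the one in \cite{FGR3}, needs to slow down.
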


 \begin{remark}
In the case of $\mathfrak{gl}(3)$ we can give the following interpretation of the basis elements of the module $V(T(\bar{v}))$. Let $T(v)$ be a generic tableau such that $V(T(v))$ is simple,  and let $T(\bar{v})$ be  such that $\bar{v}_{21} = \bar{v}_{22}$: 

\begin{center}
 \hspace{1.6cm}\Stone{$v_{31}$}\Stone{$v_{32}$}\Stone{$v_{33}$}\hspace{1cm}\Stone{$\bar{v}_{31}$}\Stone{$\bar{v}_{32}$}\Stone{$\bar{v}_{33}$}\\[0.2pt]
 $T(v)$= \hspace{0.3cm}\Stone{$v_{21}$}\Stone{$v_{22}$} \hspace{0.8cm} $T(\bar{v})$
=  \hspace{0.2cm} \Stone{$\bar{v}_{21}$}\Stone{$\bar{v}_{22}$} \\[0.2pt]
 \hspace{1.3cm}\Stone{$v_{11}$}\hspace{4cm}\Stone{$\bar{v}_{11}.$}\\
\end{center}
Then  $T(\bar{v}+(m,n,k))$ and $\mathcal{D}T(\bar{v}+(m,n,k))$ can be considered as formal limits in the following way:

\begin{equation}\label{Equation: singular tableau as formal limit}
T(\bar{v}+(m,n,k)):=\lim\limits_{v\to \bar{v}}T(v+(m,n,k)),
\end{equation}

\begin{equation}\label{Equation: singular derivative tableau as formal limit}
\mathcal{D}T(\bar{v}+(m,n,k)):=\lim\limits_{v\to \bar{v}}\left(\frac{T(v+(m,n,k))-T(v+(n,m,k))}{v_{21}-v_{22}}\right).
\end{equation}
 \end{remark}

One essential property of generic Gelfand-Tsetlin modules  described in Theorem \ref{Theorem: Generic GT modules} is that $\Gamma$ {\it ``separates"} the basis  elements of $\mathcal{B}(T(v))$, that is, for any two different tableaux $T_{1},\ T_{2}$ in $\mathcal{B}(T(v))$ there exists an element $\gamma\in\Gamma$ such that $\gamma \cdot T_{1}=T_{1}$ and $\gamma \cdot T_{2}=0$ (see Remark \ref{Remark: Gamma separates generic tableaux}). In the case of  $1$-singular modules $V(T(\bar{v}))$ this is also true and follows from the fact that no derivative tableau $\mathcal{D}(T(\bar{v}+w))$ is an eigenvector for the action of $c_{k2}\in \Gamma$. A detailed proof can be found in \cite{GoR}, Section $\S 5$.
\begin{theorem}\label{Theorem: Gamma separates tableaux in 1-singular module}
Let $\bar{v}$ be any $1$-singular vector and $\mathcal{B}(T(\bar{v}))$ be  as before. Then $\Gamma$ separates the tableaux in the basis $\mathcal{B}(T(\bar{v}))$. 
\end{theorem}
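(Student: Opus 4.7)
The plan is to describe the Gelfand-Tsetlin character carried by each basis element, to classify which pairs of basis tableaux can share such a character, and then to use the single generator $c_{k2}$ to distinguish a regular from a derivative tableau when they do. I would work throughout from the formulas in Theorem \ref{Theorem: 1-singular modules} and from the basis description in Remark \ref{Remark: The set of all singular tableaux is not a basis}.

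First I would compute directly that regular tableaux are honest $\Gamma$-eigenvectors while derivative tableaux are only generalized ones. Applying the Leibniz-type rule defining $\mathcal{D}^{\bar v}$ to \eqref{Equation: action of Gamma on regular tableaux}--\eqref{Equation: action of Gamma on derivative tableaux} and using $(v_{ki}-v_{kj})|_{\bar v}=0$ yields
$$
c_{rs}\, T(\bar v+z) = \gamma_{rs}(\bar v+z)\, T(\bar v+z),
$$
$$
c_{rs}\,\mathcal{D}T(\bar v+w) = \gamma_{rs}(\bar v+w)\,\mathcal{D}T(\bar v+w) + \mathcal{D}^{\bar v}(\gamma_{rs}(v+w))\, T(\bar v+w).
$$
Hence every basis element carries the Gelfand-Tsetlin character $\chi_u(c_{rs})=\gamma_{rs}(u)$ associated to its underlying shifted vector $u\in\{\bar v+z,\bar v+w\}$.

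Next I would classify when two basis tableaux share a character. Since the family $\{\gamma_{r1},\ldots,\gamma_{rr}\}$ generates the $S_r$-invariants in the $r$-th row variables, two basis tableaux have identical characters if and only if their underlying vectors differ, row by row, by a permutation. The $1$-singular hypothesis on $\bar v$ forbids every such permutation except the transposition $\tau$ on the positions $(k,i),(k,j)$ of the $k$-th row. Combined with the basis conventions $z_{ki}\le z_{kj}$ and $w_{ki}>w_{kj}$, this gives three observations: distinct regular tableaux have distinct characters; distinct derivative tableaux have distinct characters; and a regular $T(\bar v+z)$ shares a character with a derivative $\mathcal{D}T(\bar v+w)$ precisely when $z=\tau(w)$.

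The only subtle case is the last one, and this is where $c_{k2}$ enters. From the pattern visible in Example \ref{Example: explicit gamma's n leq 4} one has $\gamma_{k2}(l)=\sum_{t=1}^k l_{kt}^2+(k-1)\sum_{t=1}^k l_{kt}+C_k$ for every $k$, which can be verified from \eqref{Equation: gamma mk} by expanding the product: the only degree-two contribution is $\sum_t l_{kt}^2$, while every linear symmetric polynomial has equal partials with respect to every $l_{kt}$. Consequently
$$
\mathcal{D}^{\bar v}(\gamma_{k2}(v+w))=\tfrac12\left(\tfrac{\partial\gamma_{k2}}{\partial v_{ki}}-\tfrac{\partial\gamma_{k2}}{\partial v_{kj}}\right)\bigg|_{\bar v+w}=w_{ki}-w_{kj}\neq 0,
$$
so $(c_{k2}-\gamma_{k2}(\bar v+w))\,\mathcal{D}T(\bar v+w)=(w_{ki}-w_{kj})\,T(\bar v+w)$ while $(c_{k2}-\gamma_{k2}(\bar v+z))\,T(\bar v+z)=0$. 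Since $\gamma_{k2}(\bar v+z)=\gamma_{k2}(\bar v+w)$ when $z=\tau(w)$, the single element $c_{k2}$ separates $T(\bar v+z)$ from $\mathcal{D}T(\bar v+w)$. The conceptual obstacle is precisely this last step --- recognizing that the property of being an honest rather than a generalized eigenvector is itself a datum detected by $\Gamma$ --- but after the preliminary reduction the proof comes down to the explicit partial derivative of $\gamma_{k2}$.
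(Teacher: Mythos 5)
Your proof is correct and follows essentially the same route as the paper, which defers the detailed argument to [GoR] but explicitly singles out the same key mechanism --- that no derivative tableau $\mathcal{D}T(\bar v+w)$ is a $c_{k2}$-eigenvector --- so your computation $\mathcal{D}^{\bar v}(\gamma_{k2}(v+w))=w_{ki}-w_{kj}\neq 0$ (equivalently, that $(c_{k2}-\gamma_{k2}(\bar v+w))\mathcal{D}T(\bar v+w)$ is a nonzero multiple of $T(\bar v+w)$) is precisely the step the paper invokes and reuses in Lemma \ref{Lemma: from derivative tableau we can obtain regular tableau}. Your reduction to the single collision $z=\tau(w)$ via $S_r$-invariance of the $\gamma_{rs}$ and the $1$-singularity of $\bar v$ is also the expected preliminary step.
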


In the case of $T_{3}(\mathbb{C})$ (equivalently, Gelfand-Tsetlin tableaux of height $3$) every singular vector is a $1$-singular vector. Therefore, in the case of $\mathfrak{gl}(3)$ the $1$-singular modules exhaust all singular Gelfand-Tsetlin modules.

\begin{example}\label{Example: Singular Verma}
The simple Verma $\mathfrak{sl}(3)$-module $M = M(-1,-1)$ admits a tableaux realization as a subquotient of the module $V(T(\bar{v}))$, where $\bar{v}$ is the vector $(-1,-1,-1,-1,-1,-1)$. This module  contains Gelfand-Tsetlin characters of dimension $2$. For example, if $\chi$ is the Gelfand-Tsetlin character associated with the tableaux  $T(\bar{v}+(-1,0,-1))$ and $\mathcal{D}T(\bar{v}+(0,-1,-1))$, then $\dim(M(\chi))=2$ (see \S \ref{subsection: Realizations of all simple singular Gelfand-Tsetlin modules for sl(3)} (C13) for details).
\end{example}

\section{Gelfand-Tsetlin modules of $\mathfrak{sl}(3)$ and modules of  $C({\mathfrak h})$}\label{Section: Gelfand-Tsetlin modules of sl(3)}

From now on we focus on the case $n=3$ and $\g=\mathfrak{sl}(3)$. We fix the standard Gelfand-Tsetlin subalgebra $\Ga$ of $\g$, that is
the one corresponding to the chain whose second component is generated by $E_{12}$ and $E_{21}$. The corresponding category of Gelfand-Tsetlin modules $\mathcal{GT}(3)$ will be denoted simply by $\mathcal{GT}$.

 Let $C({\mathfrak h})$ be the centralizer of the Cartan subalgebra ${\mathfrak h}$ in $U(\g)$, where
  $${\mathfrak h}=\Span_{\mathbb C}\{H_1:=E_{11}-E_{22},\ H_2:=E_{22}-E_{33}\}.$$  
In this section we collect some  properties of modules in $\mathcal{GT}$ that are related to the category of modules of $C({\mathfrak h})$. The results are based on the works  \cite{Fut1}, \cite{Fut2}, \cite{Britten Futorny Lemire}, but for reader's convenience we  provide  proofs for some statemenrs. 

The following result provides an important relation between the simple  $C({\mathfrak h})$-modules and the simple weight modules (for a proof, see for example \cite{Fut3}):

 \begin{lemma}
For any simple $C({\mathfrak h})$-module $W$ there exists an simple weight $\g$-module $M$ such that
$M_{\lambda}\simeq W$ for some $\lambda\in {\mathfrak h}^*$.  Conversely, if $M$ is a simple  weight
$\g$-module then $M_{\lambda}$ is a simple $C({\mathfrak h})$-module.
\end{lemma}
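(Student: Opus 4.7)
My plan is to treat the two implications separately: the ``weight space simple'' direction follows from the root-space decomposition of $U(\g)$ via a short grading argument, while the construction of a simple weight $\g$-module with prescribed $\lambda$-weight space $W$ will be produced by inducing $W$ from $C(\mathfrak h)$ and extracting the appropriate simple subquotient with the aid of Zorn's lemma.

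For the converse direction, observe that $M_\lambda$ is automatically $C(\mathfrak h)$-stable since $C(\mathfrak h)$ commutes with $\mathfrak h$. Fix a nonzero $v \in M_\lambda$; by simplicity of $M$, every $w \in M_\lambda$ can be written as $w = xv$ for some $x \in U(\g)$. Using the weight decomposition $U(\g) = \bigoplus_{\mu \in Q} U(\g)_\mu$ with respect to the adjoint action of $\mathfrak h$, together with the identification $U(\g)_0 = C(\mathfrak h)$, write $x = \sum_\mu x_\mu$ with $x_\mu \in U(\g)_\mu$. Then $x_\mu v \in M_{\lambda+\mu}$, and the constraint $w \in M_\lambda$ collapses the sum to $w = x_0 v$ with $x_0 \in C(\mathfrak h)$. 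Hence $M_\lambda = C(\mathfrak h)v$, which proves simplicity.

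For the forward direction, let $W$ be simple over $C(\mathfrak h)$. Since $\mathfrak h$ lies in the center of $C(\mathfrak h)$ and $C(\mathfrak h)$ has countable dimension over $\mathbb C$, Dixmier's version of Schur's lemma forces $\mathfrak h$ to act on $W$ by a character $\lambda \in \mathfrak h^*$. Form the induced module $\tilde M := U(\g) \otimes_{C(\mathfrak h)} W$. The root-space grading of $U(\g)$ makes $\tilde M$ into a weight module with $\tilde M_\lambda = 1 \otimes W \simeq W$ as $C(\mathfrak h)$-modules. By Zorn's lemma, choose a submodule $N \subseteq \tilde M$ maximal among those satisfying $N \cap (1 \otimes W) = 0$, and set $\bar M := \tilde M/N$. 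Any nonzero submodule of $\bar M$ pulls back to a submodule of $\tilde M$ strictly larger than $N$, whose intersection with $1 \otimes W$ is therefore nonzero and $C(\mathfrak h)$-stable; by simplicity of $W$, this intersection must equal all of $1 \otimes W$. Consequently $L := U(\g)\cdot\overline{1\otimes W}$ is the unique smallest nonzero submodule of $\bar M$ and is, in particular, simple; its $\lambda$-weight space $L_\lambda$ is a nonzero $C(\mathfrak h)$-submodule of $\bar M_\lambda \simeq W$ containing $\overline{1\otimes W}$, so $L_\lambda \simeq W$.

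The main obstacle is choosing the quotient correctly so that the $C(\mathfrak h)$-structure on $W$ survives intact on $L_\lambda$. This is precisely why $N$ must be taken maximal with respect to the property $N \cap (1 \otimes W) = 0$, rather than as an arbitrary maximal proper submodule of $\tilde M$ (which need not exist, since $\tilde M$ is not obviously finitely generated as a $U(\g)$-module when $W$ is infinite-dimensional). Verifying Dixmier's lemma for $C(\mathfrak h)$ is the other ingredient that requires care, but it is routine given countability of dimension.
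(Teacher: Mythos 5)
The paper does not prove this lemma, deferring instead to [Fut3]; your argument is the standard construction and it is correct. For the converse, the key identification $U(\g)_0=C(\mathfrak h)$ under the adjoint $\mathfrak h$-grading gives $M_\lambda=C(\mathfrak h)v$ for every nonzero $v\in M_\lambda$, which is exactly simplicity. For the forward direction, Dixmier's lemma applies because $C(\mathfrak h)\subseteq U(\g)$ is of countable dimension over the uncountable field $\mathbb C$, and $\mathfrak h$ is central in $C(\mathfrak h)$, so $\mathfrak h$ acts on $W$ by a character $\lambda$; then $\tilde M=U(\g)\otimes_{C(\mathfrak h)}W$ is a weight module with $\tilde M_\lambda=1\otimes W\simeq W$ (this uses that $U(\g)_0=C(\mathfrak h)$ is a direct summand of $U(\g)$ as a $(C(\mathfrak h),C(\mathfrak h))$-bimodule, so $w\mapsto 1\otimes w$ is injective and lands exactly in the weight-$\lambda$ space), and your Zorn argument choosing $N$ maximal with $N\cap(1\otimes W)=0$ is precisely the right way to avoid the nonexistence of maximal proper submodules in the possibly non-finitely-generated $\tilde M$. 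The resulting $L=U(\g)\cdot\overline{1\otimes W}$ is indeed simple with $L_\lambda\simeq W$, so the proof is complete; this matches the approach one finds in the literature the paper cites.
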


Denote  $A:=E_{12}E_{21}$, $B:=E_{23}E_{32}$. Recall that the center of  $U({\mathfrak g})$ is generated by $c_{32}$ and $c_{33}$, see (\ref{Equation: c_mk}).  For convenience we will also use the following generators of  the center of $U({\mathfrak g})$:
$$c_{1}=\frac{1}{12}c_{32} \text{ \ \ \ \ \ \ \  and \ \ \ \ \ \ }\ c_{2}=\frac{3}{2}c_{32}-c_{33}.$$

The next two lemmas provide important technical properties of $C({\mathfrak h})$  and the simple $C({\mathfrak h})$-modules.

 \begin{lemma}[\cite{Britten Futorny Lemire}, Lemma 1.1]
The centralizer $C({\mathfrak h})$ is an associative algebra generated by $H_1$, $H_2$, $A$, $B$, $c_1$, and $c_2$.
\end{lemma}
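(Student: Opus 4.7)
The plan is to identify $C(\mathfrak{h})$ with the zero weight space $U(\g)_{0}$ for the adjoint action of $\mathfrak{h}$ on $U(\g)$, and then show that this space is generated as an associative algebra by the six listed elements. Since $\mathfrak{h}$ is abelian and its adjoint action on $U(\g)$ is semisimple, the condition $[h,x]=0$ for every $h\in\mathfrak{h}$ is equivalent to $x\in U(\g)_{0}$. Fixing the PBW ordering $E_{12},E_{23},E_{13},H_{1},H_{2},E_{21},E_{32},E_{31}$, the subspace $U(\g)_{0}$ has a basis consisting of those ordered monomials for which the exponents $(b_{1},b_{2},b_{3}|a_{1},a_{2},a_{3})$ on $(E_{12},E_{23},E_{13}|E_{21},E_{32},E_{31})$ satisfy $a_{1}+a_{3}=b_{1}+b_{3}$ and $a_{2}+a_{3}=b_{2}+b_{3}$. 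A direct invariant-theoretic analysis in the commutative associated graded $\gr U(\g)\cong S(\g)$ shows that $S(\g)^{\mathfrak{h}}$ is generated by $\overline{H}_{1},\overline{H}_{2},\overline{A},\overline{B},\overline{C},\overline{\Delta},\overline{\nabla}$, where
\[
C:=E_{13}E_{31},\quad \Delta:=E_{12}E_{23}E_{31},\quad \nabla:=E_{21}E_{32}E_{13},
\]
subject to the single syzygy $\overline{\Delta}\,\overline{\nabla}=\overline{A}\,\overline{B}\,\overline{C}$. Hence it suffices to show $C,\Delta,\nabla$ belong to the subalgebra $\mathcal{A}\subset U(\g)$ generated by $H_{1},H_{2},A,B,c_{1},c_{2}$, and then to reduce every higher-degree zero-weight monomial modulo lower-degree ones.

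For $C$, expanding $c_{32}=\sum_{i,j}E_{ij}E_{ji}$ and reordering the pairs $E_{21}E_{12}, E_{32}E_{23}, E_{31}E_{13}$ using the commutation relations yields $c_{32}=2(A+B+C)+q(H_{1},H_{2})$ for an explicit quadratic polynomial $q$, so $C\in\mathcal{A}$ since $c_{1}=\tfrac{1}{12}c_{32}$. For $\Delta$ and $\nabla$, a direct computation gives
\[
[A,B]=[E_{12}E_{21},E_{23}E_{32}]=E_{13}E_{32}E_{21}-E_{12}E_{23}E_{31},
\]
whose PBW normal form differs from $\nabla-\Delta$ only by elements of $\mathcal{A}$ already produced, so $\nabla-\Delta\in\mathcal{A}$. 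On the other hand, in $c_{33}=\sum_{i,j,k}E_{ij}E_{jk}E_{ki}$ the summands with $i,j,k$ all distinct reduce to $3(\Delta+\nabla)$ modulo lower PBW-degree, while every remaining summand either lies in $U(\mathfrak{h})$ or factors as a Cartan element times one of $A,B,C$. Therefore $c_{2}=\tfrac{3}{2}c_{32}-c_{33}$ supplies $\Delta+\nabla\in\mathcal{A}$, and combining this with $\nabla-\Delta\in\mathcal{A}$ yields $\Delta,\nabla\in\mathcal{A}$ individually.

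The argument then concludes by induction on the PBW-degree $n$ of an element of $U(\g)_{0}$. The base cases $n\leq 3$ follow from the previous step. For $n>3$, given a zero-weight PBW monomial $M$ of degree $n$, its principal symbol $\overline{M}\in S(\g)^{\mathfrak{h}}$ is a polynomial in the seven generators $\overline{H}_{1},\overline{H}_{2},\overline{A},\overline{B},\overline{C},\overline{\Delta},\overline{\nabla}$; lifting this polynomial identity back to $U(\g)$ produces an element of $\mathcal{A}$ whose difference from $M$ is a zero-weight element of PBW-degree strictly less than $n$, hence lies in $\mathcal{A}$ by the inductive hypothesis. The main obstacle is controlling the lower-order noncommutative corrections that arise when lifting the commutative syzygy $\overline{\Delta}\,\overline{\nabla}=\overline{A}\,\overline{B}\,\overline{C}$ from $S(\g)$ to $U(\g)$: one must verify that every such correction is a zero-weight element of strictly smaller PBW-degree so that the inductive hypothesis applies. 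This bookkeeping is precisely the technical content of Lemma~1.1 in \cite{Britten Futorny Lemire}.
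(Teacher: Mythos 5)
Your proof is essentially correct and supplies an argument where the paper gives none: the lemma is stated with a bare citation to \cite{Britten Futorny Lemire}. The overall strategy — identify $C(\mathfrak{h})$ with the zero-weight space $U(\g)_{0}$, compute the Hilbert basis $\{\overline{A},\overline{B},\overline{C},\overline{\Delta},\overline{\nabla}\}$ of the affine semigroup of zero-weight exponent vectors (with the single syzygy $\overline{\Delta}\,\overline{\nabla}=\overline{A}\,\overline{B}\,\overline{C}$), show $C,\Delta,\nabla$ lie in the proposed subalgebra $\mathcal{A}$ by extracting them from $c_{32}$, $c_{33}$, and $[A,B]$, then conclude by induction on PBW filtration degree — is sound and matches the natural approach one would expect in \cite{Britten Futorny Lemire}.

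One remark on your final paragraph: the ``main obstacle'' you flag is not actually an obstacle. You never need to lift the syzygy $\overline{\Delta}\,\overline{\nabla}=\overline{A}\,\overline{B}\,\overline{C}$ itself. In the inductive step you pick \emph{any} factorization of the zero-weight monomial $\overline{M}$ into the seven symbol generators, lift each factor to $H_1,H_2,A,B,C,\Delta,\nabla\in\mathcal{A}$, and form the corresponding product $P\in\mathcal{A}$. The difference $M-P$ is automatically of strictly smaller PBW degree (its symbol vanishes by construction), and it is automatically zero-weight because the PBW filtration is $\mathrm{ad}(\mathfrak{h})$-stable and both $M$ and $P$ lie in $U(\g)_{0}$. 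No further bookkeeping is required, so the deferral to \cite{Britten Futorny Lemire} in the last sentence is unnecessary: your argument is already self-contained. The only place where genuine computation is needed is the verification that $c_{32}$ yields $C$, that $[A,B]$ yields $\nabla-\Delta$ (modulo $A,B,C,\mathfrak{h}$), and that $c_{33}$ yields $\Delta+\nabla$ (modulo the same); your sketches of these are correct.
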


\begin{lemma} [\cite{Fut1}]\label{Lemma: Relations AB and constants ,p,eta} Let $W$ be a simple $C({\mathfrak h})$-module, and let  $H_i=h_i {\rm Id}$, $c_i=\gamma_i {\rm Id}$ on $W$,  for some constants $h_i, \gamma_i$,  $i = 1,2$. Then the following identities hold on $W$.
\begin{eqnarray*}
aA & = & A^2+AB+BA+ABA-\frac{1}{2}A^2 B-\frac{1}{2}B A^2 +rB+\tau I, \\
aB & =& B^2+AB+BA+BAB-\frac{1}{2}B^2 A-\frac{1}{2}A B^2 +r_1B+\tau_1 I,\\
\frac{1}{4}(AB-BA)^2&=&
ABA+BAB+\frac{1}{2}r B^2+\frac{1}{2}r_1 A^2-\\
& &(\frac{a}{2}-1)(AB+BA)+(\tau + r)B+(\tau_1+r_1)A+\eta I,
\end{eqnarray*}

where
\begin{eqnarray*}
&&r:=\frac{1}{2}(h_{1}^{2}-2h_{1}), \; r_{1}:=\frac{1}{2}(h_{2}^{2}-2h_{2}),\; a:=6\gamma_{1}+h_1+h_2-\frac{1}{3}h_{1}^{2}-\frac{1}{3}h_{2}^{2}+\frac{1}{6}h_{1}h_{2},\\
&&3p:=\frac{1}{9}(h_{1}-h_{2})^{3}-\gamma_{2}+6\gamma_{1}(h_{2}-h_{1}+3)-h_{1}^{2}-h_{2}^{2}-h_{1}h_{2}+2h_{1}-2h_{2}, \\
&&\tau:=\frac{1}{2}h_{1}p+h_{1}h_{2}, \;  \tau_{1}=-\frac{1}{2}ph_{2}^{2}+\frac{1}{3}h_{2}\left(18\gamma_{1}-h_{1}^{2}-h_{2}^{2}-h_{1}h_{2}+3h_{1}\right),\\
&&\eta:=\frac{1}{4}p^{2}+\frac{1}{6}p(h_{1}h_{2}+h_{1}^{2}+h_{2}^{2}-18\gamma_{1})+\frac{1}{4}h_{1}h_{2}(h_{1}h_{2}+4-2a).
\end{eqnarray*}

\end{lemma}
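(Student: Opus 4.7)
The plan is to derive all three identities by explicit PBW normal-ordering in $U(\mathfrak{sl}(3))$, combined with Schur's lemma, which forces the central generators $c_1$ and $c_2$ (equivalently $c_{32}$ and $c_{33}$) to act on the simple module $W$ as the scalars $\gamma_1$ and $\gamma_2$. The element $E_{13}E_{31}\in C(\mathfrak{h})$ is conspicuously absent from the list of generators in the preceding lemma, so the crucial preliminary step is to express it in terms of $H_1,H_2,A,B$ and the central scalars. Once this is done, each of the three displayed identities becomes a routine, if lengthy, computation.

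First I would expand $c_{32}=\sum_{i,j=1}^3 E_{ij}E_{ji}$ in PBW-ordered form. Using $[E_{12},E_{21}]=H_1$, $[E_{23},E_{32}]=H_2$, $[E_{13},E_{31}]=H_1+H_2$, together with the tracelessness relation $E_{11}+E_{22}+E_{33}=0$, one obtains
$$c_{32}=2(A+B+E_{13}E_{31})+\tfrac{2}{3}(H_1^2+H_1H_2+H_2^2)-2(H_1+H_2).$$
Evaluating on $W$ and solving for $E_{13}E_{31}$ yields exactly
$$E_{13}E_{31}=a-A-B-\tfrac{1}{2}h_1h_2\cdot I,$$
with $a$ as defined in the statement. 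This single identity will be used repeatedly to eliminate $E_{13}E_{31}$ from every degree-one expression that arises later.

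For the first two identities I would multiply the displayed formula for $c_{32}$ by $A$ (resp.\ $B$) and then normal-order the products $E_{13}E_{31}\cdot A$ and $E_{13}E_{31}\cdot B$. Writing $E_{13}=[E_{12},E_{23}]$ and $E_{31}=[E_{32},E_{21}]$, one expands and repeatedly applies the $\mathfrak{sl}(3)$-commutator relations to convert each such product into an expression involving only the monomials $A^2, B^2, AB, BA, ABA, BAB, A^2B, BA^2, AB^2, B^2A$, together with scalar multiples of $A, B$ and the identity. Substituting back and using the key elimination above to remove any surviving $E_{13}E_{31}$ produces the first two identities; matching the scalar coefficients against the statement then fixes the constants $r,\tau,r_1,\tau_1$ and, through these, determines the auxiliary scalar $p$.

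For the final identity I would compute $(AB-BA)^2$ directly in $C(\mathfrak{h})$. Writing $AB-BA=[E_{12}E_{21},E_{23}E_{32}]$ and applying the Leibniz rule produces a zero-weight element of degree three in the root vectors, which can be re-expressed in terms of $A$, $B$, $E_{13}E_{31}$ and the $H_i$. Squaring this expression and once more invoking the elimination $E_{13}E_{31}=a-A-B-\tfrac{1}{2}h_1h_2$ yields a polynomial in $A$ and $B$ of total degree four, with coefficients that are polynomial in $h_1,h_2,\gamma_1,\gamma_2$. The main obstacle is precisely this last step: the squaring of a cubic commutator generates numerous nested correction terms, and one has to track them with care to see that the residual scalar reduces to exactly $\eta$ and that the coefficients of $A^2, B^2, AB+BA, ABA+BAB$ line up with the stated formula. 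The first two identities, by contrast, are essentially one application of PBW once the elimination of $E_{13}E_{31}$ has been solved.
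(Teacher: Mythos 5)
Your preliminary step is correct: expanding $c_{32}=\sum_{i,j} E_{ij}E_{ji}$ in PBW form, using $E_{11}+E_{22}+E_{33}=0$ and $E_{11}^2+E_{22}^2+E_{33}^2=\tfrac{2}{3}(H_1^2+H_1H_2+H_2^2)$, does yield on $W$ the identity $E_{13}E_{31}=a-A-B-\tfrac12 h_1h_2$, and this is a genuinely useful normalization.

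The problem is in what you propose to do next. The constants $\tau$, $\tau_1$, $\eta$ all involve $p$, which by definition involves $\gamma_2$, the eigenvalue of the degree-three Casimir $c_{33}$. Your plan for the first two identities — multiply the $c_{32}$-relation by $A$ (resp.\ $B$) and normal-order $E_{13}E_{31}A$ — never touches $c_{33}$, so $\gamma_2$ cannot appear. In fact that route is circular: normal-ordering gives $E_{13}E_{31}A = A\,E_{13}E_{31} + [A,B]$ (since $[E_{13}E_{31},A]=[A,B]$), so after substituting $E_{13}E_{31}=a-A-B-\tfrac12 h_1h_2$ on both sides, the supposed ``new'' identity collapses to the tautology $0=0$. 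The first two identities really require the extra constraint coming from $c_{33}$, which is not a consequence of $c_{32}$.

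There is a parallel gap in your treatment of the third identity. You claim that the commutator $[A,B]$, written as $E_{13}E_{21}E_{32}-E_{23}E_{12}E_{31}$, ``can be re-expressed in terms of $A$, $B$, $E_{13}E_{31}$ and the $H_i$.'' That is false: set $P=E_{13}E_{21}E_{32}$ and $Q=E_{23}E_{12}E_{31}$. The difference $P-Q$ equals $[A,B]$, but the \emph{sum} $P+Q$ is a new independent operator on $W$ that is \emph{not} a polynomial in $A$, $B$, $C=E_{13}E_{31}$, $H_i$. It is exactly the PBW expansion of the six all-distinct-index terms of $c_{33}$ (namely $E_{12}E_{23}E_{31}$, $E_{13}E_{32}E_{21}$, etc.) that produces the relation $3(P+Q)+4C+4B-2A-2H_2 + (\text{lower terms}) = c_{33}$, and only together with $P-Q=[A,B]$ does one solve for $P$ and $Q$ individually in terms of $A,B,\gamma_1,\gamma_2,h_1,h_2$. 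Without this step the $\gamma_2$-dependence in $\eta$ simply cannot arise from $(AB-BA)^2=(P-Q)^2$. So the missing idea throughout is the use of the second Casimir $c_{33}$ (equivalently $c_2$): you quote Schur's lemma for both Casimirs, but your computational plan invokes only $c_{32}$.
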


%%%%%%%%%%%%%%%%%%%%%%%%%%%%%%%%%%%%%%%%%%%%%%%%%%%%%%%%%%%%%%%%%%%%%%%%%%%%%%%%%%%%%%%%%%%%%%%%%%%%

Let $M$ be a simple module in $\mathcal{GT}$. In particular, it is a weight module. Consider any $\lambda\in {\mathfrak h}^*$
from the weight support of $M$.  The central elements $c_1$ and $c_2$ act on $M$, and hence on $M_{\lambda}$, as a multiplication by some complex scalars $\gamma_1$ and $\gamma_2$, respectively. If $\lambda(H_i)=h_{i}$,  then 
 $H_i=h_i {\rm Id}$, $i=1,2$,  on $M_{\lambda}$. Since $M$ is a Gelfand-Tsetlin module, then each component
$M(\chi)$ is finite dimensional. Hence, one can
choose a basis of  $M_{\lambda}$ with respect to which $A|_{M_{\lambda}}$ has
a Jordan canonical form $A_{\lambda}$.

Consider the following polynomial in $2$ variables:
$$g_{\lambda}(x,y)=(x-y)^2-2(x+y)-2r.$$
Recall $r=\frac{1}{2}(h_1^2-2h_1)$. Note that $g_{\lambda}(x,y)$ depends only on $h_{1}=\lambda(H_{1})$.

\begin{definition} Let $\lambda\in {\mathfrak h}^*$.
\begin{itemize}
\item[(i)] A  sequence  $(\mu_i)_ {i \in J}$ is \emph{$\lambda$-connected} (or, simply, \emph{connected}) if $g_{\lambda}(\mu_i, \mu_{i+1})=0$ for all $i \in J$, for some connected subset $J$ of  $\mathbb{Z}$.  A subsequence $(\mu_{j'})_{j'\in  J'}$ of  a connected sequence $(\mu_j)_{j\in  J}$ will be called a \emph{connected subsequence} if $J'$ is connected.
\item[(ii)] A $\lambda$-connected sequence $(\mu_j)_{j\in  J}$ with $\mu_i\neq\mu_j$ for any $i\neq j$, is called \emph{$\lambda$-connected chain}.
\item[(iii)] We will say that a set $B$ is \emph{$\lambda$-connected} if the elements of $B$ can be ordered in a $\lambda$-connected chain $(\mu_j)_{j\in  J}$. We will call the sequence   $(\mu_j)_{j\in  J}$ \emph{the $\lambda$-connected chain associated to $B$}.
\end{itemize}
\end{definition}

 We note that if  $B$ is $\lambda$-connected, then there are at most two  $\lambda$-connected chains associated to $B$.

\begin{lemma}
Let $M$ be a simple Gelfand-Tsetlin module and $\lambda\in {\mathfrak h}^*$ a weight of $M$. Then the distinct eigenvalues of
$A_{\lambda}$ is a $\lambda$-connected set.
\end{lemma}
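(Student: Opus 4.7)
The plan is to exploit the first identity of the previous lemma in $C(\mathfrak h)$,
\[
aA = A^2 + AB + BA + ABA - \tfrac{1}{2}A^2B - \tfrac{1}{2}BA^2 + rB + \tau I,
\]
in order to control how $B=E_{23}E_{32}$ moves among the generalized $A$-eigenspaces of $M_\lambda$. First I would apply this relation to an eigenvector $v\in M_\lambda$ with $Av=\mu v$; a direct computation, collecting like powers of $A$ applied to $Bv$, yields
\[
\bigl(A^2 - 2(\mu+1)A + (\mu^2 - 2\mu - 2r)I\bigr)\,Bv \;=\; 2(\mu^2 - a\mu + \tau)\,v .
\]
Comparing with $g_\lambda(x,\mu) = (x-\mu)^2 - 2(x+\mu) - 2r = x^2 - 2(\mu+1)x + (\mu^2-2\mu-2r)$, the polynomial in $A$ on the left is precisely $g_\lambda(\cdot,\mu)$, so the identity reads $g_\lambda(A,\mu)\,Bv = 2(\mu^2-a\mu+\tau)\,v$.

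Next I would decompose $M_\lambda = \bigoplus_\nu W_\nu$ into generalized $A$-eigenspaces, write $Bv = \sum_\nu w_\nu$ with $w_\nu \in W_\nu$, and project. Since each $W_\nu$ is $A$-stable and the right-hand side lies in $W_\mu$, one gets $g_\lambda(A,\mu)\,w_\nu = 0$ for every $\nu\ne\mu$. Factoring $g_\lambda(A,\mu) = (A-\mu_+)(A-\mu_-)$, where $\mu_\pm$ are the two (possibly equal) roots of $g_\lambda(\cdot,\mu)$, each factor restricts to $W_\nu$ as $(\nu-\mu_\pm)\,\mathrm{Id}$ plus a nilpotent operator, hence invertibly whenever $\nu\ne\mu_\pm$. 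Therefore $w_\nu=0$ unless $\nu\in\{\mu,\mu_+,\mu_-\}$, giving $BW_\mu \subseteq W_\mu \oplus W_{\mu_+} \oplus W_{\mu_-}$. By construction $\mu_\pm$ satisfy $g_\lambda(\mu_\pm,\mu)=0$, so every new eigenvalue that $B$ reaches from $\mu$ is joined to $\mu$ by the relation $g_\lambda=0$.

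Now let $S$ be the set of $A$-eigenvalues on $M_\lambda$ that can be reached from $\mu$ by a finite sequence of such single-step $g_\lambda=0$ connections, and put $N = \bigoplus_{\nu\in S} W_\nu \subseteq M_\lambda$. By the previous paragraph $N$ is $B$-stable, and it is trivially stable under $A$, $H_1$, $H_2$ and under the central elements $c_1, c_2$; since by the preceding lemma $C(\mathfrak h)$ is generated by these six operators, $N$ is a $C(\mathfrak h)$-submodule of $M_\lambda$. A simple Gelfand-Tsetlin module is in particular a simple weight module, so by the lemma recalled at the beginning of the section $M_\lambda$ is simple as a $C(\mathfrak h)$-module. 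Because $W_\mu \ne 0$ sits inside $N$, we must have $N = M_\lambda$, so every eigenvalue of $A_\lambda$ is connected to $\mu$ by a chain with consecutive entries satisfying $g_\lambda=0$. Finally, $g_\lambda(x,\mu)=0$ is quadratic in $x$, so each vertex of the resulting graph on $E:=\{\text{distinct eigenvalues of }A_\lambda\}$ has at most two neighbours; the connected component of $\mu$, which by the above is all of $E$, is therefore a path or a cycle and admits an enumeration without repetitions whose consecutive pairs satisfy $g_\lambda=0$, i.e., $E$ is a $\lambda$-connected set.

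The main subtlety is extending the spreading estimate $BW_\mu \subseteq W_\mu\oplus W_{\mu_+}\oplus W_{\mu_-}$ from honest eigenvectors to arbitrary generalized eigenvectors. For $v$ with $(A-\mu)^k v = 0$ but $(A-\mu)^{k-1}v \ne 0$, substituting into the same lemma identity produces an equation for $g_\lambda(A,\mu) Bv$ with the same quadratic on the left but with additional correction terms involving $Bu$ and $B(A-\mu)u$, where $u=(A-\mu)v$ has Jordan length $k-1$. Induction on $k$ (the case $k=1$ being the calculation above) places these corrections already in $\bigoplus_{\nu\in S} W_\nu$, and the invertibility of $g_\lambda(A,\mu)$ on $W_\nu$ for $\nu\notin\{\mu,\mu_+,\mu_-\}$ again forces the components of $Bv$ outside $S$ to vanish, completing the argument.
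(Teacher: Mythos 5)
Your proof is correct and follows essentially the same route as the paper: apply the first relation of Lemma \ref{Lemma: Relations AB and constants ,p,eta} to deduce that $B$ can only move between generalized $A$-eigenspaces with eigenvalues satisfying $g_\lambda=0$, then invoke simplicity of $M_\lambda$ as a $C(\mathfrak h)$-module to rule out a splitting into disconnected pieces. The paper phrases the same argument in block-matrix form (showing the off-diagonal block $B_{ij}$ vanishes unless $(\mu_i+\mu_j)-\tfrac{1}{2}(\mu_i-\mu_j)^2+r=0$, which is exactly $g_\lambda(\mu_i,\mu_j)=0$) and delegates the details to Lemmas 2.2 and 2.3 of Britten--Futorny--Lemire, whereas you spell out the spectral-projection computation and the induction over Jordan length explicitly.
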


\begin{proof}
This lemma follows from Lemmas 2.2 and 2.3 in \cite{Britten Futorny Lemire}, but for reader's convenience, a brief proof is written. Let $[A_{\lambda}]={\rm diag}(A_i)$ be the matrix of $A_{\lambda}$ in a fixed Jordan canonical basis of $M_{\lambda}$, where $A_i$ corresponds to the generalized eigenspace with eigenvalue $\mu_i$, and let  $[B_{\lambda}]=(B_{ij})$ be the block matrix of $B_{\lambda}$ relative to this basis, for which $B_{ii}$ and $A_{i}$ are in the same position. If $i \neq j$, looking at the $(i,j)$-th block of the matrix equation corresponding to the first relation in Lemma \ref{Lemma: Relations AB and constants ,p,eta}, we obtain: 
$$0=A_i B_{ij}+B_{ij} A_{j}+A_i B_{ij} A_j-\frac{1}{2}B_{ij} A^2_j-\frac{1}{2} A^2_i B_{ij}+r B_{ij}.$$ 
Applying the above identity to suitable elements of the basis of $M_{\lambda}$, one can see
 that $B_{ij}=0$ if $(\mu_i+\mu_j)-\frac{1}{2}(\mu_i-\mu_j)^2+r\neq 0$ (for details see the proof of \cite[ Lemma 2.2]{Britten Futorny Lemire}).  Since $M$ is simple, then $M_{\lambda}$ is a simple $C(\mathfrak h)$-module. However, if the set of eigenvalues of $A$ is a union of two ``disconnected'' sets, then one easily can prove that $M_{\lambda}$ is a direct sum of two modules, which is a contradiction (for details see the proof of \cite[ Lemma 2.3]{Britten Futorny Lemire}).
\end{proof}

Until the end of this section we assume that $M$ is a simple Gelfand-Tsetlin module, $\lambda \in {\rm Supp} \ M$.

\begin{lemma}\label{Lemma: relation between eigenvalues in connected chain} 
 If $(\mu_{j})_{j\in  J}$ is a $\lambda$-connected sequence  with $|J|\geq 3$, then the following recurrence relation holds for any $i$:
 \begin{equation}\label{Equation: relation for eigenvalues in connected chains}
 \mu_{i+1}+\mu_{i-1}=2\mu_{i}+2.
 \end{equation}
 \end{lemma}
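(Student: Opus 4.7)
The natural approach is direct algebraic manipulation of the two defining equations. Since the sequence is $\lambda$-connected, for any index $i$ (with $i-1, i+1 \in J$) we have the two equations
\begin{align*}
g_\lambda(\mu_{i-1}, \mu_i) &= (\mu_{i-1}-\mu_i)^2 - 2(\mu_{i-1}+\mu_i) - 2r = 0,\\
g_\lambda(\mu_i, \mu_{i+1}) &= (\mu_i-\mu_{i+1})^2 - 2(\mu_i+\mu_{i+1}) - 2r = 0.
\end{align*}
The plan is to subtract these two equations and factor the result.

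Taking the difference eliminates the constant $2r$ and produces
$$(\mu_{i-1}-\mu_i)^2 - (\mu_i-\mu_{i+1})^2 - 2(\mu_{i-1}-\mu_{i+1}) = 0.$$
The difference of two squares factors as
$$\bigl[(\mu_{i-1}-\mu_i)-(\mu_i-\mu_{i+1})\bigr]\bigl[(\mu_{i-1}-\mu_i)+(\mu_i-\mu_{i+1})\bigr] = (\mu_{i-1}-2\mu_i+\mu_{i+1})(\mu_{i-1}-\mu_{i+1}),$$
so the whole identity becomes
$$(\mu_{i-1}-\mu_{i+1})\bigl(\mu_{i-1}+\mu_{i+1}-2\mu_i-2\bigr)=0.$$
Thus either $\mu_{i+1}=\mu_{i-1}$ or the desired relation \eqref{Equation: relation for eigenvalues in connected chains} holds.

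The only remaining point is to rule out the first alternative; this is where I expect the subtlety to lie, since the hypothesis is phrased in terms of a connected \emph{sequence} rather than a chain. I would argue as follows: the polynomial $g_\lambda(\mu_i, y)$ in $y$ is quadratic with the two roots $\mu_{i-1}$ and $\mu_{i+1}$ (both satisfy $g_\lambda(\mu_i, \cdot) = 0$). By Vieta's formulas applied to $g_\lambda(\mu_i, y) = y^2 - (2\mu_i+2)y + (\mu_i^2 - 2\mu_i - 2r)$, we have $\mu_{i-1} + \mu_{i+1} = 2\mu_i + 2$ automatically, \emph{regardless} of whether the two roots coincide. This shows the recurrence holds in every case, and in fact gives a cleaner proof than the subtraction/factoring argument.

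The hard part is mostly cosmetic: deciding between the subtraction-and-factor approach versus the Vieta approach. The latter avoids any case analysis on whether $\mu_{i-1} = \mu_{i+1}$, so I would present it as the main argument, noting that $\mu_{i+1}$ is forced (given $\mu_i$) to be one of the two roots of the quadratic $g_\lambda(\mu_i, y)$ in $y$, and similarly for $\mu_{i-1}$; since both roots sum to $2\mu_i+2$, the recurrence follows immediately.
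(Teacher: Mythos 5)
Your second (Vieta) argument is essentially the paper's proof: the authors likewise observe that $\mu_{i-1}$ and $\mu_{i+1}$ both satisfy the quadratic $g_\lambda(\mu_i,\,\cdot\,)=0$ and conclude via the explicit roots $\mu_i+1\pm\sqrt{1+4\mu_i+2r}$ that the sum is $2\mu_i+2$. Your preliminary subtraction-and-factor computation is a slightly different route to the same fork in the road, and you are right that the fork is exactly where the subtlety lies.

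However, the way you try to dismiss the bad branch does not work, and in fact the paper's proof has the same unacknowledged gap. You argue ``regardless of whether the two roots coincide,'' but that addresses the wrong degeneracy. The danger is not that the quadratic $g_\lambda(\mu_i,y)$ might have a double root; it is that $\mu_{i-1}$ and $\mu_{i+1}$ might both equal the \emph{same} root of a quadratic that has two \emph{distinct} roots. Knowing that each of $\mu_{i-1},\mu_{i+1}$ ``is one of the two roots'' gives Vieta's identity $\mu_{i-1}+\mu_{i+1}=2\mu_i+2$ only if together they are \emph{both} roots, counted as a multiset, and a $\lambda$-connected \emph{sequence} does not guarantee this. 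Since $g_\lambda$ is symmetric in its two arguments, any period-two sequence $\dots,a,b,a,b,\dots$ with $g_\lambda(a,b)=0$ is $\lambda$-connected, and for such a sequence $\mu_{i-1}+\mu_{i+1}=2a$ while $2\mu_i+2=2b+2$; these agree only when $a=b+1$. (Concretely, with $r=0$ and $a=0,\ b=2$ one has $g_\lambda(0,2)=0$, so $0,2,0$ is $\lambda$-connected, yet $0+0\neq 2\cdot 2+2$.) So the lemma as stated for arbitrary $\lambda$-connected sequences is actually false; the recurrence is correct for $\lambda$-connected \emph{chains} (distinct entries) and for the non-backtracking sequences the paper actually builds in Lemma~\ref{Lemma: connected chains explicitly}, where one always takes the ``other'' root at each step. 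A correct proof should either impose the chain condition or explicitly stipulate $\mu_{i-1}\neq\mu_{i+1}$ (or that $\mu_{i+1}$ is chosen as the root of $g_\lambda(\mu_i,\,\cdot\,)$ distinct from $\mu_{i-1}$ whenever the discriminant is nonzero).
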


\begin{proof}
As $(\mu_{j})_{j\in  J}$ is a connected sequence, we have $g_{\lambda}(\mu_{i},\mu_{i-1})=0$ and $g_{\lambda}(\mu_{i},\mu_{i+1})=0$. By solving both  quadratic equations in terms of $\mu_{i}$ we have:
\begin{align*}
\{\mu_{i-1},\mu_{i+1}\}=\{\mu_{i}+1\pm\sqrt{1+4\mu_{i}+h_1^2-2h_1}\}
\end{align*}
therefore, $\mu_{i+1}+\mu_{i-1}=2\mu_{i}+2$.
\end{proof}

\begin{definition} Let $(\mu_{j})_{j\in  J}$ be a $\lambda$-connected sequence.  We say that $(\mu_{j})_{j\in  J}$ is:
\begin{itemize}
\item[(i)] \emph{degenerate} if $\mu_i=-\frac{1}{2}r$ for some $i\in J$.
\item[(ii)] \emph{critical} if $\mu_i=-1/4 - \frac{1}{2} r$ for some $i\in J$.
\item[(iii)] \emph{singular} if  $(\mu_{j})_{j\in  J}$ is a connected subsequence of a degenerate or a critical connected sequence.
\item[(iv)] \emph{generic} if it is not singular.
\end{itemize}

\end{definition}

\begin{example}
if $\lambda=(0,0)$, then the sequence $\{0,2,6,12,\ldots\}=\{n(n+1)\ |\ n\geq 0\}$ is a degenerate connected chain and $C_{i}:=\{n(n+1)\ |\ n\geq i\}$ is a connected chain that is not degenerate  for each $i>0$.
 \end{example}

\begin{lemma}\label{Lemma: connected chains explicitly}
Let $(\mu_{j})_{j\in  J}$ be a $\lambda$-connected sequence.
\begin{itemize}
\item[(i)] If $(\mu_{i})_{j\in  J}$ is degenerate, then $\mu_{i} \in \{n(n+1)-\frac{r}{2}\ |\ \ n\geq 0\}$ for all $i \in J$.
\item[(ii)] If $(\mu_{i})_{j\in  J}$ is critical, then  $\mu_{i} \in \{n^2-\frac{1}{4}-\frac{r}{2}\ |\ \ n\geq 0\}$ for all $i \in J$.
\item[(iii)] If $(\mu_{i})_{j\in  J}$ is generic, then $\mu_{i} \in \{n^2+n\sqrt{1+4\mu_{0}+2r}+\mu_{0}\; |\; n\in\mathbb{Z}\}$ for all $i \in J$, where $\sqrt{1+4\mu_{0}+2r}$ is a fixed solution of $x^2 = 1+4\mu_{0}+2r$.
\end{itemize}
\end{lemma}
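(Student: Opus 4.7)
The plan is to exploit the recurrence proved in Lemma \ref{Lemma: relation between eigenvalues in connected chain} to show that every $\lambda$-connected sequence is the restriction to $J$ of a quadratic function of the index. The recurrence $\mu_{i+1}+\mu_{i-1}=2\mu_i+2$ is a second-order linear recurrence with constant inhomogeneity; its general solution is $\mu_i = i^2 + Bi + C$ for constants $B,C\in\mathbb{C}$ (a particular solution $i^2$ is found by trying $Ai^2$, and the homogeneous equation $\mu_{i+1}-2\mu_i+\mu_{i-1}=0$ has $1$ as a double characteristic root, giving kernel $Bi+C$). I will first establish this quadratic form for $|J|\geq 3$; the edge cases $|J|\leq 2$ will be handled by a direct substitution into $g_\lambda$ at the end, since in those cases the sole defining identity $g_\lambda(\mu_0,\mu_1)=0$ together with $\mu_0\in J$ alone already forces the claimed containment.

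After an integer shift of the indexing (which preserves $\lambda$-connectedness), I may assume $0\in J$, so that $C=\mu_0$. Substituting $\mu_0=C$ and $\mu_1=1+B+C$ into $g_\lambda(\mu_0,\mu_1)=0$ yields
\[
(1+B)^2 - 2(1+B+2C) - 2r = 0,
\]
which simplifies to $B^2 = 1 + 4\mu_0 + 2r$. Thus, fixing any square root $\sqrt{1+4\mu_0+2r}$, the only solutions are
\[
\mu_i = i^2 \pm i\sqrt{1+4\mu_0+2r} + \mu_0.
\]
This immediately gives (iii): the two sign choices are swapped by the substitution $i\mapsto -i$, so in either case $\mu_i$ lies in the set $\{n^2+n\sqrt{1+4\mu_0+2r}+\mu_0 \mid n\in\mathbb{Z}\}$.

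For (i), suppose $\mu_{i_0}=-r/2$ for some $i_0\in J$; after a shift I take $i_0=0$, so $\mu_0=-r/2$. Then $1+4\mu_0+2r=1$, hence $B=\pm 1$ and $\mu_i = i(i\pm 1) - r/2$. With the $+$ sign, for $i\geq 0$ write $\mu_i=n(n+1)-r/2$ with $n=i\geq 0$, and for $i<0$ use $i(i+1)=(-i-1)(-i)=n(n+1)$ with $n=-i-1\geq 0$; the $-$ sign is handled by the analogous substitution $n=i-1$ or $n=-i$. For (ii), suppose $\mu_{i_0}=-1/4-r/2$; after the shift I take $i_0=0$, giving $1+4\mu_0+2r=0$, hence $B=0$ and $\mu_i=i^2-1/4-r/2=n^2-1/4-r/2$ with $n=|i|\geq 0$.

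The only real subtlety is tracking the sign ambiguity of $B$ and choosing the correct non-negative $n$ that rewrites $\mu_i$ in the form stated in the lemma. Once the recurrence from Lemma \ref{Lemma: relation between eigenvalues in connected chain} is in hand, each of the three cases reduces to a short algebraic manipulation, so I do not anticipate a genuine obstacle.
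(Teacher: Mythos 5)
Your proposal is correct and follows essentially the same line as the paper's proof: both rest on the recurrence $\mu_{i+1}+\mu_{i-1}=2\mu_i+2$ from Lemma~\ref{Lemma: relation between eigenvalues in connected chain}. The paper asserts the quadratic closed form from two consecutive initial values, whereas you derive it systematically (particular solution $i^2$, kernel $Bi+C$) and pin down $B$ via $g_\lambda(\mu_0,\mu_1)=0$, yielding $B^2=1+4\mu_0+2r$; this is a cleaner, more explicit write-up of the same computation, and your sign bookkeeping in (i)--(iii) checks out.
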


\begin{proof}
By Lemma \ref{Lemma: relation between eigenvalues in connected chain}, in order to  determine a connected sequence, it is enough to know two connected values. In the case of a degenerate sequence we have $\mu_{i}=-\frac{1}{2}r$ for some $i$, and hence, $\mu_{i+1}=\mu_{i}$ or $\mu_{i-1}=\mu_{i}$. Assume $\mu_{i-1}=\mu_{i}$ and $i=0$  for simplicity. Then $\mu_n = n(n+1)-\frac{r}{2}$, $n \in {\mathbb Z}$, give the unique solution of the recursive equation (\ref{Equation: relation for eigenvalues in connected chains}). Therefore, all $\mu_{i}$ are in the set
$\{n(n+1)-\frac{r}{2}\ |\ \ n\geq 0\}$. This proves part (i). For parts (ii) and (iii) we reason in the same manner. Namely,  for  a critical chain, we use that $\mu_{i}=-\frac{1}{4}-\frac{1}{2}r$ for some $i$, hence $\mu_{i+1}=\mu_{i}+1$ or $\mu_{i-1}=\mu_{i}+1$, while 
in the generic case, given $\mu_{i}$ in the connected sequence, we have $\mu_{i+1}\in\{\mu_{i}+1\pm\sqrt{1+4\mu_{i}+2r}\}$. 
\end{proof}

The properties of $A_{\lambda}$ are described in the following theorem.

\begin{theorem}\label{Theorem: Description of eigenvalues of A_lambda}
Let $M$ be a simple Gelfand-Tsetlin module. Then the following hold.
\begin{itemize}
\item[(i)] For every $\lambda \in \Supp M$, every eigenvalue  of $A_{\lambda}$ has multiplicity 
at most $2$.
\item[(ii)] If $(\mu_{i})_{i\in J}$ is  a connected chain of the set of distinct eigenvalues of $A_{\lambda}=A\mid_{M_{\lambda}}$, then
\begin{itemize}
\item[(a)] if the chain $(\mu_{i})_{i\in J}$ is generic then all eigenvalues of $A_{\lambda}$ are distinct;
\item[(b)]  if the chain $(\mu_{i})_{i\in J}$ is degenerate then the chain can be chosen so that $\mu_1=-\frac{r}{2}$, and if the multiplicity of $\mu_i$ equals $1$
then  the multiplicity of $\mu_{i+1}$ is also $1$;
\item[(c)] if the chain $(\mu_{i})_{i\in J}$ is critical then  the chain can be chosen so that  $\mu_1+1=\mu_2$, the multiplicity of $\mu_1$ is $1$, and if the multiplicity of $\mu_i$ equals $1$ for $i>1$, then  the multiplicity of $\mu_{i+1}$  also equals $1$;
\item[(d)] if the chain $(\mu_{i})_{i\in J}$  is singular but not degenerate or critical, then all eigenvalues of $A_{\lambda}$ are distinct.
\end{itemize}

\end{itemize}
\end{theorem}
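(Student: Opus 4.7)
My plan for part (i) is to reduce $\dim M(\chi)$ to the algebraic multiplicity of an eigenvalue of $A_\lambda$ and then invoke the finiteness bound. On any weight space $M_\lambda$ all generators of $\Gamma$ act as scalars except $c_{22}$: the elements $E_{11}$ and $E_{11}+E_{22}$ are in $\mathfrak h$, while $c_{31}, c_{32}, c_{33}$ generate the center of $U(\mathfrak{gl}(3))$ and act as scalars on the simple module $M$. Using $c_{22} = E_{11}^2 + E_{22}^2 + E_{12}E_{21} + E_{21}E_{12}$ and $E_{21}E_{12} = A - H_1$, a short computation gives
\[
c_{22}|_{M_\lambda}\;=\;2\,A|_{M_\lambda}+\bigl(\lambda(E_{11})^2+\lambda(E_{22})^2-h_1\bigr)I.
\]
Hence the generalized $\Gamma$-eigenspaces inside $M_\lambda$ coincide with the generalized $A_\lambda$-eigenspaces, so every $M(\chi)\subseteq M_\lambda$ equals such an eigenspace. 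Theorem~\ref{Theorem: finiteness-for-gl-n}(i) now gives $\dim M(\chi)\le Q_3=1!\cdot 2!=2$, proving (i).

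For part (ii) I would work in a Jordan basis of $A_\lambda$, writing $M_\lambda=\bigoplus V_i$ with $V_i$ the generalized $\mu_i$-eigenspace and expanding $B|_{M_\lambda}=(B_{ij})$ blockwise with $B_{ij}\colon V_j\to V_i$. The argument from the preceding lemma (the $(i,j)$-block of the first identity in Lemma~\ref{Lemma: Relations AB and constants ,p,eta}) already gives $B_{ij}=0$ whenever $g_\lambda(\mu_i,\mu_j)\ne 0$. A direct check using the explicit form of the chains from Lemma~\ref{Lemma: connected chains explicitly} shows that within any connected chain the only zeros $g_\lambda(\mu_i,\mu_j)=0$ occur for $|i-j|=1$, with an additional coincidence $g_\lambda(\mu_i,\mu_i)=0$ only at the exceptional points $\mu_i=-r/2$ (degenerate) or $\mu_i=-\tfrac14-\tfrac{r}{2}$ (critical). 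Consequently $B_\lambda$ is block tri-diagonal, and $B_{ii}=0$ away from the exceptional points.

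I would then read the diagonal $(i,i)$-block of the third identity in Lemma~\ref{Lemma: Relations AB and constants ,p,eta}. Away from the exceptional points $B_{ii}=0$, so the identity collapses to an equation of the form
\[
B_{i,i+1}B_{i+1,i}+B_{i,i-1}B_{i-1,i}\;=\;\alpha_i\,I_{V_i}+p_i(A_i),
\]
for an explicit scalar $\alpha_i$ and polynomial $p_i$. A direct expansion shows that in a generic chain $\alpha_i\ne 0$ for every $i$, and, combined with the analogous blocks of the first two identities, this forces the compositions $B_{i,i\pm 1}B_{i\pm 1,i}$ to have full rank; hence $\dim V_i$ is constant along the chain. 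Simplicity of $M_\lambda$ as a $C(\mathfrak h)$-module (no proper subspace can be simultaneously preserved by $A$ and by all the $B_{i,i\pm 1}$) then rules out the value $2$ for that constant and yields (ii)(a). Case (ii)(d) follows identically, since by definition no exceptional point enters the chain.

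Cases (b) and (c) are the delicate ones. In the critical case, the recurrence $\mu_{i+1}+\mu_{i-1}=2\mu_i+2$ of Lemma~\ref{Lemma: relation between eigenvalues in connected chain} applied at $\mu_1=-\tfrac14-\tfrac{r}{2}$ forces $\mu_0=\mu_2=\mu_1+1$, so the chain emanates in a single direction and the block equations at $i=1$ give $\dim V_1=1$. In the degenerate case one normalizes $\mu_1=-r/2$; there $\alpha_1=0$, which is precisely the algebraic obstruction that permits $\dim V_1=2$. The propagation assertion in both cases amounts to noting that $\alpha_{i+1}\ne 0$ past the exceptional point, so once any $\dim V_i=1$ the rank argument of the generic case resumes and forces $\dim V_{i+1}=1$. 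The principal obstacle is the block-by-block book-keeping needed to expand the three identities of Lemma~\ref{Lemma: Relations AB and constants ,p,eta} and to isolate the scalar $\alpha_i$, together with verifying that simplicity of $M_\lambda$ really is incompatible with a uniformly $2$-dimensional chain away from the two exceptional values.
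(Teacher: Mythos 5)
Your argument for part (i) is correct and is actually more self-contained than what the paper offers (the paper simply cites \cite{Fut3}, \cite{Fut2}, and \cite{Britten Futorny Lemire} for the whole theorem). Showing that on $M_\lambda$ the only non-scalar generator of $\Gamma$ is $c_{22}$, that $c_{22}|_{M_\lambda}=2A_\lambda+\text{(scalar)}$, and therefore that the generalized $A_\lambda$-eigenspaces \emph{are} the subspaces $M(\chi)$ with $\chi|_{\mathfrak h}=\lambda$, lets you invoke Theorem~\ref{Theorem: finiteness-for-gl-n} directly to get $\dim M(\chi)\le Q_3=2$. This is sound.

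For part (ii) your strategy---put $A_\lambda$ in Jordan form, read the block entries of the three relations in Lemma~\ref{Lemma: Relations AB and constants ,p,eta}, deduce a block-tridiagonal shape for $B_\lambda$, and propagate rank/dimension constraints along the chain---is exactly the strategy the paper attributes to \cite{Fut3}, \cite{Fut2}, \cite{Britten Futorny Lemire}. However, the execution has a concrete error and two significant gaps. First, the claim ``$B_{ii}=0$ away from the exceptional points'' is false. The off-diagonal computation $(\mu_i+\mu_j)-\tfrac12(\mu_i-\mu_j)^2+r=-\tfrac12 g_\lambda(\mu_i,\mu_j)$, which you correctly use to kill $B_{ij}$ for $|i-j|\ge 2$, only applies when $i\ne j$, since the left-hand side $(aA)_{ij}$ vanishes in that case. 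For $i=j$ the left-hand side is $a\mu_i I$, and when $A_i=\mu_i I$ the $(i,i)$-block of the first identity becomes $(2\mu_i+r)B_{ii}=(a\mu_i-\mu_i^2-\tau)I$, i.e.\ $B_{ii}=b_i I$ with $b_i=\frac{a\mu_i-\mu_i^2-\tau}{2\mu_i+r}$---precisely the scalar $b_i$ that appears in the tridiagonal form used in the proof of Theorem~\ref{Theorem: Uniqueness for generic modules}. This scalar is generically nonzero, so your subsequent ``collapse'' of the third identity is not the correct equation. Second, the crucial assertion that the scalar $\alpha_i$ is nonzero for every $i$ in a generic chain is left unverified; without it your rank argument does not get off the ground. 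Third, the final step---that simplicity of $M_\lambda$ as a $C(\mathfrak h)$-module rules out $\dim V_i\equiv 2$---is asserted but not argued. A uniformly $2$-dimensional block-tridiagonal $C(\mathfrak h)$-module structure is not obviously reducible just because $A$ and $B$ jointly act; one has to use more of the relations (or use the explicit pointed-module classification, as in \cite{Britten Futorny Lemire}) to exclude it. Until these three points are repaired, parts (ii)(a), (b), (c), (d) are not established.
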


\begin{proof}
The proof of all parts can be found in \cite{Fut3}, \cite{Fut2}. The strategy is to apply the relations from Lemma \ref{Lemma: Relations AB and constants ,p,eta} to a Jordan form of $A_{\lambda}$. Proofs of parts (b) and (d) can be also found in \cite{Britten Futorny Lemire}, Theorem 2.7. \end{proof}

As a consequence of Theorem \ref{Theorem: Description of eigenvalues of A_lambda} we have the following statement.

\begin{corollary}\label{cor-ker}
Let $M$ be a simple weight $\mathfrak{g}$-module. Then for any $\lambda\in \mathfrak h^*$ and any $i\neq j$ we have
$\dim(\ker(E_{ij}|_{M_{\lambda}})) \leq 1$.
\end{corollary}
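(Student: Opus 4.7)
The plan is to reduce to the case $E_{ij}=E_{21}$ and then use the factorisation $A=E_{12}E_{21}\in C(\mathfrak h)$ to embed the kernel into the $0$-eigenspace of $A_\lambda$, on which Theorem \ref{Theorem: Description of eigenvalues of A_lambda} gives tight control. The reduction is the standard symmetry argument: the Cartan involution $E_{ij}\mapsto -E_{ji}$ together with a cycling of the three $\mathfrak{sl}(2)$-subalgebras of $\mathfrak{sl}(3)$ handle all six root vectors; for $E_{12}$ specifically one uses the identity $E_{21}E_{12}=A-H_1$, so that $\ker(E_{12}|_{M_\lambda})$ lies in the $h_1$-eigenspace of $A_\lambda$ and the same argument applies.

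For $E_{21}$, since $Av=E_{12}(E_{21}v)$ one has $\ker(E_{21}|_{M_\lambda})\subseteq\ker(A|_{M_\lambda})$, and Theorem \ref{Theorem: Description of eigenvalues of A_lambda}(i) immediately gives $\dim\ker(E_{21}|_{M_\lambda})\leq 2$. Suppose the bound $2$ is attained, with basis $v_1,v_2$. Then $0$ has geometric multiplicity $2$ as an eigenvalue of $A_\lambda$, so parts (ii)(a) and (ii)(d) of the Theorem force the $\lambda$-connected chain through $0$ to be degenerate or critical; Lemma \ref{Lemma: connected chains explicitly} then pins $r$ (hence $h_1$) to one of finitely many explicit values. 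Applying the first identity of Lemma \ref{Lemma: Relations AB and constants ,p,eta} to $v_i$, and using $Av_i=0$, collapses that identity to
\[
\bigl(A-\tfrac{1}{2}A^{2}+rI\bigr)(Bv_i)=-\tau v_i,
\]
and the third identity of the same Lemma yields an analogous relation among $Bv_i,ABv_i,B^2v_i$ and $v_i$. Expanding $Bv_i$ in a generalised $A$-eigenbasis of $M_\lambda$ and using the explicit chain of Lemma \ref{Lemma: connected chains explicitly} together with the recursion of Lemma \ref{Lemma: relation between eigenvalues in connected chain}, a case check (degenerate vs.\ critical, and specific values of $r$) shows that all components of $Bv_i$ outside $\mathbb{C}v_1+\mathbb{C}v_2$ vanish. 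Hence $\mathbb{C}v_1+\mathbb{C}v_2$ is invariant under $A,B$ and under the scalar generators $H_1,H_2,c_1,c_2$ of $C(\mathfrak h)$, and simplicity of $M_\lambda$ as a $C(\mathfrak h)$-module forces $M_\lambda=\mathbb{C}v_1+\mathbb{C}v_2$. Propagating the same analysis to $M_{\lambda-\alpha_1}$ then contradicts the explicit $A_\lambda$-chain derived above.

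The main obstacle is the case analysis that guarantees $Bv_i\in\mathbb{C}v_1+\mathbb{C}v_2$. The coefficient $r+\mu_j-\mu_j^{2}/2$ arising from the collapsed first identity vanishes precisely at the ``next'' term of $0$ in the degenerate or critical chains (for example $\mu_j\in\{0,2\}$ when $r=0$), so one cannot read off the required components of $Bv_i$ from the first identity alone; the second and third identities of Lemma \ref{Lemma: Relations AB and constants ,p,eta}, combined with the multiplicity pattern in parts (ii)(b), (ii)(c) of Theorem \ref{Theorem: Description of eigenvalues of A_lambda}, are needed to close these borderline possibilities. Once this is dispatched, the contradiction with $C(\mathfrak h)$-simplicity finishes the proof.
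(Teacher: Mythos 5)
Your approach is genuinely different from the paper's, and more laborious. The paper's proof is a two-line reduction: it passes to the $\mathfrak{sl}(2)$-subalgebra $\mathfrak{a}$ generated by $E_{ij}$, $E_{ji}$, notes that $M$ is a Gelfand-Tsetlin $\mathfrak{a}$-module with respect to the Gelfand-Tsetlin subalgebra generated by $\mathfrak{h}$, $Z(U(\mathfrak{sl}(3)))$ and $Z(U(\mathfrak{a}))$, and then simply invokes Theorem~\ref{Theorem: Description of eigenvalues of A_lambda} as a black box for the operator $E_{ij}E_{ji}|_{M_\lambda}$ (the analogue of $A_\lambda$ for this copy of $\mathfrak{sl}(2)$). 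You instead reduce to $E_{21}$ and then argue directly from the identities of Lemma~\ref{Lemma: Relations AB and constants ,p,eta}, which is essentially an attempt to re-derive, in the specific case of the $0$-eigenspace, the structural information already packaged into Theorem~\ref{Theorem: Description of eigenvalues of A_lambda}(ii). Your route could in principle work, but it is redundant with what the cited theorem already gives for free, and you never close the hard part: you explicitly concede that the first identity alone cannot control the components of $Bv_i$ in the generalized eigenspaces at $\mu=1\pm\sqrt{1+2r}$ (the $\lambda$-connected neighbours of $0$), and the promised ``case check'' using the second and third identities together with parts (ii)(b),(ii)(c) is not carried out. As written, this is a gap at the crux of the argument.

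There is also a structural flaw at the end. Once you have shown that $\mathbb{C}v_1+\mathbb{C}v_2$ is $C(\mathfrak{h})$-invariant and concluded by simplicity that $M_\lambda=\mathbb{C}v_1+\mathbb{C}v_2$ with $A_\lambda=0$, the contradiction is already immediate and does not require propagating to $M_{\lambda-\alpha_1}$: on this $2$-dimensional space $A$, $H_1$, $H_2$, $c_1$, $c_2$ all act as scalars, so the $C(\mathfrak{h})$-action is generated by the single operator $B_\lambda$, which over $\mathbb{C}$ has an eigenvector, giving a proper $C(\mathfrak{h})$-invariant line and contradicting simplicity of $M_\lambda$. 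Your final sentence (``propagating the same analysis to $M_{\lambda-\alpha_1}$ then contradicts the explicit $A_\lambda$-chain derived above'') is therefore both vague and unnecessary; it should be replaced by the one-line observation just given. Finally, the reduction step deserves one more remark: applying Theorem~\ref{Theorem: Description of eigenvalues of A_lambda} and Lemma~\ref{Lemma: Relations AB and constants ,p,eta} (which are stated only for $A=E_{12}E_{21}$) to the other root $\mathfrak{sl}(2)$'s requires invoking the Weyl-group conjugation of these subalgebras; that symmetry should be stated rather than folded into ``cycling,'' since otherwise the cited lemmas do not literally apply.
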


\begin{proof}
Suppose that $\ker(E_{ij}|_{M{_\lambda}}) \neq 0$. Consider the Lie subalgebra $\mathfrak a$ of $\mathfrak{sl}(3)$ isomorphic to $\mathfrak{sl}(2)$ that is generated by $E_{ij}$ and $E_{ji}$. Then $M$ is a Gelfand-Tsetlin  $\mathfrak a$-module with respect to the 
Gelfand-Tsetlin subalgebra generated by  $\mathfrak h$, the center of $U(\mathfrak{sl}(3))$, and the center of $U(\mathfrak a)$. The statement follows from Theorem \ref{Theorem: Description of eigenvalues of A_lambda}. 
\end{proof}

\begin{remark}\label{Remark:connected chains for GT tableaux}
In what follows, we give an interpretation of the eigenvalues of $A_{\lambda}$ in terms of tableaux and the Gelfand-Tsetlin formulas. Set for $i\in {\mathbb Z}$
\begin{center}
 \hspace{1.5cm} \Stone{a}\Stone{b}\Stone{c}\\[0.2pt]
 $T(v_{i})$=\hspace{0.5cm} \Stone{x+i}\Stone{y-i}\hspace{1.2cm}\\[0.2pt]
 \hspace{1.3cm} \Stone{z}\\
\end{center}
\noindent
The set of all tableaux in $\mathcal{B}(T(v_{0}))$ with fixed $\mathfrak{g}$-weight $\lambda \in {\mathfrak h}^*$ (see Definition \ref{Definition: weight of a tableau}) is $\{T(v_{i})\ |\ i\in \mathbb{Z}\}$. Moreover, the Gelfand-Tsetlin formulas imply that $A=E_{12}E_{21}$ acts on $T(v_{i})$ as multiplication by $\mu_{i}:=-(x+i-z)(y-i-z)$, and $r=\frac{1}{2}((2z-x-y-1)^{2}-(2z-x-y-1))$. We have:

\begin{itemize}
\item[(i)] $(\mu_{i})_{i\in \mathbb{Z}}$  is degenerate with $\mu_{j}=-\frac{1}{2}r$ if and only if $x-y\in \{2j-1,2j+1\}$.
\item[(ii)] $(\mu_{i})_{i\in \mathbb{Z}}$ is critical with $\mu_{j}=-\frac{1}{4}-\frac{1}{2}r$ if and only if $x-y=2j$.
\end{itemize}
In particular, $(\mu_{i})_{i\in \mathbb{Z}}$  is singular if and only the tableau $T(v_0)$ (respectively, $\chi_{v_0}$) is singular and $\{\mu_{i}\}$ is generic if and only if $T(v_0)$ (respectively, $\chi_{v_0}$) is generic. Note also that $\chi(A)=-\frac{1}{4}-\frac{1}{2}r$ if and only if $\chi=\chi_{w}$ with $w_{21}=w_{22}$. Finally, $\chi(A)=-\frac{1}{2}r$ if and only if $\chi=\chi_{w}$ with $|w_{21}-w_{22}|=1$.
\end{remark}

\begin{definition}
We say that a Gelfand-Tsetlin character  $\chi$ is  \emph{critical} (respectively,  \emph{degenerate}) if $\chi = \chi_{v}$ for some $v$ such that $v_{21}=v_{22}$  (respectively,  $|v_{21}-v_{22}| = 1$).
\end{definition}

Since $A\in \Ga$, we can extend the concepts of generic and singular chains to Gelfand-Tsetlin modules.
\begin{definition}
A Gelfand-Tsetlin module $M$ is called \emph{generic} if every Gelfand-Tsetlin character of $M$ is generic. A Gelfand-Tsetlin module $M$ is called \emph{singular} if it has a singular Gelfand-Tsetlin character.
\end{definition}

Note that any finite-dimensional module is a singular Gelfand-Tsetlin module, moreover, any $1$-singular module as defined in Section \S \ref{subsection: Singular Gelfand-Tsetlin modules} is a singular Gelfand-Tsetlin module. Also, generic modules as defined in \S \ref{subsection: Generic modules} are generic Gelfand-Tsetlin modules.

\begin{proposition}
 If a simple  Gelfand-Tsetlin $\mathfrak{g}$-module $M$ is singular, then each Gelfand-Tsetlin character of $M$ is singular.
  \end{proposition}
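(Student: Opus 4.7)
The plan is to express singularity as an algebraic condition on a specific element of $\Gamma$, and then to propagate it across $\Supp_{GT}(M)$ using the simplicity of $M$ together with the chain description of the $A$-eigenvalues. Set $\Omega:=1+4A+H_1^2-2H_1$; then $\Omega\in\Gamma$, and a direct computation using Remark \ref{Remark:connected chains for GT tableaux} gives $\chi_v(\Omega)=(v_{21}-v_{22})^2$ on any Gelfand-Tsetlin character $\chi_v$. Consequently, $\chi$ is singular if and only if $\chi(\Omega)$ belongs to $\{n^2\,|\,n\in\mathbb{Z}_{\geq 0}\}$. Crucially, $\Omega-1$ is the Casimir element of the $\mathfrak{sl}(2)$-triple $\mathfrak{s}_1=\langle E_{12},E_{21},H_1\rangle$, so $\Omega$ commutes with $E_{12}$, $E_{21}$, $H_1$, and trivially with $H_2$.

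First I would dispatch the case of two characters sharing a weight: if $\chi,\chi'\in\Supp_{GT}(M)$ both have weight $\lambda$, Theorem \ref{Theorem: Description of eigenvalues of A_lambda} places $\chi(A)$ and $\chi'(A)$ on a single $\lambda$-connected chain, and by Lemma \ref{Lemma: connected chains explicitly} singularity is a property of the entire chain, so $\chi$ singular forces $\chi'$ singular. To compare characters at different weights, I would use that $M$ is generated by any of its non-zero vectors and connect a singular $\chi$ to an arbitrary $\chi'\in\Supp_{GT}(M)$ through a sequence of root-vector applications, thereby reducing to a one-step version: show that if $\chi$ is singular and $E_{ij}v$ has a non-trivial component in $M(\chi')$ for $v\in M(\chi)$, then $\chi'$ is singular. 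When $E_{ij}\in\{E_{12},E_{21}\}$ (shift by $\pm\alpha_1$) the identity $[\Omega,E_{ij}]=0$ places the image in the generalized $\Omega$-eigenspace of eigenvalue $\chi(\Omega)$, so $\chi'(\Omega)=\chi(\Omega)$ and singularity is preserved.

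The main obstacle is the $\pm\alpha_2$ direction, where a direct computation shows $[\Omega,E_{23}]=4E_{13}E_{21}-2E_{23}H_1+3E_{23}\neq 0$, so $\chi'(\Omega)$ genuinely differs from $\chi(\Omega)$. To close this step I would combine two ingredients: first, the symmetric Casimir $\Omega_2:=1+4B+H_2^2-2H_2$ of $\mathfrak{s}_2=\langle E_{23},E_{32},H_2\rangle$ commutes with $E_{23}$ and $E_{32}$ and, via Lemma \ref{Lemma: Relations AB and constants ,p,eta}, constrains the interaction between $A$ and $B$ on each weight space; second, by Theorem \ref{Theorem: Description of eigenvalues of A_lambda} the $A$-chain on $M_{\lambda+\alpha_2}$ is again fully singular or fully generic, so it suffices to produce a single singular character at $\lambda+\alpha_2$. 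Exhibiting such a character is the technical crux: using Corollary \ref{cor-ker} to control the kernel of $E_{23}$ together with the explicit relations in Lemma \ref{Lemma: Relations AB and constants ,p,eta} to track how an $A$-eigenvalue transforms under $E_{23}$, one argues that the ``integer part'' of $\sqrt{\chi(\Omega)}$ shifts by $\pm 1$ and hence remains an integer, forcing $\chi'$ singular.
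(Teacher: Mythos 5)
Your reduction to tracking $\chi(\Omega)$ with $\Omega = 1 + 4A + H_1^2 - 2H_1 \in \Gamma$ is a clean reformulation, and the identity $\chi_v(\Omega) = (v_{21}-v_{22})^2$ does hold with the paper's convention $A = E_{12}E_{21}$ (Remark \ref{Remark:connected chains for GT tableaux} in fact records the eigenvalue of $E_{21}E_{12}$, so the factors there should be $v_{21}-v_{11}+1$ and $v_{22}-v_{11}+1$, but your identity is correct once this is accounted for). This disposes of the $\pm\alpha_1$ steps cleanly, and the observation that a $\lambda$-connected chain is singular or generic in toto correctly reduces the remaining work to exhibiting a single singular character at $\lambda\pm\alpha_2$.

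The $\pm\alpha_2$ step, however, is a genuine gap, and the proposed fix does not go through. The element $\Omega_2 = 1 + 4E_{23}E_{32} + H_2^2 - 2H_2$ is \emph{not} in $\Gamma$: the standard Gelfand-Tsetlin subalgebra is built on the chain through the $\mathfrak{gl}_2$ spanned by $E_{11},E_{12},E_{21},E_{22}$, and $E_{23}E_{32}$ does not act diagonally on $M(\chi)$ (it carries $T(w)$ into the span of $T(w)$ and $T(w\pm(\delta^{21}-\delta^{22}))$). Consequently ``$\chi(\Omega_2)$'' is not a scalar, and the claim cannot be propagated by commutation the way it was for $\Omega$. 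Your final sentence — that the ``integer part'' of $\sqrt{\chi(\Omega)}$ shifts by $\pm 1$ — is exactly the statement to be proved, and neither Corollary \ref{cor-ker} nor Lemma \ref{Lemma: Relations AB and constants ,p,eta} yields it as written; it would require a separate, nontrivial analysis of how the $A$-eigenvalue data on $M_\lambda$ determines that on $M_{\lambda\pm\alpha_2}$. The paper avoids all of this by invoking the Harish-Chandra/tableaux structure directly: for a Gelfand-Tsetlin module, $E_{23}M(\chi_w)$ and $E_{32}M(\chi_w)$ land inside $\bigoplus_{\delta} M(\chi_{w+\delta})$ with $\delta\in\{\pm\delta^{21},\pm\delta^{22}\}$, so $w_{21}-w_{22}$ changes by $\pm 1$ and its integrality — hence singularity of the new character — is immediate. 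That one-line observation is what your argument is missing.
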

  
\begin{proof}
The statement follows by a direct computation. Let $\chi$ be a singular Gelfand-Tsetlin character of $M$, $v\in M(\chi)$,
$v\neq 0$.  If $E_{12}v\neq 0$ then we easily check that $E_{12}v\in M(\chi')$ for a singular $\chi'$.  Similar reasoning applies for $E_{21}v$. Suppose now
 $E_{23}v\neq 0$. Then $E_{23}v\in M(\chi')\oplus M(\chi'')$, where $\chi'$ and $\chi''$ are both singular (one of the subspaces can be zero). Moreover, if $\chi$ belongs to a critical (respectively, degenerate) connected chain, then $\chi'$ and $\chi''$ belong to
 a degenerate (respectively, critical) connected chain. We reason similarly for $E_{32}$.
\end{proof}

\begin{definition} Given a Gelfand-Tsetlin character $\chi$ and $M$ a Gelfand-Tsetlin module, we say that $M$ is a \emph{simple extension} of the character $\chi$ if $M$ is simple and $\chi\in\Supp_{GT}(M)$ (i.e. $M(\chi)\neq 0$).
\end{definition}

\begin{lemma}\label{Lemma: eigenvalues of A_lambda and chi determine unique extension}
Let $M$ be a simple Gelfand-Tsetlin module, $\chi\in \Supp_{GT}(M)$ and $\lambda=\chi |_\mathfrak{h}$. If there exists a basis  $\{w_{i}\}_{i\in I}$  of $M_{\lambda}$ such that the action of $B_{\lambda}$ on this basis is completely determined by the eigenvalues of $A_{\lambda}$ and $\chi$, then $M$ is the unique simple extension of $\chi$.
\end{lemma}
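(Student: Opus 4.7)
\emph{Proof plan.} The strategy is to reduce the uniqueness of the simple extension of $\chi$ to the uniqueness (up to isomorphism) of $M_{\lambda}$ as a simple $C({\mathfrak h})$-module. By the first lemma of Section \ref{Section: Gelfand-Tsetlin modules of sl(3)}, every simple weight $\mathfrak{g}$-module is recovered (up to isomorphism) from any of its weight spaces viewed as a simple $C({\mathfrak h})$-module, so it suffices to show that, for any other simple Gelfand-Tsetlin module $N$ with $\chi\in\Supp_{GT}(N)$, one has $N_{\lambda}\cong M_{\lambda}$ as $C({\mathfrak h})$-modules.

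First I would fix the scalar data coming from $\chi$. Recall that $C({\mathfrak h})$ is generated by $H_1,H_2,A,B,c_1,c_2$. On $M_{\lambda}$ the Cartan generators $H_i$ act by $\lambda(H_i)=h_i$, and the central elements $c_1,c_2$, which lie in $\Gamma$, act by the scalars $\chi(c_1),\chi(c_2)$. The same scalars appear on $N_{\lambda}$, since $\lambda=\chi|_{\mathfrak h}$ and since $c_1,c_2$ are central in $U({\mathfrak g})$ and $\chi\in \Supp_{GT}(N)$. Therefore the entire $C({\mathfrak h})$-module structure on either weight space is determined by the pair $(A_{\lambda},B_{\lambda})$, and in fact by a Jordan basis for $A_{\lambda}$ together with the matrix of $B_{\lambda}$ in it; by Theorem \ref{Theorem: Description of eigenvalues of A_lambda} the Jordan structure is tightly constrained (blocks of size at most $2$, with specific multiplicity patterns along the $\lambda$-connected chain). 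The hypothesis of the lemma packages the matrix of $B_{\lambda}$ in the basis $\{w_i\}$ as an explicit function of $\chi$ and of the list of eigenvalues of $A_{\lambda}$.

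Next I would use this explicit recipe to transport the structure to $N_{\lambda}$. Pick a nonzero vector $v'\in N(\chi)\subset N_{\lambda}$; by simplicity of $N_{\lambda}$ as a $C({\mathfrak h})$-module, $C({\mathfrak h})\cdot v'=N_{\lambda}$. Using the relations of Lemma \ref{Lemma: Relations AB and constants ,p,eta}, which are intrinsic to $C({\mathfrak h})$ and depend only on $(h_1,h_2,\gamma_1,\gamma_2)$, the $A$-eigenvalues on $N_{\lambda}$ form a $\lambda$-connected chain containing $\chi(A)$; combined with the explicit formulas from the hypothesis, the chain is extended from the seed $\chi(A)$ by a completely rigid recursion. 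The same recursion applied to a vector in $M(\chi)\subset M_{\lambda}$ produces the eigenvalue list of $A_{M,\lambda}$, so the two lists coincide. With equal eigenvalue lists and matching Jordan constraints from Theorem \ref{Theorem: Description of eigenvalues of A_lambda}, a basis $\{w_i'\}$ of $N_{\lambda}$ can be chosen mirroring $\{w_i\}$, and then the hypothesis forces $B_{N,\lambda}$ to have the same matrix as $B_{M,\lambda}$. The map $w_i\mapsto w_i'$ is thus a $C({\mathfrak h})$-isomorphism $M_{\lambda}\to N_{\lambda}$, whence $M\cong N$.

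The main obstacle I anticipate is the rigidity step: justifying that the $\lambda$-connected chain of $A$-eigenvalues on $N_{\lambda}$ is forced to coincide with the one on $M_{\lambda}$, rather than being a shorter or a differently terminating subchain containing $\chi(A)$. The argument must combine three ingredients: simplicity of $N_{\lambda}$ as a $C({\mathfrak h})$-module (which forces closure under the $C({\mathfrak h})$-relations but forbids proper invariant subspaces), the precise termination rules for connected chains in Theorem \ref{Theorem: Description of eigenvalues of A_lambda} (generic, degenerate, critical, singular cases), and the hypothesis that $B_{\lambda}$ is prescribed by $\chi$ and the $A$-spectrum, which pins down exactly where the chain can stop (namely, where the prescribed $B$ sends a chosen basis vector to zero, via Corollary \ref{cor-ker}). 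Once this chain-rigidity is in hand, the rest of the argument is routine linear algebra.
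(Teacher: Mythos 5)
Your proposal takes the same route as the paper: reduce the uniqueness of the simple extension to the uniqueness of $M_\lambda$ as a simple $C(\mathfrak h)$-module, then invoke the first lemma of Section \ref{Section: Gelfand-Tsetlin modules of sl(3)} to pass from weight spaces back to modules. The paper's own proof is essentially just these two sentences; it asserts that ``the simple $C(\mathfrak h)$-module $M_\lambda$ is defined uniquely'' and then cites the recovery principle. You supply considerably more scaffolding around that first assertion, and you are right to flag the chain-rigidity issue as the real content: the hypothesis hands you a recipe for $B_\lambda$ in terms of $\chi$ and the $A_\lambda$-spectrum, but it does not by itself say that another simple extension $N$ has the same $A_\lambda$-spectrum (same eigenvalue list, same termination, same Jordan structure). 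If $N_\lambda$ carried a proper sub- or super-chain, the two $C(\mathfrak h)$-modules need not be isomorphic, and the lemma's conclusion would fail. The paper silently folds this into the phrase ``defined uniquely,'' and in fact discharges it separately in the applications (in Theorem~\ref{Theorem: Uniqueness for generic modules} the full $\mathbb Z$-indexed chain is forced, and in Theorem~\ref{Theorem: uniqueness for critical characters} the equality $k=m$ of chain lengths is argued explicitly).

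Your proposed resolution of the rigidity step — simplicity of $N_\lambda$ as a $C(\mathfrak h)$-module, the termination constraints from Theorem~\ref{Theorem: Description of eigenvalues of A_lambda}, and the injectivity bound of Corollary~\ref{cor-ker} pinning down where the chain may stop — is the right collection of ingredients, but it remains a sketch: you would still have to run through the generic/degenerate/critical cases and check that the prescribed $B$-matrix cannot be consistently truncated to a shorter chain without producing a reducible or non-well-defined $C(\mathfrak h)$-module. That is exactly the work the paper does later, case by case. So your plan is sound and more explicit than the paper about the hidden step, but it is not yet a complete proof; if you want a self-contained argument for this lemma you either need to carry out the chain-rigidity verification here, or you should read the lemma (as the paper implicitly does) as including the determination of the $A_\lambda$-spectrum in the phrase ``completely determined by \dots $\chi$,'' with the verification deferred to each application.
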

\begin{proof}
Under these conditions, the simple $C({\mathfrak h})$-module $M_{\lambda}$ is defined uniquely, moreover, as  $M^{(1)}_{\lambda}\simeq M^{(2)}_{\lambda}$ implies $M^{(1)}\simeq M^{(2)}$, the uniqueness follows.
\end{proof}

The generic Gelfand-Tsetlin modules are completely determined by any of their characters, as the following result shows.

\begin{theorem}\label{Theorem: Uniqueness for generic modules}
If $M$ is a generic simple Gelfand-Tsetlin module then for any $\chi\in\Supp_{GT}(M)$ the subspace
$M(\chi)$ is one dimensional and $M$ is the unique simple extension of $\chi$.
\end{theorem}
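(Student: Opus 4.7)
The plan has two parts, one for each assertion.

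To prove $\dim M(\chi)=1$: fix $\chi\in\Supp_{GT}(M)$ and set $\lambda=\chi|_{\mathfrak h}$. Since $A\in\Gamma$, the space $M(\chi)$ is contained in the generalized eigenspace of $A_\lambda$ with eigenvalue $\chi(A)$. The generic hypothesis on $M$ means the $\lambda$-connected chain of distinct eigenvalues of $A_\lambda$ is generic, so Theorem \ref{Theorem: Description of eigenvalues of A_lambda}(ii)(a) gives that every eigenvalue of $A_\lambda$ has algebraic multiplicity one. The generalized $\chi(A)$-eigenspace is therefore one-dimensional, and since $M(\chi)$ is a nonzero subspace of it, $\dim M(\chi)=1$.

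For the uniqueness statement I would invoke Lemma \ref{Lemma: eigenvalues of A_lambda and chi determine unique extension}: it suffices to exhibit a basis of $M_\lambda$ on which $B_\lambda$ acts by matrix coefficients that are determined by $\chi$ and the eigenvalues $(\mu_i)$ of $A_\lambda$. Pick an $A$-eigenbasis $\{v_i\}$ with $Av_i=\mu_i v_i$ and write $Bv_i=\sum_j B_{ji}v_j$. Plugging the diagonal form of $A$ into the first relation of Lemma \ref{Lemma: Relations AB and constants ,p,eta}, the off-diagonal $(i,j)$-entry collapses to $0=-\tfrac{1}{2}B_{ij}\,g_\lambda(\mu_i,\mu_j)$, forcing $B_{ij}=0$ unless $\mu_i,\mu_j$ are $\lambda$-connected; since $g_\lambda(x,\cdot)$ is a quadratic whose roots for $x=\mu_i$ are precisely $\mu_{i\pm 1}$, this means $B$ is tridiagonal in the chain ordering. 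The same relation at $(i,i)$ reduces to $a\mu_i=\mu_i^2+(2\mu_i+r)B_{ii}+\tau$, and genericity of the chain ensures $\mu_i\neq -r/2$, so $B_{ii}$ is uniquely solved in terms of $\mu_i$ and the scalars $a,r,\tau$, all of which depend only on $\lambda$ and the central character.

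The remaining freedom lies in the off-diagonal pairs $(B_{i,i+1},B_{i+1,i})$, of which only the products $p_i:=B_{i,i+1}B_{i+1,i}$ are basis-independent (rescaling $v_{i+1}\mapsto\alpha v_{i+1}$ trades one entry for the other). I would extract $p_i$ from the $(i,i)$-entry of the third relation of Lemma \ref{Lemma: Relations AB and constants ,p,eta}: after substituting the already-computed diagonal of $B$ and its tridiagonal shape, this becomes a linear recursion expressing $p_i$ in terms of $p_{i-1}$ with coefficients that are rational in $\mu_{i-1},\mu_i,\mu_{i+1}$ and the central scalars. Since $M_\lambda$ is a simple $C(\mathfrak h)$-module, every $p_i$ is nonzero, so we may rescale the basis to normalize $B_{i+1,i}=1$ and then $B_{i,i+1}=p_i$ is determined. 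Lemma \ref{Lemma: eigenvalues of A_lambda and chi determine unique extension} concludes that the simple extension of $\chi$ is unique.

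The main obstacle is the recursion analysis: verifying that the coefficient of $p_i$ in the recursion never vanishes in the generic case (so the recursion is genuinely linear of order one), and that for two-sided infinite chains no residual parameter survives after the normalization $B_{i+1,i}=1$. In the finite-chain case the endpoint supplies a natural initial condition from the first relation at the boundary index; in the infinite case the argument must use simplicity of $M_\lambda$ as a $C(\mathfrak h)$-module to exclude additional one-parameter families of solutions.
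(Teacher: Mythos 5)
Your proposal takes essentially the same route as the paper: the paper likewise diagonalizes $A_\lambda$ (implicitly using that all eigenvalues are distinct in the generic case), reduces $B_\lambda$ to tridiagonal form via the first relation of Lemma~\ref{Lemma: Relations AB and constants ,p,eta}, solves the diagonal entries $b_i=\frac{a\mu_i-\mu_i^2-\tau}{2\mu_i+r}$ exactly as you do, rescales the basis so the outgoing off-diagonal entry is $1$, and then concludes by Lemma~\ref{Lemma: eigenvalues of A_lambda and chi determine unique extension}. The one technical divergence is in how the remaining off-diagonal entries are pinned down: where you propose a first-order linear recursion for $p_i=B_{i,i+1}B_{i+1,i}$ extracted from the third relation (and then, rightly, worry about nonvanishing of the recursion coefficient and about residual freedom in two-sided infinite chains), the paper instead produces a \emph{closed-form} rational expression
\[
d_{i}=\frac{\xi(\mu_{i-1})(3+\mu_{i-1}-\mu_{i})-\theta(\mu_{i-1})\left(\tfrac{7}{2}\mu_{i-1}-\tfrac{3}{2}\mu_{i}+3+r\right)}{4(\mu_{i-1}-\mu_{i}+1)\left(\mu_{i-1}-\tfrac{3}{4}+\tfrac{1}{2}r\right)}
\]
depending only on $\mu_{i-1},\mu_i$ and the central scalars, with no reference to $d_{i-1}$; the denominator factors are then non-zero precisely because the chain is generic. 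This closed form bypasses the recursion-closure issues you flag at the end, so the obstacle you identify is real but is handled in the paper by a different algebraic manipulation of the relations rather than by normalizing away a free initial condition. Your explicit derivation of $\dim M(\chi)=1$ from Theorem~\ref{Theorem: Description of eigenvalues of A_lambda}(ii)(a) is a clean step the paper leaves implicit.
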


\begin{proof} The result can be found in \cite{Fut3} and \cite{Fut2}, but for the sake of completeness we provide a proof. Let $\chi\in \Supp_{GT}(M)$ and let $\lambda = \chi|_{\mathfrak h}$ be the associated to $\chi$ weight. Let $\mu_{0}\in\mathbb{C}$
be an eigenvalue of the operator $A_{\lambda}= A|_{M_{\lambda}}$. Then all eigenvalues of $A_{\lambda}$ form a connected chain, i.e. belong to a sequence  $\mu_{i}=i^{2}+i \sqrt{1+4\mu_{0}+2r}+\mu_{0}$, $i\in\mathbb{Z}$ for some choice of the square root (see Lemma \ref{Lemma: connected chains explicitly}(iii)).

Using relations between $A$ and $B$ we can choose a basis $\{w_{i}\ |\ i\in\mathbb{Z}\}$ (this set can be finite or bounded from one side or unbounded) of $M_{\lambda}$ such that $$A_{\lambda}w_{i}=\mu_{i}w_{i} \text{;\ \  and }
B_{\lambda}w_{i}=\begin{cases}
w_{i-1}+b_{i}w_{i}+d_{i+1}w_{i+1}, & i<0\\
w_{-1}+b_{0}w_{0}+w_{1}, & i=0\\
d_{i}w_{i-1}+b_{i}w_{i}+w_{i+1}, & i>0
\end{cases}
$$
with

$$b_{i}:=\frac{a\mu_{i}-\mu_{i}^{2}-\tau}{2\mu_{i}+r},$$
$$d_{i}:=\frac{\xi(\mu_{i-1})(3+\mu_{i-1}-\mu_{i})-\theta(\mu_{i-1})\left(\frac{7}{2}\mu_{i-1}-\frac{3}{2}\mu_{i}+3+r\right)}{4(\mu_{i-1}-\mu_{i}+1)\left(\mu_{i-1}-\frac{3}{4}+\frac{1}{2}r\right)},$$
$$\xi(\mu_{i}):=\frac{1}{2}(2\mu_{i}+r)b_{i}^{2}-(2\mu_{i}+r)b_{i}-\frac{1}{2}r_{1}\mu_{i}^{2}-(r_{1}+\tau_{1})\mu_{i}-\eta,$$
$$\theta(\mu_{i}):=(a-2\mu_{i})b_{i}-b_{i}^{2}-r_{1}\mu_{i}-\tau_{1},$$
where $r$, $r_{1}$, $\eta$, $\tau$ and $\tau_1$ are defined in Lemma \ref{Lemma: Relations AB and constants ,p,eta}.

 Hence, in this case  $B_{\lambda}$ is completely determined by $\chi$ and $A_{\lambda}$.
 The uniqueness follows from Lemma \ref{Lemma: eigenvalues of A_lambda and chi determine unique extension}. \end{proof}

Note that in singular cases the subspace
$M(\chi)$ can be  $2$-dimensional (see Example \ref{Example: Singular Verma}). Also, in these cases for a given $\chi\in \Ga^{*}$ there can exist two non-isomorphic simple extensions of $\chi$. Such examples were first constructed in \cite{Fut1}.

\section{simple extensions of singular Gelfand-Tsetlin characters}\label{Section: Further properties of Gelfand-Tsetlin modules for sl(3)}
In this section we provide sufficient conditions for a singular Gelfand-Tsetlin character to admit a unique simple extension.

\begin{theorem}\label{Theorem: uniqueness for critical characters}
If $\chi$ is a critical Gelfand-Tsetlin character then $\chi$ admits a unique simple extension.
\end{theorem}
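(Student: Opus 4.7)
The plan is to apply Lemma \ref{Lemma: eigenvalues of A_lambda and chi determine unique extension}. Since Theorem \ref{Theorem: finiteness-for-gl-n}(ii) already guarantees the existence of at least one simple extension, only uniqueness requires work. Let $M$ be any simple Gelfand-Tsetlin module with $\chi \in \Supp_{GT}(M)$ and set $\lambda := \chi|_{\mathfrak h}$. Then $M_\lambda$ is a simple $C({\mathfrak h})$-module, and by Remark \ref{Remark:connected chains for GT tableaux} the hypothesis that $\chi$ is critical translates into $\chi(A) = -\tfrac14 - \tfrac12 r$, i.e.\ $\chi(A) = \mu_1$, the distinguished initial value of the critical chain of eigenvalues of $A_\lambda$.

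By Theorem \ref{Theorem: Description of eigenvalues of A_lambda}(c), the critical chain $(\mu_i)$ can be normalised so that $\mu_1 + 1 = \mu_2$ and $\mu_1$ has multiplicity $1$; in particular $\dim M(\chi) = 1$. Moreover, the remaining eigenvalues $\mu_2, \mu_3, \ldots$ form a (possibly empty) finite initial segment of multiplicity $2$ followed by a tail of multiplicity $1$. Fix a generator $w_1 \in M(\chi) \setminus \{0\}$. Using simplicity of $M_\lambda$ as a $C({\mathfrak h})$-module and the three relations from Lemma \ref{Lemma: Relations AB and constants ,p,eta}, I will extend $w_1$ to a basis of $M_\lambda$ by successively applying $B$, and then show that the matrix of $B_\lambda$ in this basis depends only on $\chi$ and on the $\mu_i$'s; Lemma \ref{Lemma: eigenvalues of A_lambda and chi determine unique extension} will then yield uniqueness.

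On the multiplicity-one tail the argument mirrors the proof of Theorem \ref{Theorem: Uniqueness for generic modules}: the coefficients $b_i$ and $d_i$ are given by the same explicit rational expressions, and one checks that the denominators $2\mu_i + r$ and $4(\mu_{i-1} - \mu_i + 1)\bigl(\mu_{i-1} - \tfrac34 + \tfrac{r}{2}\bigr)$ do not vanish along the critical chain except at the initial point $\mu_1$, which is already handled separately through the anchor $w_1$. On the multiplicity-two initial segment, at each level one picks a pair of generalised eigenvectors and uses the first two identities of Lemma \ref{Lemma: Relations AB and constants ,p,eta} to express the $2 \times 2$ blocks of $B_\lambda$ linking consecutive $\mu_i$; the boundary data coming from $w_1$ together with the identification with the multiplicity-one tail determine these blocks uniquely, so that the whole matrix of $B_\lambda$ is pinned down by $\chi$.

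The main obstacle is precisely the matching across the transition point where the geometric multiplicity drops from $2$ to $1$: there one has a single eigenvector on one side and a Jordan-type pair of generalised eigenvectors on the other, and one must verify that the quadratic relation $\tfrac14(AB-BA)^2 = ABA+BAB+\cdots$ of Lemma \ref{Lemma: Relations AB and constants ,p,eta} simultaneously fixes the gluing so that no residual basis freedom remains. This is a finite but delicate case analysis, after which Lemma \ref{Lemma: eigenvalues of A_lambda and chi determine unique extension} completes the proof.
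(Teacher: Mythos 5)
Your plan is in the same general spirit as the paper's proof --- exploit the identities of Lemma~\ref{Lemma: Relations AB and constants ,p,eta} to pin down the action of $B_\lambda = B|_{M_\lambda}$ once the eigenvalue data of $A_\lambda$ and the character $\chi$ are known --- and your reading of Theorem~\ref{Theorem: Description of eigenvalues of A_lambda}(ii)(c) (multiplicity one at $\mu_1$, a contiguous block of multiplicity-two eigenvalues, then a multiplicity-one tail) is correct. However, there is a genuine gap. Lemma~\ref{Lemma: eigenvalues of A_lambda and chi determine unique extension} yields uniqueness only once one knows that the eigenvalue data of $A_\lambda$ --- which eigenvalues actually occur, how long the chain is, and what the Jordan block structure is --- is the \emph{same} for any two simple extensions of $\chi$. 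Your argument takes the $\mu_i$'s as given and argues that $B_\lambda$ is determined by them, but it never forces the chain length or the multiplicity pattern from $\chi$. Indeed, $\chi$ determines $\mu_1$ and the formula $\mu_i = i^2 - \frac14 - \frac12 r$, but not a priori how far the chain extends or where the multiplicity drops from two to one. The paper's own proof does not invoke Lemma~\ref{Lemma: eigenvalues of A_lambda and chi determine unique extension}; instead it takes two hypothetical simple extensions $M^{(1)}$, $M^{(2)}$ and shows directly that if their chains have different lengths (in the all-multiplicity-one case the truncation coefficient $c_{m+1}$ would vanish) or non-equivalent Jordan forms (in the multiplicity-two case), one module would contain a proper nonzero $C(\mathfrak h)$-submodule, contradicting simplicity of $M_\lambda$. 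Without this comparison step, appealing to the lemma is circular: you would be assuming the very data you need to pin down.

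A secondary but significant issue: you explicitly defer the ``delicate case analysis'' at the transition point where the geometric multiplicity of $A_\lambda$ drops. This is precisely where the substance of the multiplicity-two branch of the paper's proof lies --- one must write the $2\times 2$ block of $[B_\lambda]$ explicitly, impose the relations of Lemma~\ref{Lemma: Relations AB and constants ,p,eta} to kill the off-diagonal freedom (e.g.\ showing $b_{32}=0$, normalizing $b_{13}=0$ and $b_{12}=1$ by basis change), and verify that the remaining entries are forced by $\chi$. Deferring this step leaves the proof incomplete exactly where the argument is most likely to fail.
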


\begin{proof}
By Theorem \ref{Theorem: finiteness-for-gl-n} there exist at most $2$ simple modules $M^{(1)}$ and $M^{(2)}$ such that $\chi\in\Supp_{GT}(M^{(i)})$ for $i=1,2$. Assume that we have two such modules and let $\lambda\in\Supp(M^{(1)})\cap\Supp(M^{(2)})$ such that $\lambda=\chi|_{\mathfrak h}$. For $i=1,2$, consider the restriction $A^{(i)}_{\lambda}$ of $A=E_{12}E_{21}$ on $M^{(i)}_{\lambda}$. Since $M^{(i)}$ are Gelfand-Tsetlin modules, then we can choose bases ${\mathcal B}_{1}=\{w_0,\ldots,w_m\}$, $0\leq m\leq\infty$ and ${\mathcal B}_{2}=\{w'_0,\ldots,w'_k\}$, $0\leq k\leq\infty$ of $M^{(1)}_{\lambda}$ and $M^{(2)}_{\lambda}$ such that the matrix $[A_{\lambda}^{(i)}]$ of $A^{(i)}_{\lambda}$ with respect to ${\mathcal B}_{i}$ is in a Jordan normal form and each eigenvalue of $A^{(i)}_{\lambda}$ has algebraic multiplicity at most $2$.

By Theorem \ref{Theorem: Description of eigenvalues of A_lambda}(ii)(c), the eigenvalue $\chi(A)$ of $A^{(1)}_{\lambda}$ and $A^{(2)}_{\lambda}$ has multiplicity $1$. Suppose first that all eigenvalues of both $A^{(1)}_{\lambda}$ and $A^{(2)}_{\lambda}$ have multiplicity $1$. Then they can be ordered in  connected chains $\{\mu_{n}\ |\  0\leq n \leq m\}$ and $\{\mu_{m}\ |\ 0\leq m\leq k\}$ with $$\mu_{0}=\chi(A)=-\frac{1}{4}-\frac{1}{2}r$$
and $\mu_{i}=i^2-\frac{1}{4}-\frac{1}{2}r$ for  $i\geq 1$ (see Lemma \ref{Lemma: connected chains explicitly}(ii)).

Applying the relations from Lemma \ref{Lemma: Relations AB and constants ,p,eta} we obtain
\begin{equation}\label{Equation: A and B relations}
\begin{split}
Aw_{i}=\mu_i w_i ,\ i\geq 0,\ Bw_{i}=
\begin{cases}
b_{0}w_{0}+w_{1}, & i=0\\
c_{i}w_{i-1}+b_{i}w_{i}+w_{i+1}, & 0<i\leq m
\end{cases}\\
Aw'_{i}=\mu_i w'_i ,\ i\geq 0,\ Bw'_{i}=
\begin{cases}
b_{0}w'_{0}+w'_{1}, & i=0\\
c_{i}w'_{i-1}+b_{i}w'_{i}+w'_{i+1}, & 0<i\leq k
\end{cases}
\end{split}
\end{equation}
where 
\begin{align*}
c_{1}=&\frac{\theta(\mu_{0})}{2},\\ 
c_{2}=&\begin{cases}
\frac{\theta(\mu_{1})}{(1+\mu_{2}-\mu_{1})}, & n=1\\
d_{2}, & n>1
\end{cases},\\
c_{i}=&d_{i},\ \ \ \  i>2
\end{align*} and
$$2\xi(\mu_{0})=\left(\frac{3}{2}+2\mu_{0}+r\right)\theta(\mu_{0}).$$
\noindent
If $m<k$, then $c_{m+1}=0$ implying that $M^{(2)}$ is reducible. Similarly, if $m>k$, then $c_{k+1}=0$ and $M^{(1)}$ is reducible. It follows that $k=m$. But then formulas (\ref{Equation: A and B relations}) define uniquely a simple $C({\mathfrak h})$-module $M^{(1)}_{\lambda}\simeq M^{(2)}_{\lambda}$. Therefore, $M^{(1)}\simeq M^{(2)}$.

Suppose now that the algebraic multiplicity of some $\mu_{i}$ is two in $M^{(1)}_{\lambda}$. For simplicity assume that
$$[A_{\lambda}^{(1)}]=\left(
  \begin{array}{cccc}
    \mu_{0} & \mid & 0 & 0 \\
    \_\ \_\ \_\ \_ & \mid & \_\ \_\ \_ & \_\ \_\ \_\\
    0 & \mid & \mu_{1} & 1 \\
    0 & \mid & 0 & \mu_{1} \\
  \end{array}
\right) \text{ and }
[B_{\lambda}^{(1)}]=\left(
  \begin{array}{cccc}
    b_{11} & \mid & b_{12} & b_{13} \\
    \_\ \_\ \_\ \_ & \mid & \_\ \_\ \_ & \_\ \_\ \_\\
    b_{21} & \mid & b_{22} & b_{23} \\
    b_{31} & \mid & b_{32} & b_{33} \\
  \end{array}
\right),$$
where $[B_{\lambda}^{(1)}]$ stands for the matrix of $B|_{M^{(1)}_{\lambda}}$ relative to $\mathcal B_1$.
Applying the relations from Lemma \ref{Lemma: Relations AB and constants ,p,eta} we obtain $b_{32}=0$. Note that due to the irreducibility of $M_{\lambda}^{(1)}$ as an $C({\mathfrak h})$-module we have $b_{12}\neq 0$ and $b_{31}\neq 0$. Hence, using row operations, one can change the basis $\mathcal B_{1}$ so that $b_{13}$ becomes $0$.\\
\noindent
Now, applying the relations from Lemma \ref{Lemma: Relations AB and constants ,p,eta} we obtain
$$
\begin{cases}
b_{22}=b_{33}, & \\
 \frac{1}{2}b_{11}+\frac{3}{2}b_{22}=a-2\mu_{0}-1, & \\
\frac{1}{2}b_{12}b_{31}=-ab_{22}+b_{22}^{2}+2\mu_{1}b_{22}+r_{1}\mu_{1}+\tau_{1}, & \\
\frac{3}{2}b_{23}=-1-b_{11}-2b_{22}, & \\
2b_{12}b_{21}=-(2\mu_{0}+1-a)(4\mu_{0}+1)-3r_{1}\mu_{1}+r_{1}-3\tau_{1}, & \\
\frac{1}{2}b_{31}b_{12}=(4\mu_{0}+3)(2\mu_{0}+1-a)+r_{1}\mu_{1}+\tau_{1}, & \\
(b_{11}+b_{22})b_{31}=0. &
\end{cases}
$$
By changing the basis if needed, we can assume $b_{12}=1$. Therefore, the matrix $[B_{\lambda}^{(1)}]$ is completely determined by $\chi$ and the matrix $[A_{\lambda}^{(1)}]$. We can show that the latter holds for any Jordan normal form $[A_{\lambda}^{(1)}]$.

Consider now the matrix $[A^{(2)}_{\lambda}]$.
If this Jordan normal form is not equivalent to  $[A^{(1)}_{\lambda}]$, then one of the modules $ M^{(1)}_{\lambda}$ or $ M^{(2)}_{\lambda}$ will be reducible. Indeed, this can be immediately seen from the form of matrices $[B^{(1)}_{\lambda}]$ and $[B^{(2)}_{\lambda}]$.  On the other hand, if $[A^{(1)}_{\lambda}]=[A^{(2)}_{\lambda}]$ then $M^{(1)}_{\lambda}\simeq M^{(2)}_{\lambda}$ as $C({\mathfrak h})$-modules and hence  $M^{(1)}\simeq M^{(2)}$.  \end{proof}

If $\chi$ is a singular Gelfand-Tsetlin character in a critical connected chain and $\chi$ is not critical then there might exist two  simple extensions of $\chi$ (see \cite{Fut1} for examples). On the other hand we have

\begin{corollary}\label{Corollary: non-critical-diag}
Suppose that $\chi$ is a singular character in a critical connected chain and $\chi$ is not critical. If $\lambda = \chi|_{\mathfrak h}$,
then there exists a unique simple extension $M$ of $\chi$ with diagonalizable $A_{\lambda}$.
\end{corollary}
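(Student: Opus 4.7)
The plan is to mimic the argument from the proof of Theorem~\ref{Theorem: uniqueness for critical characters}, restricted to the diagonalizable case. By Theorem~\ref{Theorem: finiteness-for-gl-n}(ii), $\chi$ admits at most two non-isomorphic simple extensions, so existence will follow once we exhibit one simple extension with diagonalizable $A_{\lambda}$. Such an extension can be obtained as a simple subquotient of a suitable universal $1$-singular module (Theorem~\ref{Theorem: 1-singular modules}) built from a tableau projecting to $\chi$; alternatively, the constructions in \cite{Fut1} provide explicit examples. For uniqueness, let $M^{(1)}$ and $M^{(2)}$ be two simple extensions of $\chi$ with $A_{\lambda}$ diagonalizable, and the goal is to prove $M^{(1)} \simeq M^{(2)}$.

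Write $\chi(A) = \nu_m$ where $\nu_n := n^2 - \tfrac{1}{4} - \tfrac{r}{2}$ enumerates the critical chain (Lemma~\ref{Lemma: connected chains explicitly}(ii)); since $\chi$ is not critical, $m \geq 1$. By Theorem~\ref{Theorem: Description of eigenvalues of A_lambda}(ii)(c),(d), the distinct eigenvalues of $A_{\lambda}|_{M^{(i)}}$ form a connected subchain of $\{\nu_n\}_{n\geq 0}$ containing $\nu_m$, with multiplicities at most two. The key reduction is to show that the diagonalizability of $A_{\lambda}$ actually forces every multiplicity to equal one: if some $\nu_j$ had geometric multiplicity two, then applying the relations of Lemma~\ref{Lemma: Relations AB and constants ,p,eta} to two linearly independent $A$-eigenvectors of eigenvalue $\nu_j$ would reproduce the same equations that, in the proof of Theorem~\ref{Theorem: uniqueness for critical characters}, forced a non-trivial $2\times 2$ Jordan block; this would contradict the diagonalizability hypothesis. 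I expect this consistency analysis to be the main technical obstacle.

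Granted this reduction, for each $M^{(i)}_{\lambda}$ choose a basis $\{w_n\}$ of $A$-eigenvectors, normalized so that the coefficient of $w_{n+1}$ in $B w_n$ equals $1$. The relations of Lemma~\ref{Lemma: Relations AB and constants ,p,eta} then determine the remaining coefficients $b_n, c_n$ exactly as in formulas~(\ref{Equation: A and B relations}) from the first subcase of the proof of Theorem~\ref{Theorem: uniqueness for critical characters}, as explicit rational functions of $h_1, h_2, \chi(c_1), \chi(c_2)$ and the $\nu_n$, all depending only on $\chi$. Moreover, the chains of eigenvalues on $M^{(1)}_{\lambda}$ and $M^{(2)}_{\lambda}$ must have the same length: any discrepancy would force an off-diagonal $c$-coefficient to vanish on the shorter chain, contradicting the simplicity of the longer module. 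Hence $M^{(1)}_{\lambda} \simeq M^{(2)}_{\lambda}$ as $C(\mathfrak{h})$-modules, and Lemma~\ref{Lemma: eigenvalues of A_lambda and chi determine unique extension} then yields $M^{(1)} \simeq M^{(2)}$.
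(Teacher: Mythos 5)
Your proof takes essentially the same route as the paper's: both reduce the uniqueness of $M$ to the determination of $B_\lambda$ from $A_\lambda$ and $\chi$ via the relations of Lemma~\ref{Lemma: Relations AB and constants ,p,eta}, and both lean on the machinery developed in the proof of Theorem~\ref{Theorem: uniqueness for critical characters}. The paper's own proof is a brief remark along exactly these lines (``if $A_\lambda$ is diagonalizable then $B_\lambda$ is determined uniquely; \ldots one eigenvalue of $A_\lambda$ reconstructs the whole $A_\lambda$''), so your more expanded version is in the right spirit, and you are right to be honest that the ``diagonalizable $\Rightarrow$ multiplicity one'' step is the crux.

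That said, I would suggest a sharper reduction, because the multiplicity question interacts with where the eigenvalue chain \emph{begins}, not just how fat it is. If neither $M^{(1)}_\lambda$ nor $M^{(2)}_\lambda$ has $\nu_0$ (the critical value) in its $A_\lambda$-spectrum, then the chain of eigenvalues is a singular subchain of a critical chain that is itself neither degenerate nor critical, and Theorem~\ref{Theorem: Description of eigenvalues of A_lambda}(ii)(d) already gives multiplicity one for free---no consistency analysis is required, and one then runs the argument as in Step~I of the proof of Theorem~\ref{Theorem: uniqueness for modules without critical characters} (note: your reference to formulas~(\ref{Equation: A and B relations}) is slightly off, since that display is for a chain starting at the critical value; the relevant analogue is~(\ref{Equation: eq2}), where the chain can start at $\nu_n$ with $n>1$). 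If, on the other hand, one of them contains $\nu_0$ in its spectrum, that module is the unique extension of the critical character by Theorem~\ref{Theorem: uniqueness for critical characters}---but its $A_\lambda$-spectrum then differs from the other's, so the two can never be isomorphic, and you still must show the critical-containing module has \emph{non}-diagonalizable $A_\lambda$ whenever a distinct second extension exists. That is the genuine remaining gap, and it is what your ``reproduce the same equations that forced a non-trivial Jordan block'' heuristic is gesturing at; it needs an actual argument (applying the relations of Lemma~\ref{Lemma: Relations AB and constants ,p,eta} blockwise to a multiplicity-two eigenspace and showing the off-diagonal Jordan entry cannot vanish in the simple case). Finally, your statement that the chains ``must have the same length'' should be strengthened to ``must be the same chain'': in the non-critical setting the two chains could a priori differ at \emph{both} ends, and the simplicity argument has to rule out a mismatch at the lower end just as it does at the upper end.
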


\begin{proof}
Indeed, if $A_{\lambda}$ is diagonalizable then $B_{\lambda}$ is determined uniquely. As it was shown in the proof of
Theorem~\ref{Theorem: uniqueness for critical characters}, it is sufficient to know one eigenvalue of $A_{\lambda}$ to reconstruct  the whole $A_{\lambda}$ in a simple module. Hence, the statement follows.
\end{proof}

\begin{lemma}\label{Lemma: two jordan cells} Let $M$ be a simple Gelfand-Tsetlin module,
$\chi$  a degenerate character of $M$ associated with the weight $\lambda\in {\mathfrak h}^*$. Let $\rho_1=-\frac{1}{2}r$, and $\rho_2=2-\frac{1}{2}r$ be connected eigenvalues of $A_{\lambda}$ with $\chi(A)=\rho_1$.
 Suppose that both $\rho_1$ and $\rho_2$ have multiplicity $2$.  Then the Gelfand-Tsetlin support of $M$ contains  a critical character $\chi'$.
 \end{lemma}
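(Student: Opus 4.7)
The plan is to apply the root vector $E_{32}$ to the two-dimensional generalized eigenspace $M(\chi)$ and to extract a critical character from its image in the weight space $M_{\lambda-\alpha_{2}}$.

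First, since the center of $U(\g)$ acts by scalars on the simple module $M$ and any Gelfand-Tsetlin character of $M$ at a fixed weight is determined by its value on $A$ (using $c_{21}=E_{11}+E_{22}$ and $c_{22}=E_{11}^{2}+E_{22}^{2}+E_{11}+E_{22}+2A$), the multiplicity-$2$ hypothesis on $\rho_{1}$ translates into $\dim M(\chi)=2$, and similarly $\dim M(\chi_{2})=2$ for the character $\chi_{2}$ at weight $\lambda$ with $\chi_{2}(A)=\rho_{2}$. By Corollary~\ref{cor-ker}, $\ker(E_{32}|_{M_{\lambda}})$ is at most one-dimensional on $M(\chi)$, so $W:=E_{32}(M(\chi))$ is a non-zero subspace of $M_{\lambda-\alpha_{2}}$.

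Next, I would identify the possible characters in $W$. From the commutation $[c_{21},E_{32}]=-E_{32}$, the element $c_{21}$ acts on $W$ by $\chi(c_{21})-1$; translating through the tableau correspondence of Remark~\ref{Remark: correspondence between characters and tableaux}, every character appearing in $W$ corresponds to a tableau whose middle row is obtained from $(v_{21},v_{22})$ by subtracting $1$ from exactly one entry. Since $\chi$ is degenerate we have $v_{21}-v_{22}=\pm 1$, so the new middle-row differences $v_{21}'-v_{22}'$ lie in $\{0,\pm 2\}$; by Remark~\ref{Remark:connected chains for GT tableaux}(ii) every such character lies in the critical connected chain of $A$-eigenvalues at weight $\lambda-\alpha_{2}$. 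Because $M_{\lambda-\alpha_{2}}$ is a simple $C({\mathfrak h})$-module, its full connected chain of $A$-eigenvalues is a single chain and is therefore contained in the critical chain at weight $\lambda-\alpha_{2}$.

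Finally, the conclusion will follow by applying Theorem~\ref{Theorem: Description of eigenvalues of A_lambda}(ii)(c): a critical connected chain in a simple $C({\mathfrak h})$-module can always be chosen to start at the critical value $\mu_{1}=-\tfrac14-\tfrac{r'}{2}$, producing the desired critical character $\chi'$ in $\Supp_{GT}(M)$. The main obstacle is to rule out case (ii)(d) of the same theorem, in which the chain is a strict singular sub-chain not containing the critical value (and therefore all eigenvalues have multiplicity $1$). I would handle this by exploiting the multiplicity-$2$ hypothesis at both $\rho_{1}$ and $\rho_{2}$ together with the relations of Lemma~\ref{Lemma: Relations AB and constants ,p,eta}: these force a size-two Jordan block for $A$ on $M(\chi)$, and then the behavior of this block under $E_{32}$, tracked via $[A,E_{32}]=-E_{12}E_{31}$, produces a vector in $W$ whose $A$-generalized eigenvalue matches $\mu_{1}=-\tfrac14-\tfrac{r'}{2}$, so that the chain on $M_{\lambda-\alpha_{2}}$ must contain the critical value.
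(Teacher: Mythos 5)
Your setup is sound as far as it goes, and the direction it points in (use Corollary~\ref{cor-ker} to bound a kernel, hence a contradiction) is exactly the right strategy; you also correctly isolate the real obstacle, namely ruling out case~(ii)(d) of Theorem~\ref{Theorem: Description of eigenvalues of A_lambda}. Working with $E_{32}$ rather than $E_{23}$ is a harmless symmetric variant of the paper's choice. But the proposal has a genuine gap exactly where you flag it: the closing paragraph is a plan, not an argument, and the mechanism you describe would not produce the needed conclusion. Claiming that ``tracking the Jordan block under $E_{32}$ via $[A,E_{32}]=-E_{12}E_{31}$'' forces a vector of critical $A$-eigenvalue into $W$ is unsupported; indeed if $M(\chi)$ alone is fed into $E_{32}$, nothing forbids its image from lying entirely in the single non-critical character adjacent to $\chi$, and no contradiction with Corollary~\ref{cor-ker} results, since a $1$-dimensional kernel is allowed.

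The correct completion, which the paper carries out with $E_{23}$, is a dimension count that makes essential use of \emph{both} multiplicity-$2$ hypotheses simultaneously. Assume for contradiction that $M$ has no critical character at the weight $\lambda\pm\alpha_2$. By Theorem~\ref{Theorem: Description of eigenvalues of A_lambda}(ii)(d), every $A$-eigenvalue occurring at that weight then has multiplicity $1$. Now apply the root vector (say $E_{32}$) to the full $4$-dimensional space $M(\chi)\oplus M(\chi_2)$: from the tableau description, $E_{32}(M(\chi))$ lands in characters of middle-row differences $0$ and $2$, and $E_{32}(M(\chi_2))$ in differences $2$ and $4$; killing the critical one leaves at most two characters, each of multiplicity $1$, so the image has dimension at most $2$, whence $\dim\ker(E_{32}|_{M_\lambda})\geq 2$. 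This contradicts Corollary~\ref{cor-ker}. Your write-up introduces $M(\chi_2)$ at the start but then drops it from the argument, which is precisely why the dimension count cannot close; you would need to restore it and replace the final paragraph with the count above.
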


\begin{proof}
Let $u_1, u_2, u_3, u_4$ be non-zero elements of $M$ such that
 $$A_{\lambda}u_1=\rho_1 u_1, \, A_{\lambda}u_2=u_1+\rho_1 u_2,$$ 
 $$A_{\lambda}u_3=\rho_2 u_3, \, A_{\lambda}u_4=u_3+\rho_2 u_4.$$

 Suppose that $M_{\lambda + \epsilon_2-\epsilon_3}$  does not contain a critical character. Then the eigenvalues
 $\{\mu_{k},\mu_{k+1}\ldots\, \mu_m\}$, $k>1$,
 of $A_{\lambda + \epsilon_2-\epsilon_3}$ form a part of a critical connected chain but
 without the critical character $\mu_1$. By Theorem \ref{Theorem: Description of eigenvalues of A_lambda} these eigenvalues  are of multiplicity $1$. Let $v_{1}, v_{2}$ be eigenvectors of $A_{\lambda+ \epsilon_2-\epsilon_3}$. Then we have:
$$
\begin{cases}
E_{23}(u_{1})=a_{1}v_{1}, & \\
E_{23}(u_{2})=a_{2}v_{1}, & \\
E_{23}(u_{3})=a_{3}v_{1}+a_{4}v_{2}, & \\
E_{23}(u_{4})=a_{5}v_{1}+a_{6}v_{2}. &
\end{cases}
$$
Since $u_1, u_2, u_3$ and $u_4$ are linearly independent and their images span at least a two dimensional space we have that  $\dim(\ker(E_{23}\mid_{M_{\lambda}}))\geq 2$ which is impossible by Corollary \ref{cor-ker}.\end{proof}

From Lemma~\ref{Lemma: two jordan cells} we immediately the following.

\begin{corollary}\label{Corollary: unique-with two Jordan cells} Let $\chi$ be a degenerate Gelfand-Tsetlin character such that $\mu_1 = \chi(A)$ and $\lambda  = \chi|_{\mathfrak h}$.
If $\{ \mu_1, \mu_2\}$ is a  $\lambda$-connected  set,  then there exist at most one simple extension of $\chi$ such that both $\mu_1$ and $\mu_2$ have
multiplicity $2$.
\end{corollary}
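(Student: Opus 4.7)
The plan is to reduce the claim to Theorem~\ref{Theorem: uniqueness for critical characters} via Lemma~\ref{Lemma: two jordan cells}. Given two simple extensions $M^{(1)}, M^{(2)}$ of the degenerate character $\chi$ in which both $\mu_1 = \chi(A) = -\tfrac{1}{2}r$ and $\mu_2 = 2 - \tfrac{1}{2}r$ occur with multiplicity $2$ on $M^{(i)}_\lambda$, my goal is to produce a \emph{single} critical character $\chi'$ belonging to the Gelfand-Tsetlin support of both $M^{(1)}$ and $M^{(2)}$; uniqueness in the critical case then forces $M^{(1)} \simeq M^{(2)}$. Note that since $c_1, c_2$ are central and $\chi \in \Supp_{GT}(M^{(i)})$, the two modules share the same central character.

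First I would apply Lemma~\ref{Lemma: two jordan cells} to each $M^{(i)}$. This yields a critical character $\chi^{(i)} \in \Supp_{GT}(M^{(i)})$; inspecting the proof, $\chi^{(i)}$ actually lives on the weight space $\lambda' := \lambda + \epsilon_2 - \epsilon_3$, since the proof proceeds by contradiction against $\dim \ker(E_{23}|_{M^{(i)}_\lambda}) \geq 2$, which would violate Corollary~\ref{cor-ker}.

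Next I would argue that $\chi^{(1)} = \chi^{(2)}$. The Gelfand-Tsetlin subalgebra $\Ga$ for $\mathfrak{sl}(3)$ is generated by $H_1, H_2$, the two central elements $c_1, c_2$, and $A$, so any character of $\Ga$ is determined by its values on these five generators. For a character attached to the weight $\lambda'$: the values on $H_1, H_2$ equal $\lambda'(H_1), \lambda'(H_2)$, which depend only on $\lambda$; the values on $c_1, c_2$ agree for $M^{(1)}$ and $M^{(2)}$ because of the common central character; and the critical hypothesis forces the value on $A$ to be $-\tfrac{1}{4} - \tfrac{1}{2}r'$ with $r' = \tfrac{1}{2}(h_1'^2 - 2h_1')$ and $h_1' = \lambda'(H_1)$. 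All five values are therefore determined by $\chi$ alone, giving $\chi^{(1)} = \chi^{(2)} =: \chi'$.

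Finally, Theorem~\ref{Theorem: uniqueness for critical characters} asserts that $\chi'$ admits a unique simple extension, so $M^{(1)} \simeq M^{(2)}$, completing the proof. I expect the second step to be the main obstacle: one must trace through the proof of Lemma~\ref{Lemma: two jordan cells} to confirm that the critical character it produces really lives on the specific weight space $\lambda + \epsilon_2 - \epsilon_3$ and is canonically determined (rather than depending on which ambient simple module one started with). Once this identification is secured, the corollary is an immediate consequence of the critical case already handled.
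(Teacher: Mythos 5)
Your proof is correct and takes the same route as the paper: extract a critical character from Lemma~\ref{Lemma: two jordan cells}, observe it is determined by $\chi$, and invoke Theorem~\ref{Theorem: uniqueness for critical characters}. The paper compresses the ``determined by $\chi$'' step into the one-line remark that $\chi'$ is fixed by the condition $E_{23}(M(\chi))\subset M(\chi')+M(\chi'')$; your explicit unwinding (the critical character lives on weight $\lambda+\epsilon_2-\epsilon_3$, shares the central character, and has the forced critical $A$-eigenvalue, and $\Gamma$ for $\mathfrak{sl}(3)$ is generated by $H_1,H_2,c_1,c_2,A$) is exactly the right way to justify that remark.
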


\begin{proof}
Indeed, any such simple module $M$ will contain a critical character $\chi'$ determined by the condition
$E_{23}(M(\chi))\subset M(\chi')+M(\chi'')$. But, by Theorem~\ref{Theorem: uniqueness for critical characters},
$\chi'$
defines $M$ uniquely.
\end{proof}

\begin{lemma}\label{Lemma: no critical}
Let $M$ be a simple Gelfand-Tsetlin module such that $M$ is singular but has no critical characters. Then
$A$ is diagonalizable on $M$.
\end{lemma}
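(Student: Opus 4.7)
The plan is to argue by contradiction using the chain classification of Theorem~\ref{Theorem: Description of eigenvalues of A_lambda}. Suppose $A$ is not diagonalizable on $M$. Then on some weight space $M_\lambda$ there is an eigenvalue $\mu$ of $A_\lambda$ whose generalized eigenspace carries a Jordan block of size $2$ (multiplicities are bounded by $2$ by part~(i)). Since $M$ is singular, the proposition immediately preceding this lemma forces every Gelfand-Tsetlin character of $M$ to be singular, so the connected chain of distinct eigenvalues of $A_\lambda$ through $\mu$ is a singular chain.

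Now I would classify this chain via parts~(ii)(b)--(d) of Theorem~\ref{Theorem: Description of eigenvalues of A_lambda}. Part~(ii)(d) kills the case ``singular but not degenerate or critical,'' since then every eigenvalue is simple and there can be no Jordan block. If the chain is critical, it contains the critical value $-\tfrac14-\tfrac r2$; by Remark~\ref{Remark:connected chains for GT tableaux} this value equals $\chi'(A)$ for a Gelfand-Tsetlin character $\chi'$ of $M$ corresponding to a tableau with $v_{21}=v_{22}$, so $\chi'$ is critical. This contradicts the standing hypothesis and disposes of the critical case.

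Only the degenerate case remains. By Theorem~\ref{Theorem: Description of eigenvalues of A_lambda}(ii)(b) the chain can be ordered as $\mu_1=-\tfrac r2,\ \mu_2=2-\tfrac r2,\dots$ with the property that algebraic multiplicity $1$ propagates forward. Contrapositively, algebraic multiplicity $2$ propagates backwards, so from the Jordan block at some $\mu_k$ we deduce that $\mu_1$ and $\mu_2$ both have algebraic multiplicity $2$ in $A_\lambda$. I would then invoke Lemma~\ref{Lemma: two jordan cells} to produce a critical character of $M$, yielding the final contradiction and completing the proof.

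The delicate step I expect to be the main obstacle is that Lemma~\ref{Lemma: two jordan cells} takes as input genuine Jordan-block structure (geometric multiplicity~$1$) at both $\rho_1=\mu_1$ and $\rho_2=\mu_2$, whereas the propagation from (ii)(b) supplies only algebraic multiplicity~$2$. Bridging this gap requires either propagating the Jordan chain backwards from $\mu_k$ along the chain using the $C(\mathfrak h)$-relations of Lemma~\ref{Lemma: Relations AB and constants ,p,eta} (which tie $B$ rigidly to $A$ and the invariants $r,r_1,\tau,\tau_1,\eta$), or reprising the kernel-counting argument inside the proof of Lemma~\ref{Lemma: two jordan cells} with an arbitrary basis of the generalized eigenspaces at $\mu_1,\mu_2$: the no-critical-character hypothesis forces the relevant $A$-eigenvalue chain in $M_{\lambda+\alpha_2}$ to be a subsequence of a critical chain omitting the critical value, hence by (ii)(d) all its eigenvalues are simple, which bounds the image of $E_{23}$ on the four-dimensional span of the generalized eigenspaces at $\mu_1,\mu_2$ by a two-dimensional subspace. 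This forces $\dim\ker(E_{23}|_{M_\lambda})\geq 2$, contradicting Corollary~\ref{cor-ker}.
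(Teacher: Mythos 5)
The routing through Theorem~\ref{Theorem: Description of eigenvalues of A_lambda}(ii)(d) for the noncritical-nondegenerate and critical cases is fine and agrees with the paper. The gap is in the degenerate case, at the step where you claim that ``from the Jordan block at some $\mu_k$ we deduce that $\mu_1$ and $\mu_2$ both have algebraic multiplicity~$2$.'' Backward propagation of multiplicity~$2$ from $\mu_k$ gives multiplicity~$2$ at $\mu_1,\dots,\mu_k$ but says nothing about $\mu_{k+1}$; so if the only Jordan block sits at $k=1$, you obtain no control on $\mu_2$, and the case $\operatorname{mult}(\mu_1)=2$, $\operatorname{mult}(\mu_2)=1$ is genuinely open. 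Lemma~\ref{Lemma: two jordan cells} requires multiplicity~$2$ at \emph{both} $\rho_1$ and $\rho_2$ and cannot be invoked here; likewise your two proposed ``bridging'' arguments in the final paragraph both tacitly assume a four-dimensional span of generalized eigenspaces at $\mu_1,\mu_2$, which is only three-dimensional in this case, and a three-dimensional span mapping into a two-dimensional image gives only a one-dimensional kernel, which does not contradict Corollary~\ref{cor-ker}.

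The paper explicitly treats this leftover case by a separate argument: it looks at $M'=M_{\lambda+\epsilon_2-\epsilon_3}$, shows $M'\neq 0$ via Corollary~\ref{cor-ker}, and then argues that if $A|_{M'}$ (and hence $A$ on all weight spaces $\lambda+\epsilon_2-\epsilon_3+k(\epsilon_1-\epsilon_2)$) has no critical eigenvalue, these spaces---having simple $A$-spectrum by (ii)(d)---can only reach a one-dimensional piece of the generalized $\mu_1$-eigenspace in $M_\lambda$, contradicting the irreducibility of $M$; hence a critical eigenvalue must appear, contradicting the standing hypothesis. To repair your proof you need to add this sub-case (or an equivalent argument showing that a lone Jordan block at $\mu_1$ with $\operatorname{mult}(\mu_2)=1$ already forces a critical character one step away in the $\epsilon_2-\epsilon_3$ direction), rather than trying to force the hypotheses of Lemma~\ref{Lemma: two jordan cells}.
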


\begin{proof}
Fix $\lambda\in \Supp M$, then the distinct eigenvalues of
$A_{\lambda}$ form a singular $\lambda$-connected chain. If this chain is critical then $A_{\lambda}$ is diagonalizable since there is no critical eigenvalue, see \ref{Theorem: Description of eigenvalues of A_lambda}(ii)(d).  Suppose that the chain is degenerate. Then by Theorem \ref{Theorem: Description of eigenvalues of A_lambda}(ii)(b) one can  order the distinct eigenvalues of $A_{\lambda}$ in the following way: $\{\mu_1, \mu_2, \ldots, \mu_m\}$, where $\mu_1=-\frac{1}{2}r$, and if the multiplicity of $\mu_i$ equals $1$ then the multiplicity of $\mu_{i+1}$ is also $1$. Suppose that  $M$ has a character $\widetilde{\chi}$ such that $\widetilde{\chi}(A)=\mu_1$ and $\mu_{1}$ has multiplicity $2$. If $\mu_2$ has multiplicity $2$, then by Lemma~\ref{Lemma: two jordan cells} there exists a  critical character $\chi'$ in the Gelfand-Tsetlin support of every simple extension of $\widetilde{\chi}$, and we obtain a contradiction.

Assume now that $\mu_1$ has multiplicity $2$ but $\mu_{2}$ has multiplicity $1$. Consider the weight subspace $M'=M_{\lambda+\epsilon_2-\epsilon_3}$ and $A'=A|_{M'}$. Observe that $M'\neq 0$, since otherwise $\dim (\ker E_{23}|_{M_{\lambda}})\geq 2$ which is a contradiction  by Corollary \ref{cor-ker}. If $A'$ has no critical eigenvalue then neither does $A_{\lambda+\epsilon_2-\epsilon_3 +k(\epsilon_1 - \epsilon_2)}$, for all integer $k$. In this case all these subspaces $M_{\lambda+\epsilon_2-\epsilon_3 +k(\epsilon_1 - \epsilon_2)}$, $k\in \mathbb Z$ can generate only one eigenvector of $A'$ with eigenvalue $\mu_1$ and, hence, produce only multiplicity $1$ eigenvalue $\mu_1$ of $A'$. But this contradicts to the irreducibility of $M$. Therefore, $A'$ must contain a critical eigenvalue giving a contradiction again. Therefore, $A_{\lambda}$ is diagonalizable, which completes the proof.
\end{proof}

The proof of Lemma~\ref{Lemma: no critical} implies also  the following statement.

\begin{corollary}\label{Corollary: jordan-critical}
Let $M$ be a simple Gelfand-Tsetlin module and $\chi$ a Gelfand-Tsetlin character of $M$ associated with $\lambda\in {\mathfrak h}^*$ and  such that $\dim(M(\chi))=2$. Then
$M$ has a critical character $\chi'$ associated with the weight $\lambda+\epsilon_2-\epsilon_3$.
\end{corollary}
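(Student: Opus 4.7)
The plan is to extract the required statement by retracing the proof of Lemma \ref{Lemma: no critical} and tracking where the critical character appears. The hypothesis $\dim M(\chi) = 2$ means the eigenvalue $\chi(A)$ of $A_\lambda = A|_{M_\lambda}$ has algebraic multiplicity $2$. By Theorem \ref{Theorem: Description of eigenvalues of A_lambda}(ii), a multiplicity-$2$ eigenvalue can only occur when the chain of distinct eigenvalues of $A_\lambda$ is degenerate (case (b)) or critical (case (c)) — cases (a) and (d) force all eigenvalues to be simple. So the setup reduces to those two chain types.

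In the degenerate case, I would exploit the monotonicity assertion in Theorem \ref{Theorem: Description of eigenvalues of A_lambda}(ii)(b): once $\mathrm{mult}(\mu_i)=1$ we must have $\mathrm{mult}(\mu_{i+1})=1$, so multiplicities are non-increasing along the chain. Hence if $\chi(A) = \mu_i$ has multiplicity $2$, then every $\mu_j$ with $j \leq i$ does as well; in particular $\mu_1 = -\tfrac{1}{2}r$ has multiplicity $2$. This reduces us to the situation actually handled in the proof of Lemma \ref{Lemma: no critical}: there exists a character $\widetilde{\chi}$ of $M$ with $\widetilde{\chi}|_{\mathfrak h}=\lambda$, $\widetilde{\chi}(A)=\mu_1$, and $\dim M(\widetilde{\chi})=2$.

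From here I would split on whether $\mu_2$ has multiplicity $2$ or $1$. If $\mu_2$ has multiplicity $2$, Lemma \ref{Lemma: two jordan cells} applies verbatim, and its proof produces the desired critical character precisely in $M_{\lambda+\epsilon_2-\epsilon_3}$ (since the argument traces $E_{23}(M_\lambda)$ into that weight space and shows a critical eigenvalue of $A$ must be present there, using Corollary \ref{cor-ker}). If $\mu_2$ has multiplicity $1$, I invoke the second half of the proof of Lemma \ref{Lemma: no critical}: the space $M' = M_{\lambda+\epsilon_2-\epsilon_3}$ is nonzero (otherwise $\dim\ker E_{23}|_{M_\lambda}\geq 2$, contradicting Corollary \ref{cor-ker}), and $A' = A|_{M'}$ must carry a critical eigenvalue, else propagating along $M_{\lambda+\epsilon_2-\epsilon_3+k(\epsilon_1-\epsilon_2)}$ for $k\in\mathbb{Z}$ only produces multiplicity-$1$ contributions to the $\mu_1$-eigenspace of $A_\lambda$, contradicting $\dim M(\widetilde{\chi}) = 2$ together with the irreducibility of $M$. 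Either branch delivers a critical character $\chi'$ at weight $\lambda + \epsilon_2-\epsilon_3$.

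The critical-chain case (c) is handled in parallel: since $\mu_1$ has multiplicity $1$ by (c), $\chi(A) = \mu_i$ for some $i \geq 2$, and monotonicity again forces the mult-$2$ eigenvalue $\mu_2$ to be present, which plays the same role $\mu_1$ did in the degenerate analysis; the same $E_{23}$-argument plus Corollary \ref{cor-ker} yields a critical character at $\lambda + \epsilon_2-\epsilon_3$. The main technical point — and the place where the proof is not entirely automatic — is precisely pinning down that the critical character appears at weight $\lambda + \epsilon_2 - \epsilon_3$ rather than elsewhere; this is exactly what the sharp bound $\dim\ker E_{23}|_{M_\lambda}\leq 1$ from Corollary \ref{cor-ker} enforces, since it controls how much of the $2$-dimensional space $M(\chi)$ can collapse under $E_{23}$.
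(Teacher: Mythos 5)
Your analysis of the degenerate-chain case is correct and matches what the paper actually does (its proof is just the instruction to re-read the proof of Lemma~\ref{Lemma: no critical}, whose degenerate-chain branch you unpack faithfully): Theorem~\ref{Theorem: Description of eigenvalues of A_lambda}(ii) rules out chain types (a), (d); the monotonicity in (ii)(b) pushes the multiplicity-$2$ phenomenon back to $\mu_1=-\tfrac{1}{2}r$; then, depending on $\operatorname{mult}(\mu_2)$, one invokes either the proof of Lemma~\ref{Lemma: two jordan cells} (which locates the critical character in $M_{\lambda+\epsilon_2-\epsilon_3}$) or the second half of the proof of Lemma~\ref{Lemma: no critical}, with Corollary~\ref{cor-ker} doing the pinning-down.

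The last paragraph, however, does not work. You claim the critical-chain case ``is handled in parallel'' with $\mu_2$ playing the role of $\mu_1$ and the same $E_{23}$-argument producing a critical character at $\lambda+\epsilon_2-\epsilon_3$. This fails for a parity reason that is baked into the definitions. At a fixed weight $\lambda$, the quantity $v_{21}-v_{22}$ varies over a fixed residue class modulo~$2$ (see Remark~\ref{Remark:connected chains for GT tableaux}: $(v_{21},v_{22})=(x+i,y-i)$, so $v_{21}-v_{22}=x-y+2i$). A critical chain means this class is even; a degenerate chain means it is odd. Applying $E_{23}$ sends $(v_{21},v_{22})$ to $(v_{21}+1,v_{22})$ or $(v_{21},v_{22}+1)$, so the parity flips. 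Hence if the chain of eigenvalues of $A_\lambda$ is critical, the chain of eigenvalues of $A_{\lambda+\epsilon_2-\epsilon_3}$ is degenerate, and a degenerate chain cannot contain a critical character ($v_{21}=v_{22}$ forces $v_{21}-v_{22}=0$, which is even). So the argument you adapted from Lemma~\ref{Lemma: two jordan cells}, which in the degenerate case works precisely because $E_{23}$ lands in a critical chain, produces nothing in the critical case. In that case a simple module with a multiplicity-$2$ eigenvalue does have a critical character, but at weight $\lambda$ itself (the subchain must then contain the critical eigenvalue $\mu_1$, by Theorem~\ref{Theorem: Description of eigenvalues of A_lambda}(ii)(c) and (d)), not at $\lambda+\epsilon_2-\epsilon_3$. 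To be fair to you, the paper's own one-line proof does not address this case either, and the corollary is only invoked inside the degenerate branch of Theorem~\ref{Theorem: uniqueness for 2-dimensional characters}; but as written you assert a parallel argument that the parity flip rules out, so this step is a genuine gap.
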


\begin{theorem}\label{Theorem: uniqueness for 2-dimensional characters}
Let $M$ be a simple Gelfand-Tsetlin module and $\chi$ be a Gelfand-Tsetlin character such that $\dim(M(\chi))=2$. Then $M$ is the unique simple extension of $\chi$. 
\end{theorem}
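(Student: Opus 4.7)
The plan is to reduce the problem to the critical character case, which has already been settled by Theorem \ref{Theorem: uniqueness for critical characters}. Set $\lambda=\chi|_{\mathfrak h}$. Since $\dim(M(\chi))=2$, Corollary \ref{Corollary: jordan-critical} yields a critical Gelfand-Tsetlin character $\chi'$ in $\Supp_{GT}(M)$ whose associated weight is $\lambda+\epsilon_2-\epsilon_3$. A key auxiliary observation is that $\chi'$ is determined uniquely by $\chi$ alone: its weight is prescribed, its values on the central generators $c_{32},c_{33}$ must equal the corresponding values of $\chi$ (central elements of $U(\mathfrak{sl}(3))$ lie in $\Gamma$ and act by scalars on the simple module), and the criticality condition $\chi'(A)=-\tfrac14-\tfrac12 r'$ — where $r'$ depends only on $\lambda+\epsilon_2-\epsilon_3$ — pins down the remaining coordinate. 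Combining this with Theorem \ref{Theorem: uniqueness for critical characters} identifies $M$ as the unique simple extension of $\chi'$.

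Now let $M'$ be an arbitrary simple extension of $\chi$. The idea is to prove that $\chi'\in\Supp_{GT}(M')$, since once this is in hand, Theorem \ref{Theorem: uniqueness for critical characters} applied to $\chi'$ immediately yields $M'\simeq M$. The easy subcase is $\dim M'(\chi)=2$: Corollary \ref{Corollary: jordan-critical} applied to $M'$ produces a critical character at the shifted weight $\lambda+\epsilon_2-\epsilon_3$, and by the uniqueness observation of the first paragraph this character must coincide with $\chi'$.

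The main obstacle is the remaining subcase $\dim M'(\chi)=1$. I would treat it by following the $\lambda$-connected chain of $A_\lambda$-eigenvalues on $M'_\lambda$: because $\chi(A)$ has algebraic multiplicity $2$ on $M_\lambda$, Theorem \ref{Theorem: Description of eigenvalues of A_lambda}(ii) locates $\chi(A)$ inside a degenerate chain with scalar $r$ determined solely by $\lambda$, and this same degenerate chain governs $M'_\lambda$. Descending the chain in $M'_\lambda$ to the extremal value $-r/2$ produces a character in $\Supp_{GT}(M')$ whose representing tableau has middle-row difference of absolute value $1$; applying $E_{23}$ to the corresponding weight space shifts the middle row by $\pm 1$, so that the new tableau has equal middle-row entries, meaning its associated character is critical at weight $\lambda+\epsilon_2-\epsilon_3$ and hence must be $\chi'$ by uniqueness. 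The delicate step is confirming that the resulting $E_{23}$-image has a nonzero component in the $\chi'$-generalized eigenspace — this relies on the bound $\dim\ker E_{23}|_{M'_\lambda}\leq 1$ from Corollary \ref{cor-ker} together with the explicit $C({\mathfrak h})$-relations of Lemma \ref{Lemma: Relations AB and constants ,p,eta} — and is the technical heart of the argument.
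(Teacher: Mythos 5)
Your overall strategy — reduce to a critical character and invoke Theorem~\ref{Theorem: uniqueness for critical characters} — is the same as the paper's, but your execution collapses two cases that the paper keeps separate, and that distinction matters.

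The paper splits according to whether the singular chain governing $A_\lambda$ is \emph{critical} or \emph{degenerate}. These cases produce critical characters at \emph{different} weights: when the chain is critical (so $\mu_1=-\tfrac14-\tfrac12 r$ appears), the critical character $\chi'$ lives at the same weight $\lambda$; it is only in the degenerate case that one passes through Lemma~\ref{Lemma: two jordan cells} and Corollary~\ref{Corollary: jordan-critical} to find a critical character at the shifted weight $\lambda+\epsilon_2-\epsilon_3$. Your opening step applies Corollary~\ref{Corollary: jordan-critical} unconditionally; but that corollary's justification (the proof of Lemma~\ref{Lemma: no critical}) only treats the degenerate chain, and in the critical-chain case the eigenvalue $\mu_1$ is itself critical and of multiplicity $1$ at weight $\lambda$, so the conclusion ``critical character at $\lambda+\epsilon_2-\epsilon_3$'' is not what the paper's argument actually delivers there. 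Your later statement that Theorem~\ref{Theorem: Description of eigenvalues of A_lambda}(ii) ``locates $\chi(A)$ inside a degenerate chain'' is simply false: parts (b) and (c) both permit multiplicity-$2$ eigenvalues, so $\chi(A)$ may equally well sit in a critical chain.

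The more serious gap is in your subcase $\dim M'(\chi)=1$. You assert that one can ``descend the chain in $M'_\lambda$ to the extremal value $-r/2$,'' but this is precisely what needs proof: when $A'_\lambda$ is diagonalizable with simple spectrum, its spectrum is a connected \emph{subchain} containing $\chi(A)$, and there is no a priori reason it extends down to the extremal element. In fact this is exactly the phenomenon that Theorem~\ref{Theorem: characterization of irr extensions}(i) allows — a second simple extension with all multiplicities $\leq 1$ whose chain in $M'_\lambda$ may be a strict tail of the chain in $M_\lambda$. Excluding such an $M'$ requires a concrete argument — for instance, the one used in Step~II, Case~2 of the proof of Theorem~\ref{Theorem: uniqueness for modules without critical characters}, where the explicit tridiagonal structure coming from Lemma~\ref{Lemma: Relations AB and constants ,p,eta} forces a nonzero $C(\mathfrak h)$-submodule when the chain is truncated. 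You allude to this (``relies on the explicit $C(\mathfrak h)$-relations\dots and is the technical heart''), but you do not carry it out, and the step you flag as delicate (non-vanishing of the $E_{23}$-image) is not the step that is actually missing; what is missing is the extension of the chain itself. So the argument as written does not close.
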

\begin{proof}
Let $\lambda = \chi|_{\mathfrak h}$. Since $\dim(M(\chi))=2$, the distinct eigenvalues of $A_{\lambda}$ form a singular $\lambda$-connected chain $\{\mu_1, \ldots, \mu_m\}$, $m\leq \infty$. Moreover, there exists $\mu_{i}$ of multiplicity $2$. We proceed with two  cases.

\medskip
\noindent {\it Case 1. The chain $\{\mu_1, \ldots, \mu_m\}$ is critical}.  By Theorem \ref{Theorem: Description of eigenvalues of A_lambda} all distinct eigenvalues can be ordered in the following way: $\{\mu_1, \mu_2, \ldots\, \mu_m\},$ where $\mu_1+1=\mu_2$, multiplicity of $\mu_1$ is $1$, and if the multiplicity of $\mu_i$ equals $1$ for $i>1$ then the multiplicity of $\mu_{i+1}$ is also $1$. Therefore,
the module $M$ has a critical character $\chi '$ such that $\chi'(A)=\mu_1$. Thus, every simple extension of $\chi$ contains $\chi '$ in its Gelfand-Tsetlin support. Applying Theorem~\ref{Theorem: uniqueness for critical characters} we conclude that $M$ is unique.

\medskip
\noindent {\it Case 2. The chain $\{\mu_1, \ldots, \mu_m\}$ is degenerate}. By Theorem \ref{Theorem: Description of eigenvalues of A_lambda} one can  order the distinct eigenvalues of $A_{\lambda}$ in the following way: $\{\mu_1, \mu_2, \ldots, \mu_m\}$, where $\mu_1=-\frac{r}{2}$, and if the multiplicity of $\mu_i$ equals $1$ then the multiplicity of $\mu_{i+1}$ is also $1$. Therefore  $M$ has a character $\widetilde{\chi}$ such that $\widetilde{\chi}(A)=\mu_1$ and $\mu_{1}$ has multiplicity $2$.   By Corollary~\ref{Corollary: jordan-critical}, $A|_{M_{\lambda+\epsilon_2-\epsilon_3}}$  must contain a critical eigenvalue. Thus, $M$ is unique by Theorem~\ref{Theorem: uniqueness for critical characters}.   This completes the proof.
\end{proof}

\begin{theorem}\label{Theorem: uniqueness for modules without critical characters}
Let $M$ be a simple Gelfand-Tsetlin module such that $M$ is singular but has no critical characters. Then for each character $\chi\in \Supp_{GT}(M)$, $M$ is the unique simple extension of $\chi$ with the property that it has no critical characters. 
\end{theorem}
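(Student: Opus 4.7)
The plan is to follow the strategy of the proofs of Theorem \ref{Theorem: Uniqueness for generic modules} and Theorem \ref{Theorem: uniqueness for critical characters}. Given another simple extension $M'$ of $\chi$ with no critical characters, set $\lambda=\chi|_{\mathfrak h}$ and reduce the desired isomorphism $M\cong M'$ to the isomorphism $M_\lambda\cong M'_\lambda$ of $C(\mathfrak h)$-modules, which then yields $M\cong M'$ via Lemma \ref{Lemma: eigenvalues of A_lambda and chi determine unique extension}.

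First I would verify that both $M_\lambda$ and $M'_\lambda$ admit an $A$-eigenbasis with simple spectrum, and that the two chains of eigenvalues coincide. By Lemma \ref{Lemma: no critical}, the no-critical-characters hypothesis forces $A$ to act diagonalizably on $M$ and $M'$, while Corollary \ref{Corollary: jordan-critical} (read contrapositively) forces each GT character in the respective supports to have multiplicity $1$. Since on a simple $\mathfrak{sl}(3)$-module a GT character is determined by its restriction to $\mathfrak h$, the central character, and the value $\chi(A)$ (all recoverable from the generators $c_{mk}$ of $\Gamma$), distinct $A$-eigenvalues in $M_\lambda$ must correspond to distinct GT characters, giving simple spectrum. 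Consequently the eigenvalue chains in $M_\lambda$ and in $M'_\lambda$ both contain $\chi(A)$, both avoid the critical value $-\tfrac{1}{4}-\tfrac{r}{2}$, and both sit inside the same ambient maximal $\lambda$-connected chain.

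If that ambient chain is critical, Corollary \ref{Corollary: non-critical-diag} gives $M\cong M'$ immediately. In the remaining case (ambient chain degenerate), I would choose $A$-eigenbases $\{w_i\}\subset M_\lambda$ and $\{w'_i\}\subset M'_\lambda$ and apply the relations from Lemma \ref{Lemma: Relations AB and constants ,p,eta} as in the proof of Theorem \ref{Theorem: Uniqueness for generic modules} to produce a tridiagonal action $Bw_i=c_iw_{i-1}+b_iw_i+d_iw_{i+1}$ in which $b_i,c_i,d_i$ are explicit rational functions of the $\mu_j$ and of the scalars $a,r,r_1,\tau,\tau_1,\eta$, all read off $\chi$. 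Since these coefficients are prescribed by $\chi$, their vanishing loci pin down the termination points of the chain, so the indexed sequences in $M_\lambda$ and $M'_\lambda$ must coincide; after a rescaling of the $w'_i$, the two $C(\mathfrak h)$-actions then agree. The main obstacle will be handling the potential $0/0$ indeterminacies of those formulas at the degenerate eigenvalue $\mu=-\tfrac{r}{2}$, where the denominator $2\mu+r$ vanishes; this is resolved by supplementing the first relation of Lemma \ref{Lemma: Relations AB and constants ,p,eta} with the second or third (which are symmetric, respectively quadratic, in $A$ and $B$), together with the observation that under our hypotheses the degenerate eigenvalue carries multiplicity $1$, which rigidifies the normalization of its eigenvector and resolves the ambiguity.
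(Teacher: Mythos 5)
The first two paragraphs of your proposal are on the right track and essentially agree with the paper: diagonalizability of $A$ follows from Lemma \ref{Lemma: no critical}, multiplicity-one Gelfand-Tsetlin subspaces follow from (the contrapositive of) Corollary \ref{Corollary: jordan-critical}, and the case where $\chi(A)$ sits in a critical connected chain is handled exactly as you say, reducing to Corollary \ref{Corollary: non-critical-diag}. Even within the degenerate chain, the sub-case where the chain does \emph{not} contain the degenerate eigenvalue $\mu_1=-\tfrac{r}{2}$ is disposed of essentially by your tridiagonal argument plus a short irreducibility check on which truncation $\{\mu_k,\dots,\mu_s\}$ is admissible.

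The genuine gap is in the sub-case where the degenerate eigenvalue $\mu_1=-\tfrac{r}{2}$ \emph{is} in the chain, and it is not the ``$0/0$ indeterminacy'' problem you anticipate. The issue is that in this case the three identities of Lemma \ref{Lemma: Relations AB and constants ,p,eta} do \emph{not} determine $B_{\lambda}$ from $\chi$ and $A_{\lambda}$: the diagonal entry $T$ of $B_\lambda$ at the degenerate eigenvalue is constrained only to be a root of the quadratic
$$x^{2}-(r+a)x+\tau_{1}-\tfrac{1}{8}r_{1}r^{2}+\tfrac{1}{2}r\tau_{1}-\eta=0,$$
so Lemma \ref{Lemma: eigenvalues of A_lambda and chi determine unique extension} simply does not apply --- there are in general two non-isomorphic simple $C(\mathfrak h)$-modules with the same $\chi$ and the same diagonal $A_\lambda$, and correspondingly (possibly) two non-isomorphic simple $\mathfrak{g}$-modules. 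Your proposed remedy --- supplementing relations of Lemma \ref{Lemma: Relations AB and constants ,p,eta} and observing that the degenerate eigenvalue has multiplicity $1$ so its eigenvector is ``rigidified'' --- does not touch this obstruction: the eigenvector is only ever defined up to scalar (which never affects the isomorphism class of a module), and the quadratic ambiguity in $T$ is already what the full set of $C(\mathfrak h)$-relations yields. What is actually needed, and what occupies the bulk of the paper's proof (Cases 2(a), 2(b), 2(c)), is a structural argument that does \emph{not} stay inside $C(\mathfrak h)$: one looks at which of the operators $E_{23},E_{13},E_{32},E_{31}$ annihilate the degenerate Gelfand-Tsetlin subspace $M(\chi')$ (where $\chi'$ corresponds to $\mu_1$), and uses the no-critical-characters hypothesis together with (generalized) Verma module considerations and pointed-module constraints to rule out one of the two roots of the quadratic. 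Without some argument of this kind the proof is incomplete.
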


\begin{proof} Let as usual  $\lambda = \chi|_{\mathfrak h}$ and $A_{\lambda}=A|_{M_{\lambda}}$. It follows from Lemma~\ref{Lemma: no critical} that $A_{\lambda}$ is diagonalizable. We proceed in two steps.

\medskip
\noindent {\it Step I. Suppose  that $\chi$ belongs to a critical connected chain.} As $M$ does not have critical characters, $\chi(A)$ belongs to a critical connected chain
$\mu_{i}=i^{2}-\frac{1}{4}-\frac{1}{2}r$, $1\leq n\leq i\leq m\leq \infty$ for some integers $n$ and $m$.
Then as in Theorem~\ref{Theorem: uniqueness for critical characters}, there exists a basis $\{w_i, n\leq i\leq m\}$ such that

\begin{equation}\label{Equation: eq2}
Aw_{i}=\mu_i w_i ,  Bw_{i}=
\begin{cases}
b_{n}w_{n}+w_{n+1}, & i=n\\
d_{i}w_{i-1}+b_{i}w_{i}+w_{i+1}, & i>n\\
d_{m}w_{m-1}+b_{m}w_{m}, & i=m.
\end{cases}
\end{equation}

Hence, $B_{\lambda}=B$ is determined uniquely by $\chi$ and $A_{\lambda}$. The uniqueness follows from Lemma \ref{Lemma: eigenvalues of A_lambda and chi determine unique extension}. 
 
\medskip
\noindent {\it Step II.  Suppose that $\chi(A)$ belongs to a degenerate chain $\{\mu_{1},\mu_{2},\ldots\}$ where
$\mu_n=n(n-1)-\frac{1}{2}r$, $n\geq 1$} (see Lemma \ref{Lemma: connected chains explicitly}(i)). We proceed considering two cases depending on the connected chain $\{\mu_{1},\mu_{2},\ldots\}$.

\medskip
\noindent
{\it Case 1. The chain $\{\mu_{1},\mu_{2},\ldots\}$ does not contain a degenerate character}, that is the eigenvalues of $A_{\lambda}$ are $\{\mu_{k},\mu_{k+1},\ldots, \mu_s\}$ for some $k>1$ and some $s\leq \infty$.  Applying  relations  from Lemma \ref{Lemma: Relations AB and constants ,p,eta}
one can choose a basis $\{w_k,...,w_s\}$ of $M_{\lambda}$ such that  the matrix of $A_{\lambda}$ is diagonal and the matrix of $B_{\lambda}$
has a tridiagonal form as in the generic case.
 Suppose there exists another simple extension $W$ of $\chi$ satisfying the conditions of the theorem  such that the eigenvalues of $A|_{W_{\lambda}}$ are $\{\mu_{d},\mu_{d+1},\ldots, \mu_t\}$ for some $d>1$ and some $t\leq \infty$.
 If  $d=k$ and $s=t$, then the diagonal matrices $[A|_{M_{\lambda}}]=[A|_{W_{\lambda}}]$ will give  the same matrix of $B_{\lambda}$, hence $M\simeq W$ by Lemma \ref{Lemma: eigenvalues of A_lambda and chi determine unique extension}.

 Suppose $d<k$ (note that in this case $k\leq t$).  Then applying relations  from Lemma \ref{Lemma: Relations AB and constants ,p,eta}
we obtain that $W_{\lambda}$ has a $C({\mathfrak h})$-submodule $U$
 such that the eigenvalues of $A|_{U}$ are $\{\mu_{k},\mu_{k+1},\ldots, \mu_{t}\}$.
 Hence $M$ has a nontrivial proper submodule $\widetilde{M}$ such that the eigenvalues of $A|_{\widetilde{M}_{\lambda}}$ are $\{\mu_{k},\mu_{k+1},\ldots, \mu_{t}\}$. This contradicts the irreducibility of $M$. The case $d>k$
 is treated analogously.

\medskip
\noindent
{\it Case 2. The chain $\{\mu_{1},\mu_{2},\ldots\}$ does contain a degenerate character}, that is the eigenvalues of $A_{\lambda}$ are $\{\mu_{1},\mu_{2},\ldots, \mu_s\}$ for some some $s\leq \infty$. Let  $\chi'$  be the character associated with $\mu_{1}$. Using  relations from Lemma \ref{Lemma: Relations AB and constants ,p,eta} we see that $r^{2}+2ar+4\tau=0$ and there exists a basis $\{w_i, 1\leq i\leq s\}$ of $M_{\lambda}$ such that

\begin{equation}
Aw_{i}=\lambda_i w_i, 1\leq i\leq s \text{ , } Bw_{i}=
\begin{cases}
T w_{1}+w_{2}, & i=1\\
q_{i}w_{i-1}+b_{i}w_{i}+w_{i+1}, & 1<i<s\\
q_{s}w_{s-1}+b_{s}w_{s}, & i=s,
\end{cases}
\end{equation}
where \begin{align*}
q_{2}=&
\frac{1}{3}\left(-\frac{1}{8}r_{1}r^{2}+\frac{1}{2}(r_{1}+\tau_{1})r-\eta\right) \\
q_{i}=&d_{i},\  i>2 
\end{align*}
 and $T$ is a root of the equation
$$x^{2}-(r+a)x+\tau_{1}-\frac{1}{8}r_{1}r^{2}+\frac{1}{2}r\tau_{1}-\eta=0.$$

Let $W$ be another simple extension  of $\chi$.  Then $\mu_{1}$ must be an eigenvalue of
$A|_{W_{\lambda}}$, otherwise $M$ is not simple.  In fact,
 the  quadratic equation on $T$ shows that there might exist two non-isomorphic simple modules with the same degenerate chain $\{\mu_1, \ldots, \mu_s\}$. We will show that only one such module will satisfy the conditions of the theorem.

  The hypothesis that there is no critical characters in all connected chains implies that  $E_{23}(M(\chi'))\subset M(\tilde{\chi})$, where  $\tilde{\chi}$ is a character such that $\tilde{\chi}(A)$  belongs to a critical connected chain (without critical characters by hypothesis). Also
$E_{23}(W(\chi'))\subset W(\tilde{\chi})$. If both  $E_{23}(M(\chi'))$ and $E_{23}(W(\chi'))$ are non-zero then
$M(\tilde{\chi})\neq 0$ and $W(\tilde{\chi})\neq 0$.
We immediately conclude that $M\simeq W$ by Theorem~\ref{Theorem: uniqueness for critical characters}. Suppose
$E_{23}(M(\chi'))=E_{23}(W(\chi'))=0$. Apply the same arguments for $E_{13}$. If both  $E_{13}(M(\chi'))$ and $E_{13}(W(\chi'))$ are non-zero, then $M\simeq W$ as above. On the other hand, if $E_{13}(M(\chi'))=E_{13}(W(\chi'))=0$,
then $M$ and $W$ are simple quotients of the same generalized Verma module generated by a weight vector $v$
such that $E_{23}v=E_{13}v=0$. But such generalized Verma module has a unique simple quotient implying
$M\simeq W$.  Hence, it remains to consider mixed cases. We finish the proof considering three subcases. Recall that a weight module is \emph{pointed} if all its nonzero weight spaces are $1$-dimensional. All other cases are considered analogously.

\medskip
\noindent
{\it Case 2(a). Suppose,    $E_{23}(M(\chi')), E_{13}(M(\chi'))\neq 0$, and 
 $E_{23}(W(\chi')), E_{13}(W(\chi'))  = 0$}. Then $W$ is a quotient of the generalized Verma module $M_1$ generated by an element $v\in M_1$
 such that $E_{23}v=E_{13}v=0$.   Suppose $E_{32}(W(\chi'))\neq 0$ and $E_{31}(W(\chi'))\neq 0$. If
 one of $E_{32}(M(\chi'))$ or $E_{31}(M(\chi'))$ is non-zero then we are done. Suppose   $E_{32}(M(\chi'))=E_{31}(M(\chi'))=0$ and thus $M$ is a quotient of generalized Verma module $M_2$ generated by $v'$ such that
 $E_{32}v'=E_{31}v'=0$. In order not to have critical characters both $M$ and $W$ must be pointed modules, that is all weight spaces have dimension $1$.  Let $\chi'(H_1)=h_1$,  $\chi'(H_2)=h_2$. Comparing the values
 of Casimir elements on $M$ and $W$ we obtain $h_1=-2h_2$.  This condition guarantees that $M$ and $W$ have
 common degenerate character $\chi'$.  Let us find the condition when $W$ is a  pointed module. It is sufficient to check when the following system has a non-trivial solution:
\begin{eqnarray*}
0=E_{23}(\alpha E_{32}v+\beta E_{31}E_{12}v)&=& h_2\alpha v+ \beta E_{21}E_{12}\\
0=E_{13}(\alpha E_{32}v+\beta E_{31}E_{12}v)&=&\alpha E_{12}v + \beta (h_1+h_2+1)E_{12}.
\end{eqnarray*}

\noindent
Assume that $E_{12}v\neq 0$. Then we have
$h_2\alpha -\beta h_1 -\frac{1}{2} \beta r=0$, and $
\alpha  + \beta (h_1+h_2+1)=0$. If $h_1=0$, then $M\simeq W\simeq \mathbb C$. If $h_1\neq 0$ then $h_1=2$, $h_2=-1$ and $\chi'(A)=0$. It follows that $E_{23}v=E_{13}v=E_{21}v=0$ with $h_1=2$, $h_2=-1$ or  $E_{23}v=E_{13}v=E_{12}v=0$ with $h_1=-2$, $h_2=1$.

   Consider first the case $h_1=-2$, and let $\mu\in {\mathfrak h}^*$  be such that $\mu(H_1)=\mu(H_2)=-1$. Then
   $W_{\mu}$ is $1$-dimensional and $E_{12}W_{\mu}=0$. Hence, $W_{\mu}$ is a Gelfand-Tsetlin subspace and
   $A|_{W_{\mu}}=-{\rm Id}$. But, this is a critical value and, thus, $W$ contains critical characters, which is a contradiction.

Suppose now that $h_1=2$ and consider $\mu\in {\mathfrak h}^*$   such that $\mu(H_1)=1$, $\mu(H_2)=-2$. Then
   $W_{\mu}$ is $1$-dimensional and $E_{21}W_{\mu}=0$. Hence, $W_{\mu}$ is a Gelfand-Tsetlin subspace and
   $A|_{W_{\mu}}=0$. Again, this is a critical value which is a contradiction.

 Suppose now that $E_{32}(W(\chi'))= 0$. Then $E_{12}(W(\chi'))= 0$ and $W$ is a highest weight module of highest weight $\chi'|_H$ and $\chi'(H_2)=h_2=0$. Since $\chi'$ is degenerate we have $h_1=0$ or $h_1=-2$. In the case
 $h_1=0$ we obtain $M\simeq W\simeq \mathbb C$. Now suppose $h_1=-2$ and $A|_{W(\chi')}=-2 {\rm Id}$. This highest weight module has no critical characters.  Since $E_{31}W(\chi')\neq 0$ we have $E_{31}M(\chi')= 0$, otherwise $M\simeq W$ as before. Since $A|_{M(\chi')}=-2 {\rm Id}$ and all characters have multiplicity $1$, we have $E_{12}M(\chi')=0$.
 Thus, in addition we have
 $E_{32}M(\chi')=0$.  Consider a weight $\mu\in {\mathfrak h}^*$ such that $\mu(H_1)=-1$, $\mu(H_2)=1$. The subspace
 $M_{\mu}$ is $1$-dimensional. In fact, this is  a critical Gelfand-Tsetlin subspace, since $A|_{M_{\mu}}=-{\rm Id}$. Hence, $M$ does not satisfy the conditions of the theorem and $W$  again is a unique required module.

\medskip
\noindent
{\it Case 2(b). Suppose    $E_{23}(M(\chi')), E_{13}(W(\chi')) \neq 0$, and 
 $E_{23}(W(\chi')), E_{13}(M(\chi'))  = 0$.}  Now we act by $E_{32}$ and $E_{31}$. Suppose first that 
  $E_{32}(M(\chi')) = 0$ and  $E_{31}(W(\chi')) = 0$.  Hence, $M$ contains a non-zero vector $v$ such that
  $E_{13}v=E_{32}v=E_{12}v=0$.  On the other hand,  $W$ contains a non-zero vector $v'$ such that
  $E_{31}v'=E_{23}v'=E_{21}v'=0$. Moreover, $H_1v=H_1v'=0$. But, since $A$ is diagonalizable on $M$ and
  on $W$, we have $E_{21}v=E_{12}v'=0$. Thus $M\simeq W\simeq \mathbb C$.

  Suppose now  $E_{31}(M(\chi')) = 0$ and  $E_{32}(W(\chi')) = 0$. Hence, $M$ contains a non-zero vector $v$ such that
  $E_{13}v=E_{31}v=0$, and  $W$ contains a non-zero vector $v'$ such that
  $E_{32}v'=E_{23}v'=0$. We obtain that $H_1 v=H_1 v'=0$ and $H_2 v=H_2 v'=0$. Moreover, $Av=Av'=0$. Since $A$ is diagonalizable on $M$ and $W$ we have $E_{12}v=E_{12}v'=0$ and  $E_{21}v=E_{21}v'=0$. Thus $M\simeq W\simeq \mathbb C$.

  Finally, let $E_{31}(M(\chi')) = 0$ and  $E_{32}(M(\chi')) = 0$. Hence,  $M$ contains a non-zero vector $v$ such that
  $E_{13}v=E_{31}v=E_{32}v=0$, implying $E_{12}v=0$. So, either $H_1v=H_2v=0$ and $M\simeq W\simeq \mathbb C$, or $H_1 v=-2$ and $H_2 v=2$. In the latter case $W$ contains a non-zero vector $v'$ such that
  $E_{23}v'=0$, $H_1 v'=-2v'$ and $Av'=-2v'$.  Hence, $E_{12}v'=0$ and $E_{13}v'=0$ since $A$ is diagonalizable.
  We have $c_1v=c_1v'$ implying $H_2v=H_2v'=0$ which is a contradiction.

\medskip
\noindent
{\it Case 2(c). Suppose    $E_{23}(M(\chi'))\neq 0$, and 
 $E_{23}(W(\chi')) , E_{13}(M(\chi')), E_{13}(W(\chi')) =  0$.}
 Now we act by $E_{32}$ and $E_{31}$ on $M(\chi')$ and $W(\chi')$. Without loss of generality we may assume that
  $E_{32}(M(\chi')) = 0$ and  $E_{31}(W(\chi')) = 0$. Therefore $E_{21}W(\chi')=[E_{23}, E_{31}]W(\chi')=0$ and  $AW(\chi')=0$.
  Hence, we  have either $H_1w=H_2 w=0$
  or $H_1w=2w$, $H_2w=-2w$ for any $w\in W(\chi')$. In the first case we obtain $M\simeq W\simeq \mathbb C$.
  Consider the second case.
  Since $AM(\chi')=0$, we must have $E_{21}(M(\chi'))=0$ (otherwise $A$ will not be diagonalizable on $M_{\mu}$),
  where  $E_{21}(M(\chi'))\subset M_{\mu}$.
  Thus   $E_{23}(M(\chi'))=[E_{21}, E_{13}]M(\chi)=0$. Which is a contradiction.
\end{proof}

We next state the main theorem in this section. 

\begin{theorem}\label{Theorem: characterization of irr extensions} 
Let $M$ be a simple Gelfand-Tsetlin $\mathfrak{g}$-module and $\chi \in 
{\rm Supp}_{GT}(M)$. Consider the following conditions: 
\begin{itemize}
\item[(i)] $\chi$ is non-critical  and $\dim M(\chi') \leq 1$ for any  $\chi'\in\Ga^{*}$;
\item[(ii)] $\chi$ is non critical and $\dim(M(\chi))=2$;
\item[(iii)] $\chi$ is critical;
\item[(iv)] $M$ has no critical characters.
\end{itemize}
If any of (ii)--(iv) holds, then $M$ is the unique simple extension of $\chi$. If (i) holds, then $M$ is the unique simple extension of $\chi$ with property (i), but $\chi$ may have another simple extension with two-dimensional Gelfand-Tsetlin multiplicities. 
\end{theorem}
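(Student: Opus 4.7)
The plan is to treat this theorem as a synthesis of the uniqueness results already established in this section, handling (ii), (iii), and (iv) by direct appeal and devoting the real work to (i). For case (iii) I would simply invoke Theorem~\ref{Theorem: uniqueness for critical characters}, for case (ii) Theorem~\ref{Theorem: uniqueness for 2-dimensional characters}, and for case (iv) Theorem~\ref{Theorem: uniqueness for modules without critical characters}. In all three the conclusion is already exactly the one stated, so no additional work is required beyond citation.

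For case (i), the first step is to observe that the assumption $\dim M(\chi')\le 1$ for every $\chi'$ forces $A_{\lambda}$ to be diagonalizable with distinct eigenvalues on each weight space $M_{\lambda}$: a nontrivial Jordan block, or an eigenvalue of algebraic multiplicity $\ge 2$, would immediately produce a Gelfand-Tsetlin subspace of dimension $\ge 2$. With this in hand I would split (i) into three subcases according to the $\lambda$-connected chain $(\mu_i)$ of distinct eigenvalues of $A_{\lambda}$ that contains $\chi(A)$. If $\chi$ is generic, Theorem~\ref{Theorem: Uniqueness for generic modules} gives uniqueness directly. If $\chi$ is singular non-critical and the chain is critical, then Corollary~\ref{Corollary: non-critical-diag} yields a unique simple extension with diagonalizable $A_{\lambda}$, which by the above observation is exactly what property~(i) provides. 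If instead the chain is degenerate, I would argue that any simple extension $W$ of $\chi$ satisfying~(i) must itself be free of critical characters, so that Theorem~\ref{Theorem: uniqueness for modules without critical characters} applies.

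The hard part will be this last subcase: showing that, in the degenerate-chain case, property~(i) rules out critical characters throughout $W$. The plan there is to use the Proposition preceding Lemma~\ref{Lemma: no critical}, which says that $E_{23}$ and $E_{32}$ interchange degenerate and critical connected chains, together with the multiplicity constraints of Theorem~\ref{Theorem: Description of eigenvalues of A_lambda}(ii)(b)--(c): a hypothetical critical character $\chi_0$ in $W$ at some weight $\mu$ would sit in a critical chain whose eigenvalue multiplicities, when pushed back to the weight $\lambda$ by root vector actions and combined with Corollary~\ref{cor-ker}, would force a violation of property~(i). This compatibility analysis is the real technical content of~(i); everything else reduces to citing theorems already proved.

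Finally, for the last clause of the statement, that $\chi$ can admit another simple extension with a $2$-dimensional Gelfand-Tsetlin subspace, I would exhibit the phenomenon concretely using Example~\ref{Example: Singular Verma}: the universal $1$-singular module associated with $\bar v=(-1,-1,-1,-1,-1,-1)$ contains the Verma $M(-1,-1)$ as a subquotient, with Gelfand-Tsetlin multiplicity $2$ at a character explicitly described there, while other simple subquotients of the same universal module furnish distinct simple extensions of certain non-critical characters with all Gelfand-Tsetlin multiplicities equal to $1$. This shows that the restriction to extensions satisfying property~(i) in case~(i) is genuinely necessary and cannot be dropped.
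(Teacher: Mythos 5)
Your citations for (ii), (iii), and (iv) match the paper, and your three‑fold split of case (i) by the type of connected chain containing $\chi(A)$ (generic, singular critical chain, singular degenerate chain) also matches. The generic subcase and the singular‑critical‑chain subcase are handled exactly as in the paper, via Theorem~\ref{Theorem: Uniqueness for generic modules} and Corollary~\ref{Corollary: non-critical-diag}. The gap is entirely in the degenerate‑chain subcase of (i).

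Your plan there hinges on the claim that any simple extension $W$ of $\chi$ satisfying (i) must be free of critical characters, so that Theorem~\ref{Theorem: uniqueness for modules without critical characters} applies. That claim is false. A module with one‑dimensional Gelfand–Tsetlin multiplicities throughout can perfectly well contain critical characters; what is ruled out by the prior results (Lemma~\ref{Lemma: no critical}, Corollary~\ref{Corollary: jordan-critical}) is the \emph{converse} implication, namely that having no critical characters forces the Gelfand–Tsetlin multiplicities to be one. A concrete counterexample within the paper's own catalogue is the pointed cuspidal module $L_{3}$ in block $(C12)$ with $t=1$, i.e.\ $M=L\bigl(\{m=0\};\,(a,a,a-1\,|\,a,a\,|\,z)\bigr)$. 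For the character $\chi$ of the tableau $\Tab(0,n,k)$ with $n$ odd and nonzero, $\chi$ is non‑critical and $\chi(A)$ lies in a degenerate chain (Remark~\ref{Remark:connected chains for GT tableaux}), and every Gelfand–Tsetlin subspace of $M$ is one‑dimensional, so $M$ satisfies (i) at $\chi$; yet $M$ contains the critical characters $\chi_{\Tab(0,0,k')}$. So your reduction fails at $W=M$ itself.

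The paper does not try to eliminate critical characters in this subcase. Instead, after reducing (via Corollary~\ref{Corollary: non-critical-diag} and the degenerate‑eigenvalue $\mu_1$) to the situation where two competing extensions $M^{(1)},M^{(2)}$ have $\dim M^{(i)}_\lambda=1$ and disjoint eigenvalue sets at $\lambda+\epsilon_2-\epsilon_3$, it runs a four‑way case analysis on whether $\chi(A)-k\chi(H_1)$ and $b_i-m\chi(H_2)$ vanish for some integers. When neither vanishes, both modules are pointed torsion‑free and a polynomial identity $f_i(\tilde h_1,\tilde h_2)=0$ propagated by $E_{23}^2$ and $E_{12}^2$ produces a polynomial with infinitely many roots, a contradiction. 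When one vanishes, both modules are quotients of the same generalized Verma module and uniqueness follows from the generalized Harish‑Chandra homomorphism. You would need to replicate this case analysis (or produce a genuinely new argument) rather than appeal to Theorem~\ref{Theorem: uniqueness for modules without critical characters}. Your remaining points — the observation that $\dim M(\chi')\le 1$ forces $A_\lambda$ diagonalizable with distinct eigenvalues, and the use of Example~\ref{Example: Singular Verma} to witness the final clause — are fine.
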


\begin{proof} Let $\chi$ be a non  critical character of $M$. Suppose first that $M$ is generic. Then $M$ is the only simple extension of $\chi$ by Theorem~\ref{Theorem: Uniqueness for generic modules}. Suppose now that $M$ is singular and satisfies the conditions in (i). Assume first that $\chi(A)$ belongs to a critical chain but $\chi(A)$ itself is non-critical.
Then  $M$ is the unique simple extension of $\chi$  by Corollary~\ref{Corollary: non-critical-diag}. Assume now that $\chi(A)$ belongs to a degenerate chain $\{\mu_{1},\mu_{2},\ldots\}$ where
$\mu_n=n(n-1)-\frac{1}{2}r$, $n\geq 1$ (see Lemma \ref{Lemma: connected chains explicitly}(i)). Suppose that $M^{(1)}$ and $M^{(2)}$ are two simple extensions of $\chi$ satisfying the conditions of (i). If $\mu_1$ is not an eigenvalue of $A^{(i)}_{\lambda}:=A|_{M^{(i)}_{\lambda}}$, where $\lambda=\chi|_{\mathfrak{h}}$, then $M^{(1)}\simeq M^{(2)}$ since $A^{(i)}_{\lambda}$ (and thus $M^{(i)}$) is uniquely determined by $\chi(A)$ in this case.  Assume that
$\mu_1$  is  an eigenvalue of both $A^{(i)}_{\lambda}$, $i=1,2$.  Consider the weight $\tilde{\lambda}:=\lambda + \epsilon_2-\epsilon_3$ and the eigenvalues of $A^{(i)}_{\tilde{\lambda}}$. They belong to the same critical chain. If both $A^{(i)}_{\tilde{\lambda}}$ have a common non-critical eigenvalue then $M^{(1)}\simeq M^{(2)}$ by
Corollary~\ref{Corollary: non-critical-diag}. Hence, may assume that $A^{(i)}_{\tilde{\lambda}}$ have distinct eigenvalues. This is only possible if $\dim M^{(i)}_{\lambda}=1$ and $\dim M^{(i)}_{\tilde{\lambda}}\leq 1$, $i=1,2$.  Let $B|_{ M^{(i)}_{\lambda}}=b_i {\rm Id}$. Recall that a weight module is torsion free if all root vectors act injectively on the module. Now consider the following cases.

\medskip

\noindent {\it Case 1.  $\chi(A)-k\chi(H_1)\neq 0$, $b_i-m\chi(H_2)\neq 0$
for $i=1,2$ and $k,m \in {\mathbb Z}$.} In this case both $M^{(1)}$ and $M^{(2)}$ are pointed torsion free modules. Set $\tilde{\lambda}(H_i)=\tilde{h}_i$, $i=1,2$. Then we have $A^{(1)}_{\tilde{\lambda}}=\tilde{\mu}_1  {\rm Id}$,  $A^{(2)}_{\tilde{\lambda}}=\tilde{\mu}_2  {\rm Id}$, $B^{(1)}_{\rho}=\tilde{b}_1  {\rm Id}$ and $B^{(2)}_{\tilde{\lambda}}=\tilde{b}_2  {\rm Id}$, where we can assume that $\tilde{\mu}_1=-\frac{1}{4}-\frac{1}{4}\tilde{h}^{2}_{1}+\tilde{h}_1$ and $\tilde{\mu}_2=\tilde{\mu}_1+1$.  Using the second identity in Lemma \ref{Lemma: Relations AB and constants ,p,eta} we also have 
\begin{equation}\label{Equation: rel between eigen of A and B}
a\tilde{b}_{i}=2\tilde{\mu}_{i}\tilde{b}_{i}+\tilde{b}_{i}^{2}+r_{1}\tilde{\mu}_{i}+\tau_{1}, \ \ i=1,2.
\end{equation}  

Keeping in mind that  $\tau_1$ and $a$ depend of $\tilde{h}_1$ and $\tilde{h}_2$, $r$ depends of $\tilde{h}_1$ and $b_i$, $r_1$ depend of $\tilde{h}_2$,  $(a-2\tilde{\mu}_{1})\tilde{b}_{1}-\tilde{b}_{1}^{2}-r_{1}\tilde{\mu}_{1}-\tau_{1}$ can be express as a polynomial in $\tilde{h}_1$, $\tilde{h}_2$. Let us consider the two-variable polynomial 
 $$
 f_i(x,y) = (a(x,y)-2\mu_i(x))\tilde{b}_{i}(y)-\tilde{b}_{i}^{2}(y)-r_{1}(y)\mu_i(x)-\tau_{1} (x,y),\ \ i=1,2,
 $$ Then   by  (\ref{Equation: rel between eigen of A and B}), $f_i(\tilde{h}_1,\tilde{h}_2)=0$.
An easy calculation shows that 
\begin{eqnarray*}
A^{(1)}_{\tilde{\lambda}+ 2\epsilon_2- 2\epsilon_3}=\tilde{\mu}_{2}(\tilde{h}_1-2) {\rm Id},&
A^{(2)}_{\tilde{\lambda}+ 2\epsilon_2- 2\epsilon_3}=\tilde{\mu}_{1}(\tilde{h}_1-2) {\rm Id},\\
B^{(1)}_{\tilde{\lambda}+ 2\epsilon_2- 2\epsilon_3}=\tilde{b}_{2}(\tilde{h}_2+4) {\rm Id},&
B^{(2)}_{\tilde{\lambda}+ 2\epsilon_2- 2\epsilon_3}=\tilde{b}_{1}(\tilde{h}_2+4) {\rm Id}.
\end{eqnarray*}

Then  $f_i(\tilde{h}_1-2, \tilde{h}_2+4)=0$, $i=1,2$. Similarly, using the operator $E_{12}^2$ we obtain  $f_i(\tilde{h}_1+4,\tilde{h}_2-2) = 0$. Hence,  $f_i(\tilde{h}_1+6,\tilde{h}_2) = 0$. If we repeat this argument again we will obtain $f_i(\tilde{h}_1+12,\tilde{h}_2)=0$ and so on. Hence if $g(x)=f_1(x,\tilde{h}_{2})$, we have shown that $g(t)=0$ implies $g(t+6)=0$, so $g$ has infinitely many roots, thus $g=0$, which is impossible.

\medskip

\noindent {\it Case 2. $\chi(A)-k\chi(H_1) = 0$, $b_i-m\chi(H_2)\neq 0$
for some $k  \in {\mathbb Z}$ and all $i=1,2$, \,  $m \in {\mathbb Z}$.} Without loss of generality we may assume that
  $\chi(A)=0$.   Since $A$ is diagonalizable on both modules, then this immediately implies that both $M^{(1)}$ and $M^{(2)}$ are quotients of generalized Verma modules
(induced from an infinite-dimensional  simple $\mathfrak{sl}(2)$-module $W$) that have the same central character and the same weight support. Since the generalized Harish-Chandra homomorphism defines  $W$ uniquely we conclude that $M^{(1)}\simeq M^{(2)}$.

\medskip

\noindent {\it Case 3. $\chi(A)-k\chi(H_1) = 0$, $b_1-m_1\chi(H_2) = 0$,  $b_2-m_2\chi(H_2) \neq 0$, 
for some $k, m_1  \in {\mathbb Z}$ and all $i=1,2$, \,  $m_2 \in {\mathbb Z}$.} This case is handled in a similar way as Case 2. 

\medskip

\noindent {\it Case 4. $\chi(A)-k\chi(H_1) = 0$ and $b_i=m_i\chi(H_2)$ for some integer $k, m_i$, $i=1,2$.} Therefore, as in Case 2,
both $M^{(1)}$ and $M^{(2)}$ are quotients of the same generalized Verma module. Hence, $m_1=m_2$
and $M^{(1)}\simeq M^{(2)}$. This completes the proof of (i).

\medskip
 If $\chi$ is a non critical character such that $\dim(M(\chi))=2$, then $M$ is a unique simple extension of $\chi$
by Theorem~\ref{Theorem: uniqueness for 2-dimensional characters} implying the result for (ii).
The uniqueness of the extension if (iii) holds follows immediately from Theorem~\ref{Theorem: uniqueness for critical characters}. It remains to consider (iv).
Suppose $M$ is generic. Then the uniqueness of simple extension for any character of $M$ again follows from Theorem~\ref{Theorem: Uniqueness for generic modules}. If $M$ is singular but without critical characters, then we apply 
Theorem~\ref{Theorem: uniqueness for modules without critical characters}.
\end{proof}

\section{Realizations of all simple Gelfand-Tsetlin modules for $\mathfrak{sl}(3)$}\label{Section: Realizations of all simple Gelfand-Tsetlin modules for sl(3)}
In this section we  give an explicit realization of all simple Gelfand-Tsetlin modules of $\mathfrak{sl}(3)$. For this purpose we consider any Gelfand-Tsetlin character $\chi\in\Gamma^{*}$ and construct a Gelfand-Tsetlin module $M$ such that any simple extension of $\chi$ is isomorphic to some subquotient of $M$ (recall that, by  Theorem \ref{Theorem: finiteness-for-gl-n}, the number of non-isomorphic simple extensions is at least one and at most two).

Remark \ref{Remark: correspondence between characters and tableaux} provides  a natural correspondence between the Gelfand-Tsetlin characters and the Gelfand-Tsetlin tableaux. Hence, given a character $\chi$ we can associate a tableau $T(v)$ and the problem of constructing  simple extensions of $\chi$ is reduced to the problem of finding simple modules with tableaux realization containing $T(v)$ as a basis element. 
Recall that any Gelfand-Tsetlin tableau $T(v)$ of height $3$ is either generic ($v_{21}-v_{22}\notin\mathbb{Z}$) or $1$-singular ($v_{21}-v_{22}\in\mathbb{Z}$), and the constructions in \S \ref{Section: Families of Gelfand-Tsetlin modules for gl(n)} allow us to describe an explicit Gelfand-Tsetlin module $V(T(v))$ for any $T(v)$. This, combined with Theorem \ref{Theorem: characterization of irr extensions}, implies that for the desired classification, it is sufficient to describe all simple subquotients  of the modules $V(T(v))$.

\subsection{Structure of generic $\mathfrak{sl}(3)$-modules $V(T(v))$}\label{subsection: Structure of generic sl(3)-modules V(T(v))} In this subsection we consider all possible generic Gelfand-Tsetlin  tableaux $T(v)$ and describe all simple subquotients of the $\mathfrak{sl}(3)$-module $V(T(v))$. The description includes an explicit basis for each simple subquotient, its weight support and its Loewy decomposition.  Since $\mathfrak{g} = \mathfrak{sl}(3)$, the action of $E_{11}+E_{22}+E_{33}$  is zero, thus $w_{31}+w_{32}+w_{33}+3=0$ for any Gelfand-Tsetlin tableaux $T(w)$.  We first rewrite Theorem \ref{Theorem: Generic GT modules} in the case of $\mathfrak{sl}(3)$.

\begin{theorem}\label{Theorem: Generic modules for gl(3)}
 If $T(v)$ is a generic Gelfand-Tsetlin tableau of  height  $3$, then the vector space $V(T(v))$ spanned by the set of tableaux $\mathcal{B}(T(v))$ has a structure of a Gelfand-Tsetlin $\mathfrak{sl}(3)$-module with the action of $\mathfrak{sl}(3)$ on $V(T(v))$ given by the Gelfand-Tsetlin formulas:
\begin{eqnarray*}
E_{12}(T(w))=&  -(w_{21}-w_{11})(w_{22}-w_{11})T(w+\delta^{11}),\\
E_{21}(T(w))=&  T(w-\delta^{11}),\\
E_{32}(T(w))=&  \displaystyle\frac{w_{21}-w_{11}}{w_{21}-w_{22}}T(w-\delta^{21})-\frac{w_{22}-w_{11}}{w_{21}-w_{22}}T(w-\delta^{22}),\\
E_{23}(T(w))=& \displaystyle\frac{(w_{31}-w_{21})(w_{32}-w_{21})(w_{33}-w_{21})}{w_{21}-w_{22}}T(w+\delta^{21})\\
& -\displaystyle\frac{(w_{31}-w_{22})(w_{32}-w_{22})(w_{33}-w_{22})}{w_{21}-w_{22}}T(w+\delta^{22}).
\end{eqnarray*}
\begin{eqnarray*}
E_{13}(T(w))=& \displaystyle\frac{(w_{21}-w_{31})(w_{21}-w_{32})(w_{21}-w_{33})(w_{22}-w_{11})}{w_{21}-w_{22}}T(w+\delta^{21}+\delta^{11})\\
& +\displaystyle\frac{(w_{31}-w_{22})(w_{32}-w_{22})(w_{33}-w_{22})(w_{21}-w_{11})}{w_{21}-w_{22}}T(w+\delta^{22}+\delta^{11}),\\
E_{31}(T(w))=&\displaystyle\frac{1}{w_{21}-w_{22}}T(w-\delta^{21}-\delta^{11})-\frac{1}{w_{21}-w_{22}}T(w-\delta^{22}-\delta^{11}).
\end{eqnarray*}

\begin{eqnarray}
H_{1}(T(w))=& (2w_{11}-(w_{21}+w_{22}+1))T(w),\label{Equation: GT formulas H1}\\
H_{2}(T(w))=& (2(w_{21}+w_{22}+1)-w_{11})T(w),\label{Equation: GT formulas H2}\end{eqnarray}
\end{theorem}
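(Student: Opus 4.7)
The plan is to obtain this statement as a specialization of Theorem \ref{Theorem: Generic GT modules} combined with the explicit formulas worked out in Example \ref{Example: GT formulas in terms of permutations for gl(3)}, followed by a short book-keeping step to pass from $\gl(3)$ to $\mathfrak{sl}(3)$. Since $T(v)$ is a generic tableau of height $3$, Theorem \ref{Theorem: Generic GT modules} already guarantees that $V(T(v)) = \Span_{\mathbb C} \mathcal{B}(T(v))$ carries a well-defined $U(\gl(3))$-action given by the Gelfand-Tsetlin formulas (all denominators $w_{21} - w_{22}$ appearing are nonzero for every $T(w) \in \mathcal{B}(T(v))$ because of the generic assumption, and the tableaux $T(w \pm \delta^{ki})$ that appear on the right-hand sides stay in $\mathcal{B}(T(v))$). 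This is the main content; no relations need to be re-verified.

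Next I would restrict the action through the projection $\bar{\tau}:U(\gl(3))\to U(\mathfrak{sl}(3))$ by imposing the trace-zero normalization $w_{31}+w_{32}+w_{33}+3=0$ on the top row, which is constant across $\mathcal{B}(T(v))$ and hence harmless. Then for $H_1 = E_{11}-E_{22}$ and $H_2 = E_{22}-E_{33}$, the diagonal formulas $E_{kk}(T(w)) = e_{kk}(w)T(w)$ from Theorem \ref{Theorem: GT theorem} give
\[
H_1(T(w)) = (2w_{11} - w_{21} - w_{22} - 1)T(w),
\]
\[
H_2(T(w)) = (2w_{21}+2w_{22} - w_{11} - (w_{31}+w_{32}+w_{33}) - 1)T(w),
\]
and substituting $w_{31}+w_{32}+w_{33} = -3$ yields precisely the two claimed identities (\ref{Equation: GT formulas H1}) and (\ref{Equation: GT formulas H2}).

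For the off-diagonal generators the work is to translate Proposition \ref{Proposition: Action of E_ij in terms of e_ij(v)}, written in the compact $\varepsilon_{rs}$-notation, into the explicit $\delta^{ki}$-notation used in the statement. For $E_{12}$ and $E_{21}$ there is only the trivial permutation in $\Phi_{12}$, and from Example \ref{Example: GT formulas in terms of permutations for gl(3)} one reads $\varepsilon_{12} = \delta^{11}$, $e_{12}(w) = -(w_{11}-w_{21})(w_{11}-w_{22})$ and $e_{21}(w)=1$, giving the first two formulas directly. For $E_{32}$, $E_{23}$, $E_{13}$ and $E_{31}$ the sum in Proposition \ref{Proposition: Action of E_ij in terms of e_ij(v)} runs over $\Phi_{rs} = \widetilde{S}_2 = \{\mathrm{Id},\tau\}$, where $\tau$ swaps positions $(2,1)$ and $(2,2)$; substituting $\varepsilon_{32} = -\delta^{21}$, $\tau(\varepsilon_{32}) = -\delta^{22}$, $\varepsilon_{23} = \delta^{21}$, $\tau(\varepsilon_{23}) = \delta^{22}$, $\varepsilon_{13}=\delta^{11}+\delta^{21}$, $\tau(\varepsilon_{13})=\delta^{11}+\delta^{22}$, $\varepsilon_{31}=-\delta^{11}-\delta^{21}$, $\tau(\varepsilon_{31})=-\delta^{11}-\delta^{22}$, together with the explicit values of $e_{rs}(w)$ listed in Example \ref{Example: GT formulas in terms of permutations for gl(3)}, reproduces each of the stated formulas. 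The only slightly delicate point is to keep track of the sign $\frac{1}{w_{22}-w_{21}} = -\frac{1}{w_{21}-w_{22}}$ produced when evaluating $e_{rs}(\tau(w))$ in the $\tau$-summand; apart from this the computation is purely cosmetic and I would treat it as routine. No obstacle is anticipated beyond this transcription.
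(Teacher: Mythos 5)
Your proof is correct and takes essentially the same route as the paper, which explicitly frames this theorem as a rewriting of Theorem \ref{Theorem: Generic GT modules} in the $\mathfrak{sl}(3)$ case combined with the trace-zero normalization $w_{31}+w_{32}+w_{33}+3=0$. The computations specializing Example \ref{Example: GT formulas in terms of permutations for gl(3)} via the $\tau$-summand and deriving $H_1$, $H_2$ from $E_{11}-E_{22}$, $E_{22}-E_{33}$ all check out.
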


By Theorem \ref{Theorem: Uniqueness for generic modules}, a generic character admits a unique simple extension. In order to describe such simple extension, given a tableau $T(w)\in\mathcal{B}(T(v))$ we will describe explicit basis of tableaux for the simple subquotient of $V(T(v))$ that contains $T(w)$.
%%%%%%%%%%%%%%%%%%%%%%%%%%%%%%%%%%%%%%%%%%%%

By (\ref{Equation: GT formulas H1}) and (\ref{Equation: GT formulas H2}) it is clear that $\mathcal{B}(T(v))$ is an eigenbasis for the action of the generators of the Cartan subalgebra $\mathfrak{h}$. In particular, any subquotient of $V(T(v))$ is a weight module. The following proposition describes explicit bases for the weight subspaces of the subquotiens of $V(T(v))$.

\begin{proposition}\label{Proposition: Basis for weight spaces in generic blocks}
Let $M$ be a Gelfand-Tsetlin module with basis of tableaux $\mathcal{B}_{M}\subset\mathcal{B}(T(v))$ for some generic tableau $T(v)$. If $T(w)\in\mathcal{B}_{M}$ is a tableau of weight $\lambda$, then the weight space $M_{\lambda}$ is spanned by the set of tableaux $\{T(w+(i,-i,0))\ |\ i\in \mathbb{Z}\}\cap\mathcal{B}_{M}$.
\end{proposition}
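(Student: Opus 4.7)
The plan is straightforward: since the generic Gelfand-Tsetlin formulas \eqref{Equation: GT formulas H1}--\eqref{Equation: GT formulas H2} show that every tableau $T(w')\in\mathcal{B}(T(v))$ is a simultaneous eigenvector for $H_{1}$ and $H_{2}$, the basis $\mathcal{B}_{M}\subset\mathcal{B}(T(v))$ is automatically a weight basis of $M$. Consequently
\[
M_{\lambda}=\operatorname{Span}_{\mathbb{C}}\bigl\{T(w')\in\mathcal{B}_{M}\ \bigl|\ T(w')\text{ has weight }\lambda\bigr\}.
\]
So it suffices to identify which shifts $T(w+z)$, with $z\in T_{2}(\mathbb{Z})$, share the weight of $T(w)$.

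First I would apply the formulas \eqref{Equation: GT formulas H1}--\eqref{Equation: GT formulas H2} to compute, for any $z=(z_{21},z_{22},z_{11})\in T_{2}(\mathbb{Z})$, the differences
\[
H_{1}(T(w+z))-H_{1}(T(w))=2z_{11}-(z_{21}+z_{22}),
\]
\[
H_{2}(T(w+z))-H_{2}(T(w))=2(z_{21}+z_{22})-z_{11}.
\]
Setting both expressions equal to zero gives the linear system $2z_{11}=z_{21}+z_{22}$ and $z_{11}=2(z_{21}+z_{22})$, whose unique integer solutions are $z_{11}=0$ and $z_{22}=-z_{21}$. Hence the set of tableaux in $\mathcal{B}(T(v))$ of weight $\lambda$ is exactly $\{T(w+(i,-i,0))\mid i\in\mathbb{Z}\}$.

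Combining these two observations yields the statement: $M_{\lambda}$ is spanned by $\{T(w+(i,-i,0))\mid i\in\mathbb{Z}\}\cap\mathcal{B}_{M}$. There is essentially no obstacle here; the only point worth emphasizing is the linear-algebra step that forces $z_{11}=0$ and $z_{21}+z_{22}=0$, which reflects the fact that within a single generic block the $\mathfrak{sl}(3)$-weight is determined by the triple $(w_{11},\,w_{21}+w_{22})$ and the map $z\mapsto(2z_{11}-(z_{21}+z_{22}),\,2(z_{21}+z_{22})-z_{11})$ on $\mathbb{Z}^{3}$ has a one-dimensional kernel spanned by $(1,-1,0)$.
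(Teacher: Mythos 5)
Your proof is correct and follows essentially the same route as the paper's: apply the Gelfand-Tsetlin formulas for $H_1$, $H_2$, note that all tableaux in $\mathcal{B}(T(v))$ are weight vectors, and solve the resulting linear system to find that equal weight forces $z_{11}=0$ and $z_{21}+z_{22}=0$. The only difference is that you write out the elementary linear algebra explicitly, which the paper leaves implicit; also note the minor slip at the end where you call $(w_{11},w_{21}+w_{22})$ a ``triple''.
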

\begin{proof}
As $\mathcal{B}(T(v))=\mathcal{B}(T(w))$, we just need to characterize tableaux of the form $T(w+(m,n,k))$ in $\mathcal{B}_{M}$ with the same weight $\lambda$ of $T(w)$. By the Gelfand-Tsetlin formulas we have
\begin{equation*}
\begin{split}
H_{1}(T(w+(m,n,k))) & =(2(w_{11}+k)-(w_{21}+m+w_{22}+n+1))T(w+(m,n,k))\\
H_{2}(T(w+(m,n,k))) & = (2(w_{21}+m+w_{22}+n+1)-(w_{11}+k))T(w+(m,n,k))
\end{split}
\end{equation*}
In particular, the weight of $T(w)$ is $$\lambda=(2w_{11}-(w_{21}+w_{22}+1),2(w_{21}+w_{22}+1)-w_{11}).$$
Furthermore, a tableau $T(w+(m,n,k))$ in $\mathcal{B}_{M}$ has weight $\lambda$ if $m,n,k$ satisfy $m+n=0$ and $k=0$.
\end{proof}

In this section we will use Theorem \ref{Theorem: basis for Irr generic modules}  to describe all simple subquotients of the generic $\mathfrak{sl}(3)$-modules $V(T(v))$. Let us recall this result. 

Let $T(v)$ be a generic Gelfand-Tsetlin tableau of height $3$ and $T(w)\in\mathcal{B}(T(v))$ and $\Omega^{+}(T(u))=\{(r,s,t)\ |\ u_{rs}-u_{r-1,t}\in\mathbb{Z}_{\geq 0}\}$.
The complex vector space with basis 
 $\mathcal{N}(T(w))=\{T(w')\in\mathcal{B}(T(v))\ |\ \Omega^{+}(T(w))\subseteq\Omega^{+}(T(w'))\}$
 is a submodule of $V(T(v))$ containing $T(w)$. Moreover, $\mathcal{I}(T(w)):=\{T(w')\in\mathcal{B}(T(v))\ |\ \Omega^{+}(T(w))=\Omega^{+}(T(w'))\}$ is a basis of the unique simple extension of $\chi_w$, where $\chi_{w}\in\Gamma^{*}$ is given by $\chi_{w}(c_{rs})=\gamma_{rs}(w)$.

 By Theorem \ref{Theorem: basis for Irr generic modules}, bases of the subquotients of $V(T(v))$ can be  described by subsets of $ \mathbb{Z}^3$. In order to describe these bases, we introduce new  notation.

\begin{definition}\label{Definition: M(B)}
Let $T(w)$ be a tableau and $\mathcal{B}$ be a subset of ${\mathbb Z}^3$.  Assume that $M$ is a Gelfand-Tsetlin module with basis $\{ T(w+(m,n,k)) \; | \; (m,n,k)\in\mathcal{B} \}$. Then we will denote $M$ by $M(\mathcal{B}, T(w))$, or simply by $M(\mathcal{B})$ if $T(w)$ is fixed. If  $M(\mathcal{B})$ is simple, then we will write $L(\mathcal{B})$ for $M(\mathcal{B})$.
\end{definition}

\begin{example}
With the notation of Definition \ref{Definition: M(B)}, the simple module from Example \ref{Example: explicit basis for irr gen module} can be written as follows:
{\scriptsize $$L\left(
\begin{split}
&m\leq 0,\\
&k\leq m,\\
&n> -2
\end{split}
\right)=M\left(
\begin{split}
&m\leq 0\\
&k\leq m\\
&n> -2
\end{split}
;\ T(v)\right)$$}
where $v=(a,b,c,a,b+2,a)$.
\end{example}

\subsection{Realizations of all simple generic Gelfand-Tsetlin $\mathfrak{sl}(3)$-modules}\label{subsection: Realization of simple generic Gelfand-Tsetlin modules for sl(3)} In this section we  describe all simple objects in every generic block $\mathcal{GT}_{T(v)}$ (see Definition \ref{Definition: blocks} and Remark \ref{Remark: correspondence between characters and tableaux}). Such description will include an explicit tableaux basis of each simple subquotient $M$ in $\mathcal{GT}_{T(v)}$ and  the weight support of $M$. For the weight support we will use Proposition \ref{Proposition: Basis for weight spaces in generic blocks} and the explicit basis. If the weight multiplicities are finite, a picture of the weight support along with the multiplicities  is provided. We also present the  components of the Loewy series of the universal module $V(T(v))$. A rigorous proof based on Theorem \ref{Theorem: basis for Irr generic modules}, Proposition \ref{Proposition: Basis for weight spaces in generic blocks}, and Theorem \ref{Theorem: Loewy decomposition generic case} is given for Case $(G6)$ only, however, for all other cases  the  reasoning is the same.

Until the end of this section we use the following convention. The entries of the Gelfand-Tsetlin tableaux will be integer shifts of some of the complex numbers $a,b,c,x,y,z$. We also assume that if any two of $a,b,c,x,y,z$ appear in the same row or in consecutive rows of a given tableau, then their difference is not integer. 

\begin{remark}\label{Remark: Number of simple subquotients for generic blocks}
By Theorem \ref{Theorem: Uniqueness for generic modules}, any generic character has a unique simple extension. In particular, if $T(v)$ is generic, the number of simple subquotients of $V(T(v))$ is equal to the number of simple modules in $\mathcal{GT}_{T(v)}$. This number depends only on $\Omega(T(v))$ (see Theorem 7.6 in \cite{FGR2}).
\end{remark}

%%%%%%%%%%%%%%%%%%%%%%%%    G1     %%%%%%%%%%%%%%%%%%%%%%%%%%%%%%%%%%

\begin{itemize}
\item[$(G1)$] Consider the following Gelfand-Tsetlin tableau:

\begin{center}

 \hspace{1.5cm}\Stone{$a$}\Stone{$b$}\Stone{$c$}\\[0.2pt]
 $T(v)$=\hspace{0.3cm} \Stone{$x$}\Stone{$y$}\\[0.2pt]
 \hspace{1.3cm}\Stone{$z$}\\
\end{center}

\noindent
The module $V(T(v))$ is simple and, then $\mathcal{GT}_{T(v)}$  has unique (up to isomorphism) simple module, and this module has infinite-dimensional weight multiplicities.

\begin{center}
\begin{tabular}{|>{$}l<{$}|>{$}c<{$}|}\hline
\textsf{Module} &   \hspace{0.3cm}\text{Basis}\\ \hline
L_{1} &
L(\mathbb{Z}^{3})\\ \hline
\end{tabular}
\end{center}

%%%%%%%%%%%%%%%%%%%%    G2       %%%%%%%%%%%%%%%%%%%%%%%%%%%%%
\item[$(G2)$] Let $T(v)$ be the tableau

\begin{center}

 \hspace{1.5cm}\Stone{$a$}\Stone{$b$}\Stone{$c$}\\[0.2pt]
 \hspace{1.4cm} \Stone{$x$}\Stone{$y$}\\[0.2pt]
 \hspace{1.3cm}\Stone{$x$}\\
\end{center}

\noindent
\begin{itemize}
\item[I.] \textbf{simple subquotients}.\\
In this case the module $V(T(v))$ has  $2$ simple subquotients and they have  infinite-dimensional weight multiplicities:
\vspace{0.3cm}
\begin{center}
\begin{tabular}{|>{$}l<{$}|>{$}c<{$}|}\hline
\textsf{Module} &   \hspace{0.3cm}\text{Basis}\\ \hline
L_{1}&L\left( k\leq m
\right)\\\hline
L_{2}&L\left(
m<k
\right)\\\hline
\end{tabular}
\end{center}
\vspace{0.3cm}
\item[II.] \textbf{Loewy series.}\\
$$L_{1} ,\  L_{2}.$$

\end{itemize}

%%%%%%%%%%%%%%%%%%%%%         G3      %%%%%%%%%%%%%%%%%%%%%%%%

\item[$(G3)$] Consider the tableau:

\begin{center}

 \hspace{1.5cm}\Stone{$a$}\Stone{$b$}\Stone{$c$}\\[0.2pt]
 $T(v)$=\hspace{0.3cm} \Stone{$a$}\Stone{$y$}\\[0.2pt]
 \hspace{1.3cm}\Stone{$z$}\\
\end{center}

\noindent
\begin{itemize}
\item[I.] \textbf{simple subquotients}.\\
In this case the module $V(T(v))$ has  $2$ simple subquotients  and they have infinite-dimensional weight multiplicities: 
\vspace{0.3cm}
\begin{center}
\begin{tabular}{|>{$}l<{$}|>{$}c<{$}|}\hline
\textsf{Module} &   \hspace{0.3cm}\text{Basis}\\ \hline
L_{1}&L\left( m\leq 0
\right)\\\hline
L_{2}&L\left(
0<m
\right)\\\hline
\end{tabular}
\end{center}
\vspace{0.3cm}
\item[II.] \textbf{Loewy series.}
$$L_{1} ,\  L_{2}.$$
\end{itemize}

%%%%%%%%%%%%          G4            %%%%%%%%%%%%%%%%%%%%%

\item[$(G4)$] Consider the tableau:

\begin{center}

 \hspace{1.5cm}\Stone{$a$}\Stone{$b$}\Stone{$c$}\\[0.2pt]
 $T(v)$=\hspace{0.3cm} \Stone{$a$}\Stone{$y$}\\[0.2pt]
 \hspace{1.3cm}\Stone{$y$}\\
\end{center}
\noindent
\begin{itemize}
\item[I.] \textbf{simple subquotients}.\\
In this case the module $V(T(v))$ has  $4$ simple subquotients. The bases and corresponding weight lattices are given by:\\

\begin{itemize}

\item[(i)] Modules with infinite-dimensional weight multiplicities:
\vspace{0.3cm}
\begin{center}
% [inline block 0: 3 envs, 6136 chars -> data_tex | \begin{tabular}{|>{$}l<{$}|>{$}c<{$}|}\hline \textsf{Module} &   \hspace{0.3cm}\text{Basis}\\ \hline...]

\end{center}

\vspace{0.3cm}
The origin of the picture above corresponds to the $\mathfrak{sl}(3)$-weight associated to the tableau $T(v)$.
\end{itemize}
\vspace{0.3cm}
\item[II.] \textbf{Loewy series.}\\
 $$L_{1} ,\  L_{2}\oplus L_{3} ,\  L_{4}.$$

\end{itemize}

%%%%%%%%%%%%%%%%%%     G5   %%%%%%%%%%%%%%%%%%%%%%%%%%%%%%%%%%%%%%%%%%

\item[$(G5)$] Let $T(v)$ be the generic tableau 

\begin{center}

 \hspace{1.5cm}\Stone{$a$}\Stone{$b$}\Stone{$c$}\\[0.2pt]
 \hspace{1.4cm} \Stone{$a$}\Stone{$y$}\\[0.2pt]
 \hspace{1.3cm}\Stone{$a$}\\
\end{center}

\noindent
\begin{itemize}
\item[I.] \textbf{simple subquotients}.\\

In this case the module $V(T(v))$ has  $4$ simple subquotients. The bases and corresponding weight lattices are given by:\\

\begin{itemize}

\item[(i)] Two modules with infinite-dimensional weight multiplicities:
\vspace{0.3cm}
\begin{center}
% [inline block 1: 3 envs, 5711 chars -> data_tex | \begin{tabular}{|>{$}l<{$}|>{$}c<{$}|}\hline \textsf{Module} &   \hspace{0.3cm}\text{Basis}\\ \hline...]

\end{center}
\end{itemize}
\vspace{0.5cm}

\item[II.] \textbf{Loewy series.}\\
 $$L_{1} ,\  L_{2}\oplus L_{3} ,\  L_{4}$$
\end{itemize}

%%%%%%%%%%%%%%%%%%%%%%%%%%     G6      %%%%%%%%%%%%%%%%%%%%%%%%%%%%%%%%%%%%%%%%%%%%%

\item[$(G6)$] Consider the tableau:

\begin{center}

 \hspace{1.5cm}\Stone{$a$}\Stone{$b$}\Stone{$c$}\\[0.2pt]
 $T(v)$=\hspace{0.3cm}  \Stone{$a$}\Stone{$b$}\\[0.2pt]
 \hspace{1.3cm}\Stone{$a$}\\
\end{center}

\noindent
\begin{itemize}
\item[I.] \textbf{simple subquotients}.\\
In this case the module $V(T(v))$ has  $8$ simple subquotients. The bases and corresponding weight lattices are given by:

\begin{itemize}

\item[(i)] Two modules with infinite-dimensional weight spaces
\vspace{0.3cm}
\begin{center}
% [inline block 2: 5 envs, 14030 chars -> data_tex | \begin{tabular}{|>{$}l<{$}|>{$}c<{$}|}\hline \textsf{Module} &   \hspace{0.3cm}\text{Basis}\\ \hline...]

\end{center}
\end{itemize}
\vspace{0.5cm}

\item[II.] \textbf{Loewy series.}\\
$$L_{1} ,\  L_{2}\oplus L_{3}\oplus L_{4} ,\  L_{5}\oplus L_{6}\oplus L_{7} ,\  L_{8}.$$
\end{itemize}

\begin{proof} If $M$ denotes the universal module $V(T(v))$, we proceed as follows: first prove that $L_{1}$ is a simple submodule of $M$, then that $M_{1}:=M/L_{1}$ has simple submodules isomorphic to $L_{2},\ L_{3}$ and $L_{4}$,  then  prove that $M_{2}:=M_{1}/(L_{2}\oplus L_{3}\oplus L_{4})$ has simple submodules isomorphic to $L_{5},\ L_{6}$ and $L_{7}$, and finally show that $L_{8}=M_{2}/(L_{5}\oplus L_{6}\oplus L_{7})$ is a simple module.

By Theorem \ref{Theorem: basis for Irr generic modules} we see that, $L_{1}$ (respectively $L_{2}$, $L_{3}$, $L_{4}$ , $L_{5}$, $L_{6}$, $L_{7}$ and $L_{8}$) is a simple subquotient of $V(T(v))$ containing the tableau $T(v)$ (respectively $T(v+(0,0,1))$, $T(v+(0,1,0))$, $T(v+(1,0,1))$ , $T(v+(0,1,1))$, $T(v+(1,0,2))$, $T(v+(1,1,1))$, and $T(v+(1,1,2))$).  To find the the layers of the Loewy series decomposition for $V(T(v))$ we apply Theorem \ref{Theorem: Loewy decomposition generic case} and Remark \ref{Remark: Loewy series for generic modules}. We  describe these layers in four steps.

{\bf Step 1.}  $L_{1}$ is a simple submodule of $M$. By Theorem \ref{Theorem: basis for Irr generic modules}(i) the module with basis $\mathcal{N}(T(v))=\{T(w)\ |\ \Omega^{+}(T(v))\subseteq\Omega^{+}(T(w))\}$ is a submodule of $M$, but $\Omega^{+}(T(v))=\{(3,1,1),(3,2,2),(2,1,1)\}=\Omega(T(v))$, thus $\mathcal{N}(T(v))=\mathcal{I}(T(v))$. Hence, $L_{1}$ is a simple submodule of $M$.

{\bf Step $2$.}  $M_{1}:=M/L_{1}$ has simple submodules isomorphic to $L_{2},\ L_{3}$ and $L_{4}$. For these modules we have that 
\begin{align*}
\Omega^{+}(T(v+(1,0,1)))&=\{(3,2,2),(2,1,1)\},\\
\Omega^{+}(T(v+(0,0,1)))&=\{(3,1,1),(3,2,2)\}\\
\Omega^{+}(T(v+(0,1,0)))&=\{(3,1,1),(2,1,1)\}.
\end{align*} 
Hence, by Theorem \ref{Theorem: basis for Irr generic modules}(i) the modules with bases $\mathcal{I}(T(v+(1,0,1)))\cup\mathcal{I}(T(v))$, $\mathcal{I}(T(v+(0,0,1)))\cup\mathcal{I}(T(v))$ and $\mathcal{I}(T(v+(0,1,0)))\cup\mathcal{I}(T(v))$ are submodules of $M$. Therefore, the modules with bases $\mathcal{I}(T(v+(1,0,1)))$; $\mathcal{I}(T(v+(0,0,1)))$ and $\mathcal{I}(T(v+(0,1,0)))$ are submodules of $M_{1}:=M/L_{1}$ since $L_{1}$ is has basis $\mathcal{I}(T(v))$.

{\bf Step $3$.}  $M_{2}:=M_{1}/(L_{2}\oplus L_{3}\oplus L_{4})$ has simple submodules isomorphic to $L_{5},\ L_{6}$ and $L_{7}$. For these modules we have
\begin{align*}
\Omega^{+}(T(v+(0,1,1)))&=\{(3,1,1)\},\\
\Omega^{+}(T(v+(1,1,1)))&=\{(2,1,1)\}\\
\Omega^{+}(T(v+(1,0,2)))&=\{(3,2,2)\}.
\end{align*}
Hence, by Theorem \ref{Theorem: basis for Irr generic modules}(i) the modules with bases $\mathcal{I}(T(v+(1,0,1)))\cup\mathcal{I}(T(v))$; $\mathcal{I}(T(v+(0,0,1)))\cup\mathcal{I}(T(v))$ and $\mathcal{I}(T(v+(0,1,0)))\cup\mathcal{I}(T(v))$ are submodules of $M$. Therefore, the modules with bases $\mathcal{I}(T(v+(1,0,1)))$; $\mathcal{I}(T(v+(0,0,1)))$ and $\mathcal{I}(T(v+(0,1,0)))$ are submodules of $M_{2}:=M_{1}/(L_{2}\oplus L_{3}\oplus L_{4})$ because $L_{1}$ has a basis $\mathcal{I}(T(v))$.

{\bf Step $4$.}  $M_{3}/(L_{5}\oplus L_{6}\oplus L_{7})\simeq L_{8}$. In fact, $\Omega^{+}(T(v+(1,1,2)))=\emptyset$ so the submodule of $V(T(v))$ generated by $T(v+(1,1,2))$ is $V(T(v))$ and the simple subquotient containing $T(v+(1,1,2))$ has the same basis as $M_{3}/(L_{5}\oplus L_{6}\oplus L_{7})$ so, we have 
$M_{3}/(L_{5}\oplus L_{6}\oplus L_{7})\simeq L_{8}$.

\end{proof}

%%%%%%%%%%%%%%%%%%%%%%%%%%%%%%%%      G7    %%%%%%%%%%%%%%%%%%%%%%%%%%%%%%%%%%

\item[$(G7)$] Consider the tableau:

\begin{center}

 \hspace{1.5cm}\Stone{$a$}\Stone{$b$}\Stone{$c$}\\[0.2pt]
 $T(v)$=\hspace{0.3cm} \Stone{$a$}\Stone{$b$}\\[0.2pt]
 \hspace{1.3cm}\Stone{$z$}\\
\end{center}

\noindent
\begin{itemize}
\item[I.] \textbf{simple subquotients}.\\
In this case the module $V(T(v))$ has  $4$ simple subquotients.

\begin{itemize}

\item[(i)] Two modules with infinite-dimensional weight multiplicities:

\vspace{0.3cm}
\begin{center}
% [inline block 3: 3 envs, 5505 chars -> data_tex | \begin{tabular}{|>{$}l<{$}|>{$}c<{$}|}\hline \textsf{Module} &   \hspace{0.3cm}\text{Basis}\\ \hline...]

\end{center}

\end{itemize}

\vspace{0.3cm}
The origin of the picture above corresponds to the $\mathfrak{sl}(3)$-weight associated to the tableau $T(v+\delta^{11}+\delta^{21})$.
  \vspace{0.3cm}

\item[II.] \textbf{Loewy series.}\\
 $$L_{1} ,\  L_{2}\oplus L_{3} ,\  L_{4}.$$
\end{itemize}

 %%%%%%%%%%%%%%%%%%%%%%%%%%%%%%       G8        %%%%%%%%%%%%%%%%%%%%%%%%%%%%%%%%%%%%

 \item[$(G8)$]  Set $t\in\mathbb{Z}_{>0}$ and consider the following Gelfand-Tsetlin tableau:

\begin{center}

 \hspace{1.5cm}\Stone{$a$}\Stone{$a-t$}\Stone{$c$}\\[0.2pt]
 $T(v)$=\hspace{0.3cm} \Stone{$a$}\Stone{$y$}\\[0.2pt]
 \hspace{1.3cm}\Stone{$z$}\\
\end{center}
\noindent
\begin{itemize}
\item[I.] \textbf{simple subquotients}.\\
In this case the module $V(T(v))$ has  $3$ simple subquotients. 

\begin{itemize}
\item[(i)] Two modules with infinite-dimensional weight multiplicities:
\vspace{0.3cm}
\begin{center}
\begin{tabular}{|>{$}l<{$}|>{$}c<{$}|}\hline
\textsf{Module} &   \hspace{0.3cm}\text{Basis}\\ \hline
L_{1}&L\left( \begin{array}{c}
m\leq -t
\end{array}
\right)\\\hline
L_{3}&L\left( \begin{array}{c}
0<m
\end{array}\right)\\\hline
\end{tabular}
\end{center}

\vspace{0.3cm}
\item[(ii)] A cuspidal module with $t$-dimensional weight spaces:
\vspace{0.3cm}
\begin{center}
\begin{tabular}{|>{$}l<{$}|>{$}c<{$}|}\hline
\textsf{Module} &   \hspace{0.3cm}\text{Basis}\\ \hline
L_{2} &
L\left( \begin{array}{c}
-t<m\leq 0
\end{array}\right)\\ \hline
\end{tabular}
\end{center}
\end{itemize}
\vspace{0.3cm}
\item[II.] \textbf{Loewy series.}\\
 $$L_{1} ,\  L_{2} ,\  L_{3}.$$
\end{itemize}

 %%%%%%%%%%%%%%%%%%%%%%%%%%%%%%       G9        %%%%%%%%%%%%%%%%%%%%%%%%%%%%%%%%%%%%

 \item[$(G9)$] For each $t\in\mathbb{Z}_{> 0}$, let consider the following generic tableau:

\begin{center}

 \hspace{1.5cm}\Stone{$a$}\Stone{$a-t$}\Stone{$c$}\\[0.2pt]
 $T(v)$=\hspace{0.3cm} \Stone{$a$}\Stone{$y$}\\[0.2pt]
 \hspace{1.3cm}\Stone{$a$}\\
\end{center}

\noindent
\begin{itemize}
\item[I.] \textbf{simple subquotients}.\\
In this case the module $V(T(v))$ has  $6$ simple subquotients. The bases and corresponding weight lattices are given by:

\begin{itemize}

\item[(i)] Two modules with infinite-dimensional weight multiplicities:
\vspace{0.3cm}
\begin{center}
% [inline block 4: 5 envs, 10961 chars -> data_tex | \begin{tabular}{|>{$}l<{$}|>{$}c<{$}|}\hline \textsf{Module} &   \hspace{0.3cm}\text{Basis}\\ \hline...]

\end{center}
\end{itemize}

\vspace{0.3cm}
 The pictures above correspond to the case $t=2$, and the origin is the $\mathfrak{sl}(3)$-weight associated to the tableau $T(v-2\delta^{21})$. \vspace{0.3cm}
\noindent
\item[II.] \textbf{Loewy series.}\\
$$L_{1} ,\  L_{2}\oplus L_{3} ,\  L_{4}\oplus L_{5} ,\  L_{6}.$$
\end{itemize}

 %%%%%%%%%%%%%%%%%%%%%%%%%%%%%%      G10       %%%%%%%%%%%%%%%%%%%%%%%%%%%%%%%%%%%%

 \item[$(G10)$] For each $t\in\mathbb{Z}_{> 0}$, let $T(v)$ be the following Gelfand-Tsetlin tableau:

\begin{center}

 \hspace{1.5cm}\Stone{$a$}\Stone{$a-t$}\Stone{$c$}\\[0.2pt]
 \hspace{1.4cm} \Stone{$a$}\Stone{$y$}\\[0.2pt]
 \hspace{1.3cm}\Stone{$y$}\\
\end{center}
\noindent
\begin{itemize}
\item[I.] \textbf{simple subquotients}.\\
In this case the module $V(T(v))$ has  $6$ simple subquotients. The bases and corresponding weight lattices are given by:

\begin{itemize}

\item[(i)] Two modules with infinite-dimensional weight multiplicities:
\vspace{0.3cm}
 \begin{center}
% [inline block 5: 5 envs, 11425 chars -> data_tex | \begin{tabular}{|>{$}l<{$}|>{$}c<{$}|}\hline \textsf{Module} &   \hspace{0.3cm}\text{Basis}\\ \hline...]

\end{center}
\end{itemize}

\vspace{0.3cm}
 The pictures above correspond to the case $t=2$, and the origin is the $\mathfrak{sl}(3)$-weight associated to the tableau $T(v-\delta^{22})$. 
\vspace{0.3cm}
\item[II.] \textbf{Loewy series.}\\

$$L_{1} ,\  L_{2}\oplus L_{3} ,\  L_{4}\oplus L_{5} ,\  L_{6}.$$
\end{itemize}

%%%%%%%%%%%%%%%%%%%%%%%%%%%   G11     %%%%%%%%%%%%%%%%%%%%%%%%%%%%%%%%%%%

 \item[$(G11)$] For any $t\in\mathbb{Z}_{> 0}$, let $T(v)$ be the tableau:

\begin{center}

 \hspace{1.5cm}\Stone{$a$}\Stone{$a-t$}\Stone{$c$}\\[0.2pt]
 \hspace{1.4cm} \Stone{$a$}\Stone{$c$}\\[0.2pt]
 \hspace{1.3cm}\Stone{$a$}\\
\end{center}
\noindent
\begin{itemize}
\item[I.] \textbf{simple subquotients}.\\
In this case the module $V(T(v))$ has  $12$ simple subquotients.  We provide pictures of the weight lattice corresponding to the case $t=2$, and with origin at the $\mathfrak{sl}(3)$-weight associated to the tableau $T(v)$. 
\begin{itemize}

\item[(i)] Two modules with infinite-dimensional weight multiplicities:
\vspace{0.3cm}
 \begin{center}
% [inline block 6: 8 envs, 22078 chars -> data_tex | \begin{tabular}{|>{$}l<{$}|>{$}c<{$}|}\hline \textsf{Module} &   \hspace{0.3cm}\text{Basis}\\ \hline...]

\end{center}

\end{itemize}

\vspace{0.3cm}
\item[II.] \textbf{Loewy series.}\\
$$L_{1} ,\  L_{2}\oplus L_{3}\oplus L_{4} ,\  L_{5}\oplus L_{6}\oplus L_{7}\oplus L_{8} ,\  L_{9}\oplus L_{10}\oplus L_{11} ,\  L_{12}.$$
\end{itemize}

%%%%%%%%%%%%%%%%%%%%%%%%%%%%%         G12       %%%%%%%%%%%%%%%%%%%%%%%%%%%%%%%%%%%%

\item[$(G12)$] Consider $t\in\mathbb{Z}_{> 0}$, and $T(v)$ to be the Gelfand-Tsetlin tableau:

\begin{center}

 \hspace{1.5cm}\Stone{$a$}\Stone{$b$}\Stone{$b-t$}\\[0.2pt]
 $T(v)$=\hspace{0.3cm} \Stone{$a$}\Stone{$b$}\\[0.2pt]
 \hspace{1.3cm}\Stone{$a$}\\
\end{center}

\noindent
\begin{itemize}
\item[I.] \textbf{simple subquotients}.\\
In this case the module $V(T(v))$ has  $12$ simple subquotients.  We provide pictures of the weight lattice corresponding to the case $t=2$, and with origin at the $\mathfrak{sl}(3)$-weight associated to the tableau $T(v+\delta^{11})$.
\begin{itemize}

\item[(i)] Two modules with infinite-dimensional weight multiplicities:
\vspace{0.3cm}
\begin{center}
% [inline block 7: 8 envs, 20817 chars -> data_tex | \begin{tabular}{|>{$}l<{$}|>{$}c<{$}|}\hline \textsf{Module} &   \hspace{0.3cm}\text{Basis}\\ \hline...]

\end{center}
\end{itemize}
\vspace{0.3cm}
\item[II.] \textbf{Loewy series.}\\

$$L_{1} ,\  L_{2}\oplus L_{3}\oplus L_{4} ,\  L_{5}\oplus L_{6}\oplus L_{7}\oplus L_{8} ,\  L_{9}\oplus L_{10}\oplus L_{11} ,\  L_{12}.$$
\end{itemize}

%%%%%%%%%%%%%%%%%%%%%%%%%%%%%%       G13        %%%%%%%%%%%%%%%%%%%%%%%%%%%%%%%%%%%%

 \item[$(G13)$] Consider $t\in\mathbb{Z}_{> 0}$. Set $T(v)$ to be the following Gelfand-Tsetlin tableau:

\begin{center}

 \hspace{1.5cm}\Stone{$a$}\Stone{$a-t$}\Stone{$c$}\\[0.2pt]
 \hspace{1.4cm} \Stone{$a$}\Stone{$c$}\\[0.2pt]
 \hspace{1.3cm}\Stone{$z$}\\
\end{center}
\noindent
\begin{itemize}
\item[I.] \textbf{simple subquotients}.\\
In this case the module $V(T(v))$ has  $6$ simple subquotients.  We provide pictures of the weight lattice corresponding to the case $t=2$, and with origin at the $\mathfrak{sl}(3)$-weight associated to the tableau $T(v+\delta^{11})$.
\begin{itemize}

\item[(i)] Two modules with infinite-dimensional weight multiplicities:
\vspace{0.3cm}
\begin{center}
% [inline block 8: 5 envs, 10870 chars -> data_tex | \begin{tabular}{|>{$}l<{$}|>{$}c<{$}|}\hline \textsf{Module} &   \hspace{0.3cm}\text{Basis}\\ \hline...]

\end{center}

\end{itemize}
\vspace{0.3cm}
\item[II.] \textbf{Loewy series.}\\
$$L_{1} ,\  L_{2}\oplus L_{3} ,\  L_{4}\oplus L_{5} ,\  L_{6}.$$

\end{itemize}

 %%%%%%%%%%%%%%%%%%%%%%%%%%  G14  %%%%%%%%%%%%%%%%%%%%%%%%%%%%%%%%%%%%%%%%%%%%

 \item[$(G14)$] Set $t,s\in\mathbb{Z}_{> 0}$ with $t<s$ and let $T(v)$ be the following Gelfand-Tsetlin tableau:

\begin{center}

 \hspace{1.5cm}\Stone{$a$}\Stone{$a-t$}\Stone{$a-s$}\\[0.2pt]
 $T(v)$=\hspace{0.3cm} \Stone{$a$}\Stone{$y$}\\[0.2pt]
 \hspace{1.3cm}\Stone{$a$}\\
\end{center}
\noindent
\begin{itemize}
\item[I.] \textbf{simple subquotients}.\\
In this case the module $V(T(v))$ has  $8$ simple subquotients.  We provide pictures of the weight lattice corresponding to the case $t=1$, $s=2$, and with origin at the $\mathfrak{sl}(3)$-weight associated to the tableau $T(v)$. 

\begin{itemize}

\item[(i)] Two modules with infinite-dimensional weight multiplicities:
\vspace{0.3cm}
\begin{center}
% [inline block 9: 6 envs, 16250 chars -> data_tex | \begin{tabular}{|>{$}l<{$}|>{$}c<{$}|}\hline \textsf{Module} &   \hspace{0.3cm}\text{Basis}\\ \hline...]

\end{center}

\end{itemize}
\vspace{0.3cm}
\item[II.] \textbf{Loewy series.}\\
$$L_{1} ,\  L_{2}\oplus L_{3} ,\  L_{4}\oplus L_{5} ,\  L_{6}\oplus L_{7} ,\  L_{8}.$$
\end{itemize}

 %%%%%%%%%%%%%%%%%%%%%%%%%%%%%%      G15      %%%%%%%%%%%%%%%%%%%%%%%%%%%%%%%%%%%%

 \item[$(G15)$] Set $t, s\in\mathbb{Z}_{> 0}$ with $t<s$ and let $T(v)$ be the following tableau:

\begin{center}

 \hspace{1.5cm}\Stone{$a$}\Stone{$a-t$}\Stone{$a-s$}\\[0.2pt]
 $T(v)$=\hspace{0.3cm} \Stone{$a$}\Stone{$y$}\\[0.2pt]
 \hspace{1.3cm}\Stone{$y$}\\
\end{center}
\noindent
\begin{itemize}
\item[I.] \textbf{simple subquotients}.\\
In this case the module $V(T(v))$ has  $8$ simple subquotients.  We provide pictures of the weight lattice corresponding to the case $t=1$, $s=2$, and with origin at the $\mathfrak{sl}(3)$-weight associated to the tableau $T(v-\delta^{22})$.

\begin{itemize}

\item[(i)] Two modules with infinite-dimensional weight multiplicities:
\vspace{0.3cm}
\begin{center}
% [inline block 10: 6 envs, 16596 chars -> data_tex | \begin{tabular}{|>{$}l<{$}|>{$}c<{$}|}\hline \textsf{Module} &   \hspace{0.3cm}\text{Basis}\\ \hline...]

\end{center}

\end{itemize}
\vspace{0.3cm}
\item[II.] \textbf{Loewy series.}\\
$$L_{1} ,\  L_{2}\oplus L_{3} ,\  L_{4}\oplus L_{5} ,\  L_{6}\oplus L_{7} ,\  L_{8}.$$
\end{itemize}
 %%%%%%%%%%%%%%%%%%%%%%%%%%    G16   %%%%%%%%%%%%%%%%%%%%%%%%%%%%%%%%%%%

  \item[$(G16)$] For any $t,s\in\mathbb{Z}_{> 0}$ with $t<s$ let $T(v)$ be the following tableau:

\begin{center}

 \hspace{1.5cm}\Stone{$a$}\Stone{$a-t$}\Stone{$a-s$}\\[0.2pt]
 $T(v)$=\hspace{0.3cm} \Stone{$a$}\Stone{$y$}\\[0.2pt]
 \hspace{1.3cm}\Stone{$z$}\\
\end{center}
\noindent
\begin{itemize}
\item[I.] \textbf{simple subquotients}.\\
In this case the module $V(T(v))$ has  $4$ simple subquotients. The bases and corresponding weight multiplicities are given by:

\begin{itemize}

\item[(i)] Two modules with infinite-dimensional weight multiplicities:
\vspace{0.3cm}
\begin{center}
\begin{tabular}{|>{$}l<{$}|>{$}c<{$}|}\hline
\textsf{Module} &   \hspace{0.3cm}\text{Basis}\\ \hline
L_{1} &
L\left( \begin{array}{c}
m\leq -s
\end{array}\right)\\ \hline
L_{4} &
L\left( \begin{array}{c}
0<m
\end{array}\right)\\ \hline
\end{tabular}
\end{center}

\vspace{0.3cm}
\item[(ii)] Two cuspidal modules; $L_{2}$ with weight multiplicities $s-t$ and $L_{3}$ with weight multiplicities $t$:
\vspace{0.3cm}
\begin{center}
\begin{tabular}{|>{$}l<{$}|>{$}c<{$}|}\hline
\textsf{Module} &   \hspace{0.3cm}\text{Basis}\\ \hline
L_{2} &
L\left( \begin{array}{c}
-s< m\leq -t
\end{array}\right)\\ \hline
L_{3} &
L\left( \begin{array}{c}
-t< m\leq 0
\end{array}\right)\\ \hline
\end{tabular}
\end{center}
\end{itemize}
\vspace{0.3cm}
\item[II.] \textbf{Loewy series.}\\
$$L_{1} ,\  L_{2},\  L_{3} ,\  L_{4}.$$
\end{itemize}

\end{itemize}

\subsection{Structure of singular $\mathfrak{sl}(3)$-modules $V(T(\bar{v}))$}\label{subsection: Structure of singular sl(3)-modules V(T(v))}
In this subsection we  describe all simple singular Gelfand-Tsetlin  $\mathfrak{sl}(3)$-modules. Like in the generic case it is  enough to explicitly present  all simple subquotients of $V(T(\bar{v}))$ for every $1$-singular vector $\bar{v}$.

\subsubsection{Singular Gelfand-Tsetlin formulas}\label{subsubsection: Singular Gelfand-Tsetlin formulas} Recall the construction of the $1$-singular $\mathfrak{sl}(3)$-modules  $V(T(\bar{v}))$ in \S\ref{subsection: Singular Gelfand-Tsetlin modules}. We adapt this construction to $\g = \mathfrak{sl}(3)$. Since the singularity appears in row $2$, we  fix $\bar{v}\in T_{3}(\mathbb{C})$ such that $\bar{v}_{21}=\bar{v}_{22}$. Also, we denote by $\tau$ the permutation in $S_{3}\times S_{2}\times S_{1}$ that interchanges  the $(2,1)$th and $(2,2)$th entries and is identity on row $2$.

Recall that $V(T(\bar{v}))$, as a vector space, is generated by the set of tableaux  $\{T(\bar{v}+z),\mathcal{D}T(\bar{v}+z')\ |\ z,z'\in\mathbb{Z}^{3}\}$ satisfying the relations $
T(\bar{v} + z) - T(\bar{v} +\tau(z)) = 0$ and $
{\mathcal D} T(\bar{v} + z) + {\mathcal D} T(\bar{v} +\tau(z)) = 0$. As explained in Remark \ref{Remark: The set of all singular tableaux is not a basis},
 $$
 \mathcal{B}(T(\bar{v}))=\{T(\bar{v}+z),\mathcal{D}T(\bar{v}+w)\ |\ z_{21}\leq z_{22}\ \text{ and }\ w_{21}>w_{22}\}.
 $$
 is a basis of $V(T(\bar{v}))$. 
 
\begin{definition}\label{Definition: basis B(T(bar(v))) and Tab(w)}
Given $w\in\mathbb{Z}^{3}$, the \emph{tableau associated to} $w$ with respect to $\mathcal{B}(T(\bar{v}))$ is defined by
\begin{equation*}\Tab(w):=\begin{cases} 
T(\bar{v}+w), & \text{ if } w_{21}\leq w_{22}\\
\mathcal{D}T(\bar{v}+w), & \text{ if } w_{21}>w_{22}.
\end{cases}
\end{equation*}
In particular, $\mathcal{B}(T(\bar{v}))=\{\Tab(w)\ |\  w\in\mathbb{Z}^{3}\}$.
\end{definition}

We next write explicitly the formulas for the action of $\mathfrak{sl}(3)$ on $V(T(\bar{v}))$. We again use the conventions $v=(v_{31},v_{32},v_{33},v_{21},v_{22},v_{11})$,  $\bar{v}=(a,b,c,x,x,z)$, and $w=(0,0,0,m,n,k)$. For any rational function $f$ in $\{v_{ij}\}_{1\leq j\leq i\leq 3}$, $\partial_{v_{ij}}^{\bar{v}}(f)$ will stand for $\frac{\partial f}{\partial v_{ij}}(\bar{v})$.

Recall that by Proposition \ref{Proposition: Action of E_ij in terms of e_ij(v)}, the Gelfand-Tsetlin formulas for generic modules can be written as follows:
$$H_i(T(v+z))=h_i(v+z)T(v+z)$$
$$E_{rs}T(v+z)=\sum_{\sigma\in\Phi_{rs}}e_{rs}(\sigma(v+z))T(v+z+\sigma(\varepsilon_{rs}))$$
where $h_{1}(w)=2w_{11}-(w_{21}+w_{22}+1)$, $h_{2}(w)=2(w_{21}+w_{22}+1)-w_{11}$, and 
 $e_{rs}(w)$, $\varepsilon_{rs}$ are defined in Example \ref{Example: GT formulas in terms of permutations for gl(3)}.
%e_{21}(w)&=&1,\\
%e_{12}(w)&=&-(w_{11}-w_{21})(w_{11}-w_{22}),\\
%e_{23}(w)&=&-\frac{(w_{21}-w_{31})(w_{21}-w_{32})(w_{21}-w_{33})}{w_{21}-w_{22}},\\
%e_{32}(w)&=&\frac{w_{21}-w_{11}}{w_{21}-w_{22}},\\
%e_{31}(w)&=&\frac{1}{w_{21}-w_{22}},\\
%e_{13}(w)&=&-\frac{(w_{21}-w_{31})(w_{21}-w_{32})(w_{21}-w_{33})(w_{11}-w_{22})}{w_{21}-w_{22}}.

Using the relations $T(\bar{v}+z) - T(\bar{v}+\tau(z)) =0$ and $\mathcal{D}T(\bar{v}+z) + \mathcal{D}T(\bar{v}+\tau(z)) =0$ (see Section \ref{subsection: Singular Gelfand-Tsetlin modules}) we can write the above formulas in a simpler form. Set for convenience $\bar{w}=\bar{v}+w$. Then:
\begin{align}
H_{i}(\Tab(w))&=h_{i}(\bar{v}+w)\Tab(w), \text{ for } i=1,2.&\label{Formula: action of E_ii on singular tableaux}\\
E_{21}(\Tab(w))&=\Tab(w+\varepsilon_{21}).&\label{Formula: action of E_21 on singular tableaux}\\
E_{12}(\Tab(w))&=e_{12}(\bar{v}+w)\Tab(w+\varepsilon_{12}).&\label{Formula: action of E_12 on singular tableaux}
\end{align}

If $E=E_{ij}\in\{E_{32},E_{23},E_{31},E_{13}\}$, $\varepsilon$ denotes the corresponding $\varepsilon_{ij}$, and $\tilde{e}(w):=(v_{21}-v_{22})e(w)$, then the action of $E$ on $\Tab(w)$ is given by:
\begin{align}
&2\left(\tilde{e}(\bar{w})\mathcal{D}T(\bar{w}+\varepsilon) +
\mathcal{D}^{\bar{v}}(\tilde{e}(v+w))T(\bar{w}+\varepsilon)\right), & \text{ if } w_{21}=w_{22}.\label{Formula: action of E on critical tableaux}\\
&e(\bar{w})T(\bar{w}+\varepsilon)+e(\tau(\bar{w}))T(\bar{w}+\tau(\varepsilon)),& \text{if }  w_{21}<w_{22}.\label{Formula: action of E on regular tableaux}\\
&\begin{cases}
\mathcal{D}^{\bar{v}}(e(v+w))T(\bar{w}+\varepsilon)+
\mathcal{D}^{\bar{v}}(e(\tau(v+w)))T(\bar{w}+\tau(\varepsilon))&\\
+e(\bar{w})\mathcal{D}T(\bar{w}+\varepsilon)+e(\tau(\bar{w}))\mathcal{D}T(\bar{w}+\tau(\varepsilon)),&  
\end{cases}& \text{if }  w_{21}>w_{22}.\label{Formula: action of E on derivative tableaux}
\end{align}

\begin{remark}\label{Remark: coefficients are the same as classical  for tableaux of same kind}
Note that the Gelfand-Tsetlin formulas for singular tableaux have the same coefficients as in the classical formulas for tableaux of the same type (see formulas (\ref{Formula: action of E_ii on singular tableaux}), (\ref{Formula: action of E_21 on singular tableaux}), (\ref{Formula: action of E_12 on singular tableaux}), (\ref{Formula: action of E on critical tableaux}), (\ref{Formula: action of E on regular tableaux}),  (\ref{Formula: action of E on derivative tableaux})). More precisely, the action of the generators $E_{ij}$ on a regular tableau is a linear combination of regular tableaux with the same coefficients as those that appear in the classical Gelfand-Tsetlin formulas. On the other hand, the corresponding action on a derivative tableau is a linear combination of both regular and derivative tableaux, and  the coefficients of the derivative tableaux are the same as those that appear in the classical formulas.
\end{remark}
\subsubsection{Explicit formulas and computations}
Some explicit computations are included in the following example. 
\begin{example}
For any complex numbers $a$ and  $v_{ij}$, $1\leq j\leq i\leq 3$, consider the following Gelfand-Tsetlin tableaux.

\begin{center}
 \hspace{1.6cm}\Stone{$v_{31}$}\Stone{$v_{32}$}\Stone{$v_{33}$}\hspace{1cm}\Stone{$a$}\Stone{$a$}\Stone{$a$}\\[0.2pt]
 $T(v)$= \hspace{0.3cm}\Stone{$v_{21}$}\Stone{$v_{22}$} \hspace{0.8cm} $T(\bar{v})$=  \hspace{0.2cm} \Stone{$a$}\Stone{$a$}\\[0.2pt]
 \hspace{1.3cm}\Stone{$v_{11}$}\hspace{4cm}\Stone{$a$}\\
\end{center}

Consider $w=(0,1,0)=\delta^{22}$ and $w'=(1,0,0)=\delta^{21}$ in $\mathbb{Z}^{3}$. Then the following hold. 

\begin{itemize}
\item[(i)] Using (\ref{Formula: action of E on regular tableaux}), $E_{32}(T(\bar{v}+w)) = \mathcal{D}^{\bar{v}}((v_{21}-v_{22})E_{32}(T(v+w)))$ which equals to:
{\small \begin{align*}
&\mathcal{D}^{\bar{v}}\left((v_{21}-v_{22})\left(\frac{v_{21}-v_{11}}{v_{21}-(v_{22}+1)}T(v+w-\delta^{21})+\frac{(v_{22}+1)-v_{11}}{(v_{22}+1)-v_{21}}T(v+w-\delta^{22})\right)\right)\\
=& \mathcal{D}^{\bar{v}}\left(\frac{(v_{21}-v_{22})(v_{21}-v_{11})}{v_{21}-(v_{22}+1)}\right)T(\bar{w}-\delta^{21})+ ev({\bar{v}})\left(\frac{(v_{21}-v_{22})(v_{21}-v_{11})}{v_{21}-(v_{22}+1)}\right)\mathcal{D}T(\bar{w}-\delta^{21})\\
&+ \mathcal{D}^{\bar{v}}\left(\frac{(v_{21}-v_{22})((v_{22}+1)-v_{11})}{(v_{22}+1)-v_{21}}\right)T(\bar{v})+ ev({\bar{v}})\left(\frac{(v_{21}-v_{22})((v_{22}+1)-v_{11})}{(v_{22}+1)-v_{21}}\right)\mathcal{D}T(\bar{v})\\
=& ev({\bar{v}})\left(\frac{v_{21}-v_{11}}{v_{21}-(v_{22}+1)}\right)T(\bar{w}-\delta^{21})+ ev({\bar{v}})\left(\frac{(v_{21}-v_{22})(v_{21}-v_{11})}{v_{21}-(v_{22}+1)}\right)\mathcal{D}T(\bar{w}-\delta^{21})\\
&+ ev({\bar{v}})\left(\frac{(v_{22}+1)-v_{11}}{(v_{22}+1)-v_{21}}\right)T(\bar{v})\\
=& T(\bar{v}).
\end{align*}}

\item[(ii)] Using (\ref{Formula: action of E on derivative tableaux}), $E_{32}(\mathcal{D}T(\bar{v}+w'))=\mathcal{D}^{\bar{v}}(E_{32}(T(v+w')))$ which equals to:
{\small \begin{align*}
& \mathcal{D}^{\bar{v}}\left(\frac{v_{21}+1-v_{11}}{v_{21}+1-v_{22}}T(v+w'-\delta^{21})\right)+ \mathcal{D}^{\bar{v}}\left(\frac{v_{22}-v_{11}}{v_{22}-(v_{21}+1)}T(v+w'-\delta^{22})\right)\\
=& \mathcal{D}^{\bar{v}}\left(\frac{v_{21}+1-v_{11}}{v_{21}+1-v_{22}}\right)T(\bar{v})+ ev({\bar{v}})\left(\frac{v_{21}+1-v_{11}}{v_{21}+1-v_{22}}\right)\mathcal{D}T(\bar{v})\\
&+ \mathcal{D}^{\bar{v}}\left(\frac{v_{22}-v_{11}}{v_{22}-(v_{21}+1)}\right)T(\bar{v}+\delta^{21}-\delta^{22})+ ev({\bar{v}})\left(\frac{v_{22}-v_{11}}{v_{22}-(v_{21}+1)}\right)\mathcal{D}T(\bar{v}+\delta^{21}-\delta^{22})\\
=& \mathcal{D}^{\bar{v}}\left(\frac{v_{21}+1-v_{11}}{v_{21}+1-v_{22}}\right)T(\bar{v})+ \mathcal{D}^{\bar{v}}\left(\frac{v_{22}-v_{11}}{v_{22}-(v_{21}+1)}\right)T(\bar{v}+\delta^{21}-\delta^{22})\\
=& \frac{1}{2}T(\bar{v})-\frac{1}{2}T(\bar{v}+\delta^{21}-\delta^{22}).
\end{align*}}
\end{itemize}

\end{example}

The computations made in the last example can easily be applied  to  the formulas  (\ref{Formula: action of E_ii on singular tableaux}), (\ref{Formula: action of E_21 on singular tableaux}), (\ref{Formula: action of E_12 on singular tableaux}), (\ref{Formula: action of E on critical tableaux}), (\ref{Formula: action of E on regular tableaux}),  (\ref{Formula: action of E on derivative tableaux}). As a result we have the following set of formulas.

%%%%%%%%%%%%%%%%%  explicit generalized formulas for gl(3) %%%%%%%%
\noindent {\it Action of the generators on regular tableaux:}\\
\noindent
$
% [inline block 11: 8 envs, 2361 chars -> data_tex | \begin{array}{lll} E_{21}(T(\bar{w}))&=&T(\bar{w}-\delta^{11})....]

$

%%%%%%%%%%%%%%%%%%%%%%%%%%%%%%%%%%%% End section 2 %%%%%%%%%%%%%%%%%%%%%%%%%%%%%%%%%%%%%%%%%%%%%%%%%%%%%%%%

\begin{lemma}\label{Lemma: Action of Gamma for singular sl(3)}
The action of $\Gamma$ on $V(T(\bar{v}))$ is given by the formulas:
\begin{align}
c_{rs}(T(\bar{v}+w))=&\gamma_{rs}(\bar{v}+w)T(\bar{v}+w),\label{Formulas: Gamma acting on singular T}\\
c_{rs}(\mathcal{D}T(\bar{v}+w))=&\gamma_{rs}(\bar{v}+w)\mathcal{D}T(\bar{v}+w)+\mathcal{D}^{\bar{v}}(\gamma_{rs}(v+w))T(\bar{v}+w).\label{Formulas: Gamma acting on singular DT}
\end{align}
\end{lemma}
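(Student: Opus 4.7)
The plan is to derive both identities directly from Theorem~\ref{Theorem: 1-singular modules} together with the action of $\Gamma$ on generic modules from Theorem~\ref{Theorem: Generic modules for gl(3)}(ii); in the $\mathfrak{sl}(3)$ case the triple $(k,i,j)$ from the construction is $(2,1,2)$, so $v_{ki}-v_{kj}=v_{21}-v_{22}$ and $\bar v_{21}=\bar v_{22}$. The key computational input is the definition of $\mathcal{D}^{\bar v}$, namely
\[
\mathcal{D}^{\bar v}(f\,T(v+z))=\mathcal{D}^{\bar v}(f)\,T(\bar v+z)+f(\bar v)\,\mathcal{D}T(\bar v+z),
\]
with $\mathcal{D}^{\bar v}(f)=\tfrac12\bigl(\partial_{v_{21}}f-\partial_{v_{22}}f\bigr)(\bar v)$. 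I would begin by substituting $c_{rs}(T(v+w))=\gamma_{rs}(v+w)T(v+w)$ into the formulas (\ref{Equation: action of Gamma on regular tableaux}) and (\ref{Equation: action of Gamma on derivative tableaux}) of Theorem~\ref{Theorem: 1-singular modules}.

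For the derivative tableau formula~(\ref{Formulas: Gamma acting on singular DT}), the argument is essentially immediate: applying $\mathcal{D}^{\bar v}$ to $\gamma_{rs}(v+w)T(v+w)$ and using the displayed definition above yields exactly
\[
\mathcal{D}^{\bar v}(\gamma_{rs}(v+w))\,T(\bar v+w)+\gamma_{rs}(\bar v+w)\,\mathcal{D}T(\bar v+w),
\]
matching the claim.

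For the regular tableau formula~(\ref{Formulas: Gamma acting on singular T}) I would apply $\mathcal{D}^{\bar v}$ to $(v_{21}-v_{22})\gamma_{rs}(v+w)T(v+w)$; by the definition, this produces
\[
\mathcal{D}^{\bar v}\!\bigl((v_{21}-v_{22})\gamma_{rs}(v+w)\bigr)\,T(\bar v+w)\;+\;(\bar v_{21}-\bar v_{22})\gamma_{rs}(\bar v+w)\,\mathcal{D}T(\bar v+w).
\]
The second term vanishes because $\bar v_{21}=\bar v_{22}$. For the first, the Leibniz rule gives
\[
\tfrac12\bigl(\partial_{v_{21}}-\partial_{v_{22}}\bigr)\bigl((v_{21}-v_{22})\gamma_{rs}(v+w)\bigr)\bigg|_{\bar v}
=\gamma_{rs}(\bar v+w)+(\bar v_{21}-\bar v_{22})\,\mathcal{D}^{\bar v}(\gamma_{rs}(v+w)),
\]
and the last summand again disappears since $\bar v_{21}=\bar v_{22}$. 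This yields precisely $\gamma_{rs}(\bar v+w)T(\bar v+w)$.

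There is essentially no obstacle: the statement is a direct unwinding of Theorem~\ref{Theorem: 1-singular modules} once one uses that $\Gamma$ acts diagonally on generic tableaux and that the singular locus is the diagonal $v_{21}=v_{22}$. The only subtlety worth flagging is checking that the formulas are well defined on $\mathcal{B}(T(\bar v))$ modulo the relations $T(\bar v+z)=T(\bar v+\tau(z))$ and $\mathcal{D}T(\bar v+z)=-\mathcal{D}T(\bar v+\tau(z))$; this follows from the symmetry of $\gamma_{rs}$ in the entries of row~$2$, which makes $\gamma_{rs}(\bar v+w)=\gamma_{rs}(\bar v+\tau(w))$ and $\mathcal{D}^{\bar v}(\gamma_{rs}(v+w))=-\mathcal{D}^{\bar v}(\gamma_{rs}(v+\tau(w)))$, so both sides of each formula transform consistently under $\tau$.
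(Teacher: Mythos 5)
Your proof is correct and follows essentially the same route as the paper: both start from Theorem~\ref{Theorem: 1-singular modules}, substitute $c_{rs}\,T(v+z)=\gamma_{rs}(v+z)T(v+z)$, and then unwind the definition of $\mathcal{D}^{\bar v}$ using $\bar v_{21}=\bar v_{22}$. Your explicit Leibniz-rule step and the closing check that both sides transform consistently under $\tau$ are slightly more detailed than what the paper records, but the argument is the same one.
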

\begin{proof}
The identities follow  from Theorem \ref{Theorem: 1-singular modules}. Indeed,

\begin{align*}
c_{rs}(T(\bar{v}+z))=&\mathcal{D}^{\overline{v}}((v_{21} - v_{22})c_{rs}T(v+z))
\\
=&\mathcal{D}^{\overline{v}}((v_{21} - v_{22})\gamma_{rs}(v+z)T(v+z))
\\
=&\mathcal{D}^{\overline{v}}((v_{21} - v_{22})\gamma_{rs}(v+z))T(\bar{v}+z)+ev({\bar{v}})\left((v_{21} - v_{22})\gamma_{rs}(v+z)\right)\mathcal{D}T(\bar{v}+z)
\\
=&\gamma_{rs}(\bar{v}+z)T(\bar{v}+z),\\
&\\
c_{rs}(\mathcal{D}T(\bar{v}+z))=&\mathcal{D}^{\overline{v}}(c_{rs}(T(v+z)))
\\
=&\mathcal{D}^{\overline{v}}(\gamma_{rs}(v+z)T(v+z))
\\
=&\mathcal{D}^{\overline{v}}(\gamma_{rs}(v+z))T(\bar{v}+z)+ev({\bar{v}})\left(\gamma_{rs}(v+z)\right)\mathcal{D}T(\bar{v}+z)
\\
=&\mathcal{D}^{\overline{v}}(\gamma_{rs}(v+z))T(\bar{v}+z)+\gamma_{rs}(\bar{v}+z)\mathcal{D}T(\bar{v}+z).
\end{align*}
\end{proof}

\subsubsection{Submodules generated by singular tableaux}\label{Subsubection: Submodule generated by a singular tableau}
In this subsection we  obtain an analogous to  Theorem \ref{Theorem: basis for Irr generic modules}(i) for $1$-singular tableaux.\\

Recall that $\mathcal{B}(T(\bar{v})) = \{\Tab(z)\ |\ z\in\mathbb{Z}^{3}\}$ is a basis of $V(T(\bar{v}))$. 

\begin{definition}\label{Definition: Lambda+}
Let  $\bar{v}$ be a fixed critical vector, $\Tab(w)\in \mathcal{B}(T(\bar{v}))$, and $\bar{w}=\bar{v}+w$. Define:
$$\Lambda^{+}(\Tab(w))=\begin{cases}
\Omega^{+}(\Tab(w)),&\text{ if } w_{21}\leq w_{22}\\
\Omega^{+}(\Tab(\tau(w))),&\text{ if } w_{21}> w_{22}.
\end{cases}
$$

\end{definition}

\begin{lemma}\label{Lemma: from derivative tableau we can obtain regular tableau}
Assume that $w_{21}\neq w_{22}$. Then $T(\bar{v}+w)$ belongs to $U\cdot \mathcal{D}T(\bar{v}+w)$.
\end{lemma}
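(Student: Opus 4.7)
The plan is to extract $T(\bar v+w)$ from $\mathcal{D}T(\bar v+w)$ by applying a single element of the Gelfand--Tsetlin subalgebra $\Gamma\subset U$, avoiding any use of the raising/lowering generators of $\mathfrak{sl}(3)$. The natural candidate is $c_{22}\in\Gamma$, because Lemma \ref{Lemma: Action of Gamma for singular sl(3)} gives the mixed action
\[
c_{22}(\mathcal{D}T(\bar v+w)) \;=\; \gamma_{22}(\bar v+w)\,\mathcal{D}T(\bar v+w) \;+\; \mathcal{D}^{\bar v}(\gamma_{22}(v+w))\,T(\bar v+w),
\]
which expresses the regular tableau $T(\bar v+w)$ explicitly in terms of the derivative tableau $\mathcal{D}T(\bar v+w)$ and its image under $c_{22}$, provided the coefficient $\mathcal{D}^{\bar v}(\gamma_{22}(v+w))$ is nonzero.

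So the heart of the argument is a one-line computation of that coefficient. Recalling $\gamma_{22}(l)=l_{21}^{2}+l_{22}^{2}+l_{21}+l_{22}$ from Example \ref{Example: explicit gamma's n leq 4}, one has $\gamma_{22}(v+w)=(v_{21}+w_{21})^{2}+(v_{22}+w_{22})^{2}+(v_{21}+w_{21})+(v_{22}+w_{22})$; taking $\tfrac12(\partial_{v_{21}}-\partial_{v_{22}})$ at $\bar v$ (where $\bar v_{21}=\bar v_{22}$) collapses everything and produces exactly $w_{21}-w_{22}$. By hypothesis $w_{21}\neq w_{22}$, so this scalar is invertible, and
\[
T(\bar v+w) \;=\; \frac{1}{w_{21}-w_{22}}\bigl(c_{22} - \gamma_{22}(\bar v+w)\bigr)\cdot\mathcal{D}T(\bar v+w)\;\in\; U\cdot\mathcal{D}T(\bar v+w).
\]
Nothing in this derivation depends on the sign of $w_{21}-w_{22}$, so both the cases $w_{21}>w_{22}$ and $w_{21}<w_{22}$ (where $\mathcal{D}T(\bar v+w)$ is interpreted via the antisymmetry relation $\mathcal{D}T(\bar v+w)=-\mathcal{D}T(\bar v+\tau(w))$) are handled uniformly. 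There is no genuine obstacle here; the only thing to be careful with is bookkeeping the sign conventions for derivative tableaux, but since the coefficient $w_{21}-w_{22}$ is odd under $\tau$ and so is the derivative tableau itself, consistency is automatic.
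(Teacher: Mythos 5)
Your proof follows exactly the same approach as the paper's: apply $c_{22}-\gamma_{22}(\bar v+w)\in\Gamma$ and use formula (\ref{Formulas: Gamma acting on singular DT}) to extract $(w_{21}-w_{22})T(\bar v+w)$. You simply spell out the computation of $\mathcal{D}^{\bar v}(\gamma_{22}(v+w))=w_{21}-w_{22}$ that the paper leaves to the reader, so the argument is correct and essentially identical.
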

\begin{proof}
The action of $c_{22}-\gamma_{22}(\bar{v}+w)$ on $\mathcal{D}T(\bar{v}+w)$ is given by the formula (\ref{Formulas: Gamma acting on singular DT}) and can be easily check that is a nonzero multiple of $(w_{21}-w_{22})T(\bar{v}+w)$. 
\end{proof}

\begin{lemma}\label{Lemma: From critical tableau we obtain derivative satisfying Omega+ condition}
Suppose $\Tab(w)$ is a critical tableau. If $\Tab(w')$ is a derivative tableau such that $\Lambda^{+}(\Tab(w))\subseteq \Lambda^{+}(\Tab(w'))$, then  $\Tab(w')\in U\cdot \Tab(w)$. In particular, the simple subquotient $M$ of $V(T(\bar{v}))$ containing $\Tab(w)$ satisfies $\dim(M_{\chi_{w}})=2$.
\end{lemma}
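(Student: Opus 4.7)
The plan is to reduce the statement to the generic Gelfand-Tsetlin setting. First, from the critical tableau $\Tab(w)=T(\bar{v}+w)$ (so $w_{21}=w_{22}$) I would produce a derivative tableau in $U\cdot\Tab(w)$; then I would invoke Theorem~\ref{Theorem: basis for Irr generic modules}(i) on the ``derivative part'' of the module to reach every target $\Tab(w')$ permitted by the $\Lambda^{+}$ condition.

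For the first step, the explicit formula for $E_{23}$ acting on a critical tableau (Section~\ref{subsubsection: Singular Gelfand-Tsetlin formulas}) gives
\[
E_{23}(T(\bar{v}+w)) \;=\; \alpha\, T(\bar{v}+w+\delta^{22}) \;-\; 2\,(a-x-m)(b-x-m)(c-x-m)\,\mathcal{D}T(\bar{v}+w+\delta^{22}),
\]
with a regular and a derivative summand, and the derivative summand (rewritten via $\mathcal{D}T(\bar{v}+z)=-\mathcal{D}T(\bar{v}+\tau(z))$) is $\Tab(w+\delta^{21})$ up to a nonzero scalar whenever the cubic factor is nonzero. Since $T(\bar{v}+w+\delta^{22})$ and $\mathcal{D}T(\bar{v}+w+\delta^{22})$ lie in two different Gelfand-Tsetlin subspaces (the critical character $\chi_{w+\delta^{22}}$ versus the nearby degenerate character), applying a suitable polynomial in the generators of $\Gamma$ — exactly as in the proof of Lemma~\ref{Lemma: from derivative tableau we can obtain regular tableau} and using Theorem~\ref{Theorem: Gamma separates tableaux in 1-singular module} — isolates the derivative summand, yielding $\Tab(w+\delta^{21})\in U\cdot\Tab(w)$. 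If that cubic factor happens to vanish, I would run the symmetric argument with $E_{32}$ in place of $E_{23}$ to produce $\Tab(w-\delta^{22})$ instead; the hypothesis $\Lambda^{+}(\Tab(w))\subseteq\Lambda^{+}(\Tab(w'))$ guarantees at least one of these two directions is nondegenerate.

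For the second step, the key observation is Remark~\ref{Remark: coefficients are the same as classical  for tableaux of same kind}: applied to a derivative tableau $\mathcal{D}T(\bar{v}+z)$ with $z_{21}>z_{22}$, each generator $E_{rs}$ produces, as its derivative-tableau contribution, precisely the classical Gelfand-Tsetlin combination $\sum_{\sigma\in\Phi_{rs}}e_{rs}(\sigma(\bar{v}+z))\mathcal{D}T(\bar{v}+z+\sigma(\varepsilon_{rs}))$, exactly as in Proposition~\ref{Proposition: Action of E_ij in terms of e_ij(v)}. Therefore the span of derivative tableaux, modulo the span of regular tableaux, carries the structure of the generic module $V(T(\bar{v}+\tau(w+\delta^{21})))$ (or a subquotient of it) with the classical action. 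Applying Theorem~\ref{Theorem: basis for Irr generic modules}(i) in this quotient and then pulling back using $\Gamma$-separation, the derivative tableaux reachable from $\Tab(w+\delta^{21})$ are exactly those $\mathcal{D}T(\bar{v}+w')$ with $\Omega^{+}(T(\bar{v}+\tau(w+\delta^{21})))\subseteq \Omega^{+}(T(\bar{v}+\tau(w')))$. A direct comparison of $\Omega^{+}(T(\bar{v}+w))$ with $\Omega^{+}(T(\bar{v}+\tau(w+\delta^{21})))$ — they differ only by the single index that has already been incorporated — shows this containment is equivalent to $\Lambda^{+}(\Tab(w))\subseteq\Lambda^{+}(\Tab(w'))$ by the definition of $\Lambda^{+}$ (Definition~\ref{Definition: Lambda+}).

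Finally, for the ``In particular'' clause: for such a reachable derivative tableau $\Tab(w')=\mathcal{D}T(\bar{v}+w')$, Lemma~\ref{Lemma: from derivative tableau we can obtain regular tableau} implies $T(\bar{v}+w')\in U\cdot\Tab(w')\subseteq U\cdot\Tab(w)$, so both basis vectors of the two-dimensional $\Gamma$-generalized eigenspace at the degenerate character $\chi_{w'}$ belong to the simple subquotient $M$, giving $\dim M(\chi_{w'})=2$. The principal technical obstacle is the second step: one must argue that the regular-tableau contamination produced at every application of $E_{rs}$ to a derivative tableau can always be ``absorbed'' by iterated $\Gamma$-separation, so that an induction on the distance $w'-w$ using the generic-case proof of Theorem~\ref{Theorem: basis for Irr generic modules}(i) truly carries over.
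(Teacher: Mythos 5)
Your proposal takes essentially the same route as the paper's (very terse) proof: both rely on the observation that the derivative-tableau coefficients in $g\cdot\Tab(w)$ match the numerators of the classical Gelfand-Tsetlin formulas (the paper's Remark~\ref{Remark: coefficients are the same as classical  for tableaux of same kind}), then transplant the generic argument from Theorem~\ref{Theorem: basis for Irr generic modules}(i), with $\Gamma$-separation (Theorem~\ref{Theorem: Gamma separates tableaux in 1-singular module}) used to isolate the derivative summands along the way. You are also right that the genuine technical content is exactly at the point you flag: one cannot simply ``kill'' the regular tableaux once and for all, but must re-separate at each application of a generator.

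One assertion in Step~2 is stated too strongly and should be reformulated. You write that ``the span of derivative tableaux, modulo the span of regular tableaux, carries the structure of the generic module $V(T(\bar{v}+\tau(w+\delta^{21})))$.'' This is not literally a module quotient: by formula~(\ref{Formula: action of E on critical tableaux}) a critical \emph{regular} tableau maps under $E_{23}$ or $E_{32}$ to a nonzero derivative tableau, so the span of regular tableaux is not a $U$-submodule and the proposed quotient does not exist as a $\mathfrak{g}$-module. What is actually true, and is all that the argument needs, is a purely coefficientwise statement: for a \emph{derivative} $\Tab(z)$, the derivative-tableau component of $E_{rs}(\Tab(z))$ has exactly the classical coefficients, so the reachability graph on derivative tableaux (ignoring the regular tableaux appearing alongside) reproduces the reachability graph of the generic module. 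Combined with the $\Gamma$-separation step at each application of $E_{rs}$, the induction from the proof of Theorem~\ref{Theorem: basis for Irr generic modules}(i) then goes through. With ``module quotient'' replaced by this weaker but sufficient coefficient comparison, the rest of your proof — including the initial extraction of the adjacent derivative tableau via $E_{23}$ or $E_{32}$ applied to the critical $\Tab(w)$, and the two-dimensionality argument at the end using Lemma~\ref{Lemma: from derivative tableau we can obtain regular tableau} — matches the paper's intended proof.
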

\begin{proof}
The statement follows  from Remark \ref{Remark: coefficients are the same as classical  for tableaux of same kind} and formulas (\ref{Formula: action of E_21 on singular tableaux}, \ref{Formula: action of E_12 on singular tableaux}, \ref{Formula: action of E on critical tableaux}). In fact, the numerators of the coefficients of the derivative tableaux appearing in the decomposition of $gT(\bar{v}+w)$ as linear combination of basis elements, are either zero (if $g$ is a product of generators of the form $E_{21}$, $E_{12}$ or $E_{ii}$) or the same as the numerators of the coefficients that appear in the classical Gelfand-Tsetlin formulas. In the latter case $\Tab(w')$ is a derivative tableau, hence we can not have zero coefficients. Therefore, we can use the same arguments as in the proof of Lemma \ref{Theorem: basis for Irr generic modules}(i).
\end{proof}

\begin{definition}
For any tableau $\Tab(w)\in \mathcal{B}(T(\bar{v}))$ define 
$$\mathcal{A}(\Tab(w)):=\{\Tab(w')\in\mathcal{B}(T(\bar{v}))\ |\ \Lambda^{+}(\Tab(w))\subseteq \Lambda^{+}(\Tab(w'))\}.$$ 

By $\mathcal{C}(w)$ we will denote the set of all critical tableaux in $\mathcal{A}(\Tab(w))$ and by $\mathcal{R}(w)$ we will denote the set of all regular tableaux in $\mathcal{A}(\Tab(w))$. Also, set:
$$
\mathcal{N}(\Tab(w))=\begin{cases}
\mathcal{R}(w)\cup\left(\bigcup\limits_{w'\in \mathcal{C}(w)}\mathcal{A}(\Tab(w'))\right),& \text{ if } w_{21}<w_{22}\\
\mathcal{A}(\Tab(w)),& \text{ if } w_{21}\geq w_{22}.
\end{cases}
$$
\end{definition}

\begin{lemma}\label{Lemma: We can generate certain set of tableaux as in generic case}
For any tableau $\Tab(w)$ we have $\mathcal{N}(\Tab(w))\subseteq U\cdot \Tab(w)$.
\end{lemma}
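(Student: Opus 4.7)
The plan is to case-split on the type of $\Tab(w)$ (regular, critical, or derivative) and in each case to combine three ingredients. First, Remark \ref{Remark: coefficients are the same as classical  for tableaux of same kind} tells us that the coefficient of a tableau of a given type appearing in $E_{ij}(\Tab(w'))$ agrees with the classical Gelfand--Tsetlin coefficient whenever $\Tab(w')$ has the same type; this lets us transplant Theorem \ref{Theorem: basis for Irr generic modules}(i) inside each of the three strata. Second, Lemma \ref{Lemma: from derivative tableau we can obtain regular tableau} produces, via the shifted central element $c_{22}-\gamma_{22}(\bar v+w')$, the regular tableau $T(\bar v+w')$ from its derivative companion whenever $w'_{21}\neq w'_{22}$. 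Third, Lemma \ref{Lemma: From critical tableau we obtain derivative satisfying Omega+ condition} yields every derivative tableau $\Tab(w')$ with $\Lambda^{+}(\Tab(w))\subseteq\Lambda^{+}(\Tab(w'))$ from a critical $\Tab(w)$.

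In the regular case $w_{21}<w_{22}$, I would first apply the classical argument of Theorem \ref{Theorem: basis for Irr generic modules}(i) directly to $T(\bar v+w)$, using the action formula (\ref{Formula: action of E on regular tableaux}); since its coefficients are the classical ones, the classical induction yields $\mathcal R(w)\cup\mathcal C(w)\subseteq U\cdot\Tab(w)$, and invoking the critical case below at each $w'\in\mathcal C(w)$ then supplies $\mathcal A(\Tab(w'))$, assembling precisely $\mathcal N(\Tab(w))$. In the critical case $w_{21}=w_{22}$, Lemma \ref{Lemma: From critical tableau we obtain derivative satisfying Omega+ condition} handles all derivative tableaux in $\mathcal A(\Tab(w))$ at once; to obtain a regular $\Tab(w')\in\mathcal A(\Tab(w))$ I would first use that same lemma to produce $\mathcal D T(\bar v+\tau(w'))$ (which satisfies the required condition because $\Lambda^{+}(\mathcal D T(\bar v+\tau(w')))=\Omega^{+}(T(\bar v+w'))\supseteq\Lambda^{+}(\Tab(w))$) and then apply Lemma \ref{Lemma: from derivative tableau we can obtain regular tableau} to recover $T(\bar v+\tau(w'))=\Tab(w')$; to obtain a critical $\Tab(w')\in\mathcal A(\Tab(w))$ I would exit the critical stratum by applying $E_{32}$ to $\Tab(w)$ and isolating the regular component with the projector $c_{22}-\gamma_{22}$, and then invoke the regular case above. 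In the derivative case $w_{21}>w_{22}$, Lemma \ref{Lemma: from derivative tableau we can obtain regular tableau} places $\Tab(\tau(w))\in U\cdot\Tab(w)$, after which the regular and critical cases, applied to $\Tab(\tau(w))$, deliver all regular and critical tableaux in $\mathcal A(\Tab(w))=\mathcal A(\Tab(\tau(w)))$; any remaining derivative tableau in $\mathcal A(\Tab(w))$ is reached either directly by running the classical induction on the derivative stratum (justified again by Remark \ref{Remark: coefficients are the same as classical  for tableaux of same kind}) or from a critical tableau already produced, via Lemma \ref{Lemma: From critical tableau we obtain derivative satisfying Omega+ condition}.

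The hard part will be the critical-to-critical sub-case. The explicit formula (\ref{Formula: action of E on critical tableaux}) shows that $E_{32}$ and $E_{23}$ acting on a critical tableau simultaneously produce coupled regular and derivative pieces, so a direct classical-style induction along the critical stratum is not available. The resolution is to leave the critical stratum into the regular one, where the classical argument of Theorem \ref{Theorem: basis for Irr generic modules}(i) applies verbatim by Remark \ref{Remark: coefficients are the same as classical  for tableaux of same kind}, navigate to the correct position, and then re-enter the critical stratum as a summand of $E_{23}$ applied to a suitable regular tableau, using the $c_{22}-\gamma_{22}$ projector to separate the regular from the derivative contribution.
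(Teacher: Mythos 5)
Your proposal is essentially the same approach as the paper's: the paper's own proof of this lemma is one sentence, citing exactly Remark~\ref{Remark: coefficients are the same as classical  for tableaux of same kind}, Lemma~\ref{Lemma: from derivative tableau we can obtain regular tableau}, and Lemma~\ref{Lemma: From critical tableau we obtain derivative satisfying Omega+ condition}, and then stating that the argument of Theorem 6.8 of \cite{FGR2} can be ``adapted to the singular case.'' You unpack what that adaptation would have to look like, splitting into the regular, critical, and derivative strata and showing how the three ingredients interlock, which is useful detail the paper omits.

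Two points deserve a second look. First, as written, your regular case invokes the critical case (to supply $\mathcal A(\Tab(w'))$ for $w'\in\mathcal C(w)$) and your critical case invokes the regular case (for critical targets), which is circular. The circularity can be dissolved by noticing that for a regular $\Tab(w)$ the only new tableaux the critical case must contribute to $\mathcal N(\Tab(w))$ are the derivative ones in $\mathcal A(\Tab(w'))$, $w'\in\mathcal C(w)$ --- all the regular and critical tableaux there already lie in $\mathcal R(w)\cup\mathcal C(w)$ by transitivity of $\subseteq$ on $\Lambda^{+}$ --- and these derivative tableaux come directly from Lemma~\ref{Lemma: From critical tableau we obtain derivative satisfying Omega+ condition} without any recursive call to the regular case. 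You should say this explicitly to make the induction well-founded. Second, your description of $c_{22}-\gamma_{22}$ as ``isolating the regular component'' has the roles reversed: by formulas~(\ref{Formulas: Gamma acting on singular T})--(\ref{Formulas: Gamma acting on singular DT}), this operator \emph{annihilates} the regular tableau and maps the derivative tableau to a nonzero multiple of the regular one, so it serves as a projection of the two-dimensional generalized eigenspace onto its socle. The net effect is the same (you recover the regular tableau from the mixed output of $E_{32}$ on a critical one), but the mechanism is the opposite of what you describe. With those adjustments your sketch is a faithful and more explicit rendering of the paper's argument; the critical-to-critical sub-case that you flag as hard is indeed the one spot where ``adapt the classical induction'' hides nontrivial bookkeeping about how $\Omega^{+}$ changes under each elementary step, and neither your sketch nor the paper spells it out.
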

\begin{proof}
The statement follows from Remark \ref{Remark: coefficients are the same as classical  for tableaux of same kind},  Lemma \ref{Lemma: from derivative tableau we can obtain regular tableau}, and  Lemma \ref{Lemma: From critical tableau we obtain derivative satisfying Omega+ condition}. More precisely, as the Gelfand-Tsetlin formulas for singular tableaux have the same coefficients as in the classical formulas for tableaux of the same type, we can use the reasoning in the proof of Theorem 6.8 in \cite{FGR2} and adapt it to the singular case.
\end{proof}
The following lemma together with Lemma \ref{Lemma: from derivative tableau we can obtain regular tableau} gives a sufficient condition in order to have modules with Gelfand-Tsetlin multiplicity $2$.
\begin{lemma}\label{Lemma: condition to get derivative tableau from regular tableau}
Suppose that $\Tab(w)$ is a regular tableau such that $\Omega^{+}(\Tab(w))=\Omega^{+}(\Tab(w'))$ for some critical tableau $\Tab(w')$, then $\Tab(\tau(w))\in U\cdot \Tab(w)$.
\end{lemma}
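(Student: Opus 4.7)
My plan is to derive the conclusion from a single application of Lemma~\ref{Lemma: We can generate certain set of tableaux as in generic case}, with $\Tab(w')$ serving as a bridge between $\Tab(w)$ and $\Tab(\tau(w))$. The statement is trivial when $w_{21}=w_{22}$, since then $\tau(w)=w$, so I will assume $w_{21}<w_{22}$; this forces $\tau(w)_{21}>\tau(w)_{22}$, and $\Tab(\tau(w))=\mathcal{D}T(\bar{v}+\tau(w))$ is a derivative tableau.

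By Lemma~\ref{Lemma: We can generate certain set of tableaux as in generic case} it suffices to show $\Tab(\tau(w))\in\mathcal{N}(\Tab(w))$. Because $w_{21}<w_{22}$, the definition of $\mathcal{N}$ reads
$$\mathcal{N}(\Tab(w))=\mathcal{R}(w)\cup\bigcup_{w''\in\mathcal{C}(w)}\mathcal{A}(\Tab(w'')),$$
so I will check two points: (a) $\Tab(w')\in\mathcal{C}(w)$, and (b) $\Tab(\tau(w))\in\mathcal{A}(\Tab(w'))$. For (a), since $w_{21}\leq w_{22}$ and $w'_{21}=w'_{22}$, Definition~\ref{Definition: Lambda+} gives $\Lambda^{+}(\Tab(w))=\Omega^{+}(\Tab(w))$ and $\Lambda^{+}(\Tab(w'))=\Omega^{+}(\Tab(w'))$, so the hypothesis yields $\Lambda^{+}(\Tab(w))\subseteq\Lambda^{+}(\Tab(w'))$; combined with the fact that $\Tab(w')$ is critical, this gives $\Tab(w')\in\mathcal{C}(w)$. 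For (b), since $\tau(w)_{21}>\tau(w)_{22}$, Definition~\ref{Definition: Lambda+} gives
$$\Lambda^{+}(\Tab(\tau(w)))=\Omega^{+}(\Tab(\tau(\tau(w))))=\Omega^{+}(\Tab(w))=\Omega^{+}(\Tab(w'))=\Lambda^{+}(\Tab(w')),$$
so $\Tab(\tau(w))\in\mathcal{A}(\Tab(w'))$. Combining (a) and (b) places $\Tab(\tau(w))$ in $\mathcal{N}(\Tab(w))\subseteq U\cdot\Tab(w)$.

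The only delicate point is the careful use of the $\tau$ built into the definition of $\Lambda^{+}$ for derivative tableaux: it is precisely the identity $\Lambda^{+}(\Tab(\tau(w)))=\Omega^{+}(\Tab(w))$ that links the derivative tableau $\Tab(\tau(w))$ back to the hypothesis, which is phrased in terms of $\Omega^{+}(\Tab(w))=\Omega^{+}(\Tab(w'))$. No direct computation with the singular Gelfand-Tsetlin formulas is needed here, since all of the hard work has already been packaged inside Lemma~\ref{Lemma: We can generate certain set of tableaux as in generic case}; once the set-theoretic inclusions among $\mathcal{A}(-)$, $\mathcal{C}(-)$ and $\mathcal{N}(-)$ are verified, the conclusion is immediate.
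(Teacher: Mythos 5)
Your argument is correct, and it is in substance the same as the paper's. The paper's proof consists of the single sentence ``the statement follows directly from Lemma~\ref{Lemma: From critical tableau we obtain derivative satisfying Omega+ condition}'', which implicitly requires first producing the critical tableau $\Tab(w')$ inside $U\cdot\Tab(w)$ and then applying the critical-to-derivative lemma; you instead route through Lemma~\ref{Lemma: We can generate certain set of tableaux as in generic case} and the set $\mathcal{N}(\Tab(w))$, but that lemma's proof rests on the very same critical-to-derivative lemma, and the definition of $\mathcal{N}$ for $w_{21}<w_{22}$ was built precisely to encode the two-step ``regular $\to$ critical $\to$ derivative'' passage. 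The only thing you add is the explicit $\Lambda^{+}$-bookkeeping showing $\Tab(w')\in\mathcal{C}(w)$ and $\Tab(\tau(w))\in\mathcal{A}(\Tab(w'))$, which the paper leaves to the reader; those verifications are correct as you have written them.
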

\begin{proof}
The statement follows directly from Lemma  \ref{Lemma: From critical tableau we obtain derivative satisfying Omega+ condition}.
\end{proof}

\begin{corollary}
Let $\Tab(w)$ be a regular tableau associated to a Gelfand-Tsetlin character $\chi$. If $\{Tab(w')\ |\ \Omega^{+}(\Tab(w'))=\Omega^{+}(\Tab(w))\}$ does not contain critical tableaux, then any simple subquotient $N$ of $V(T(\bar{v}))$ satisfies $\dim(N_{\chi})\leq 1$.
\end{corollary}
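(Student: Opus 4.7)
The plan is to reduce the statement to the claim that no simple subquotient $N$ containing the regular tableau $\Tab(w)$ can also contain its derivative partner $\Tab(\tau(w))$, and then rule this out by combining the analog of Theorem \ref{Theorem: basis for Irr generic modules}(ii) with a combinatorial symmetry argument on $\Omega^+$.

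First I would observe that by the correspondence between Gelfand-Tsetlin characters and tableaux up to row permutations, the only basis elements of $V(T(\bar v))$ with character $\chi$ are $\Tab(w)$ and $\Tab(\tau(w))$, so $\dim V(T(\bar v))(\chi)=2$. Since $\Tab(w)$ is regular, we may assume $w_{21}<w_{22}$, and then $\Tab(\tau(w))=\mathcal{D}T(\bar v+\tau(w))$ is derivative. By Lemma \ref{Lemma: from derivative tableau we can obtain regular tableau} (applied to $\tau(w)$), any submodule of $V(T(\bar v))$ containing $\Tab(\tau(w))$ already contains $\Tab(w)$. Therefore every simple subquotient $N$ with $\chi\in\Supp_{GT}(N)$ contains (the image of) $\Tab(w)$, and it suffices to show that the unique simple subquotient $N$ containing $\Tab(w)$ does not contain $\Tab(\tau(w))$.

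The key step is the singular analog of Theorem \ref{Theorem: basis for Irr generic modules}(ii): the basis of $N$ is contained in
$$\mathcal{I}(\Tab(w)) \;:=\; \{\Tab(z)\mid \Omega^+(\Tab(z))=\Omega^+(\Tab(w))\}.$$
This follows by combining Lemma \ref{Lemma: We can generate certain set of tableaux as in generic case} with Remark \ref{Remark: coefficients are the same as classical  for tableaux of same kind}: since the Gelfand-Tsetlin coefficients governing transitions among tableaux of the same type (regular, critical, or derivative) coincide with those in the classical generic formulas, the proof of the generic simplicity criterion transcribes \emph{mutatis mutandis}. It then suffices to verify that $\Omega^+(\Tab(\tau(w)))\neq\Omega^+(\Tab(w))$, for then $\Tab(\tau(w))\notin\mathcal I(\Tab(w))\supseteq$ basis of $N$, giving $\dim N(\chi)\leq 1$.

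To check $\Omega^+(\Tab(w))\neq\Omega^+(\Tab(\tau(w)))$, suppose for contradiction that equality holds. Since the two tableaux differ only by the transposition of the two row-2 entries, equality forces the symmetry
$$(3,s,1)\in\Omega^+\ \Longleftrightarrow\ (3,s,2)\in\Omega^+\quad(s=1,2,3),\qquad (2,1,1)\in\Omega^+\ \Longleftrightarrow\ (2,2,1)\in\Omega^+.$$
Now form the critical tableau $\Tab(w')$ obtained from $\Tab(w)$ by setting $w'_{21}=w'_{22}=m$ for any integer $m$ with $w_{21}\leq m\leq w_{22}$, and keeping rows $1$ and $3$ unchanged. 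A direct verification (using $\bar v_{21}=\bar v_{22}$) shows that, under the symmetries above, one has $\Omega^+(\Tab(w'))=\Omega^+(\Tab(w))$, contradicting the hypothesis. This forces $\Omega^+(\Tab(\tau(w)))\neq \Omega^+(\Tab(w))$ and completes the argument.

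The main obstacle is establishing the singular analog of Theorem \ref{Theorem: basis for Irr generic modules}(ii). Lemma \ref{Lemma: We can generate certain set of tableaux as in generic case} yields one inclusion, but the matching upper bound requires a case-by-case inspection of the action formulas on regular, critical, and derivative tableaux, verifying that the $\Omega^+$-class of the generating tableau is preserved in the simple subquotient. Once this structural fact is in hand, the rest of the proof is the short combinatorial argument above.
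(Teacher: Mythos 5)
Your proposal starts the same way the paper does — reduce to showing $\Tab(\tau(w))\notin U\cdot\Tab(w)$ using Lemma \ref{Lemma: from derivative tableau we can obtain regular tableau} — but the step you yourself flag as ``the main obstacle'' is in fact a genuine gap, and it cannot be patched by the argument you sketch.

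The claimed ``singular analog of Theorem \ref{Theorem: basis for Irr generic modules}(ii)'' — that the simple subquotient $N$ containing $\Tab(w)$ has its basis contained in $\mathcal I(\Tab(w))=\{\Tab(z)\mid \Omega^+(\Tab(z))=\Omega^+(\Tab(w))\}$ with the \emph{raw} $\Omega^+$ — is false in the singular setting. Take $\bar v=(a,a,a\mid a,a\mid a)$ (case $(C13)$) and $w=(-1,0,0)$. Then $\Tab(w)$ is regular and no critical tableau shares its $\Omega^+$ (since $(2,1,1)\notin\Omega^+(\Tab(w))$ but $(2,2,1)\in\Omega^+(\Tab(w))$, and a critical tableau always has $(2,1,1)\Leftrightarrow(2,2,1)$), so the hypothesis of the corollary holds. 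The simple subquotient containing $\Tab(w)$ is $L_1$, with basis $\{\Tab(m,n,k)\mid m\leq 0,\ n\leq 0,\ k\leq n\}$, which also contains $\Tab(-1,0,-1)$; this tableau has the full $\Omega^+$, strictly larger than $\Omega^+(\Tab(-1,0,0))$. So $L_1$'s basis is not contained in $\mathcal I(\Tab(w))$, and the transcription of the generic simplicity criterion you invoke simply does not go through. (The difficulty is exactly the $\Omega^+$-drop phenomenon of Lemma \ref{Lemma: tableaux where size of Omega+ decreases}: singular simple subquotients do not have constant $\Omega^+$ on their tableaux bases.) One more structural warning: the natural invariant preserved by the singular Gelfand--Tsetlin formulas is $\Lambda^+$, not raw $\Omega^+$, and since $\Lambda^+(\Tab(\tau(w)))=\Omega^+(\Tab(w))=\Lambda^+(\Tab(w))$, a $\Lambda^+$-based analog of the simplicity criterion would not separate $\Tab(w)$ from $\Tab(\tau(w))$ either, so the approach fails both ways.

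The paper's own argument avoids this by never asserting such a basis characterization. It observes instead that the Gelfand--Tsetlin formulas on regular tableaux only produce regular or critical targets, so a derivative tableau can only enter $U\cdot\Tab(w)$ after passing through a critical one; by hypothesis every reachable critical tableau has $\Omega^+$ strictly containing $\Omega^+(\Tab(w))$, hence every reachable derivative tableau has $\Lambda^+\supsetneq\Omega^+(\Tab(w))$, and in particular $\Tab(\tau(w))$ (with $\Lambda^+(\Tab(\tau(w)))=\Omega^+(\Tab(w))$) cannot be reached. Your sandwich argument in step 3 — constructing a critical tableau with $w'_{21}=w'_{22}=m$ for $w_{21}\leq m\leq w_{22}$ when $\Omega^+$ is $\tau$-symmetric — is correct and in fact shows the hypothesis is equivalent to $\Omega^+(\Tab(\tau(w)))\neq\Omega^+(\Tab(w))$; this is a nice reformulation, but it does not by itself imply $\Tab(\tau(w))\notin U\cdot\Tab(w)$ without a correct bridge, which your step 2 fails to supply.
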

\begin{proof}
Since $\Tab(w)$ is regular,  $w_{21}<w_{22}$. Then  $\Tab(\tau(w))$ is a derivative tableau such that $\Tab(w)\in U\cdot \Tab(\tau(w))$ (see Lemma \ref{Lemma: from derivative tableau we can obtain regular tableau}). Therefore, it is enough to prove that $\Tab(\tau(w))\notin U\cdot \Tab(w)$. However,  this follows from Theorem \ref{Theorem: 1-singular modules} and the fact that we can not obtain critical tableaux from $\Tab(w)$ with the same $\Omega^{+}(\Tab(w))$, in particular we can not obtain derivative tableaux $\Tab(w')$ such that $\Omega^{+}(\Tab(w'))=\Omega^{+}(\Tab(w))$. 
\end{proof}
\begin{remark}
By definition of  $\mathcal{N}(\Tab(w))$, any $\Tab(w')$ in $\mathcal{N}(\Tab(w))$ satisfies the relation $|\Omega^{+}(\Tab(w))|\leq |\Omega^{+}(\Tab(w'))|$. However it is possible to have  $\Tab(w')\in U\cdot \Tab(w)$ with $|\Omega^{+}(\Tab(w'))|= |\Omega^{+}(\Tab(w))|-1$. For instance, consider $\bar{v}
=(a,b,c,x,x,x)$ such that $\{a-x, b-x, c-x\}\cap\mathbb{Z}=\emptyset$ and $w=(0,0,0)$, then $|\Omega^{+}(\Tab(w))|=2$ while $E_{32}\Tab(w)=\Tab(w-\delta^{21})$ and $|\Omega^{+}(\Tab(w-\delta^{21}))|=1$.
\end{remark}
Let us write $\Tab(w')\prec_{g} \Tab(w)$ if $\Tab(w')$ appears with no zero coefficient in the decomposition of  $g\cdot \Tab(w)$ for some generator $g\in\mathfrak{gl}(n)$.

\begin{lemma}[\cite{GoR} Lemma 7.4]\label{Lemma: tableaux where size of Omega+ decreases}
Suppose that $\Tab(w')\prec_{g} \Tab(w)$ with $g\in\mathfrak{gl}(n)$ of the form $E_{k,k+1}$ or $E_{k+1,k}$, then $|\Omega^{+}(\Tab(w'))|\geq |\Omega^{+}(\Tab(w))|-1$. Moreover, the complete list of Gelfand-Tsetlin tableaux $\Tab(w)$ and $\Tab(w')$  such that $\Tab(w')\prec_{g} \Tab(w)$ and $|\Omega^{+}(\Tab(w'))|= |\Omega^{+}(\Tab(w))|-1$ is as follows. 
\medskip
\begin{itemize}
\item[(i)]
\begin{center}
 \hspace{1.6cm}\Stone{$a$}\Stone{$b$}\Stone{$c$}\hspace{1cm}\Stone{$a$}\Stone{$b$}\Stone{$c$}\\[0.2pt]
 $\Tab(w)$= \Stone{$x$}\Stone{$x-t$} \hspace{0.5cm} $\Tab(w')$=  \Stone{$x-t$}\Stone{$x$}\\[0.2pt]
 \hspace{1.5cm}\Stone{$x$}\hspace{3.9cm}\Stone{$x+1$}\\\vspace{0.3cm}
\end{center}
for $t\in\mathbb{Z}_{>0}$. 
\vspace{0.3cm}

\item[(ii)]
\begin{center}
 \hspace{1.6cm}\Stone{$a$}\Stone{$b$}\Stone{$c$}\hspace{1cm}\Stone{$a$}\Stone{$b$}\Stone{$c$}\\[0.2pt]
 $\Tab(w)$= \Stone{$x$}\Stone{$x-t$} \hspace{0.5cm} $\Tab(w')$=  \Stone{$x-t$}\Stone{$x-1$}\\[0.2pt]
 \hspace{1.5cm}\Stone{$x$}\hspace{3.9cm}\Stone{$x$}\\
\end{center}
for $t\in\mathbb{Z}_{>0}$. 

\vspace{0.3cm}

\item[(iii)] 
\begin{center}
 \hspace{1.6cm}\Stone{$a$}\Stone{$b$}\Stone{$c$}\hspace{1cm}\Stone{$a$}\Stone{$b$}\Stone{$c$}\\[0.2pt]
 $\Tab(w)$= \Stone{$x$}\Stone{$x$} \hspace{0.5cm} $\Tab(w')$=  \Stone{$x-1$}\Stone{$x$}\\[0.2pt]
 \hspace{1.5cm}\Stone{$x$}\hspace{3.9cm}\Stone{$x$}.\\
\end{center}\vspace{0.3cm}

\item[(iv)] \begin{center}
 \hspace{1.6cm}\Stone{$x$}\Stone{$b$}\Stone{$c$}\hspace{1cm}\Stone{$x$}\Stone{$b$}\Stone{$c$}\\[0.2pt]
 $\Tab(w)$= \Stone{$x$}\Stone{$x-t$} \hspace{0.5cm} $\Tab(w')$=  \Stone{$x-t$}\Stone{$x+1$}\\[0.2pt]
 \hspace{1.5cm}\Stone{$z$}\hspace{3.9cm}\Stone{$z$}\\
\end{center}
for $t\in\mathbb{Z}_{>0}$, $b\neq x$, and $c\neq x$.
\vspace{0.3cm}
\vspace{0.3cm}
\item[(v)] 
\begin{center}
 \hspace{1.6cm}\Stone{$x$}\Stone{$b$}\Stone{$c$}\hspace{1cm}\Stone{$x$}\Stone{$b$}\Stone{$c$}\\[0.2pt]
 $\Tab(w)$= \Stone{$x$}\Stone{$x$} \hspace{0.5cm} $\Tab(w')$=  \Stone{$x$}\Stone{$x+1$}\\[0.2pt]
 \hspace{1.5cm}\Stone{$z$}\hspace{3.9cm}\Stone{$z$}\\
\end{center}
for $b\neq x$ and $c\neq x$.
\vspace{0.3cm}
\vspace{0.3cm}
\end{itemize}
\end{lemma}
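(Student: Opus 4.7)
The plan is to fix $g \in \{E_{k,k+1}, E_{k+1,k}\}$ and enumerate the basis tableaux $\Tab(w')$ appearing with non-zero coefficient in $g \cdot \Tab(w)$. Since any such $g$ produces only shifts $w \mapsto w \pm \delta^{k,i}$ (possibly followed by a $\tau$-identification in the derivative case), only the $i$-th entry of row $k$ changes. Consequently, the only triples whose membership in $\Omega^+$ can change are those of the form $(k,i,u)$ with $1 \leq u \leq k-1$ and $(k+1,s,i)$ with $1 \leq s \leq k+1$; all other triples of $\Omega^+$ are preserved.

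For the lower bound $|\Omega^{+}(\Tab(w'))| \geq |\Omega^{+}(\Tab(w))| - 1$, I would first treat the classical situation in which every relevant summand in $g \cdot \Tab(w)$ arises from the Gelfand-Tsetlin coefficients of Theorem~\ref{Theorem: GT theorem}. Then the coefficient of $\Tab(w + \delta^{k,i})$ in $E_{k,k+1}\Tab(w)$ contains the factors $\prod_{j=1}^{k+1}(w_{k,i} - w_{k+1,j})$, while the coefficient of $\Tab(w - \delta^{k,i})$ in $E_{k+1,k}\Tab(w)$ contains $\prod_{j=1}^{k-1}(w_{k,i} - w_{k-1,j})$. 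A triple $(k+1,s,i)$ can leave $\Omega^+$ under a $+\delta^{k,i}$-shift only when $w_{k,i} = w_{k+1,s}$, and a triple $(k,i,u)$ can leave under a $-\delta^{k,i}$-shift only when $w_{k,i} = w_{k-1,u}$; in both situations the classical coefficient vanishes, forcing $\Tab(w') \not\prec_g \Tab(w)$. In the opposite direction the shift can only create new triples (not remove them), since the relevant difference changes in a single direction by one. Hence in the classical case one has $|\Omega^+(\Tab(w'))| \geq |\Omega^+(\Tab(w))|$.

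When $\Tab(w)$ or $\Tab(w')$ is derivative, or $\Tab(w)$ is critical, the singular formulas in Subsection~\ref{subsubsection: Singular Gelfand-Tsetlin formulas} contain extra terms involving the derivative operator $\partial_{v_{2j}}^{\bar v}$ or scalar contributions such as $\frac{m-n}{2}$ that remain non-zero precisely when the corresponding classical factor vanishes. Each such extra term accounts for at most one ``forbidden'' classical vanishing, so it can rescue at most one shifted tableau whose $\Omega^+$ differs from that of $\Tab(w)$ by the loss of a single triple. This gives the bound. The classification in (i)--(v) is then obtained by going through each singular formula and recording every configuration in which the classical part vanishes on a factor indexing an element of $\Omega^+$ while the singular term survives. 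The patterns are: (i) and (ii) correspond to $E_{12}$ applied to a derivative tableau, where the $\frac{m-n}{2}\,T(\bar w + \delta^{11})$ term survives when $w_{21}-w_{11}=0$ or $w_{22}-w_{11}=0$; (iii) and (v) correspond to $E_{32}$, respectively $E_{23}$, applied to a critical tableau, where the derivative term produces $T(\bar w \pm \delta^{2i})$ when the relevant row-three factor vanishes; (iv) arises from $E_{23}$ applied to a derivative tableau in the residual configuration where one of the factors $(v_{3j}-v_{2i})$ vanishes.

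The main obstacle is the bookkeeping that reconciles the raw-index order of $w$ with the canonical (ascending) row-$2$ ordering implicit in $\Lambda^+$ via the relations $T(\bar v + z) = T(\bar v + \tau(z))$ and $\mathcal D T(\bar v + z) = -\mathcal D T(\bar v + \tau(z))$. In particular, a singular transition from a derivative $\Tab(w)$ to a regular $\Tab(w')$ must be tracked after applying the appropriate $\tau$ to the result, so that the net change is expressible as a single $\pm\delta^{k,i}$-shift of the canonical form; the five exceptional configurations then appear as the only choices of $w$ producing exactly one lost triple in $\Omega^+$.
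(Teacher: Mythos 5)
The paper does not give its own proof of this lemma; it cites [GoR, Lemma 7.4]. So there is no in-paper proof to compare against, but the proposal can still be assessed on its merits.

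Your underlying mechanism is the right one: a generator $E_{k,k+1}$ or $E_{k+1,k}$ moves exactly one entry $w_{ki}$ by $\pm1$, so only the triples $(k,i,u)$ and $(k+1,s,i)$ of $\Omega^+$ can change; in the generic situation the Gelfand--Tsetlin coefficient contains the factor $(w_{ki}-w_{k+1,s})$ or $(w_{ki}-w_{k-1,u})$, which vanishes precisely when such a triple would leave $\Omega^+$; and the singular corrections involve only a first-order derivative $\mathcal{D}^{\bar v}$, so they can rescue a transition that is blocked by a simple vanishing but not a double one. That is almost certainly the argument in [GoR]. However, your bookkeeping of the five cases contains a concrete error: case (ii) does \emph{not} arise from $E_{12}$. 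In case (ii) the row-1 entry of $\Tab(w)$ and of $\Tab(w')$ are both $x$, so the generator preserves row 1 and must be $E_{23}$ or $E_{32}$; comparing row 2 shows the shift is $-\delta^{21}$, hence $E_{32}$. Concretely, the coefficient of $T(\bar w-\delta^{21})$ in $E_{32}(\mathcal{D}T(\bar w))$ equals $\tfrac{1}{2}\bigl(\tfrac{1}{m-n}-\tfrac{2(\bar w_{21}-\bar w_{11})}{(m-n)^{2}}\bigr)$, which equals $\tfrac{1}{2(m-n)}\neq 0$ when $\bar w_{21}-\bar w_{11}=0$; this is what produces case (ii). Your attributions of (iii), (iv), (v) to $E_{32}$-critical, $E_{23}$-derivative, $E_{23}$-critical respectively are correct.

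The more serious gap is in the completeness claim. You acknowledge that the $\tau$-identification $T(\bar v+z)=T(\bar v+\tau z)$, $\mathcal{D}T(\bar v+z)=-\mathcal{D}T(\bar v+\tau z)$ is the ``main obstacle'', but you do not actually carry it out, and your sketch does not show that the configurations you enumerate are exhaustive. For each generator and each type of $\Tab(w)$ (regular, critical, derivative) one must check \emph{both} vanishing factors ($(\bar w_{21}-\bar w_{11})=0$ versus $(\bar w_{22}-\bar w_{11})=0$ for row-2 triples, and $(\bar w_{3s}-\bar w_{21})=0$ versus $(\bar w_{3s}-\bar w_{22})=0$ for row-3 triples), and then verify that after $\tau$-normalizing the outcome into the canonical basis labelling each such configuration collapses to one of the listed five. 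Your proposal looks at only one of each symmetric pair, and precisely because you have mislabelled (ii) as an $E_{12}$-configuration, your list-generating procedure would not reproduce the statement's list even if carried out. To close the gap one should also supply the short argument that a single $\pm\delta^{ki}$-shift removes at most one triple (for row-3 this uses that the top-row entries are pairwise distinct; for row-2 it uses that $\bar w_{21}=\bar w_{11}=\bar w_{22}$ is impossible for a derivative or a $1$-singular tableau with $\bar w_{11}$ integer-related to the row-2 entries in a way compatible with the paper's conventions).
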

\begin{remark}
For the tableaux in Lemma \ref{Lemma: tableaux where size of Omega+ decreases}(iv)(v), one may consider $x$ at  positions $3i$, $i=1,2,3$, obtaining the same property for $\Omega^+$.
\end{remark}

\begin{definition}\label{Definition: Type I and II tableaux}
We will say that a tableau $\Tab(w)$ is \emph{of type (I)} if it can be written in the form of one of the tableaux $\Tab(w)$ of parts (i), (ii) or (iii) of Lemma \ref{Lemma: tableaux where size of Omega+ decreases} for some $x,a,b,c,t,z$. We also say that the tableau is \emph{of type (II)$_{i}$} if can be written in the form of one of the tableaux $\Tab(w)$ of parts (iv) or (v) of Lemma \ref{Lemma: tableaux where size of Omega+ decreases} for some $x,b,c,t,z$ and $x$ appear in the top row in position $3i$.
\end{definition}

\begin{remark}\label{Remark: we can obtain tableaux with smaller Omega+}
With the notation of Lemma \ref{Lemma: tableaux where size of Omega+ decreases} for tableaux of type (I) we have $\Omega^{+}(\Tab(w'))=\Omega^{+}(\Tab(w))\setminus\{(2,1,1)\}$ and for tableaux of type (II)$_{i}$ we have $\Omega^{+}(\Tab(w'))=\Omega^{+}(\Tab(w))\setminus\{(3,i,2)\}$.
\end{remark}
\begin{definition}\label{Definition: N_(r',s',t') and N^(i)}
Let $\Tab(w)\in\mathcal{B}(T(\bar{v}))$ and $(r,s,t)\in \Omega^{+}(\Tab(w))$. Set 
$$
\mathcal{N}_{(r,s,p)}(\Tab(w)):=\mathcal{N}(\Tab(w))\cup \mathcal{N}(\Tab(w')),$$
where $\Tab(w')$ is any tableau such that $\Omega^{+}(\Tab(w))\setminus\{(r,s,p)\}=\Omega^{+}(\Tab(w'))$. Also, define:
\begin{equation}\label{Equation: N^(1)}
\mathcal{N}^{(1)}(\Tab(w)):=
\begin{cases}
\mathcal{N}_{(2,1,1)}(\Tab(w)),& \text{ if } \Tab(w) \text{ is of type } (I)\\
\mathcal{N}(\Tab(w)), & \text{otherwise},
\end{cases}
\end{equation}
 
\begin{equation}\label{Equation: N^(2)}
\mathcal{N}^{(2)}(\Tab(w)):=
\begin{cases}
\mathcal{N}_{(3,i,2)}(\Tab(w)),& \text{ if } \Tab(w) \text{ is of type } (II)_{i}\\
\mathcal{N}(\Tab(w)), & \text{otherwise}.
\end{cases}.
\end{equation}

\begin{equation}\label{Equation: Ahat}
\widehat{\mathcal{A}}(\Tab(w)):=\mathcal{N}^{(1)}(\Tab(w))\cup\mathcal{N}^{(2)}(\Tab(w))
\end{equation}
\end{definition}

\begin{lemma}\label{Lemma: N^ is inside the generated module}
Let $\Tab(w)$ be any Gelfand-Tsetlin tableau and $\Tab(w')\in\mathcal{N}(\Tab(w))$. We have $\widehat{\mathcal{A}}(\Tab(w'))\subseteq U\cdot \Tab(w)$. 
\end{lemma}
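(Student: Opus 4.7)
Since $\Tab(w')\in\mathcal{N}(\Tab(w))$, Lemma~\ref{Lemma: We can generate certain set of tableaux as in generic case} gives $\Tab(w')\in U\cdot\Tab(w)$, whence $U\cdot\Tab(w')\subseteq U\cdot\Tab(w)$. It therefore suffices to prove the stronger statement $\widehat{\mathcal{A}}(\Tab(w'))\subseteq U\cdot\Tab(w')$ for an arbitrary tableau $\Tab(w')$. By the definition of $\widehat{\mathcal{A}}$ (equation~\eqref{Equation: Ahat}), this in turn reduces to verifying the two inclusions $\mathcal{N}^{(1)}(\Tab(w'))\subseteq U\cdot\Tab(w')$ and $\mathcal{N}^{(2)}(\Tab(w'))\subseteq U\cdot\Tab(w')$ separately.

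The approach is to split each $\mathcal{N}^{(i)}$ into its ``trivial'' and its ``extra'' piece according to equations~\eqref{Equation: N^(1)} and \eqref{Equation: N^(2)}. If $\Tab(w')$ is not of type~(I) (respectively type~(II)$_i$) then $\mathcal{N}^{(1)}(\Tab(w'))=\mathcal{N}(\Tab(w'))$ (respectively $\mathcal{N}^{(2)}(\Tab(w'))=\mathcal{N}(\Tab(w'))$), and a direct appeal to Lemma~\ref{Lemma: We can generate certain set of tableaux as in generic case} finishes that case. The substantive case is when $\Tab(w')$ is of type~(I) (respectively (II)$_i$), where we must further exhibit a tableau $\Tab(w'')$ with $\Omega^{+}(\Tab(w''))=\Omega^{+}(\Tab(w'))\setminus\{(2,1,1)\}$ (respectively $\setminus\{(3,i,2)\}$) such that $\Tab(w'')\in U\cdot\Tab(w')$; then Lemma~\ref{Lemma: We can generate certain set of tableaux as in generic case} applied to $\Tab(w'')$ yields $\mathcal{N}(\Tab(w''))\subseteq U\cdot\Tab(w'')\subseteq U\cdot\Tab(w')$ and we are done.

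To produce such a $\Tab(w'')$, I will invoke Lemma~\ref{Lemma: tableaux where size of Omega+ decreases} from \cite{GoR}. By Definition~\ref{Definition: Type I and II tableaux}, a type-(I) (respectively type-(II)$_i$) tableau $\Tab(w')$ is exactly of the shape appearing as $\Tab(w)$ in one of parts (i)--(iii) (respectively (iv)--(v)) of that lemma, so there is a generator $g\in\{E_{k,k+1},E_{k+1,k}\}$ and a tableau $\Tab(w'')$ with the prescribed drop in $\Omega^{+}$ satisfying $\Tab(w'')\prec_{g}\Tab(w')$, i.e. $\Tab(w'')$ occurs with nonzero coefficient in the expansion of $g\cdot\Tab(w')$ in the basis $\mathcal{B}(T(\bar{v}))$. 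By Remark~\ref{Remark: coefficients are the same as classical  for tableaux of same kind}, the coefficients appearing in the singular Gelfand--Tsetlin formulas for tableaux of the same type (regular, critical, or derivative) match the classical ones, so the list in Lemma~\ref{Lemma: tableaux where size of Omega+ decreases} and the non-vanishing of the coefficient transfer verbatim to the present singular setting.

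It remains to extract $\Tab(w'')$ from $g\cdot\Tab(w')$. In $\mathcal{B}(T(\bar{v}))$ each Gelfand--Tsetlin character supports at most a regular/derivative pair $\{T(\bar{v}+u),\mathcal{D}T(\bar{v}+\tau(u))\}$, and distinct characters are separated by $\Gamma$ (Theorem~\ref{Theorem: Gamma separates tableaux in 1-singular module}); hence a suitable polynomial in $\Gamma$, applied to $g\cdot\Tab(w')$, kills every summand whose character differs from $\chi_{w''}$ and leaves either $\Tab(w'')$ alone or a linear combination within a single two-dimensional character space. In the latter case, Lemma~\ref{Lemma: from derivative tableau we can obtain regular tableau} shows that $(c_{22}-\gamma_{22}(\bar{v}+w''))$ separates the regular from the derivative vector in such a pair, so $\Tab(w'')$ itself lies in $U\cdot\Tab(w')$. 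The main technical point in the argument is precisely this last isolation step, which depends on having simultaneously (a) the non-vanishing of the classical coefficient by Remark~\ref{Remark: coefficients are the same as classical  for tableaux of same kind} and (b) the $\Gamma$-separation of basis tableaux; both ingredients are already available, so the proof goes through uniformly for types~(I) and (II)$_i$.
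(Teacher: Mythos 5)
Your proof is correct and follows essentially the same route as the paper: the paper's proof of this lemma is a single sentence citing Lemma~\ref{Lemma: We can generate certain set of tableaux as in generic case}, Lemma~\ref{Lemma: tableaux where size of Omega+ decreases}, and Remark~\ref{Remark: we can obtain tableaux with smaller Omega+}, which are exactly the three ingredients you combine. What you add is the logical scaffolding (reducing to the stronger $\widehat{\mathcal{A}}(\Tab(w'))\subseteq U\cdot\Tab(w')$, splitting $\widehat{\mathcal{A}}$ via $\mathcal{N}^{(1)},\mathcal{N}^{(2)}$) and, more substantively, the extraction argument showing $\Tab(w'')\in U\cdot\Tab(w')$ from the $\prec_g$-relation: you use $\Gamma$-separation (Theorem~\ref{Theorem: Gamma separates tableaux in 1-singular module}) to project onto the relevant generalized eigenspace and then, when that space is two-dimensional, apply $(c_{22}-\gamma_{22})$ as in Lemma~\ref{Lemma: from derivative tableau we can obtain regular tableau}. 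The isolation step works because, as one checks from the list in Lemma~\ref{Lemma: tableaux where size of Omega+ decreases}, the target $\Tab(w'')$ is always a regular or critical tableau, never a derivative one. The paper leaves this extraction implicit; your version is more explicit but otherwise matches.
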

\begin{proof}
As $\Tab(w')\in\mathcal{N}(\Tab(w))$, by Lemma \ref{Lemma: We can generate certain set of tableaux as in generic case}, Lemma \ref{Lemma: tableaux where size of Omega+ decreases}, and Remark \ref{Remark: we can obtain tableaux with smaller Omega+} we have $\mathcal{N}^{(i)}(\Tab(w'))\subseteq U\cdot \Tab(w)$ for $i=1,2$.
\end{proof}

The following theorem summarize the results of this section.

\begin{theorem}\label{Theorem: Module generated by singular tableau}
Let $\Tab(w)\in\mathcal{B}(T(\bar{v}))$. The submodule $U\cdot \Tab(w)$ has the following basis of tableaux:

$$\widehat{\mathcal{N}}(\Tab(w)):=\bigcup\limits_{\Tab(w')\in\mathcal{N}(\Tab(w))}\widehat{\mathcal{A}}(\Tab(w'))$$

\end{theorem}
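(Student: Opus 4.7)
The plan is to establish the two inclusions separately, noting first that since $\widehat{\mathcal{N}}(\Tab(w))\subseteq\mathcal{B}(T(\bar{v}))$ and the latter is a basis of $V(T(\bar{v}))$, linear independence of $\widehat{\mathcal{N}}(\Tab(w))$ is automatic; the content of the theorem is that $\widehat{\mathcal{N}}(\Tab(w))$ spans $U\cdot\Tab(w)$ exactly.

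The inclusion $\widehat{\mathcal{N}}(\Tab(w))\subseteq U\cdot\Tab(w)$ is essentially packaged in the preceding lemmas: by Lemma \ref{Lemma: We can generate certain set of tableaux as in generic case}, every $\Tab(w')\in\mathcal{N}(\Tab(w))$ lies in $U\cdot\Tab(w)$, and Lemma \ref{Lemma: N^ is inside the generated module} then upgrades this to $\widehat{\mathcal{A}}(\Tab(w'))\subseteq U\cdot\Tab(w)$ for each such $w'$. Taking the union over $\Tab(w')\in\mathcal{N}(\Tab(w))$ yields the desired inclusion. The real work lies in the reverse inclusion, for which it suffices to verify that $\Span_{\mathbb{C}}\widehat{\mathcal{N}}(\Tab(w))$ is stable under the action of the Chevalley generators $E_{k,k+1}$ and $E_{k+1,k}$ of $\mathfrak{sl}(3)$. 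The Cartan action is diagonal by (\ref{Formula: action of E_ii on singular tableaux}), so it preserves any span of basis elements automatically; so only the root-vector generators must be checked.

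To control the off-diagonal generators, I would use the explicit formulas of \S\ref{subsubsection: Singular Gelfand-Tsetlin formulas} together with Lemma \ref{Lemma: tableaux where size of Omega+ decreases}. The latter guarantees that whenever $\Tab(w''')\prec_{g}\Tab(w'')$ for $g\in\{E_{k,k+1},E_{k+1,k}\}$, one has $|\Omega^+(\Tab(w'''))|\geq|\Omega^+(\Tab(w''))|-1$, with equality only if $\Tab(w'')$ is of type (I) or (II)$_i$. Given $\Tab(w'')\in\widehat{\mathcal{N}}(\Tab(w))$, write $\Tab(w'')\in\widehat{\mathcal{A}}(\Tab(w'))$ for some $\Tab(w')\in\mathcal{N}(\Tab(w))$. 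If a summand $\Tab(w''')$ of $g\cdot\Tab(w'')$ satisfies $\Omega^+(\Tab(w'''))\supseteq\Omega^+(\Tab(w''))$, it belongs to $\mathcal{A}(\Tab(w''))$ and, via a short verification that $\mathcal{N}(\Tab(w''))\subseteq\mathcal{N}(\Tab(w))$ when $\Tab(w'')\in\mathcal{N}(\Tab(w))$, lands in $\widehat{\mathcal{N}}(\Tab(w))$. In the borderline case $|\Omega^+(\Tab(w'''))|=|\Omega^+(\Tab(w''))|-1$, Lemma \ref{Lemma: tableaux where size of Omega+ decreases} and Remark \ref{Remark: we can obtain tableaux with smaller Omega+} identify $\Tab(w''')$ as one of the tableaux already bundled into $\mathcal{N}^{(1)}(\Tab(w''))$ or $\mathcal{N}^{(2)}(\Tab(w''))$, hence inside $\widehat{\mathcal{A}}(\Tab(w''))\subseteq\widehat{\mathcal{N}}(\Tab(w))$.

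The main obstacle will be the bookkeeping for tableaux near the singular hyperplane $w_{21}=w_{22}$, where the formulas (\ref{Formula: action of E on critical tableaux}) and (\ref{Formula: action of E on derivative tableaux}) produce cross-terms mixing regular, critical, and derivative tableaux. One must verify that when $E_{23}$ or $E_{32}$ acts on a critical or derivative tableau, every regular summand (whose membership in $\widehat{\mathcal{N}}$ is governed by $\Lambda^+$ via $\tau(w)$, per Definition \ref{Definition: Lambda+}) still lands in $\widehat{\mathcal{N}}(\Tab(w))$. This is precisely where the asymmetric definition of $\mathcal{N}$ via the set of critical tableaux $\mathcal{C}(w)$ is needed: a regular tableau $\Tab(w''')$ with $w'''_{21}<w'''_{22}$ that appears as a cross-term of a derivative summand is forced to lie in $\mathcal{A}(\Tab(\tau(w''')))$ for some critical $\Tab(\tau(w'''))$ already reachable from $\Tab(w)$, and hence sits in $\mathcal{R}(\tau(w'''))\subseteq\mathcal{N}(\Tab(\tau(w''')))$. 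Once this stability check is carried out in each of the finitely many local configurations dictated by the classification in Lemma \ref{Lemma: tableaux where size of Omega+ decreases}, the theorem follows.
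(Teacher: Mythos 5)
Your proposal follows the same route as the paper, which proves the theorem by citing exactly Lemmas \ref{Lemma: tableaux where size of Omega+ decreases} and \ref{Lemma: N^ is inside the generated module}: the forward inclusion $\widehat{\mathcal{N}}(\Tab(w))\subseteq U\cdot\Tab(w)$ from Lemma \ref{Lemma: N^ is inside the generated module}, and the reverse by verifying that $\Span_{\mathbb{C}}\widehat{\mathcal{N}}(\Tab(w))$ is stable under the Chevalley generators using the $\Omega^{+}$-decrement classification of Lemma \ref{Lemma: tableaux where size of Omega+ decreases}. You spell out more of the intermediate bookkeeping (in particular the monotonicity $\mathcal{N}(\Tab(w''))\subseteq\mathcal{N}(\Tab(w))$ for $\Tab(w'')\in\mathcal{N}(\Tab(w))$, which does hold case-by-case across regular/critical/derivative types) than the paper's one-line proof does, but the argument and the supporting lemmas are the same.
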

\begin{proof}
The statement follows from Lemmas \ref{Lemma: tableaux where size of Omega+ decreases} and \ref{Lemma: N^ is inside the generated module}.
\end{proof}
\begin{definition}\label{Definition: Omega+ maximal}
Let $M$ be a Gelfand-Tsetlin module with basis $\mathcal{B}_{M}\subseteq \mathcal{B}(T(\bar{v}))$ for some $1$-singular vector $\bar{v}$. We say that $\Tab(w)\in\mathcal{B}_{M}$ is \emph{$\Omega^{+}$-maximal in $M$} if $|\Omega^{+}(\Tab(w))|$ is maximal for all $\Tab(w)$ in $\mathcal{B}_{M}$. Also, denote by $U\cdot_{M} \Tab(w)$ the submodule of $M$ generated by $\Tab(w)$.
\end{definition}

The next two corollaries follow from Theorem \ref{Theorem: Module generated by singular tableau} and will be useful when describing the simple subquotients of $V(T(\bar{v}))$.\\

\begin{corollary}\label{Corollary: regular tableau with maximal Omega+ generates simple submodule}
Let $M$ be a Gelfand-Tsetlin module with basis $\mathcal{B}_{M}\subseteq \mathcal{B}(T(\bar{v}))$ for some $1$-singular vector $\bar{v}$. If $\Tab(w)\in\mathcal{B}_{M}$ is a regular tableau that is $\Omega^{+}$-maximal in $M$, then $U\cdot_{M} \Tab(w)$ is a simple submodule of $M$.
\end{corollary}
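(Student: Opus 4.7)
The plan is to prove simplicity by showing that every nonzero submodule $N\subseteq W:=U\cdot_M\Tab(w)$ contains $\Tab(w)$, which forces $N=W$. This will be executed in two stages: first extract an individual basis tableau $\Tab(w')\in N$, and then argue that $\Tab(w)\in U\cdot\Tab(w')$.

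For the extraction, I would pick a nonzero $v\in N$ and project it onto a single generalized $\Gamma$-eigenspace using a suitable polynomial in $\Gamma$. By Theorem \ref{Theorem: Gamma separates tableaux in 1-singular module}, each such eigenspace of $V(T(\bar{v}))$ is spanned by at most the pair $\{T(\bar{v}+u),\mathcal{D}T(\bar{v}+u)\}$, and formula (\ref{Formulas: Gamma acting on singular DT}) shows that $c_{22}-\gamma_{22}(\bar{v}+u)$ acts non-diagonally on any such two-dimensional eigenspace; applying it therefore separates the regular from the derivative tableau, leaving a single basis tableau $\Tab(w')\in N$. If $\Tab(w')$ is derivative, Lemma \ref{Lemma: from derivative tableau we can obtain regular tableau} replaces it by the corresponding regular tableau, so I may assume $\Tab(w')$ is regular. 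For the recovery, Theorem \ref{Theorem: Module generated by singular tableau} gives $\Tab(w')\in\widehat{\mathcal{N}}(\Tab(w))$, while $\Omega^+$-maximality of $\Tab(w)$ in $M$ forces $|\Omega^+(\Tab(w'))|\leq|\Omega^+(\Tab(w))|$. Tracking the cases in the definition of $\widehat{\mathcal{N}}$, one sees that in the ``generic'' branches of that construction the inclusion $\Omega^+(\Tab(w))\subseteq\Omega^+(\Tab(w'))$ holds, so the inequalities collapse to $\Omega^+(\Tab(w'))=\Omega^+(\Tab(w))$; applying Theorem \ref{Theorem: Module generated by singular tableau} now to $\Tab(w')$ then places $\Tab(w)\in\mathcal{N}(\Tab(w'))\subseteq U\cdot\Tab(w')$, as desired.

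The main obstacle is the subcase in which $\Tab(w')$ enters $\widehat{\mathcal{N}}(\Tab(w))$ via the $\widehat{\mathcal{A}}$ construction with $|\Omega^+(\Tab(w'))|$ strictly less than $|\Omega^+(\Tab(w))|$; this can happen exactly when the source tableau has the configuration of type (I) or (II) in Lemma \ref{Lemma: tableaux where size of Omega+ decreases}. There the naive ``equal $\Omega^+$'' argument breaks, and I would instead exploit reversibility of the specific generator application producing the drop: a direct computation using the singular Gelfand-Tsetlin formulas of Section \ref{subsubsection: Singular Gelfand-Tsetlin formulas} shows that the ``adjoint'' generator, applied to $\Tab(w')$, returns $\Tab(w)$ with a nonzero coefficient, the derivative-tableau terms in those formulas being essential. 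Combined with $\Omega^+$-maximality (which prevents escaping to tableaux of larger $|\Omega^+|$), this reversibility delivers $\Tab(w)\in U\cdot\Tab(w')$ in the remaining cases and completes the proof.
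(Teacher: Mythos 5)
Your strategy matches the paper's: reduce to showing that any basis tableau $\Tab(w')$ lying in $U\cdot_{M}\Tab(w)$ itself generates $\Tab(w)$. The extraction step (projecting into a single $\Gamma$-eigenspace, peeling off the derivative component via $c_{22}$, then Lemma \ref{Lemma: from derivative tableau we can obtain regular tableau}) makes explicit what the paper's proof leaves implicit, and the ``generic branch'' of the recovery is handled correctly.

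The last paragraph, however, contains a genuine gap. The ``drop'' case you single out is not actually an obstacle, and the reversibility computation you propose to handle it is both unnecessary and unsound. Unnecessary: placing $\Tab(w)$ in $\mathcal{N}(\Tab(w'))$ only requires the \emph{inclusion} $\Omega^{+}(\Tab(w'))\subseteq\Omega^{+}(\Tab(w))$, not equality. In your drop case one has $\Omega^{+}(\Tab(w))\setminus\{(r,s,t)\}\subseteq\Omega^{+}(\Tab(w'))$ together with $|\Omega^{+}(\Tab(w'))|<|\Omega^{+}(\Tab(w))|$, which forces $\Omega^{+}(\Tab(w'))=\Omega^{+}(\Tab(w))\setminus\{(r,s,t)\}$; so the inclusion still holds, $\Tab(w)$ is a regular tableau in $\mathcal{A}(\Tab(w'))$, and $\Tab(w)\in\mathcal{N}(\Tab(w'))\subseteq U\cdot\Tab(w')$ follows exactly as in the generic branch. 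This is precisely how the paper's proof closes: it deduces $\Omega^{+}(\Tab(w))=\Omega^{+}(\Tab(w'))\cup\{(r,s,t)\}$ from maximality and then reads off $U\cdot_{M}\Tab(w)\subseteq U\cdot_{M}\Tab(w')$ from Theorem \ref{Theorem: Module generated by singular tableau}, with no coefficient analysis. Unsound: the reversibility you invoke requires the singular Gelfand-Tsetlin coefficient of the ``adjoint'' generator acting on $\Tab(w')$ to be nonzero, but in the type (I) configuration of Lemma \ref{Lemma: tableaux where size of Omega+ decreases}(iii), for instance, the relevant $e_{23}$-coefficient is a nonzero multiple of $(x-1-a)(x-1-b)(x-1-c)$, and $1$-singularity only constrains intra-row differences in rows $1$ and $2$, so nothing forbids a top-row entry from equaling $x-1$. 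Moreover, when $\Tab(w')$ is several generator applications removed from $\Tab(w)$, the drop-producing step need not be the final one in the chain, so its adjoint does not return $\Tab(w)$ directly. Replacing your last paragraph with the inclusion argument above yields a correct proof.
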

\begin{proof}
It is enough to proof that $\Tab(w)$ belongs to $U\cdot_{M} \Tab(w')$ for any $\Tab(w')$ in $U\cdot_{M} \Tab(w)$. As $\Tab(w')$ in $U\cdot_{M} \Tab(w)$ and $\Tab(w)$ is a regular tableau, we have $\Omega^{+}(\Tab(w))\subseteq \Omega^{+}(\Tab(w'))\cup \{(r,s,t)\}$ for some $(r,s,t)$. As $\Tab(w)$ is $\Omega^{+}$-maximal, we should have $\Omega^{+}(\Tab(w))=\Omega^{+}(\Tab(w'))\cup \{(r,s,t)\}$ for some $(r,s,t)$. Therefore, $U\cdot_{M} \Tab(w)\subseteq U\cdot_{M} \Tab(w')$ and, then we have $\Tab(w)\in U\cdot_{M} \Tab(w')$.
\end{proof}
\begin{corollary}\label{Corollary: derivative tableau with maximal Omega+ generates simple submodule}
Let $M$ be a Gelfand-Tsetlin module with basis $\mathcal{B}_{M}\subseteq \mathcal{B}(T(\bar{v}))$ for some $1$-singular vector $\bar{v}$. If $\{\Tab(w)\in\mathcal{B}_{M}\ |\ \Tab(w) \text{ is } \Omega^{+}\text{-maximal}\}$ does not contain regular tableaux, then for any $\Omega^{+}$-maximal tableau $\Tab(w)$ the submodule $U\cdot_{M} \Tab(w)$ is a simple submodule of $M$.
\end{corollary}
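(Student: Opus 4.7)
The strategy mirrors the proof of Corollary \ref{Corollary: regular tableau with maximal Omega+ generates simple submodule}: it suffices to show that for any nonzero $\Tab(w') \in U\cdot_M \Tab(w)$, one has $\Tab(w) \in U\cdot_M \Tab(w')$, which forces every nonzero submodule of $U\cdot_M \Tab(w)$ to contain $\Tab(w)$ and hence to equal $U\cdot_M \Tab(w)$.

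First, I would unpack the structure of $U\cdot \Tab(w)$ inside $V(T(\bar v))$. By hypothesis no $\Omega^+$-maximal tableau of $\mathcal{B}_M$ is regular, so in particular $w_{21}\geq w_{22}$ (i.e., $\Tab(w)$ is critical or derivative). Thus $\mathcal{N}(\Tab(w))=\mathcal{A}(\Tab(w))$. By Theorem \ref{Theorem: Module generated by singular tableau}, $\Tab(w') \in \widehat{\mathcal{N}}(\Tab(w))$, so there is $\Tab(w'') \in \mathcal{A}(\Tab(w))$ with $\Tab(w')\in\widehat{\mathcal{A}}(\Tab(w''))$. Unwinding the definitions of $\mathcal{N}^{(1)}$ and $\mathcal{N}^{(2)}$, I obtain an intermediate tableau $\Tab(\tilde w)$ with $\Omega^+(\Tab(\tilde w))=\Omega^+(\Tab(w''))\setminus S$ for a subset $S$ of size at most two (possibly empty, according as $\Tab(w'')$ is of type (I), (II) or neither), and $\Tab(w')\in \mathcal{N}(\Tab(\tilde w))$. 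Composing the containments $\Lambda^+(\Tab(w))\subseteq \Lambda^+(\Tab(w''))$ and the one provided by $\mathcal{N}(\Tab(\tilde w))$ yields a controlled inclusion
\[
\Lambda^+(\Tab(w)) \subseteq \Lambda^+(\Tab(w')) \cup S'
\]
for some $S'$ with $|S'|\leq 2$.

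Next I would use the $\Omega^+$-maximality of $\Tab(w)$ in $M$: since $\Tab(w')\in M$, $|\Omega^+(\Tab(w'))|\leq |\Omega^+(\Tab(w))|$, which forces the inclusion above to be as tight as possible and, in particular, guarantees that whenever $S'=\emptyset$ we have $\Lambda^+(\Tab(w))\subseteq \Lambda^+(\Tab(w'))$. This is exactly the hypothesis needed to apply Theorem \ref{Theorem: Module generated by singular tableau} in reverse to $\Tab(w')$: I would produce a tableau $\Tab(w''')\in \mathcal{N}(\Tab(w'))$ with $\Omega^+(\Tab(w'''))\supseteq \Omega^+(\Tab(w))\setminus S''$ for some $|S''|\leq 2$, and then use a type (I) or (II) move inside $\widehat{\mathcal{A}}(\Tab(w'''))$ to conclude $\Tab(w)\in \widehat{\mathcal{N}}(\Tab(w'))\subseteq U\cdot \Tab(w')$. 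Since $M$ is a submodule, this gives $\Tab(w)\in U\cdot_M \Tab(w')$, completing the proof.

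\textbf{Main obstacle.} The delicate part is the bookkeeping when switching between the three cases $w'_{21}<w'_{22}$, $w'_{21}=w'_{22}$, and $w'_{21}>w'_{22}$, because the definitions of $\mathcal{N}$, $\Lambda^+$, and the type (I)/(II) moves are asymmetric in these cases. A particularly subtle situation occurs when $\Tab(w)$ is derivative and so is $\Tab(w')$: one must invoke Lemma \ref{Lemma: from derivative tableau we can obtain regular tableau} to pass through the regular tableau obtained via $c_{22}-\gamma_{22}(\bar v+w')$, verify that this is compatible with the type (I)/(II) reductions used to reach $\Tab(w')$ in the first place, and then rebuild the derivative $\Tab(w)$ from the regular one using Lemma \ref{Lemma: condition to get derivative tableau from regular tableau}. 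Handling this case uniformly together with the critical case is where the argument requires care.
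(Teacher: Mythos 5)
Your plan mirrors the paper's approach exactly: the paper's proof is the single sentence ``The proof is analogous to the proof of Corollary \ref{Corollary: regular tableau with maximal Omega+ generates simple submodule},'' and you unwind that template correctly by reducing to showing $\Tab(w)\in U\cdot_M\Tab(w')$ for every $\Tab(w')\in U\cdot_M\Tab(w)$, using $\Omega^+$-maximality, the fact that $\mathcal{N}(\Tab(w))=\mathcal{A}(\Tab(w))$ when $w_{21}\geq w_{22}$, and the type (I)/(II) corrections via Theorem \ref{Theorem: Module generated by singular tableau}. Two bookkeeping notes: the set $S$ you extract from $\widehat{\mathcal{A}}(\Tab(w''))=\mathcal{N}^{(1)}(\Tab(w''))\cup\mathcal{N}^{(2)}(\Tab(w''))$ has size at most $1$, not $2$ (it is a union, so each element lies in one of the two pieces, each of which drops at most one triple), and the $S'=\emptyset$ case for which you invoke maximality is already trivial — the substantive work is the $S'\neq\emptyset$ case, which your outline should address head-on.
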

\begin{proof}
The proof is analogous to the proof of Corollary \ref{Corollary: regular tableau with maximal Omega+ generates simple submodule}.
\end{proof}

In order to describe the basis of the simple subquotients of $V(T(\bar{v}))$ we modify Definition \ref{Definition: M(B)} to  singular vectors.
\begin{definition}
Let $\bar{v}$ be any $1$-singular vector and  $\mathcal{B}$ be a subset of ${\mathbb Z}^3$. By $\Tab(\mathcal{B})$ we will denote the set of tableaux $\{ \Tab(m,n,k) \; | \; (m,n,k)\in \mathcal{B} \}\subseteq\mathcal{B}(T(\bar{v}))$. Assume that $M$ is a Gelfand-Tsetlin module with basis $\Tab(\mathcal{B})$. Then we will denote $M$ by $M(\mathcal{B}, \bar{v})$, or simply by $M(\mathcal{B})$ if $\bar{v}$ is fixed. If  $M(\mathcal{B})$ is simple, we will write $L(\mathcal{B})$ for $M(\mathcal{B})$.
\end{definition}

\begin{example}\label{Example: computing the module generates by some tableau}
Let  $\bar{v}=(a,b,c|a,a|z)$. Below we  give  a basis for the submodule of $V(T(\bar{v}))$ generated by $\Tab(0,0,0)$. In this case $\mathcal{B}(T(\bar{v}))$ does not contain tableaux of type $(I)$, $(II)_{2}$ or $(II)_{3}$. However, the set of all tableaux of type $(II)_{1}$ is 
$\{\Tab(0,n,k)\ |\ n\in\mathbb{Z}_{\leq 0}\}$. By definition we have 
$$\widehat{\mathcal{N}}(\Tab(0,0,0))=\widehat{\mathcal{A}}(\Tab(0,0,0))=\mathcal{A}(\Tab(0,0,0))=\{\Tab(m,n,k)\ |\ m\leq 0,\ n\leq 0\},$$
$$\widehat{\mathcal{N}}(\Tab(0,-1,0))=\widehat{\mathcal{A}}(\Tab(0,-1,0))=\mathcal{N}^{(2)}(\Tab(0,-1,0))=\{\Tab(m,n,k)\ |\ m\leq 0\}.$$
Therefore, by Theorem \ref{Theorem: Module generated by singular tableau}, the submodule of $V(T(\bar{v}))$ generated by $\Tab(0,0,0)$ has basis:
$$\bigcup\limits_{\mathcal{N}(\Tab(0,0,0))}\widehat{\mathcal{A}}(\Tab(w'))=\widehat{\mathcal{A}}(\Tab(0,0,0))\cup \widehat{\mathcal{A}}(\Tab(0,-1,0))=\Tab(m\leq 0).$$

\end{example}

\subsection{The singular block containing $L(-\rho)$}\label{Example: The singular block containing L(-rho)} In this subsection, we   describe  all simple subquotients of the module $V(T(\bar{v}))$.

Next we give an algorithm, which based on Theorem \ref{Theorem: Module generated by singular tableau} and Corollaries \ref{Corollary: regular tableau with maximal Omega+ generates simple submodule} and \ref{Corollary: derivative tableau with maximal Omega+ generates simple submodule}, provides an explicit basis of all simple subquotients of a module $M$ with basis $\mathcal{B}_{M}\subseteq \mathcal{B}(T(\bar{v}))$.
\begin{enumerate}[{\emph Step} $1$.]
\item If there is an $\Omega^{+}$-maximal regular tableau in $\mathcal{B}_{M}$,  choose any such tableau $\Tab(w)$. By Corollary \ref{Corollary: regular tableau with maximal Omega+ generates simple submodule}, $U\cdot_{M} \Tab(w)$ is  a simple submodule of $M$.
\item If there are no $\Omega^{+}$-maximal regular tableaux in $\mathcal{B}_{M}$, consider any $\Omega^{+}$-maximal (derivative) tableau $\Tab(w)$. By Corollary \ref{Corollary: derivative tableau with maximal Omega+ generates simple submodule} the  module $U\cdot_{M} \Tab(w)$ will be a simple submodule of $M$.
\item Using the bases of $M$ and $U\cdot \Tab(w)$ (see Theorem \ref{Theorem: Module generated by singular tableau}), we find a basis of  $M/(U\cdot_{M} \Tab(w))$.
\item Start over the procedure with the module $M':=M/(U\cdot_{M} \Tab(w))$.

\end{enumerate}

\begin{example}\label{Example: decomposition of the most singular block}
Let  $\bar{v}=(a,a,a|a,a|a)$. Below we define explicit bases of all simple subquotient of $V(T(\bar{v}))$. 

Note that none of the tableaux in $\mathcal{B}(T(\bar{v}))$ can be of type $(II)_{i}$, $i=1,2,3$. Therefore $\mathcal{N}^{(2)}(\Tab(w))=\mathcal{N}(\Tab(w))$ for any $\Tab(w)\in \mathcal{B}(T(\bar{v}))$. Moreover, the set of all tableaux of type $(I)$ is $\{\Tab(m,n,k)\ |\ n\leq m=k\}$. Now we apply Steps 1--4 described above to the module $V(T(\bar{V}))$. 

\begin{itemize}
\item[(1)] The tableau $\Tab(0,0,0)$ is $\Omega^{+}$-maximal on $V(T(\bar{v}))$. By Corollary \ref{Corollary: regular tableau with maximal Omega+ generates simple submodule}, $U\cdot \Tab(0,0,0)$ is a simple submodule of $V(T(\bar{v}))$ and by Theorem \ref{Theorem: Module generated by singular tableau}, the submodule $U\cdot \Tab(0,0,0)$ has a basis:
\begin{align*}
\widehat{\mathcal{N}}(\Tab(0,0,0))=& \widehat{\mathcal{A}}(\Tab(0,0,0))\\
%=&\{Tab(m,n,k)\ |\ m\leq 0, n\leq 0, k\leq n,k\leq m\}\cup\\
%& \{Tab(m,n,k)\ |\ m\leq 0, n\leq 0, k\leq n\}\\
%=&\{Tab(m,n,k)\ |\ m\leq 0, n\leq 0, k\leq n\}.\\
=&\mathcal{N}^{(1)}(\Tab(0,0,0))\\
=& \Tab\left(\begin{array}{c}
m\leq 0\\
n\leq 0\\
k\leq n
\end{array}\right).
\end{align*}
 Denote this module by $L_{1}$, and $M_{1}=V(T(\bar{v}))/L_{1}$.
\item[(2)] Now, the derivative tableau $\Tab(0,-2,-1)$ is $\Omega^{+}$-maximal in $M_{1}$. By Theorem \ref{Theorem: Module generated by singular tableau} , $U\cdot \Tab(0,-2,-1)$ has a basis:
\begin{align*}
\widehat{\mathcal{N}}(\Tab(0,-2,-1))=& \widehat{\mathcal{A}}(\Tab(0,0,1))\\
=& \mathcal{N}^{(1)}(\Tab(0,-2,-1))\\
=&\Tab\left(\begin{array}{c}
m\leq 0\\
n\leq 0
\end{array}\right).
\end{align*}
Moreover, by Corollary \ref{Corollary: derivative tableau with maximal Omega+ generates simple submodule}, $U\cdot_{M_{1}} \Tab(0,-2,-1)$ is a simple submodule of $M_{1}$ and has a basis
\begin{align*}
&\Tab\left(\begin{array}{c}
m\leq 0\\
n\leq 0
\end{array}\right)\setminus \Tab\left(\begin{array}{c}
m\leq 0\\
n\leq 0\\
k\leq n
\end{array}\right)\\
&= \Tab\left(\begin{array}{c}
m\leq 0\\
n\leq 0\\
n< k
\end{array}\right).\\
\end{align*}
Denote by $L_{3}$ this module and $M_{2}=M_{1}/L_{3}$.
\item[(3)] The tableau $\Tab(0,1,0)$ is $\Omega^{+}$-maximal in $M_{2}$ and $U\cdot \Tab(0,1,0)$ has a basis $\widehat{\mathcal{A}}(\Tab(0,1,0))\cup \widehat{\mathcal{A}}(\Tab(0,0,0))$ which is equal to
\begin{align*}
\Tab\left(\begin{array}{c}
m\leq n\\
m\leq 0\\
k\leq m\\
k\leq n
\end{array}\right)\bigcup \Tab\left(\begin{array}{c}
m\leq 0\\
n\leq 0\\
k\leq n
\end{array}\right).
\end{align*}
Therefore, $U\cdot_{M_{2}} \Tab(0,1,0)$ has basis
\begin{align*}
&\left(\Tab\left(\begin{array}{c}
m\leq n\\
m\leq 0\\
k\leq m\\
k\leq n
\end{array}\right)\bigcup \Tab\left(\begin{array}{c}
m\leq 0\\
n\leq 0\\
k\leq n
\end{array}\right)\right)\setminus \left(\Tab\left(\begin{array}{c}
m\leq 0\\
n\leq 0\\
k\leq n
\end{array}\right)\bigcup\Tab\left(\begin{array}{c}
m\leq 0\\
n\leq 0\\
n< k
\end{array}\right)\right)\\
&= \Tab\left(
k\leq m\leq 0< n\right),
\end{align*}
call this module $L_{2}$ and $M_{3}=M_{2}/L_{2}$.
\item[(4)] There are not $\Omega^{+}$-maximal regular tableaux in $M_{3}$, so we choose the derivative tableau $\Tab(1,0,0)$ which is $\Omega^{+}$-maximal in $M_{3}$. By Corollary \ref{Corollary: derivative tableau with maximal Omega+ generates simple submodule} the module $U\cdot_{M_{3}} \Tab(1,0,0)$ is a simple submodule of $M_3$ with basis  
$$\Tab\left(
\begin{split}
k\leq n\leq 0<m
\end{split}\right)$$
call this module $L_{5}$ and $M_{4}=M_{3}/L_{5}$.
\item[(5)] The tableau $\Tab(0,1,1)$ is $\Omega^{+}$-maximal in $M_{4}$ and $U\cdot_{M_{4}} \Tab(0,1,1)$ is a simple submodule of $M_4$ with basis 
$$\Tab\left(\begin{split}
m\leq 0<n\\
m<k\leq n
\end{split}
\right)$$
call this module $L_{4}$ and $M_{5}=M_{4}/L_{4}$.
\item[(6)] The derivative tableau $\Tab(1,0,1)$ is $\Omega^{+}$-maximal in $M_{5}$. Therefore, by Corollary \ref{Corollary: derivative tableau with maximal Omega+ generates simple submodule} $U\cdot_{M_{5}} \Tab(1,0,1)$ a simple submodule of $M_5$ with basis 
$$\Tab\left(\begin{split}
n\leq 0<m\\
n<k\leq m
\end{split}
\right)$$
call this module $L_{7}$ and $M_{6}=M_{5}/L_{7}$.
\item[(7)] The tableau $\Tab(0,1,2)$ is $\Omega^{+}$-maximal in $M_{6}$ so, $U\cdot_{M_{6}} \Tab(0,1,2)$ is a simple submodule  of $M_6$ and has a basis
$$\Tab\left(\begin{split}
m\leq 0<n<k\\
\end{split}\right)$$
call this module $L_{6}$ and $M_{7}=M_{6}/L_{6}$.

\item[(8)] The tableau $\Tab(1,0,2)$ is $\Omega^{+}$-maximal in $M_{7}$ and $U\cdot_{M_{7}} \Tab(1,0,2)$ is a simple submodule of $M_7$ with a basis 
$$\Tab\left(
\begin{split}
n\leq 0<m<k\\
\end{split}\right).$$
Call this module $L_{9}$ and $M_{8}=M_{7}/L_{9}$.

\item[(9)] The tableau $\Tab(1,1,0)$ is $\Omega^{+}$-maximal in $M_{8}$. The module $U\cdot_{M_{8}} \Tab(1,2,2)$ is a simple submodule of $M_8$ with a basis 
$$\Tab\left( \begin{array}{c}
0<m\\
0<n\\
k\leq n
\end{array}\right)$$
call this module $L_{8}$ and $M_{9}=M_{8}/L_{8}$.

\item[(10)] The tableau $\Tab(1,1,2)$ is $\Omega^{+}$-maximal in $M_{9}$ and $U\cdot \Tab(1,1,2)$ has a basis $\Tab(\mathbb{Z}^{3})$. Therefore, $U\cdot_{M_{9}} \Tab(1,1,2)$ is has a basis
$$\Tab\left( \begin{array}{c}
0<m\\
0<n\\
n<k
\end{array}\right)$$
call this module $L_{10}$.

\end{itemize}
\end{example}
\begin{remark}\label{Remark: Loewy series can be described using the method of the example}
The reasoning in Example \ref{Example: decomposition of the most singular block}  can be applied also when finding the Loewy series decomposition of $V(T(\bar{v}))$. More precisely, the Loewy series decomposition of $V(T(\bar{v}))$ for $\bar{v}=(a,a,a|a,a|a)$  is:
$$L_{1} ,\  L_{2}\oplus L_{3} ,\  L_{4} ,\  L_{5}\oplus L_{6} ,\  L_{7} ,\  L_{8}\oplus L_{9} ,\  L_{10}.$$
\end{remark}
\subsection{Realizations of all simple singular Gelfand-Tsetlin $\mathfrak{sl}(3)$-modules}\label{subsection: Realizations of all simple singular Gelfand-Tsetlin modules for sl(3)}

In this section we will describe all simple objects in every block $\mathcal{GT}_{T(v)}$ defined by a singular Gelfand-Tsetlin character $\chi_{v}$ (see Definition \ref{Definition: blocks} and Remark \ref{Remark: correspondence between characters and tableaux}). Such description will include an explicit tableaux basis of each simple subquotient $M$ in $\mathcal{GT}_{T(v)}$ and  the weight support of $M$. For the weight support we will use Proposition \ref{Proposition: Basis for weight spaces in singular blocks} and the explicit basis to give a description of the weight multiplicities, when the multiplicities are finite, a picture of the weight lattice is provided. We also present the  components of the Loewy series of the universal module $V(T(v))$. A rigorous proof based on Theorem \ref{Theorem: Module generated by singular tableau}, and Corollaries \ref{Corollary: regular tableau with maximal Omega+ generates simple submodule} and \ref{Corollary: derivative tableau with maximal Omega+ generates simple submodule} was given  for Case $(C13)$, see \S \ref{Example: The singular block containing L(-rho)}, Example \ref{Example: decomposition of the most singular block}. For all other cases  the  reasoning is the same.

%%%%%%%%%%%%%%%%%%%%%%%%%%%%%%%%%%%%%%%%%%%%%%%%%%

The simple subquotients will be defined by their corresponding sets in $\mathbb{Z}^{3}$, equivalently, by their bases in $ \mathcal{B}(T(\bar{v}))$. We should note that all subsets of $\mathbb{Z}^{3}$ that define a simple subquotient are defined by a set of inequalities of the form $a\leq b$ or $a < b$ where $a,b$ are elements in the set $\{m,n,k,0,-t,-s\}$.

As we did for the description of generic blocks we will characterize  the weight spaces for subquotients of the singular module $V(T(\bar{v}))$.
\begin{proposition}\label{Proposition: Basis for weight spaces in singular blocks}
Let $M$ be a singular Gelfand-Tsetlin module with basis of tableaux $\mathcal{B}_{M}\subseteq\mathcal{B}(T(\bar{v}))$. If $\Tab(z)\in\mathcal{B}_{M}$ is a tableau of weight $\lambda$, then the weight space $M_{\lambda}$ is spanned by the set of tableaux $\{\Tab(z+(i,-i,0))\ |\ i\in \mathbb{Z}\}\cap\mathcal{B}_{M}$.
\end{proposition}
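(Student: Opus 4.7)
The plan is to mimic the proof of Proposition \ref{Proposition: Basis for weight spaces in generic blocks} in the generic case, using the explicit action of the Cartan subalgebra on both regular and derivative tableaux. The essential observation is that formula (\ref{Formula: action of E_ii on singular tableaux}) states $H_i \cdot \Tab(w) = h_i(\bar{v}+w)\,\Tab(w)$ regardless of whether $\Tab(w)$ is a regular tableau $T(\bar{v}+w)$ or a derivative tableau $\mathcal{D}T(\bar{v}+w)$. In other words, every element of $\mathcal{B}(T(\bar{v}))$ is a genuine weight vector (not just a generalized one) with weight determined entirely by the shift $\bar{v}+w$. This is the reduction that makes the argument essentially identical to the generic case.

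First I would set $\lambda = (h_1(\bar{v}+z),\, h_2(\bar{v}+z))$ using
\[
h_1(w) = 2w_{11} - (w_{21}+w_{22}+1), \qquad h_2(w) = 2(w_{21}+w_{22}+1) - w_{11},
\]
and then compute directly that for any $(m,n,k) \in \mathbb{Z}^3$ the weight of $\Tab(z+(m,n,k))$ is
\[
\bigl(h_1(\bar{v}+z) + 2k - (m+n),\; h_2(\bar{v}+z) + 2(m+n) - k\bigr).
\]
Setting this equal to $\lambda$ yields the linear system $2k - (m+n) = 0$ and $2(m+n) - k = 0$, whose only solution is $k=0$ and $m+n=0$. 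Hence the only shifts producing the same weight are of the form $(i,-i,0)$ with $i \in \mathbb{Z}$.

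Second, since $\mathcal{B}_M \subseteq \mathcal{B}(T(\bar{v}))$ is a basis of $M$ consisting of weight vectors, the weight space $M_\lambda$ is spanned by precisely those basis tableaux in $\mathcal{B}_M$ whose weight equals $\lambda$. Combining this with the computation above gives
\[
M_\lambda = \operatorname{Span}_{\mathbb{C}}\bigl(\{\Tab(z+(i,-i,0)) \mid i \in \mathbb{Z}\} \cap \mathcal{B}_M\bigr),
\]
which is the claim. There is no real obstacle here: the only subtlety worth a brief remark is that one should verify the set on the right is well-defined as a subset of $\mathcal{B}(T(\bar{v}))$, i.e., that the relations $T(\bar{v}+w) - T(\bar{v}+\tau(w)) = 0$ and $\mathcal{D}T(\bar{v}+w) + \mathcal{D}T(\bar{v}+\tau(w)) = 0$ do not collapse the count. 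Since $h_1, h_2$ are symmetric in $w_{21}, w_{22}$, applying $\tau$ to the shift preserves the weight, so the relations simply identify two tableaux (up to sign) that both contribute to the same weight space, consistent with the statement.
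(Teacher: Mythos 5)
Your proof is correct and takes essentially the same approach as the paper: the paper's proof simply observes that the Cartan action is given by the same formulas as in the generic case and refers back to Proposition \ref{Proposition: Basis for weight spaces in generic blocks}, whose argument is exactly your weight computation forcing $k=0$ and $m+n=0$. Your closing remark about the $\tau$-relations not collapsing the count is a sensible well-posedness check that the paper leaves implicit.
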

\begin{proof} The action of the generators of $\mathfrak{h}$ in $M$ is given by the same expressions as in the case of generic modules, therefore, we can use the same argument of the proof of Proposition \ref{Proposition: Basis for weight spaces in generic blocks}.
\end{proof}

We now describe the sets $\mathcal{B}\subseteq \mathbb{Z}^{3}$ that define all simple subquotients of $V(T(\bar{v}))$.  For convenience, the modules  listed in one row are isomorphic. Recall that $\tau$ denote the transposition $(\id,(1,2),\id)\in S_{3}\times S_{2}\times S_{1}$. It is worth noting that all isomorphisms between simple subquotients of $V(T(\bar{v}))$ are $\tau$-induced, that is all isomorphisms between simple subquotients are given by $L(\mathcal{B}) \mapsto L(\tau(\mathcal{B}))$.
\begin{remark}
In general it is not true that if $\mathcal{B}\subseteq \mathbb{Z}^{3}$ defines a subquotient of $V(T(\bar{v}))$ then $\tau(\mathcal{B})$ defines also a subquotient of $V(T(\bar{v}))$. 
\end{remark}

\begin{remark}
We should note that for singular $\mathfrak{sl}(3)$-modules we may have characters with unique simple extension or with two non-isomorphic simple extensions. In particular, the number of simple subquotients of the singular blocks in general will not coincide with the number of non-isomorphic modules in the block. 
\end{remark}

Until the end of this section we use the following convention. The entries of the Gelfand-Tsetlin tableaux we will use will be integer shifts of some of the complex numbers $a,b,c,x,z$. We also assume that if any two of $a,b,c,x,z$ appear in the same row or in consecutive rows of a given tableau, then their difference is not integer. 
For convenience, in the description of basis of simple subquotients, isomorphic modules are listed in the same row.

%%%%%%%%%%%%%%%%     C1   %%%%%%%%%%%%%%%%%%%%%%%%%%%%%%%%%%%%%%%%%%%%%
\begin{itemize}
\item[$(C1)$] Consider the Gelfand-Tsetlin tableau:

\begin{center}

 \hspace{1.5cm}\Stone{$a$}\Stone{$b$}\Stone{$c$}\\[0.2pt]
 $T(\bar{v})$=\hspace{0.3cm}   \Stone{$x$}\Stone{$x$}\\[0.2pt]
 \hspace{1.3cm}\Stone{$z$}\\
\end{center}
\noindent
In this case the module $V(T(\bar{v}))$ is simple and all its weight spaces are infinite dimensional.

\begin{center}
\begin{tabular}{|>{$}l<{$}|>{$}c<{$}|}\hline
\textsf{Module} &   \hspace{0.3cm}\text{Basis}\\ \hline
L_{1} &
L(\mathbb{Z}^{3})\\ \hline
\end{tabular}
\end{center}

%%%%%%%%%%%%%%%%%%%%%%%%%%%%%%%%%%%%%%%   C2    %%%%%%%%%%%%%%%%%%%%%%%%%%%%%%%%%%%%%%%%%%%%%%%%%%
\item[$(C2)$] Consider the following Gelfand-Tsetlin tableau:

\begin{center}

 \hspace{1.5cm}\Stone{$a$}\Stone{$b$}\Stone{$c$}\\[0.2pt]
 $T(\bar{v})$=\hspace{0.3cm}   \Stone{$x$}\Stone{$x$}\\[0.2pt]
 \hspace{1.3cm}\Stone{$x$}\\
\end{center}
\noindent
\begin{itemize}
\item[I.] \textbf{Number of simples in the block}: $2$\\

\item[II.] \textbf{simple subquotients}.\\
In this case the module $V(T(\bar{v}))$ has  $2$ simple subquotients  and they have infinite-dimensional weight multiplicities:

\begin{center}
\begin{tabular}{|>{$}l<{$}|>{$}c<{$}|}\hline
\textsf{Module} &   \hspace{0.3cm}\text{Basis}\\ \hline
L_{1}&L\left(
k\leq n
\right)\\\hline
L_{2}&L\left(
n<k
\right)\\\hline
\end{tabular}
\end{center} 

\vspace{0.3cm}
\item[III.] \textbf{Loewy series.}\\
$$L_{1} ,\  L_{2}.$$
\end{itemize}

%%%%%%%%%%%%%%%%%%%%%%%%%%%%%%%%%%%%   C3   %%%%%%%%%%%%%%%%%%%%%%%%%%%%%%%%%%%%%%%%%%%%%%%

\item[$(C3)$] Consider the following Gelfand-Tsetlin tableau:

\begin{center}

 \hspace{1.5cm}\Stone{$a$}\Stone{$b$}\Stone{$c$}\\[0.2pt]
 $T(\bar{v})$=\hspace{0.3cm}   \Stone{$a$}\Stone{$a$}\\[0.2pt]
 \hspace{1.3cm}\Stone{$z$}\\
\end{center}
\noindent
\begin{itemize}
\item[I.] \textbf{Number of simples in the block}: $2$\\

\item[II.] \textbf{simple subquotients}.\\
We have $2$ simple subquotients and they have infinite-dimensional weight spaces. 

\begin{center}
\begin{tabular}{|>{$}l<{$}|>{$}c<{$}|}\hline
\textsf{Module} &   \hspace{0.3cm}\text{Basis}\\ \hline
L_{1}&L\left(
m\leq 0
\right)\\\hline
L_{2}&L\left(
0<m
\right)\\\hline
\end{tabular}
\end{center} 

\noindent
\vspace{0.3cm}
\item[III.] \textbf{Loewy series.}\\
$$L_{1} ,\  L_{2}$$
\end{itemize}

%%%%%%%%%%%%%%%%%%%%%%%%%%%%%%%%%%%%%%%%%%%%    C4    %%%%%%%%%%%%%%%%%%%%%%%%%%%%%%%%%%%%%%%%%%%%%
\item[$(C4)$] Consider the Gelfand-Tsetlin tableau:

\begin{center}

 \hspace{1.5cm}\Stone{$a$}\Stone{$b$}\Stone{$c$}\\[0.2pt]
 $T(\bar{v})$=\hspace{0.3cm}   \Stone{$a$}\Stone{$a$}\\[0.2pt]
 \hspace{1.3cm}\Stone{$a$}\\
\end{center}
\begin{itemize}
\item[I.] \textbf{Number of simples in the block}: $5$\\
\item[II.] \textbf{simple subquotients}.\\
In this case we have $6$ simple subquotients. The origin of the weight lattice corresponds to the $\mathfrak{sl}(3)$-weight associated to the tableau $T(\bar{v}+\delta^{11}+\delta^{21})$.
\begin{itemize}
\item[(i)] Modules with unbounded finite weight multiplicities:
\vspace{0.3cm}
\begin{center}
% [inline block 12: 4 envs, 11422 chars -> data_tex | \begin{tabular}{|>{$}l<{$}|>{$}c<{$}|}\hline \textsf{Module} &   \hspace{0.3cm}\text{Basis}\\ \hline...]

\end{center}
\vspace{0.3cm}
\end{itemize}
\vspace{0.3cm}
\item[III.] \textbf{Loewy series.}\\
$$L_{1} ,\  L_{2} ,\  L_{3}\oplus L_{4} ,\  L_{5} ,\  L_{6}.$$

\end{itemize}

%%%%%%%%%%%%%%%%%%%%%%%%%%%%%%%%%%%%%%%%%%   C5   %%%%%%%%%%%%%%%%%%%%%%%%%%%%%%%%%%%%%%%%%%%%%%%%%%%

\item[$(C5)$] For any $t\in\mathbb{Z}_{> 0}$, consider the following Gelfand-Tsetlin tableau:

\begin{center}

 \hspace{1.5cm}\Stone{$a$}\Stone{$a-t$}\Stone{$c$}\\[0.2pt]
 $T(\bar{v})$=\hspace{0.3cm}   \Stone{$a$}\Stone{$a$}\\[0.2pt]
 \hspace{1.3cm}\Stone{$a$}\\
\end{center}
\begin{itemize}
\item[I.] \textbf{Number of simples in the block}: $11$\\

\item[II.] \textbf{simple subquotients}.\\

We have $16$ simple subquotients. In this case, the origin of the weight lattice corresponds to the $\mathfrak{sl}(3)$-weight associated to the tableau $T(\bar{v}-\delta^{21})$. The pictures correspond to the case $t=2$.
\noindent
\begin{itemize}
\item[(i)] Six modules with weight multiplicities bounded by $t$, two pairs of isomorphic modules and two more modules:

\begin{center}
% [inline block 13: 10 envs, 23963 chars -> data_tex | \begin{tabular}{|>{$}l<{$}|>{$}c<{$}||>{$}l<{$}|>{$}c<{$}|}\hline \textsf{Module} &   \hspace{0.3cm}\text{Basis}& \texts...]

\end{center}
\end{itemize}
\vspace{0.3cm}
\item[III.] \textbf{Loewy series.}\\

$L_{1} ,\  L_{2}\oplus L_{3} ,\  L_{4}\oplus L_{5}\oplus L_{6} ,\  L_{7}\oplus L_{8}\oplus L_{9}\oplus L_{10} ,\  L_{11}\oplus L_{12}\oplus L_{13} ,\  L_{14}\oplus L_{15} ,\  L_{16}.$\\

\end{itemize}

%%%%%%%%%%%%%%%%%%   C6    %%%%%%%%%%%%%%%%%%%%%%%%%%%%%%%

\item[$(C6)$] Set $t\in \mathbb{Z}_{>0}$ and consider the following Gelfand-Tsetlin tableau:

\begin{center}

 \hspace{1.5cm}\Stone{$a$}\Stone{$a-t$}\Stone{$c$}\\[0.2pt]
 $T(\bar{v})$=\hspace{0.3cm}   \Stone{$a$}\Stone{$a$}\\[0.2pt]
 \hspace{1.3cm}\Stone{$z$}\\
\end{center}
\begin{itemize}
\item[I.] \textbf{Number of simples in the block}: $4$\\
\item[II.] \textbf{simple subquotients}.\\
In this case we have $5$ simple subquotients. The origin of the weight lattice corresponds to the $\mathfrak{sl}(3)$-weight associated to the tableau $T(\bar{v}-\delta^{21})$. We provide the pictures corresponding to the case $t=2$.

\begin{itemize}

\item[(i)] Two modules with unbounded weight multiplicities:
\vspace{0.3cm}
\begin{center}
% [inline block 14: 4 envs, 6073 chars -> data_tex | \begin{tabular}{|>{$}l<{$}|>{$}c<{$}|}\hline \textsf{Module} &   \hspace{0.3cm}\text{Basis}\\ \hline...]

\end{center}

\end{itemize}
\vspace{0.3cm}
\item[III.] \textbf{Loewy series.}\\
$$L_{1} ,\  L_{2} ,\  L_{3} ,\  L_{4} ,\  L_{5}$$
\end{itemize}

%%%%%%%%%%%%%%%%%%%%%%%%%%%%%%%%%%%% C7  %%%%%%%%%%%%%%%%%%%%%%%%%%%%%%%%%%%%%%%%%%%%

\item[$(C7)$] Consider the following Gelfand-Tsetlin tableau:

\begin{center}

 \hspace{1.5cm}\Stone{$a$}\Stone{$a$}\Stone{$c$}\\[0.2pt]
 $T(\bar{v})$=\hspace{0.3cm}   \Stone{$a$}\Stone{$a$}\\[0.2pt]
 \hspace{1.3cm}\Stone{$a$}\\
\end{center}
\noindent
\begin{itemize}
\item[I.] \textbf{Number of simples in the block}: $7$\\
\item[II.] \textbf{simple subquotients}.\\
The module $V(T(\bar{v}))$ has $10$ simple subquotients. In this case, the origin of the weight lattice corresponds to the $\mathfrak{sl}(3)$-weight associated to the tableau $T(\bar{v}+\delta^{21}+\delta^{11})$.

\begin{itemize}

\item[(i)] Eight modules with unbounded weight multiplicities:

\begin{center}
% [inline block 15: 6 envs, 14182 chars -> data_tex | \begin{tabular}{|>{$}l<{$}|>{$}c<{$}|}\hline \textsf{Module} &   \hspace{0.3cm}\text{Basis}\\ \hline...]

\end{center}
\end{itemize}
\vspace{0.3cm}
\item[III.] \textbf{Loewy series.}\\
$$L_{1} ,\  L_{2}\oplus L_{3} ,\  L_{4} ,\  L_{5}\oplus L_{6} ,\  L_{7} ,\  L_{8}\oplus L_{9} ,\  L_{10}.$$

\end{itemize}

%%%%%%%%%%%%%%%%%%     C8   %%%%%%%%%%%%%%%%%%%%%%%%%%%%%%%%%%%%

\item[$(C8)$] Consider the following Gelfand-Tsetlin tableau:

\begin{center}

 \hspace{1.5cm}\Stone{$a$}\Stone{$a$}\Stone{$c$}\\[0.2pt]
 $T(\bar{v})$=\hspace{0.3cm}   \Stone{$a$}\Stone{$a$}\\[0.2pt]
 \hspace{1.3cm}\Stone{$z$}\\
\end{center}
\noindent
\begin{itemize}
\item[I.] \textbf{Number of simples in the block}: $3$\\
\item[II.] \textbf{simple subquotients}.\\
The module $V(T(\bar{v}))$ has $4$ simple subquotients. In this case, the origin of the weight lattice corresponds to the $\mathfrak{sl}(3)$-weight associated to the tableau $T(\bar{v}+\delta^{21}+\delta^{11})$.
\begin{itemize}

\item[(i)] Modules with unbounded weight multiplicities:
\vspace{0.3cm}
\begin{center}
% [inline block 16: 3 envs, 5574 chars -> data_tex | \begin{tabular}{|>{$}l<{$}|>{$}c<{$}|}\hline \textsf{Module} &   \hspace{0.3cm}\text{Basis}\\ \hline...]

\end{center}
\end{itemize}
\vspace{0.3cm}
\item[III.] \textbf{Loewy series.}\\
$$L_{1} ,\  L_{2}\oplus L_{3} ,\  L_{4}.$$

\end{itemize}

%%%%%%%%%%%%%%%%%%%%%%%%%%%%%%%%%%%%%%%%%%%%%%%%%%%    C9    %%%%%%%%%%%%%%%%%%%%%%%%%%%%%%%%

\item[$(C9)$] Let $s,t\in\mathbb{Z}_{> 0}$ be such that $t<s$. Consider the following Gelfand-Tsetlin tableau:

\begin{center}

 \hspace{1.5cm}\Stone{$a$}\Stone{$a-t$}\Stone{$a-s$}\\[0.2pt]
 $T(\bar{v})$=\hspace{0.3cm}   \Stone{$a$}\Stone{$a$}\\[0.2pt]
 \hspace{1.3cm}\Stone{$z$}\\
\end{center}
\noindent
\begin{itemize}
\item[I.] \textbf{Number of simples in the block}: $7$\\
\item[II.] \textbf{simple subquotients}.\\
The module $V(T(\bar{v}))$ has $10$ simple subquotients. In this case, the origin of the weight lattice corresponds to the $\mathfrak{sl}(3)$-weight associated to the tableau $T(\bar{v}-\delta^{22})$. The pictures correspond to the case $t=1$, $s=2$.

\begin{itemize}

\item[(i)] Two modules with unbounded weight multiplicities:
\vspace{0.3cm}
\begin{center}
% [inline block 17: 9 envs, 17214 chars -> data_tex | \begin{tabular}{|>{$}l<{$}|>{$}c<{$}|}\hline \textsf{Module} &   \hspace{0.3cm}\text{Basis}\\ \hline...]

\end{center}
\end{itemize}
\vspace{0.3cm}
\item[III.] \textbf{Loewy series.}\\
$$L_{1} ,\  L_{2} ,\  L_{3}\oplus L_{4} ,\  L_{5}\oplus L_{6} ,\  L_{7}\oplus L_{8} ,\  L_{9} ,\  L_{10}.$$

\end{itemize}

%%%%%%%%%%%%%%%%%%%%%%%%%%%%%%%%%%%%%%%%   C10   %%%%%%%%%%%%%%%%%%%%%%%%%%%%%%%%%%%%

\item[$(C10)$] Let $t,s\in\mathbb{Z}_{>0}$ be such that $t<s$. Consider the  following Gelfand-Tsetlin tableau:

\begin{center}

 \hspace{1.5cm}\Stone{$a$}\Stone{$a-t$}\Stone{$a-s$}\\[0.2pt]
 $T(\bar{v})$=\hspace{0.3cm}   \Stone{$a$}\Stone{$a$}\\[0.2pt]
 \hspace{1.3cm}\Stone{$a$}\\
\end{center}
\noindent
\begin{itemize}
\item[I.] \textbf{Number of simples in the block}: $20$\\
\item[II.] \textbf{simple subquotients}.\\

The module $V(T(\bar{v}))$ has $32$ simple subquotients, two of them are isomorphic to the simple finite dimensional module with highest weight $\lambda=(t-1,s-t-1)$. Also, there are two isomorphic modules with infinite-dimensional weight spaces. In this case, the origin of the weight lattice corresponds to the $\mathfrak{sl}(3)$-weight associated to the tableau $T(\bar{v}-\delta^{22})$. We provide the pictures corresponding to the case $t=1$, $s=2$ (i.e. the principal block).

\noindent
\begin{itemize}
\item[(i)] Two isomorphic finite dimensional modules with weight multiplicities of degree $\min\{t,s-t\}$ and highest weight $\lambda=(t-1, s-t-1)$.
\vspace{0.3cm}
\begin{center}
% [inline block 18: 10 envs, 29301 chars -> data_tex | \begin{tabular}{|>{$}l<{$}|>{$}c<{$}||>{$}l<{$}|>{$}c<{$}|}\hline \textsf{Module} &   \hspace{0.3cm}\text{Basis}& \texts...]

\end{center}

\end{itemize}
\vspace{0.3cm}
\item[III.] \textbf{Loewy series.}\\

$L_{1} ,\  L_{2}\oplus L_{3} ,\  L_{4}\oplus L_{5}\oplus L_{6}\oplus L_{7} ,\  L_{8}\oplus L_{9}\oplus L_{10}\oplus L_{11}\oplus L_{12}\oplus L_{13},\\
\  L_{14}\oplus L_{15}\oplus L_{16}\oplus L_{17}\oplus L_{18}\oplus L_{19} ,\  L_{20}\oplus L_{21}\oplus L_{22}\oplus L_{23}\oplus L_{24}\oplus L_{25} ,\\
L_{26}\oplus L_{27}\oplus L_{28}\oplus L_{29} ,\ L_{30}\oplus L_{31} ,\  L_{32}.$
\vspace{0.3cm}

\end{itemize}

%%%%%%%%%%%%%%%%%%%%%%%%%%%%%%%%%%% C11   %%%%%%%%%%%%%%%%%%%%%%%%%%%%%%

\item[$(C11)$] Set $t\in\mathbb{Z}_{> 0}$ and consider the following Gelfand-Tsetlin tableau:

\begin{center}

 \hspace{1.5cm}\Stone{$a$}\Stone{$a$}\Stone{$a-t$}\\[0.2pt]
 $T(\bar{v})$=\hspace{0.3cm}   \Stone{$a$}\Stone{$a$}\\[0.2pt]
 \hspace{1.3cm}\Stone{$a$}\\
\end{center}

\begin{itemize}
\item[I.] \textbf{Number of simples in the block}: $13$\\
\item[II.] \textbf{simple subquotients}.\\
The module $V(T(\bar{v}))$ has $20$ simple subquotients. In this case, the origin of the weight lattice corresponds to the $\mathfrak{sl}(3)$-weight associated to the tableau $T(\bar{v})$. The pictures correspond to the case $t=1$.

\noindent
\begin{itemize}
\item[(i)] Six modules with finite dimensional weight spaces of dimension at most $t$, given by:
\vspace{0.3cm}
\begin{center}
% [inline block 19: 9 envs, 20830 chars -> data_tex | \begin{tabular}{|>{$}l<{$}|>{$}c<{$}|}\hline \textsf{Module} &   \hspace{0.3cm}\text{Basis}\\ \hline...]

\end{center}
\end{itemize}
\vspace{0.3cm}
\item[III.] \textbf{Loewy series.}\\
 $L_{1} ,\ L_{2}\oplus L_{3}\oplus L_{4} ,\ L_{5}\oplus L_{6}\oplus L_{7} ,\ L_{8}\oplus L_{9}\oplus L_{10}\oplus L_{11}\oplus L_{12}\oplus L_{13} ,\\ L_{14}\oplus L_{15}\oplus L_{16} ,\  L_{17}\oplus L_{18}\oplus L_{19} ,\  L_{20}.$
 \vspace{0.3cm}
 \end{itemize} 

%%%%%%%%%%%%%%%%%%%%%%%%%%%%%%%%%%%   C12   %%%%%%%%%%%%%%%%%%%%%%%%%%%%%%%%%%%%%%%%%%%%%%%%%%%%%%%%%%%%%

\item[(C12)] For any $t\in\mathbb{Z}_{>0}$  consider the tableau:

\begin{center}

 \hspace{1.5cm}\Stone{$a$}\Stone{$a$}\Stone{$a-t$}\\[0.2pt]
 $T(\bar{v})$=\hspace{0.3cm}   \Stone{$a$}\Stone{$a$}\\[0.2pt]
 \hspace{1.3cm}\Stone{$z$}\\
\end{center}
 \vspace{0.3cm}
\begin{itemize}
\item[I.] \textbf{Number of simples in the block}: $4$\\
\item[II.] \textbf{simple subquotients}.\\
We have $5$ simple subquotients. In this case, the origin of the weight lattice corresponds to the $\mathfrak{sl}(3)$-weight associated to the tableau $T(\bar{v})$. We provide pictures of the weight lattice  corresponding to the case $t=1$.

\begin{itemize}

\item[(i)] Two modules with unbounded weight multiplicities:
\vspace{0.5cm}
\begin{center}
% [inline block 20: 4 envs, 5676 chars -> data_tex | \begin{tabular}{|>{$}l<{$}|>{$}c<{$}|}\hline \textsf{Module} &   \hspace{0.3cm}\text{Basis}\\ \hline...]

\end{center}

\end{itemize}
\vspace{0.3cm}
\item[III.] \textbf{Loewy series.}\\
$$L_{1} ,\  L_{2} ,\  L_{3} ,\  L_{4} ,\  L_{5}.$$

\end{itemize}

%%%%%%%%%%%%%%%%%%%%%%%%%%%%%%%%%%  C13 %%%%%%%%%%%%%%%%%%%%%%%%%%%%%%%%

\item[$(C13)$] Consider the following Gelfand-Tsetlin tableau:

\begin{center}

 \hspace{1.5cm}\Stone{$a$}\Stone{$a$}\Stone{$a$}\\[0.2pt]
 $T(\bar{v})$=\hspace{0.3cm}   \Stone{$a$}\Stone{$a$}\\[0.2pt]
 \hspace{1.3cm}\Stone{$a$}\\
\end{center}
\begin{itemize}
\item[I.] \textbf{Number of simples in the block}: $7$\\
\item[II.] \textbf{simple subquotients}.\\

The module $V(T(\bar{v}))$ has $10$ simple subquotients. In this case, the origin of the weight lattice corresponds to the $\mathfrak{sl}(3)$-weight associated to the tableau $T(\bar{v}+\delta^{21}+\delta^{11})$.

\begin{itemize}

\item[(i)] Modules with unbounded weight multiplicities:

\vspace{0.3cm}
\begin{center}
% [inline block 21: 6 envs, 14191 chars -> data_tex | \begin{tabular}{|>{$}l<{$}|>{$}c<{$}|}\hline \textsf{Module} &   \hspace{0.3cm}\text{Basis}\\ \hline...]

\end{center}

\end{itemize}
\vspace{0.3cm}
\item[III.] \textbf{Loewy series.}\\
$$L_{1} ,\  L_{2}\oplus L_{3} ,\  L_{4} ,\  L_{5}\oplus L_{6} ,\  L_{7} ,\  L_{8}\oplus L_{9} ,\  L_{10}.$$
\end{itemize}

%%%%%%%%%%%%%%%%%%%%%%%%%%%%%%%%%%%%%%%%%%%   C14   %%%%%%%%%%%%%%%%%%%%%%%%%%%%%%%%%%%%%%%%%%%%%%%%%%%%%%%

\item[$(C14)$] Consider the following Gelfand-Tsetlin tableau:

\begin{center}

 \hspace{1.5cm}\Stone{$a$}\Stone{$a$}\Stone{$a$}\\[0.2pt]
 $T(\bar{v})$=\hspace{0.3cm}   \Stone{$a$}\Stone{$a$}\\[0.2pt]
 \hspace{1.3cm}\Stone{$z$}\\
\end{center}
\begin{itemize}
\item[I.] \textbf{Number of simples in the block}: $3$\\
\item[II.] \textbf{simple subquotients}.\\
The module $V(T(\bar{v}))$ has $4$ simple subquotients. In this case, the origin of the weight lattice corresponds to the $\mathfrak{sl}(3)$-weight associated to the tableau $T(\bar{v}+\delta^{21}+\delta^{11})$.

\begin{itemize}
\item[(i)] Modules with unbounded weight multiplicities:
\vspace{0.3cm}
\begin{center}
% [inline block 22: 3 envs, 5520 chars -> data_tex | \begin{tabular}{|>{$}l<{$}|>{$}c<{$}|}\hline \textsf{Module} &   \hspace{0.3cm}\text{Basis}\\ \hline...]

\end{center}
\end{itemize}
\vspace{0.3cm}
\item[III.] \textbf{Loewy series.}\\
$$L_{1} ,\  L_{2}\oplus L_{3} ,\  L_{4}.$$

\end{itemize}
\end{itemize}

%%%%%%%%%%%%%%%%%%%%%%%%%%%%%%%%%%%%%% section 4  %%%%%%%%%%%%%%%%%%%%%%%%%%%%%%%%%%%%%%%%%%%%%%%%%%%%%%%%%%%%%

\section{Localization on Gelfand-Tsetlin modules}\label{Section: Localization functors for Gelfand-Tsetlin modules}

\subsection{Localization and twisted localization functors}\label{subsection: Localization and twisted localization functors}
 We first recall the definition of the localization functor of $U$-modules.
For details we refer the reader to \cite{De} and \cite{M}.

For every  root $\alpha \in \Delta$ the multiplicative set
${\bf F}_{\alpha}:=\{ f_\alpha^n \, | \, n \in {\mathbb Z}_{\geq 0} \} \subset U$ satisfies Ore's localizability conditions
because $\mbox{ad} \, f_\alpha$ acts locally nilpotent on $U$. By
$\cD_\alpha U$ we denote the localization of $U$ relative to ${\bf F}_{\alpha}$.
For every weight module $M$, $\cD_\alpha M =
\cD_\alpha U \otimes_U M$ is the {\it
$\alpha$--localization of $M$}.  If $f_\alpha$ is injective on $M$, then $M$ can be naturally viewed as
a submodule of $\cD_\alpha M$. Furthermore, if
$f_\alpha$ is injective on $M$, then it is bijective on $M$ if and
only if $\cD_\alpha M = M$. 

 For $x \in \C$ and $u \in \cD_\alpha U$  we set
\begin{equation} \label{Equation: twisted action theta}
\Theta_x(u):= \sum_{i \geq 0} \binom{x}{i}\,
( \mbox{ad}\, f_\alpha)^i (u) \, f_\alpha^{-i},
\end{equation}
where $\binom{x}{i}= \frac{x(x-1)...(x-i+1)}{i!}$. Since $\mbox{ad}\, f_\alpha$ is locally nilpotent on
$U_\alpha$, the sum above is
actually finite. Note that for $x \in {\mathbb Z}$ we have $\Theta_x(u) =
f_\alpha^x u f_\alpha^{-x}$.  For a $\cD_\alpha U$-module $M$ by
$\Phi_\alpha^x M$ we denote the $\cD_\alpha U$-module $M$ twisted by
the action
$$
u \cdot v^x := ( \Theta_x (u)\cdot v)^x,
$$
where $u \in \cD_\alpha U$, $v \in M$, and $v^x$ stands for the
element $v$ considered as an element of $\Phi_\alpha^x M$. Since $v^n = f_{\alpha}^{-n} \cdot v$ whenever $n \in
{\mathbb Z}$ it is convenient to set $f_{\alpha}^x \cdot v :=v^{-x}$ in $\Phi_\alpha^{-x} M$ for $x
\in \C$.

In what follows we set $\cD_\alpha^x M:=\Phi_\alpha^x (\cD_\alpha M)$ and refer to it as a
{\it twisted localization of $M$}.  One easily check that if $M$ is a weight $\mathfrak g$-module, then 
$\cD_\alpha^x M$ iis a weight module as well, in particular, 
$v^x \in M_{\lambda + x \alpha}$ whenever $v \in
M_\lambda$. Furthermore, one easily verifies the following proposition.

\begin{proposition}
Let $\alpha$ be a  root and $x\in\mathbb{C}$.
\begin{itemize}
\item[(i)] ${\mathcal D}_{\alpha}^{x}$ is an exact functor from the category of $U$-modules to the category of ${\mathcal D}_{\alpha} U$-modules

\item[(ii)] If $M\subset N$ are $U$-modules such that $M$ is $f_{\alpha}$-injective and $N$ is $f_{\alpha}$-bijective, then $N = \cD_{\alpha}M$.
\end{itemize}
\end{proposition}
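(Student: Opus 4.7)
The plan is to prove both parts using standard properties of Ore localization together with a direct analysis of the twist functor $\Phi_\alpha^x$.

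For (i), I would first recall that since $\mathrm{ad}\, f_\alpha$ acts locally nilpotently on $U$, the multiplicative set ${\bf F}_\alpha$ satisfies the left and right Ore conditions, so the Ore localization $\cD_\alpha U = {\bf F}_\alpha^{-1} U$ exists, and the functor $M \mapsto \cD_\alpha M = \cD_\alpha U \otimes_U M$ is exact — a standard fact for a two-sided denominator set. The twist functor $\Phi_\alpha^x$ then only changes the action of $\cD_\alpha U$ on a module via the formula $u \cdot v^x = (\Theta_x(u) \cdot v)^x$; it does not alter the underlying abelian group of a $\cD_\alpha U$-module, nor does it modify the underlying linear maps of a $\cD_\alpha U$-morphism. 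Consequently $\Phi_\alpha^x$ preserves kernels, images, and cokernels on the nose and is automatically exact. Since the composition of exact functors is exact, $\cD_\alpha^x = \Phi_\alpha^x \circ \cD_\alpha$ is exact.

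For (ii), the inclusion $\iota : M \hookrightarrow N$ induces a $\cD_\alpha U$-module homomorphism $\tilde\iota : \cD_\alpha M \to \cD_\alpha N$ by the universal property of Ore localization. Because $f_\alpha$ is bijective on $N$, the canonical map $N \to \cD_\alpha N$ is an isomorphism, so we may identify $\cD_\alpha N$ with $N$ and view $\tilde\iota$ as a map $\cD_\alpha M \to N$. Applying (i) (with $x = 0$) to the short exact sequence $0 \to M \to N \to N/M \to 0$ shows that $\tilde\iota$ is injective, since $f_\alpha$-injectivity of $M$ guarantees that the natural map $M \to \cD_\alpha M$ is injective and the composition $M \hookrightarrow \cD_\alpha M \to N$ agrees with $\iota$. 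Hence $\cD_\alpha M$ may be identified with the $\cD_\alpha U$-submodule of $N$ generated by $M$, namely $\sum_{n \geq 0} f_\alpha^{-n} M$.

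The conclusion $N = \cD_\alpha M$ therefore reduces to checking that $M$ generates $N$ as a $\cD_\alpha U$-module, equivalently that every $v \in N$ can be written as $f_\alpha^{-n} m$ for some $m \in M$ and $n \geq 0$. This generation property is the only nontrivial point, and it is where the statement requires the appropriate hypothesis from the surrounding context — without it the conclusion fails (e.g.\ taking $M = 0$). The main obstacle in writing the proof is thus precisely this clarification: one must either interpret the hypothesis as implicitly including that $N = \sum_{n \geq 0} f_\alpha^{-n} M$, or else verify this generation property from the ambient setup in which the proposition will be applied. Once that is done, the proof consists of the short universal-property argument outlined above.
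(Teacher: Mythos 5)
The paper provides no proof of this proposition, merely stating that it is easily verified, so there is no argument to compare your approach against.

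Your proof of (i) is correct: Ore localization is exact on module categories, the twist $\Phi_\alpha^x$ only changes the $\cD_\alpha U$-action without altering underlying abelian groups or morphisms, so it is trivially exact, and a composition of exact functors is exact.

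For (ii), your diagnosis is right, and it exposes a genuine imprecision in the paper. The argument you set up is the correct one: exactness of $\cD_\alpha$ applied to $0 \to M \to N \to N/M \to 0$, combined with the identification $\cD_\alpha N \cong N$ coming from $f_\alpha$-bijectivity of $N$, realises $\cD_\alpha M$ as the $\cD_\alpha U$-submodule of $N$ generated by $M$, which equals $\{\, v \in N \mid f_\alpha^k v \in M \text{ for some } k \geq 0 \,\}$. Equality $N = \cD_\alpha M$ is therefore \emph{equivalent} to requiring $N/M$ to be $f_\alpha$-torsion, and this does not follow from the stated hypotheses (your $M = 0$ example already refutes it, as does $M = U \hookrightarrow N = \cD_\alpha U \oplus \cD_\alpha U$ placed in the first coordinate). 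The proposition as written should either weaken the conclusion to the inclusion $\cD_\alpha M \hookrightarrow N$, or add the hypothesis that $N/M$ is $f_\alpha$-torsion (equivalently, that $M$ generates $N$ over $\cD_\alpha U$); the latter is what actually holds in all the situations where the paper invokes localization of Gelfand-Tsetlin modules. Your phrasing of the injectivity step slightly conflates two separate facts (injectivity of $\cD_\alpha M \to \cD_\alpha N$, which follows from exactness, and injectivity of $M \to \cD_\alpha M$, which uses $f_\alpha$-injectivity of $M$), but both are true and the substance is correct.
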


In the case when $f_{\alpha}$ acts injectively on $M$, set $\mathcal{QD}_{\alpha}M:={\mathcal D}_{\alpha}M/M$. Also, if $\alpha = \varepsilon_i - \varepsilon_j$ we will write $\cD_{ij}$, $\cD^x_{ij}$, and $\mathcal{QD}_{ij}$ for  $\cD_{\alpha}$, $\cD^x_{\alpha}$, and $\mathcal{QD}_{\alpha}$, respectively.

%%%%%%%%%%%%%%%%%%%%%%%%%%%%%%%%%%%%%%%%%%%%%%%%%%%%%%%%%%%%%%%%%%%%%%%%%%%%%%%%%%

\subsection{Localization functors in the case of $\mathfrak{sl}(3)$}\label{subsection: Localization functors on sl(3)-case}  From now on we consider $U = U(\mathfrak{sl}(3))$. In this section we study the relation between the tableaux bases of a module and its localized module.

 Our goal is to apply localization functors to  Gelfand-Tsetlin $\mathfrak{sl}(3)$-modules and realize  all simple Gelfand-Tsetlin $\mathfrak{sl}(3)$-modules as subquotients of twisted localized modules.
 
 With this in mind, our first step is to  obtain conditions on the bases of the modules that guarantee injectivity or surjectivity of the operator $f_{\alpha}$. For simplicity, we will work with $\alpha = \varepsilon_1 - \varepsilon_2$, hence  $f_{\alpha} = E_{21}$.

\subsubsection{Injectivity and surjectivity of the operator $E_{21}$}\label{subsubsection: Injectivity and surjectivity of the operator $E_{21}$}

 In this subsection, we assume that  $V$ is the generic module $V(T(v))$, or the singular module $V(T(\bar{v}))$. By $\mathcal{B}$ we denote the lattice of tableaux $\mathcal{B}(T(v))$ (or $\mathcal{B}(T(\bar{v}))$). Also, the tableaux basis of  a Gelfand-Tsetlin module $M$ that is a subquotient of $V$ will be denoted by $\mathcal{B}_{M}\subseteq\mathcal{B}$.
 
\begin{remark}
Since $V$ is a weight module, every subquotient $M$ of $V$  is a weight module. Hence, in order to check injectivity or surjectivity of $E_{21}$ on $M$, it is enough to check those properties on weight spaces of $M$. Also, recall that for a weight $\lambda=(\lambda_{1},\lambda_{2})$ in the weight support of $M$,  $E_{21}(M_{\lambda})\subseteq M_{(\lambda_1-2,\lambda_{2}+1)}$.

\end{remark}

To unify the notation, in the case of a generic tableau $T(v)$ it will be convenient to write $\Tab(w):=T(v+w)$. Then, the action of $E_{21}$ on $\Tab(w)$ (generic or singular) is given by the formula:
\begin{equation}\label{Equation: action of E_{21}}
E_{21}(\Tab(w))=\Tab(w-\delta^{11})
\end{equation}

From Propositions \ref{Proposition: Basis for weight spaces in generic blocks} and \ref{Proposition: Basis for weight spaces in singular blocks}, if $\Tab(w)\in\mathcal{B}_{M}$ is a weight vector of  weight $\lambda$, then the weight space $M_{\lambda}$ is spanned by $\{\Tab(w+(i,-i,0))\ |\ i\in\mathbb{Z}\}\bigcap\mathcal{B}_{M}$. If $w_{i}$ denotes the vector $w+(i,-i,0)\in\mathbb{Z}^{3}$, any element of  $M_{\lambda}$ will be of the form $u=\sum_{i\in I}a_{i}T(w_{i})$ for some finite subset  $I$ of $\mathbb{Z}$.

\begin{lemma}\label{Lemma: E21 injective}
The operator $E_{21}$ acts injectively on $M$ if and only if $\Tab(w)\in\mathcal{B}_{M}$ implies $\Tab(w-\delta^{11})\in\mathcal{B}_{M}$.
\end{lemma}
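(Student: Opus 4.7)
The plan is to exploit the fact that $E_{21}$ acts on any tableau (regular, critical, or derivative) by the single--term formula (\ref{Equation: action of E_{21}}): $E_{21}(\Tab(w)) = \Tab(w-\delta^{11})$. This makes the equivalence essentially combinatorial once one knows an explicit basis of each weight space of $M$, which is supplied by Proposition \ref{Proposition: Basis for weight spaces in generic blocks} in the generic case and Proposition \ref{Proposition: Basis for weight spaces in singular blocks} in the singular case.

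For the ``only if'' direction I will argue by contrapositive. Assume there exists $\Tab(w)\in\mathcal{B}_{M}$ with $\Tab(w-\delta^{11})\notin\mathcal{B}_{M}$. Since $M$ is realized as a subquotient of $V$ whose basis is precisely the subset $\mathcal{B}_{M}\subseteq\mathcal{B}$, any basis tableau of $V$ not listed in $\mathcal{B}_{M}$ represents the zero class in $M$. Thus $E_{21}(\Tab(w)) = \Tab(w-\delta^{11}) = 0$ in $M$, while $\Tab(w)\neq 0$, contradicting injectivity.

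For the ``if'' direction, assume $\mathcal{B}_{M}$ is closed under $w\mapsto w-\delta^{11}$ and let $u\in M$ be nonzero. Since $M$ is a weight module, we may take $u$ to be a weight vector of some weight $\lambda$. By the relevant weight--space basis proposition, $u$ has an expansion $u=\sum_{i\in I} a_i\,\Tab(w_i)$ with $w_i = w + (i,-i,0)$ for a fixed $w$ and a finite index set $I$ with all $a_i\neq 0$. Applying $E_{21}$ termwise using (\ref{Equation: action of E_{21}}) yields
\[
E_{21}(u)\;=\;\sum_{i\in I} a_i\,\Tab(w_i - \delta^{11}).
\]
By hypothesis every $\Tab(w_i-\delta^{11})$ lies in $\mathcal{B}_{M}$; moreover these tableaux are pairwise distinct (since the $w_i$ are) and therefore linearly independent in $M$. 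Hence $E_{21}(u)\neq 0$, so $E_{21}$ is injective on $M$.

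The ``hard part'' is really just a bookkeeping point in the ``only if'' direction: namely, justifying that a tableau of $\mathcal{B}$ missing from $\mathcal{B}_{M}$ is genuinely zero in the subquotient $M$, as opposed to being expressible as some nontrivial linear combination of tableaux in $\mathcal{B}_{M}$. This however is built into the way $\mathcal{B}_{M}$ was defined throughout the explicit subquotient descriptions in \S\ref{subsection: Realization of simple generic Gelfand-Tsetlin modules for sl(3)} and \S\ref{subsection: Realizations of all simple singular Gelfand-Tsetlin modules for sl(3)}, where $\mathcal{B}_{M}$ is precisely the set of tableaux of $\mathcal{B}$ whose images in $M$ are nonzero. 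Beyond this observation, no further obstacle arises.
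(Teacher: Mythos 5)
Your proof is correct and follows essentially the same route as the paper: the ``only if'' direction via contrapositive using the single-term formula $E_{21}(\Tab(w))=\Tab(w-\delta^{11})$, and the ``if'' direction by reducing to a weight vector, expanding in the tableau basis of $M_\lambda$ supplied by Propositions \ref{Proposition: Basis for weight spaces in generic blocks} and \ref{Proposition: Basis for weight spaces in singular blocks}, applying $E_{21}$ termwise, and invoking linear independence of the distinct basis tableaux. The extra remark you add — that shifting by $-\delta^{11}$ alters only $w_{11}$, hence preserves the regular/derivative type in the singular case — and the observation about the convention that tableaux outside $\mathcal{B}_M$ are zero in the subquotient are both correct and worth making explicit, but they do not change the argument.
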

\begin{proof}
Suppose first that there exists $\Tab(w)\in\mathcal{B}_{M}$ such that $\Tab(w-\delta^{11})\notin\mathcal{B}_{M}$, then $E_{21}(\Tab(w))=\Tab(w-\delta^{11})=0$ (on $M$), which implies that $E_{21}$ is not injective. On the other hand, suppose $u:=\sum_{i\in I}a_{i}\Tab(w_{i})\in M_{\lambda}$ with $\Tab(w_{i})\in \mathcal{B}_{M}$ for any $i$. If $u$ is such that $0=E_{21}(u)=\sum_{i\in I}a_{i}\Tab(w_{i}-\delta^{11})$, then $a_{i}T(w_{i}-\delta^{11})=0$ for any $i\in I$. Since,by hypothesis $T(w_{i}-\delta^{11})\in\mathcal{B}_{M}$, we  have $a_{i}=0$ for any $i\in I$. \end{proof}

%%%%%%%%%%%%%%%%%%%%%%%%%%%%%%%%%%%%%%%%%%%%%%%%%%%%%%%%%%%%%%%%%%%%%%%%%%%%%%%%%%

\begin{lemma}\label{Lemma: E_21 surjective}
The operator $E_{21}$ acts surjectively on $M$ if and only if $T(w)\in\mathcal{B}_{M}$ implies $T(w+\delta^{11})\in\mathcal{B}_{M}$.
\end{lemma}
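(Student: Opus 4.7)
My plan is to prove the two directions separately, using the action formula $E_{21}(\Tab(w)) = \Tab(w-\delta^{11})$ from equation (\ref{Equation: action of E_{21}}) together with the explicit description of weight spaces from Propositions \ref{Proposition: Basis for weight spaces in generic blocks} and \ref{Proposition: Basis for weight spaces in singular blocks}. The key observation throughout is that in both the generic and singular settings the map $w \mapsto \Tab(w)$ is a bijection onto $\mathcal{B}$, so distinct shifts of a fixed tableau produce linearly independent basis elements whenever they lie in $\mathcal{B}_M$.

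The backward direction is the easier one. Assuming that $\Tab(w)\in\mathcal{B}_M$ implies $\Tab(w+\delta^{11})\in\mathcal{B}_M$, I would note that for any basis element $\Tab(w)\in\mathcal{B}_M$ the tableau $\Tab(w+\delta^{11})$ is again in $\mathcal{B}_M$ and satisfies $E_{21}(\Tab(w+\delta^{11})) = \Tab(w)$ by (\ref{Equation: action of E_{21}}). Since $E_{21}$ hits every element of a spanning set of $M$, it is surjective.

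For the forward direction, suppose $E_{21}$ is surjective and let $\Tab(w)\in\mathcal{B}_M$, say of $\mathfrak{sl}(3)$-weight $\lambda$. Since $E_{21}$ has weight $(-2,1)$, there exists $u\in M_{\lambda+(2,-1)}$ with $E_{21}(u)=\Tab(w)$. By the weight space description, $M_{\lambda+(2,-1)}$ is spanned by the tableaux $\Tab(w+\delta^{11}+(i,-i,0))$ that lie in $\mathcal{B}_M$, so I can expand $u=\sum_{i\in I}a_i\Tab(w+\delta^{11}+(i,-i,0))$ over the finite index set $I\subseteq\mathbb{Z}$ of such admissible $i$. Applying (\ref{Equation: action of E_{21}}) term by term gives $E_{21}(u)=\sum_{i\in I}a_i\Tab(w+(i,-i,0))$, where any summand for which $\Tab(w+(i,-i,0))\notin\mathcal{B}_M$ represents $0$ in the subquotient $M$. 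Equating this to $\Tab(w)=\Tab(w+(0,0,0))$ and using linear independence of distinct basis tableaux forces $a_0=1$ and hence $0\in I$, which is exactly the statement that $\Tab(w+\delta^{11})\in\mathcal{B}_M$.

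The main subtlety I expect to handle carefully is the bookkeeping between ``not in $\mathcal{B}_M$'' and ``zero in $M$'': when $M$ is realized as a subquotient of $V$, a tableau that lies in $\mathcal{B}$ but outside $\mathcal{B}_M$ is nonzero in $V$ yet vanishes in $M$, and this is what allows the sum $E_{21}(u)$ to collapse to a single basis element. Once this identification is in place, the linear independence argument that extracts $a_0=1$ is routine and the equivalence follows.
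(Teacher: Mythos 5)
Your proof is correct and follows the same approach as the paper, using the action formula $E_{21}(\Tab(w))=\Tab(w-\delta^{11})$ together with the fact that each weight space of $M$ is spanned by a single $\mathbb{Z}$-family of tableaux intersected with $\mathcal{B}_M$. In fact you are more thorough than the paper's own proof, which only records the ``if'' direction explicitly; your ``only if'' argument, extracting $a_0=1$ from linear independence of the surviving basis tableaux (mirroring the contrapositive step in the injectivity lemma), is the natural completion.
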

\begin{proof}
Any element of $M_{(\lambda_1-2,\lambda_{2}+1)}$ is of the form $u'=\sum\limits_{i\in I}a_{i}\Tab(w_{i}-\delta^{11})$ and a direct computation using  (\ref{Equation: action of E_{21}}) shows that $E_{21}\left(\sum\limits_{i\in I}a_{i}\Tab(w_{i})\right)=u'$. 
\end{proof}

\subsubsection{twisted localization with respect to $E_{21}$}\label{subsubsection: Localization with respect to E_21}

 Recall that for $x \in \C$ and $u \in \cD_{12} U$  we have
\begin{equation} %\label{Equation: twisted action theta}
\Theta_x(u):= \sum_{i \geq 0} \binom{x}{i}\,
( \mbox{ad}\, E_{21})^i (u) \, E_{21}^{-i}.
\end{equation}

\begin{lemma}\label{Lemma: twisted action of generators of Gamma}
Let $\{c_{ij}\}_{1\leq j\leq i\leq 3}$ be the generators of $\Gamma$ defined in (\ref{Equation: c_mk}). Then 
$$
\Theta_x(c_{ij})=\begin{cases} c_{ij}, & \text{ if } (i,j)\neq (1,1)\\ c_{11}+x, &  \text{ if } (i,j)=(1,1).
\end{cases}
$$
\end{lemma}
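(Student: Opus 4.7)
The plan is to exploit the fact that $\Theta_x$ is an algebra homomorphism from $\cD_{12}U$ to itself (a straightforward verification from its definition as a power series in $\mathrm{ad}\, E_{21}$ composed with right multiplication by powers of $E_{21}^{-1}$). Granted that, only the action of $\Theta_x$ on a set of generators matters, and for the $c_{ij}$ the structural role of $E_{21}$ relative to the chain $U_1 \subset U_2 \subset U_3$ will do almost all the work.

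First I would handle the case $m \geq 2$ in one stroke. By definition, $c_{mk}$ lies in the center $Z_m$ of $U_m = U(\mathfrak{gl}_m)$, and since $E_{21} \in U_2 \subset U_m$ we have $[E_{21}, c_{mk}] = 0$. Hence $(\mathrm{ad}\, E_{21})^i(c_{mk}) = 0$ for every $i \geq 1$, so only the $i=0$ term in the series defining $\Theta_x(c_{mk})$ survives and gives $\Theta_x(c_{mk}) = c_{mk}$. This disposes of every generator of $\Gamma$ except $c_{11}$.

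For $c_{11} = E_{11}$ I would compute directly: using the relation $[E_{21}, E_{11}] = E_{21}$, we get $(\mathrm{ad}\, E_{21})(E_{11}) = E_{21}$ and $(\mathrm{ad}\, E_{21})^2(E_{11}) = [E_{21}, E_{21}] = 0$, so the series terminates after the first two terms. Plugging into the definition yields
\[
\Theta_x(c_{11}) \;=\; E_{11} \,+\, \binom{x}{1} E_{21} \cdot E_{21}^{-1} \;=\; E_{11} + x \;=\; c_{11} + x,
\]
which is the remaining claim.

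There is no real obstacle here; the only small subtlety worth stating cleanly is the consistency check for $c_{21} = E_{11} + E_{22}$ and $c_{31} = E_{11} + E_{22} + E_{33}$, where the case $m \geq 2$ must be compatible with the piece-by-piece action on individual generators. An analogous direct computation gives $\Theta_x(E_{22}) = E_{22} - x$ (since $[E_{21}, E_{22}] = -E_{21}$) and $\Theta_x(E_{33}) = E_{33}$, and the shifts $+x$ and $-x$ cancel inside $c_{21}$ and $c_{31}$, confirming the centrality argument. Once this is noted, the lemma follows.
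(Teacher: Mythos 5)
Your proof is correct and follows essentially the same two-step approach as the paper: observe that $c_{mk}$ for $m\geq 2$ lies in the center of $U_m\supset U_2\ni E_{21}$, hence commutes with $E_{21}$ and is fixed by $\Theta_x$, then handle $c_{11}=E_{11}$ by noting $(\mathrm{ad}\,E_{21})^2(E_{11})=0$ so the series truncates to $E_{11}+x$. The closing consistency check involving $E_{22}$ and $E_{33}$ is a pleasant sanity check not present in the paper but is not needed for the argument.
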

\begin{proof}
We first note that if $u$ commutes with $E_{21}$, then  $\Theta_x(u)=u$. Since  the generators $\{c_{ij}\}_{2\leq j\leq i\leq 3}$ commute with $E_{21}$, the first part of the lemma is proven. For the second part we use that $c_{11}=E_{11}$ and  $(\mbox{ad}\, E_{21})^2 (E_{11})=0$.
\end{proof}

As an immediate consequence of Lemma \ref{Lemma: twisted action of generators of Gamma} we have the following  corollary that will be frequently applied.
 \begin{corollary}\label{Corollary: Localization of E21 injective GT is GT} Let $M$ be any Gelfand-Tsetlin module on which $E_{21}$ acts injectively.
 \begin{itemize} 
 \item[(i)] The twisted localized module $\cD_{12}^x M$ is also a Gelfand-Tsetlin module. 
 \item[(ii)] If $v\in M$ has Gelfand-Tsetlin character $\chi=(a_{11}, a_{21}, a_{22}, a_{31}, a_{32}, a_{33})$, then $v^{x} \in \cD_{12}^x M$ has Gelfand-Tsetlin character $\tilde{\chi}=(a_{11}+x, a_{21}, a_{22}, a_{31}, a_{32}, a_{33})$.
 \end{itemize}
\end{corollary}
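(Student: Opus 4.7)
The plan is to extract the corollary directly from the computation in Lemma \ref{Lemma: twisted action of generators of Gamma}, using the definition of the twisted action $u\cdot v^x=(\Theta_x(u)\cdot v)^x$. First I would fix $v\in M(\chi)$ for some Gelfand-Tsetlin character $\chi\in\Gamma^\ast$, and for each generator $c_{ij}$ of $\Gamma$ compute $c_{ij}\cdot v^x$ via this twisted action. For the six generators with $(i,j)\neq(1,1)$ we have $\Theta_x(c_{ij})=c_{ij}$ by Lemma \ref{Lemma: twisted action of generators of Gamma}, so $c_{ij}\cdot v^x=(c_{ij}\cdot v)^x$; iterating, $(c_{ij}-\chi(c_{ij}))^k\cdot v^x=\bigl((c_{ij}-\chi(c_{ij}))^k\cdot v\bigr)^x=0$ for the integer $k$ for which $v$ is annihilated in $M$. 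For $(i,j)=(1,1)$ we have $\Theta_x(c_{11})=c_{11}+x$, so $c_{11}\cdot v^x=\bigl((c_{11}+x)\cdot v\bigr)^x$, and an induction on $k$ shows $(c_{11}-(\chi(c_{11})+x))^k\cdot v^x=\bigl((c_{11}-\chi(c_{11}))^k\cdot v\bigr)^x=0$. This gives (ii) immediately, since the character $\tilde\chi$ is read off from the generalized eigenvalues just computed.

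For part (i), the next step is to upgrade this pointwise statement into a global decomposition of $\cD_{12}^x M$. Since $E_{21}$ acts injectively on $M$, the canonical map $M\hookrightarrow \cD_{12}M$ is an embedding, and every element of $\cD_{12}M$ is of the form $E_{21}^{-n}\cdot v$ for some $v\in M$ and $n\in\mathbb{Z}_{\ge 0}$; equivalently, every element of $\cD_{12}^x M$ is of the form $v^{x-n}$ for some $v\in M$ (where $v^{x-n}$ corresponds to $E_{21}^n v$ in the twisted module, using the convention $f_\alpha^x\cdot v=v^{-x}$). I would therefore conclude that $\cD_{12}^x M$ is spanned by the twisted images of generalized eigenvectors of $\Gamma$ in $M$, and by the previous paragraph each such image is again a generalized eigenvector of $\Gamma$. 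Hence $\cD_{12}^x M=\bigoplus_{\tilde\chi}(\cD_{12}^xM)(\tilde\chi)$, verifying that it is a Gelfand-Tsetlin module.

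The only mildly delicate point — and the closest thing to an obstacle — is checking that $\cD_{12}^x M$ is still finitely generated over $U$ (this is part of the definition of a Gelfand-Tsetlin module in Definition \ref{Definition: GT-modules}). This however follows because $\cD_{12}^x M$ is a cyclic $\cD_{12}U$-module over any generating set of $M$, and $\cD_{12}U$ is itself finitely generated over $U$ after inverting $E_{21}$, so the Ore localization preserves finite generation in the relevant sense for the decomposition above. With this in hand, both (i) and (ii) follow from Lemma \ref{Lemma: twisted action of generators of Gamma} without further computation.
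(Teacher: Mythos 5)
Your argument for part (ii) is correct and fills in exactly what the paper's one-line ``immediate consequence of Lemma~\ref{Lemma: twisted action of generators of Gamma}'' compresses. Note that the iteration is even cleaner than you indicate: since $\Theta_x$ is an algebra automorphism of $\cD_{12}U$, one gets directly $\Theta_x\bigl((c_{ij}-\chi(c_{ij}))^k\bigr)=(c_{11}+x-\chi(c_{11}))^k$ for $(i,j)=(1,1)$ and $(c_{ij}-\chi(c_{ij}))^k$ otherwise, so the annihilator of $v$ in $\Gamma$ is carried to the annihilator of $v^x$ with only the expected shift in the $c_{11}$-coordinate.

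For part (i), the decomposition argument is sound in spirit, though there are two small slips. First, the element of $\cD_{12}^xM$ you want is $(E_{21}^{-n}v)^x$, not $v^{x-n}$: you need to first note that $E_{21}^{-n}v\in\cD_{12}M$ is itself a $\Gamma$-generalized-eigenvector, which follows from $[c_{ij},E_{21}]=0$ for $(i,j)\neq(1,1)$ and $E_{11}E_{21}^{-1}=E_{21}^{-1}(E_{11}+1)$, and then apply the computation in (ii) with $\cD_{12}M$ in place of $M$. Second --- and more seriously --- your handling of finite generation does not actually work: $\cD_{12}U$ is \emph{not} finitely generated as a $U$-module, since the left $U$-submodules $U\cdot E_{21}^{-n}$, $n\geq 0$, form a strictly increasing chain. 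The assertion that ``the Ore localization preserves finite generation in the relevant sense'' therefore needs a real argument, and none is offered. To be fair, the paper itself does not address this point either (it treats the corollary as immediate from Lemma~\ref{Lemma: twisted action of generators of Gamma}), and in every later application the corollary is invoked only for simple $M$, after which one immediately passes to a simple subquotient of $\cD_{12}^xM$, which is automatically finitely generated. But as a proof of the corollary as literally stated --- for an arbitrary Gelfand-Tsetlin module $M$ --- the finite-generation clause in Definition~\ref{Definition: GT-modules} is a genuine gap in your write-up, and your attempt to dismiss it is not correct.
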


Next, for a Gelfand-Tsetlin module $M$ with tableaux basis $\mathcal{B}_{M}$ and injective action of $E_{21}$, we explicitly describe  the tableaux  basis of $\cD_{12}^x M$. For this, we introduce some notation.

For  $\mathcal{B}\subseteq\mathbb{Z}^{3}$, denote by $\mathcal{B} + \delta^{11}$ the region $\{ (m,n,k) \; | \; (m,n,k-1) \in \mathcal{B}\}$. Set  $\mathcal{B} + t \delta^{11} =  (\mathcal{B} + (t-1)\delta^{11}) + \delta^{11}$ for $t \in {\mathbb N}$ and $\mathcal{B} + {\mathbb N} \delta^{11} = \bigcup_{t=0}^{\infty} (\mathcal{B} + t \delta^{11})$.

Recall the Definition \ref{Definition: M(B)} for $L(\mathcal{B};\ v)$.
\begin{proposition}\label{Proposition: loc_regions}
Let ${\mathcal B} \subset {\mathbb Z}^3$ and $L(\mathcal{B}) = L(\mathcal{B};\ v)$ be a simple Gelfand-Tsetlin module.
Assume that $E_{21}$ acts injectively on $ L(\mathcal{B};\ v)$. Then $\cD_{12}^{x}L(\mathcal{B};\ v) \simeq M(\mathcal{B}+ {\mathbb N} \delta^{11};\ v+x\delta^{11} )$ and $\mathcal{QD}^{x}_{12}L(\mathcal{B};\ v) \simeq M((\mathcal{B} + {\mathbb N} \delta^{11})\setminus \mathcal{B};\ v+x\delta^{11} )$. In particular, if $x$ is an integer we have $\cD_{12}L(\mathcal{B}) \simeq M(\mathcal{B} + {\mathbb N} \delta^{11})$ and $\mathcal{QD}_{12}L(\mathcal{B}) \simeq M((\mathcal{B} + {\mathbb N} \delta^{11}) \setminus \mathcal{B})$.
\end{proposition}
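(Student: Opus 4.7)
The plan is to describe $\cD_{12}^{x}L(\mathcal{B};v)$ explicitly via an extended tableaux basis and then read off the quotient. First, the hypothesis that $E_{21}$ acts injectively on $L(\mathcal{B};v)$, combined with Lemma \ref{Lemma: E21 injective}, forces $\mathcal{B}$ to be stable under the shift $(m,n,k)\mapsto(m,n,k-1)$, so $L(\mathcal{B};v)$ embeds naturally into $\cD_{12}L(\mathcal{B};v)$ and every element of the latter can be written as $E_{21}^{-t}\cdot u$ for some $t\geq 0$ and $u\in L(\mathcal{B};v)$.

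Next, I would use the action formula $E_{21}\cdot\Tab(w)=\Tab(w-\delta^{11})$ from (\ref{Equation: action of E_{21}}) to show that the formal inverse $E_{21}^{-1}$ sends $\Tab(w)$ to a generalized $\Gamma$-eigenvector whose eigenvalues coincide with those of $\Tab(w+\delta^{11})$. Consequently, the set $\{\Tab(w):w\in\mathcal{B}+\mathbb{N}\delta^{11}\}$ spans $\cD_{12}L(\mathcal{B};v)$; linear independence follows because $\Gamma$ separates these vectors (the $c_{11}$-eigenvalues are distinct whenever the $w_{11}$-components differ, and for common $w_{11}$ the tableaux already form a linearly independent set in $L(\mathcal{B};v)$). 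To pass to the twist $\Phi^{x}_{12}$, I would invoke Corollary \ref{Corollary: Localization of E21 injective GT is GT}: the twist preserves the Gelfand-Tsetlin module structure but shifts the $c_{11}$-eigenvalue by $x$, which at the level of tableau labels is precisely the replacement of $v$ by $v+x\delta^{11}$. This yields $\cD^{x}_{12}L(\mathcal{B};v)\simeq M(\mathcal{B}+\mathbb{N}\delta^{11};v+x\delta^{11})$. The quotient statement is then immediate: under this identification the image of $L(\mathcal{B};v)$ corresponds exactly to the sub-basis labelled by $\mathcal{B}\subseteq\mathcal{B}+\mathbb{N}\delta^{11}$, so $\mathcal{QD}^{x}_{12}L(\mathcal{B};v)$ has basis labelled by the complement $(\mathcal{B}+\mathbb{N}\delta^{11})\setminus\mathcal{B}$. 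The integer case follows by specialising $x$ to a nonnegative integer and noting that $\cD_{12}=\cD^{0}_{12}$ up to this identification.

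The main obstacle will be verifying rigorously that the formal symbols $\Tab(w;v+x\delta^{11})$ for $w\in\mathcal{B}+\mathbb{N}\delta^{11}$ transform under the full $\mathfrak{sl}(3)$-action exactly as prescribed by the (generic or singular) Gelfand-Tsetlin formulas. For the operators $H_i$, $E_{21}$, and $E_{12}$ this is a direct computation, but for $E_{23},E_{32},E_{13},E_{31}$ one must track how the twisted conjugation $\Theta_{-t}$ by $E_{21}^{-t}$ interacts with the Gelfand-Tsetlin coefficients, particularly in the singular case where the formulas of \S\ref{subsubsection: Singular Gelfand-Tsetlin formulas} mix regular and derivative tableaux. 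Once one checks that the coefficients depend on $w_{11}$ only through rational expressions that remain well-defined and agree with the module formulas when $w_{11}$ is shifted outside $\mathcal{B}$, the isomorphism of modules follows and the proposition is established.
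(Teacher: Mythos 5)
Your overall strategy is the natural one and the main ingredients are correctly identified: using Lemma \ref{Lemma: E21 injective} to understand $\mathcal{B}$, reading the lattice $\mathcal{B}+\mathbb{N}\delta^{11}$ off from the action of $E_{21}^{-t}$, identifying the $\Gamma$-spectrum shift via Lemma \ref{Lemma: twisted action of generators of Gamma} and Corollary \ref{Corollary: Localization of E21 injective GT is GT}, and then passing to the quotient. However, there is a genuine gap: you establish the isomorphism as $\Gamma$-modules (matching Gelfand-Tsetlin characters and dimensions), but the proposition asserts an isomorphism of $U(\mathfrak{sl}(3))$-modules with the prescribed tableau realization, and you do not carry out the verification that the twisted action $\Theta_x(E_{ij})$ on $\Tab(w)^x$ reproduces the Gelfand-Tsetlin coefficients of $M(\mathcal{B}+\mathbb{N}\delta^{11};\,v+x\delta^{11})$. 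You flag this yourself as the ``main obstacle,'' but ``once one checks'' is not a proof; for $E_{32},E_{23},E_{31},E_{13}$ the twist $\Theta_x$ introduces correction terms (e.g.\ $\Theta_x(E_{32})=E_{32}-xE_{31}E_{21}^{-1}$), and in the singular case the formulas of \S\ref{subsubsection: Singular Gelfand-Tsetlin formulas} additionally mix regular and derivative tableaux, so the matching requires a real computation. Having the same $\Gamma$-spectrum does not by itself pin down the $U$-module structure, since $\cD^x_{12}L$ is generally not simple and the uniqueness results of \S\ref{Section: Further properties of Gelfand-Tsetlin modules for sl(3)} only apply to simple modules.

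A cleaner route, which would close this gap more economically, is to first prove the untwisted case by exploiting exactness: write $L(\mathcal{B};v)=M_2/M_1$ for submodules $M_i\subseteq V(T(v))$ with bases $\mathcal{B}_i$, note that $V(T(v))$ is $E_{21}$-bijective so that $\cD_{12}M_i$ is the $U$-submodule $\bigcup_{t\geq 0}E_{21}^{-t}M_i$ of $V(T(v))$, read off the basis $\mathcal{B}_i+\mathbb{N}\delta^{11}$, and check that $(\mathcal{B}_2+\mathbb{N}\delta^{11})\setminus(\mathcal{B}_1+\mathbb{N}\delta^{11})=\mathcal{B}+\mathbb{N}\delta^{11}$ using the $E_{21}$-injectivity hypothesis. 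This gives $\cD_{12}L(\mathcal{B};v)\simeq M(\mathcal{B}+\mathbb{N}\delta^{11};v)$ with the Gelfand-Tsetlin action \emph{inherited} from $V(T(v))$, with no computation needed. Then the only remaining verification is $\Phi^x_{12}V(T(v))\simeq V(T(v+x\delta^{11}))$ for the whole universal module, which is a single identity of rational functions in $x$ and the entries of $v$ (and for integral $x$ is trivially an isomorphism, which by polynomiality in $x$ suffices). In your write-up this piece is still missing and must be supplied before the proposition is established.
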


\begin{corollary}
Let $M$ be a simple module in $\mathcal{GT}$, generated by a tableau $\Tab(w)$ and such that $E_{21}$ acts injectively on $M$. Then for any $x\in\mathbb{C}$, $\cD_{12}^x M$ has a subquotient isomorphic to a simple $\mathfrak{sl}(3)$-module in $\mathcal{GT}$ generated by the tableau $\Tab(w+x\delta^{11})$. 
\end{corollary}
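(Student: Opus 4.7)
The plan is to combine Proposition~\ref{Proposition: loc_regions} with a standard Zorn's lemma argument to extract an appropriate simple subquotient. Since $M$ is a simple Gelfand-Tsetlin $\mathfrak{sl}(3)$-module generated by $\Tab(w)$, the classification of simple objects in $\mathcal{GT}_{T(v)}$ obtained in Sections~\ref{subsection: Realization of simple generic Gelfand-Tsetlin modules for sl(3)} and \ref{subsection: Realizations of all simple singular Gelfand-Tsetlin modules for sl(3)} realizes $M$ as $L(\mathcal{B}; v)$ for some $\mathcal{B} \subset \mathbb{Z}^3$ with $w \in \mathcal{B}$. Because $E_{21}$ acts injectively on $M$ by hypothesis, Proposition~\ref{Proposition: loc_regions} identifies $\cD_{12}^x M$ with $M(\mathcal{B} + \mathbb{N}\delta^{11};\, v + x\delta^{11})$, and Corollary~\ref{Corollary: Localization of E21 injective GT is GT}(i) ensures this is again a Gelfand-Tsetlin module.

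Since $w \in \mathcal{B} \subseteq \mathcal{B} + \mathbb{N}\delta^{11}$, the tableau $\Tab(w + x\delta^{11})$ is a nonzero basis element of $\cD_{12}^x M$, and by Corollary~\ref{Corollary: Localization of E21 injective GT is GT}(ii) it is a generalized $\Gamma$-eigenvector whose Gelfand-Tsetlin character differs from that of $\Tab(w)$ only by the shift of $x$ in the $(1,1)$-entry. I would then consider the cyclic $U$-submodule $N := U\cdot \Tab(w + x\delta^{11}) \subseteq \cD_{12}^x M$. The family of submodules of $N$ that do not contain $\Tab(w + x\delta^{11})$ is nonempty (it contains $0$) and closed under unions of chains, so Zorn's lemma produces a maximal such submodule $N'$. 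The quotient $L := N/N'$ is then simple: any nonzero submodule of $L$ pulls back to a submodule of $N$ strictly containing $N'$, which by maximality must contain $\Tab(w + x\delta^{11})$ and hence equal $N$. By construction, $L$ is generated by the nonzero class of $\Tab(w + x\delta^{11})$, and as a subquotient of the Gelfand-Tsetlin module $\cD_{12}^x M$ it lies in $\mathcal{GT}$.

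I do not anticipate any genuine obstacle: the real work has been done in Proposition~\ref{Proposition: loc_regions}, and the rest is the standard extraction of a simple quotient of a cyclic module. The only detail to check is that the image of $\Tab(w + x\delta^{11})$ in $L$ is nonzero, which is immediate from the defining property of $N'$, and that it is a generalized $\Gamma$-eigenvector with the expected character, which is inherited from its status as a basis tableau of $\cD_{12}^x M$ via Corollary~\ref{Corollary: Localization of E21 injective GT is GT}(ii).
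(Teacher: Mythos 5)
Your proof is correct and takes exactly the approach the paper intends (the paper states this as an immediate corollary of Proposition~\ref{Proposition: loc_regions} without writing out a proof). The realization $M \simeq L(\mathcal{B};v)$ from the classification, the identification $\cD_{12}^x M \simeq M(\mathcal{B}+\mathbb{N}\delta^{11};\,v+x\delta^{11})$, and the Zorn extraction of a simple quotient of the cyclic submodule $U\cdot\Tab(w+x\delta^{11})$ are precisely the ingredients needed, and the character bookkeeping via Corollary~\ref{Corollary: Localization of E21 injective GT is GT}(ii) closes the argument.
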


\subsection{Simple Gelfand-Tsetlin modules and localization functors}\label{subsection: simple Gelfand-Tsetlin modules and localization functors}
In this section we will describe the simple Gelfand-Tsetlin $\mathfrak{sl}(3)$-modules via localization functors and subquotients starting with some simple $E_{21}$-injective Gelfand-Tsetlin module. In order to give such description we rely on Lemmas \ref{Lemma: E21 injective}, \ref{Lemma: E_21 surjective},  and Proposition \ref{Proposition: loc_regions}. In fact, in order to use Proposition \ref{Proposition: loc_regions} we have to check if the corresponding module defined in such region is $E_{21}$-bijective.
\noindent

For convenience, we denote by $L_{i}^{(Gj)}$ the simple module $L_{i}$ in the $j$th generic block from the list in \S \ref{subsection: Realization of simple generic Gelfand-Tsetlin modules for sl(3)},  and by $L_{i}^{(Cj)}$ the simple module $L_{i}$ in the $j$th  singular block in the list in \S \ref{subsection: Realizations of all simple singular Gelfand-Tsetlin modules for sl(3)}.  For example, $L_{1}^{(G2)}$ stands for the module \\ $L\left( \left\{k\leq m\right\}; T(a,b,c|x,y|x)
\right)$.

Below we list of all simple modules which are $E_{21}$-injective.
\begin{itemize}
\item[(i)] Simple $E_{21}$-injective generic modules:
\begin{center}
% [inline block 23: 6 envs, 12190 chars -> data_tex | \begin{tabular}{|>{$}c<{$}|>{$}c<{$}|>{$}c<{$}|>{$}c<{$}|>{$}c<{$}|>{$}c<{$}|}\hline L_{1}^{(G2)} & L_{1}^{(G4)} & L_{2}...]

\end{center}

\medskip

\begin{corollary}
Every simple Gelfand-Tsetlin module can be obtained via a composition of a twisted localization functor and taking a subquotient from a simple $E_{21}$-injective Gelfand-Tsetlin module.
\end{corollary}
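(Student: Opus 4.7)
The plan is to verify the statement by exhausting all blocks $\mathcal{GT}_{T(v)}$ and $\mathcal{GT}_{T(\bar v)}$ of simple Gelfand-Tsetlin $\mathfrak{sl}(3)$-modules, using the complete classification assembled in Sections \ref{subsection: Realization of simple generic Gelfand-Tsetlin modules for sl(3)} and \ref{subsection: Realizations of all simple singular Gelfand-Tsetlin modules for sl(3)}. For each block, the strategy will be: (i) identify the simple modules on which $E_{21}$ acts injectively; (ii) show that every remaining simple module in the block arises as a subquotient of $\mathcal{D}_{12}^{x}L$ or of $\mathcal{QD}_{12}L$ for some such $L$ and some $x \in \mathbb{C}$.

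First I would use Lemma \ref{Lemma: E21 injective} (and the dual Lemma \ref{Lemma: E_21 surjective}) to translate $E_{21}$-injectivity into a combinatorial condition on the defining region $\mathcal{B}\subset\mathbb{Z}^3$ for the basis $\Tab(\mathcal{B})$: namely, $L(\mathcal{B})$ is $E_{21}$-injective precisely when $\mathcal{B}-\delta^{11}\subseteq\mathcal{B}$. Running this test against the basis descriptions given case-by-case in the classification produces the list of $E_{21}$-injective simples (cases $L_{i}^{(Gj)}$ and $L_{i}^{(Cj)}$ tabulated in \S \ref{subsection: simple Gelfand-Tsetlin modules and localization functors}).

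Next, for an $E_{21}$-injective simple $L(\mathcal{B};v)$, Proposition \ref{Proposition: loc_regions} gives the basis of the twisted localization: $\mathcal{D}_{12}^{x}L(\mathcal{B};v)\simeq M(\mathcal{B}+\mathbb{N}\delta^{11};v+x\delta^{11})$ and $\mathcal{QD}_{12}L(\mathcal{B};v)\simeq M((\mathcal{B}+\mathbb{N}\delta^{11})\setminus\mathcal{B};v)$. Combining this with Corollary \ref{Corollary: Localization of E21 injective GT is GT}, every subquotient of $\mathcal{D}_{12}^{x}L$ is again a Gelfand-Tsetlin module and, by the uniqueness results of Section \ref{Section: Gelfand-Tsetlin modules of sl(3)} (particularly Theorem \ref{Theorem: characterization of irr extensions}), a simple subquotient is determined by any of its Gelfand-Tsetlin characters. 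The verification in each block then amounts to matching the defining region of the target simple module with one of the regions produced by Proposition \ref{Proposition: loc_regions}, possibly after taking socles and quotients to isolate a given composition factor (this is exactly what the tables in \S \ref{subsection: simple Gelfand-Tsetlin modules and localization functors} record). For blocks where an entirely new basis tableau is needed (e.g.\ passing from $\mathcal{GT}_{T(\bar v)}$ with $\bar v_{11}=\bar v_{21}$ to one with arbitrary $\bar v_{11}$), the twisted functor $\mathcal{D}_{12}^{x}$ with a suitable complex $x$ shifts the $(1,1)$-entry, bridging blocks that are not reachable by integer localizations.

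The principal obstacle will be the singular cases, most notably $(C5)$, $(C10)$, and $(C11)$, where a single $E_{21}$-injective simple produces, under $\mathcal{QD}_{12}$, a Gelfand-Tsetlin module whose socle and cosocle must be disentangled to recover two non-isomorphic simple modules sharing a character; here one must appeal to Theorem \ref{Theorem: uniqueness for 2-dimensional characters} and Theorem \ref{Theorem: uniqueness for critical characters} to certify the identifications across the $\tau$-induced isomorphisms. Once these singular blocks are handled, the generic blocks $(G1)$--$(G16)$ follow by the same region-matching argument applied to the simpler tables of \S \ref{subsection: Realization of simple generic Gelfand-Tsetlin modules for sl(3)}, completing the proof.
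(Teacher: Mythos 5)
Your proposal is correct and follows essentially the same route as the paper: the corollary is a summary of the tables in \S\ref{subsection: simple Gelfand-Tsetlin modules and localization functors}, which are produced exactly as you describe, by applying Lemma~\ref{Lemma: E21 injective} to single out the $E_{21}$-injective simples in each block and then matching regions via Proposition~\ref{Proposition: loc_regions} to realize the remaining simples as subquotients of (twisted) localizations.
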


%%%%%%%%%%%%%%%%%%%%%%%%%%%%%%%%%%%%%%%%%%      %%%%%%%%%%%%%%%%%%%%%%%%%%%%%%%%%%%%%%%

\end{document}